\documentclass[oneside]{amsart}
\usepackage[utf8]{inputenc}
\usepackage{amsmath,amsfonts, amssymb, amsthm}
\usepackage[dvipsnames]{xcolor}
\usepackage[english]{babel}
\usepackage{accents}
\usepackage{csquotes}
\usepackage{cases}
\usepackage{enumerate, enumitem}
\usepackage{bbm}
\usepackage{todonotes}
\usepackage{comment}
\usepackage{lmodern}
\usepackage{tensor}
\usepackage{geometry}
\usepackage{hyperref}
\usepackage[capitalise]{cleveref}

\hypersetup{
    colorlinks,
    linkcolor={red!50!black},
    citecolor={blue!50!black},
    urlcolor={blue!80!black}
}

\usepackage[hyperref=true,backref=true,backend=biber,style=alphabetic,sorting=nyt,citestyle=alphabetic,maxbibnames=99]{biblatex} 
\title[Ranked Forcing and the Length of Generalized Borel Hierarchies]{Ranked Forcing and the Length of\\  Generalized Borel Hierarchies}
\date{\today}

\author[N.\ Chapman]{Nick Chapman}
\address[Nick Chapman]
{Institut f\"ur Diskrete Mathematik und Geometrie, Technische Universit\"at Wien, Wiedner Hauptstra{\ss}e 8-10/104, 1040 Vienna, Austria}
\email{nick.steven.chapman@gmail.com}

\DeclareMathOperator{\ord}{ord}
\DeclareMathOperator{\cf}{cf}
\DeclareMathOperator{\crank}{rk}
\DeclareMathOperator{\trank}{rk}

\DeclareMathOperator{\dom}{dom}
\DeclareMathOperator{\range}{rang}
\DeclareMathOperator{\supp}{supp}
\DeclareMathOperator{\powset}{\mathcal{P}}
\DeclareMathOperator{\leaf}{leaf}
\DeclareMathOperator{\nonleaf}{nonleaf}
\DeclareMathOperator{\incomp}{\bot}
\DeclareMathOperator{\points}{\range}

\let \succ \relax
\DeclareMathOperator{\succ}{succ}
\DeclareMathOperator{\pred}{pred}
\newcommand{\minus}{\setminus}

\theoremstyle{definition}
\newtheorem{lemma}{Lemma}[section]
\newtheorem{theorem}[lemma]{Theorem}
\newtheorem*{theorem*}{Theorem}
\newtheorem{definition}[lemma]{Definition}
\newtheorem{corollary}[lemma]{Corollary}
\newtheorem{proposition}[lemma]{Proposition}
\newtheorem{claim}[lemma]{Claim}
\newtheorem{fact}[lemma]{Fact}

\theoremstyle{remark}
\newtheorem{remark}[lemma]{Remark}
\newtheorem{assumption}{Assumption}

\newcommand{\bP}{\mathbb{P}}
\newcommand{\bQ}{\mathbb{Q}}
\newcommand{\bS}{\mathbb{S}}

\newcommand{\Ord}{\mathtt{Ord}}
\newcommand{\markdef}[1]{\textit{#1}}
\newcommand{\clopen}[1]{[#1]}
\newcommand{\bSigma}{\boldsymbol{\Sigma}}
\newcommand{\bPi}{\boldsymbol{\Pi}}
\newcommand{\forces}{\mathrel{\Vdash}}
\newcommand{\defas}{\mathrel{:=}}
\newcommand{\conc}{\mathbin{^\frown}}
\newcommand{\comp}{\mathrel{||}}

\renewcommand{\restriction}{\mathord{\upharpoonright}}
\newcommand{\setOf}[2]{\left\{#1 : #2\right\}}
\newcommand{\seq}[2]{\langle #1 : #2 \rangle}

\newcommand{\bK}{\mathbb{K}}
\newcommand{\name}[1]{\protect\undertilde{#1}}
\newcommand{\aforc}{{\mathbb{B}\mathbb{M}}}

\newcommand{\saforc}{{\mathbb{B}\mathbb{M}}'}

\newcommand{\pre}[2]{\tensor[^{#1}]{{#2}}{}}

\newcommand{\ssigma}[4]{\ifthenelse{\equal{#4}{}}{}{#4\mbox{-}}\bSigma^{#1}_{#2}\ifthenelse{\equal{#3}{}}{}{(#3)}}
\newcommand{\ppi}[4]{\ifthenelse{\equal{#4}{}}{}{#4\mbox{-}}\bPi^{#1}_{#2}\ifthenelse{\equal{#3}{}}{}{(#3)}}
\newcommand{\bor}[2]{\ifthenelse{\equal{#2}{}}{}{#2\mbox{-}}\mathbf{Bor}(#1)}

\DeclareMathSymbol{\mlq}{\mathord}{operators}{'134}
\DeclareMathSymbol{\mrq}{\mathord}{operators}{'42}
\let\oldenquote\enquote
\renewcommand{\enquote}[1]{\relax\ifmmode\mlq#1\mrq\else\oldenquote{#1}\fi}

\begin{document}

\begin{abstract}
    We extend A. Miller's framework of $\alpha$-forcing to the case of a regular uncountable cardinal $\kappa = \kappa^{<\kappa}$ and apply it to study the structure of the $\kappa$-Borel hierarchy on subspaces of the generalized Baire space ${}^\kappa \kappa$. We isolate a class of iterations of $\alpha$-forcing and show that it satisfies a certain combinatorial property of admitting a sufficiently rich family of rank functions; this fact is then used to construct several models in which nontrivial constellations for the length of the $\kappa$-Borel hierarchy on multiple subspaces of ${}^\kappa \kappa$ are realized simultaneously. Finally, we provide a higher variant of Steel's forcing with tagged trees and generalize arguments of Stern to derive the exact $\kappa$-Borel complexity of certain classes of well-founded trees.
\end{abstract}

\subjclass[2020]{Primary: 03E15; Secondary: 03E35, 54H05}
\keywords{Generalized descriptive set theory, Borel hierarchy, $\alpha$-forcing, definability}

\maketitle

\section{Introduction}

The class of Borel sets is among the premier well-studied objects in descriptive set theory. Based on how complex their structure is when built up from basic clopen sets, every Borel set $B \subseteq \pre{\omega}{\omega}$ of the Baire space is located at a specific level of the Borel hierarchy - the least ordinal $\alpha$ for which $B$ is $\ssigma{0}{\alpha}{}{}$. It is a basic fact of descriptive set theory that the Borel hierarchy on the reals does not collapse, that is, for every ordinal $\alpha < \omega_1$ there exists a Borel set $B \subseteq \pre{\omega}{\omega}$ that is $\ssigma{0}{\alpha}{}{}$ but not $\ppi{0}{\alpha'}{}{}$ for any $\alpha' < \alpha$. However, when relativizing to a particular subspace $X \subseteq \pre{\omega}{\omega}$ of the reals, this might no longer be true. For example, when we direct our attention to any countable subspace $X \subseteq \pre{\omega}{\omega}$, it immediately becomes clear that any (Borel or otherwise) subset of $X$ is $\ssigma{0}{2}{X}{}$, hence the Borel hierarchy on $X$ is very short. The \textit{length} or \textit{order} $\ord(X)$ of the Borel hierarchy on $X$, that is the least ordinal $\alpha$ such that every Borel subset of $X$ is $\ssigma{0}{\alpha}{X}{}$, can be thought of as a measure of topological complexity for a space. Just like the Baire space, the Cantor space $\pre{\omega}{2}$ also satisfies $\ord(\pre{\omega}{2}) = \omega_1$, and moreover the same is also true of any $X$ containing an embedding of $\pre{\omega}{2}$.

Apart from these trivial observations, not many other statements about the ordinal $\ord(X)$ seem to provable. In fact, it turns out that the value of this ordinal is highly independent of ZFC for uncountable $X$. A thorough study of the possible values of $\ord(X)$ for subspaces of the reals was initiated by A. Miller in his work \enquote{On the length of Borel hierarchies} \cite{miller_length_1979}. A number of consistency results about modifying the Borel hierarchy are given by Miller in \cite{miller_length_1979} and \cite{miller_descriptive_1995} to demonstrate that, apart from some minor restrictions, we have almost complete freedom over setting $\ord(X)$ to be equal to some ordinal in a c.c.c.\ forcing extension. The basic building block employed here is Miller's \textit{$\alpha$-forcing}, which allows us to either adjoin to our universe a \enquote{fresh} element of $\ppi{0}{\alpha}{X}{}$ or, conversely, add a $\ppi{0}{\alpha}{X}{}$-code for specific subset of $X$. With some work, this forcing can be shown to be relatively tame and exert only an expected amount of influence over the Borel hierarchy, hence iterations of these forcing yield the desired consistency results. 

One direction in which this study can be continued is to investigate the behaviour of alternative Borel hierarchies found within the area of generalized descriptive set theory. The central conceit of this field of study is to lift familiar notions from topology, descriptive set theory or forcing theory into the uncountable, often by replacing the role of $\omega$ with an uncountable cardinal $\kappa$. As such, the central topological spaces under study become the generalized Cantor space $\pre{\kappa}{2}$ and the generalized Baire space $\pre{\kappa}{\kappa}$, each equipped with the appropriate \textit{bounded topology}. Though recently generalized descriptive set theory of a singular $\kappa$ has been rising to prominence (see e.g.\ \cite{dimonte2025} or \cite{gen_borel_sets}), throughout this work we will stick exclusively with the more commonly established assumption of a regular uncountable $\kappa = \kappa^{<\kappa}$. The natural candidate to work with in this context is the $\kappa$-Borel hierarchy on subspaces of $\pre{\kappa}{\kappa}$, which is generated from basic clopen sets in the bounded topology by taking complements and unions of size ${\leq}\kappa$. This suggests the question of whether this hierarchy behaves similarly to the classical Borel hierarchy on subspaces of $\pre{\omega}{\omega}$.

A thorough study of the properties of the $\kappa$-Borel and related hierarchies on a vast number of topological spaces has recently been initiated in joint work of the author with Claudio Agostini, Luca Motto Ros and Beatrice Pitton \cite{gen_borel_sets}. In the same work, a partial generalization of $\alpha$-forcing is provided, which allowed us to lift one of Miller's theorems.
\begin{theorem*}
    Suppose $X \subseteq \pre{\kappa}{\kappa}$ has size $|X| > \kappa$ and $1 < \alpha < \omega$ is a finite ordinal. Then there exists a ${<}\kappa$-closed, $\kappa^+$-c.c.\ forcing extension $V[G]$ such that
    \[
        V[G] \models \ord_\kappa(X) = \alpha.
    \]
\end{theorem*}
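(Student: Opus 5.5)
We follow Miller's strategy, lifted to $\kappa$. Fix $X \subseteq \pre{\kappa}{\kappa}$ with $|X| > \kappa$ and $1 < \alpha < \omega$. We aim for a model in which every subset of $X$ is $\ssigma{0}{\alpha}{X}{}$, while some subset of $X$ is not $\ssigma{0}{\alpha'}{X}{}$ for $\alpha' < \alpha$.

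\emph{Upper bound.} Perform an iteration with ${<}\kappa$-supports of length $\lambda^+$ (or a sufficiently closed length, e.g.\ $2^{|X|}$ bookkept appropriately) of the generalized $\alpha$-forcing from \cite{gen_borel_sets}. Each iterand takes a subset $Y \subseteq X$, given by bookkeeping in an intermediate model, and adds a $\ppi{0}{\alpha}{X}{}$-code for $Y$; in fact it adds a $\ssigma{0}{\alpha}{X}{}$-code, since for finite $\alpha$ one can code the complement. Each iterand is ${<}\kappa$-closed and $\kappa^+$-c.c.\ by the assumption $\kappa^{<\kappa}=\kappa$ and a $\Delta$-system argument on conditions of size ${<}\kappa$. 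The iteration therefore preserves these properties, and no new elements of $X$ or of $\pre{\kappa}{\kappa}$ appear from ${<}\kappa$-sized information. Every subset of $X$ in the final model appears at an intermediate stage by the chain condition, once the length has cofinality $>\kappa$ and exceeds $2^{|X|}$ suitably. Thus every subset is $\ssigma{0}{\alpha}{X}{}$ at the end, since codes are upward absolute. This gives $\ord_\kappa(X) \le \alpha$.

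\emph{Lower bound.} This is the main obstacle. We must show that the iteration does not collapse the hierarchy further, i.e.\ that some subset of $X$ is not $\ssigma{0}{\beta}{X}{}$ for $\beta<\alpha$. Following Miller, we would exploit $|X|>\kappa$ to find a set of size $\kappa^+$ in $X$. Using a ground-model or early-stage set $Z\subseteq X$ that is $\kappa^+$-sized, we pick a subset $A$ and argue that $A$ cannot be $\ssigma{0}{\alpha-1}{X}{}$ after forcing. The plan is to show that the generalized $\alpha$-forcing is \enquote{tame}: any $\ssigma{0}{\beta}{X}{}$ set with $\beta<\alpha$ in the extension is approximated by ground-model data. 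Concretely, we would prove a rank lemma: for each name of a $\ppi{0}{\beta}$-code and each condition, there is a ground-model $\ppi{0}{\beta}$-set that agrees with it on enough points of $X$, using a fusion/rank-function argument on conditions and the ${<}\kappa$-closure.

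Finally, we diagonalize. Choose $A \subseteq Z$ generically, for instance by adding it with a Cohen-type component at the start. Then no low-level code from the iteration can capture $A$, since the code is determined by $\kappa$ many coordinates. Combined with a homogeneity or counting argument over the $\kappa^+$ points, this yields $\ord_\kappa(X)\ge\alpha$. We expect the tameness/rank lemma for iterations to require the most care. Finiteness of $\alpha$ is used there to keep the code trees of finite height, so that fusion below $\kappa$ works.
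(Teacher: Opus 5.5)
\textbf{Upper bound.} Your upper bound is essentially the paper's. For $\alpha=\alpha_0+1$, the proof of \cref{th: set order of set} iterates $\aforc_{\alpha}(A_\gamma, X\setminus A_\gamma, X)$ with ${<}\kappa$ supports and bookkeeps every $\kappa$-Borel subset of $X$. Two steps need repair:
\begin{itemize}
\item ${<}\kappa$-closure and $\kappa^+$-c.c.\ of the iterands do not by themselves give the $\kappa^+$-c.c.\ of a ${<}\kappa$-support iteration. You need an iterable strengthening such as $(\heartsuit)$ and \cref{prop: heart cc}; for finite $\alpha$, $\kappa$-linkedness plus ${<}\kappa$-closure with unions as greatest lower bounds suffices.
\item Cofinality ${>}\kappa$ only catches sets with $\kappa$-sized names, i.e.\ $\kappa$-Borel codes. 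That is all you need, but arbitrary subsets of $X$ would require cofinality ${>}|X|$.
\end{itemize}

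\textbf{Lower bound: the genuine gap.} Here none of the proposed steps works.
\begin{itemize}
\item Your diagonalization (codes depend on $\kappa$ coordinates, plus homogeneity or counting over $\kappa^+$ points) never refers to the Borel level. It would equally show that $A$ is not $\bPi^0_\alpha(X)$, which is false because your own iteration adds such a code. Homogeneity is lost as soon as $A$, and sets computed from it, are parameters of coding iterands.
\item Your ``rank lemma'' (agreement with a ground-model $\bPi^0_\beta$ set on ``enough points'') is too vague to carry anything. The coding iterands themselves add new generic $\bPi^0_{\alpha-1}$ sets $G_\eta$, $\eta\in\succ(\emptyset)$. The invariant actually needed is different: membership of a single point outside a small set must be decided by conditions that are blind to that point.
\item \cref{rem: no general ranks} shows the lower bound genuinely depends on the coding iterands living at level $\alpha>\alpha-1$. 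Iterands $\aforc_{\alpha-1}(X\setminus A,A,X)$ would make $A$ a $\bSigma^0_{\alpha-1}$ set. Any correct argument must use this, and yours never does.
\end{itemize}

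\textbf{What the paper does instead.}
\begin{enumerate}
\item The witness is the generic $\bPi^0_{\alpha-1}$ set $G_\emptyset(0)$ added by a first iterand $\aforc_{\alpha-1}(\emptyset,\emptyset,X)$. For $\alpha=2$ the lower bound is just non-discreteness, automatic from $|X|>\kappa$.
\item On good conditions and for each locus $H$, $\crank_H(p)$ is the supremum of tree ranks of nodes paired with points outside $H$. \cref{th: rank function ground} shows it is a rank function for ordinals ${<}\alpha-1$ with $h(\beta)=\beta+1$; this is exactly where $\alpha_\gamma>\alpha_0$ enters.
\item An elementary-submodel argument (\cref{lem: small locus ground}) gives a locus with $|H(0)|\le\kappa$ on which the ${\le}\kappa$ conditions deciding a name $J$ for a $\bSigma^0_{\alpha-1}$ set equal to $G_\emptyset(0)$ all have rank $0$.
\item Miller's lemma (\cref{th: rank-forcing}) then yields, for some $y^*\in X\setminus H(0)$, a condition of rank ${<}\alpha-1$ forcing $y^*\in G_\emptyset(0)$ and $G_\emptyset(0)=J$. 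This choice of $y^*$ is the only use of $|X|>\kappa$.
\item Because of its low rank, this condition cannot contain $\langle\emptyset,y^*\rangle$. Adding $\langle\nu,y^*\rangle$ for a fresh $\nu\in\succ(\emptyset)$ forces $y^*\notin G_\emptyset(0)$ while still forcing $y^*\in J$, a contradiction (\cref{th: preserve generic pi set}).
\end{enumerate}
A Cohen-type witness might be made to work, but only by building this same machinery, including ranks for the Cohen coordinates. As written, you name the difficulty but supply neither the rank functions, nor the reflection to low rank, nor the switching step.

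Finiteness of $\alpha$ also has nothing to do with fusion. It matters because $T_\alpha$ then has no small-limit nodes, so there is no criticality constraint and the forcing is genuinely ${<}\kappa$-closed.
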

Here we now write $\ord_\kappa$ to denote the length of the $\kappa$-Borel hierarchy. The limitation to finite ordinals in the uncountable setting is a technical limitation with the definition of $\alpha$-forcing that we now resolve in this current work.

The goal of this article is to extend Miller's work into the uncountable and systematize the study of iterations of $\alpha$-forcing. To this end, we encapsulate several of his arguments into a \enquote{black box} (cf.\ \cref{th: preserve generic pi set}) that allows for applying his technique of ranked forcing to automatically derive preservation properties for iterations that yield topological consistency results about the length of Borel hierarchies. The article is structured as follows.

In Sections 2-4 we provide preliminaries, notational conventions and introduce the core notions and concepts of the article. Miller's framework of ranked forcing is introduced, which will be the tool to derive statements about $\alpha$-forcing.

Section 5 generalizes $\alpha$-forcing to the setting of a regular uncountable $\kappa = \kappa^{<\kappa}$. At limit ordinals special care has to be taken, but overall the generalized forcing satisfies analogous properties to the classical one, including sufficient regularity properties such as strategic ${<}\kappa$-closure and a strong version of the $\kappa^+$-c.c.\ Using an ad hoc iterability theorem, we show that this strengthening of the c.c.\ can be preserved in ${<}\kappa$-supported forcing iterations, enabling the use of forcing extensions via iterated $\alpha$-forcing. Crucially, \cref{subsec: preserve pi set} builds the bridge between rank arguments surrounding $\alpha$-forcing and topological properties of spaces in an iterated forcing extension.

The introduction to Section 6 gives an overview over some previously known results about iterations of $\alpha$-forcing from \cite{miller_length_1979}, \cite{miller_descriptive_1995} and \cite{gen_borel_sets}. The rest of the section contains the main contribution of this paper, in which we isolate a class of iterations which can be endowed with a sufficiently rich family of rank functions with respect to \cref{th: preserve generic pi set}. In Section 7 we reap the spoils of our work from Section 6 and construct several models for nontrivial constellations of the length $\ord_\kappa$ of the $\kappa$-Borel hierarchy. We show that a fixed space $X \subseteq \pre{\kappa}{\kappa}$ can be made to attain almost any value of $\ord_\kappa$.
\begin{theorem*}
    Suppose $X \subseteq \pre{\kappa}{\kappa}$ has size $|X| > \kappa$ and $1 < \alpha < \kappa^+$ is a successor ordinal. Then there exists a ${<}\kappa$-closed, $\kappa^+$-c.c.\ forcing extension $V[G]$ such that
    \[
        V[G] \models \ord_\kappa(X) = \alpha.
    \]
    The same is also true for $\alpha = \kappa^+$.
\end{theorem*}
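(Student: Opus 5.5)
We follow Miller's classical strategy, lifted to $\kappa$ with the machinery of Sections 5 and 6. Fix $X$ with $|X|>\kappa$ and a successor $\alpha = \beta+1$ with $1<\alpha<\kappa^+$. Two things must be arranged in $V[G]$: every $\kappa$-Borel subset of $X$ is $\ssigma{0}{\alpha}{X}{}$, which gives $\ord_\kappa(X)\le\alpha$; and some $\kappa$-Borel subset of $X$ is not $\ssigma{0}{\beta}{X}{}$, which gives $\ord_\kappa(X)\ge\alpha$.

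The extension is a ${<}\kappa$-supported iteration $\seq{\bP_\xi,\name{\bQ}_\xi}{\xi<\lambda}$ of the generalized $\alpha$-forcing (in fact $\beta$-forcing in the appropriate indexing), with $\lambda$ chosen so that $\lambda^{\kappa}=\lambda$, e.g.\ $\lambda=(2^{\kappa})^+$ or a suitable $\kappa^+$-cofinal length. The stages are organized by bookkeeping. By strategic ${<}\kappa$-closure together with the iterability theorem for the strong $\kappa^+$-c.c., each $\bP_\xi$ is ${<}\kappa$-strategically closed and $\kappa^+$-c.c. Since $X$ acquires no new points, every subset $Y\subseteq X$ in $V[G]$ has a nice name of size $\le\kappa\cdot|X|$. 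Such a name is therefore decided by an initial segment of the iteration, and bookkeeping lets us treat every subset of $X$ that appears.

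For the upper bound, at stage $\xi$ the bookkeeping hands us some $Y\subseteq X$ in $V[G_\xi]$, and we force with the $\alpha$-forcing that adds a $\ppi{0}{\alpha'}{X}{}$-code for $Y$ ($\alpha'<\alpha$ suitable). This makes every subset of $X$ in the final model $\ssigma{0}{\alpha}{X}{}$. In particular every $\kappa$-Borel subset is, so $\ord_\kappa(X)\le\alpha$. If the lengths of the codes are managed as in Miller's argument, we in fact obtain $\bPi^0_\beta$ or $\bSigma^0_\alpha$ codes.

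For the lower bound, we need a set that stays out of $\ssigma{0}{\beta}{X}{}$. We would take the generic $\ppi{0}{\beta}{}{}$-set added at the first stage, or more simply a fixed set $A\subseteq X$ handled through the rank apparatus. We then invoke \cref{th: preserve generic pi set}: once the iteration carries a sufficiently rich family of rank functions, it preserves that some generic $\ppi{0}{\beta}{X}{}$ set is not $\ssigma{0}{\beta}{X}{}$. Section 6 isolates exactly the class of iterations admitting such rank families, so the remaining task is to check that our bookkept iteration lies in this class. The point to verify is that coding arbitrary subsets by $\ppi{0}{\alpha'}{}{}$-codes with $\alpha'<\alpha$ only ever uses ranks below the critical level.

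The case $\alpha=\kappa^+$ is handled by running the same iteration in which no coding is ever performed, so that only generic sets are added. Alternatively, one can interleave $\gamma$-forcings for cofinally many $\gamma<\kappa^+$, adding fresh $\ppi{0}{\gamma}{X}{}$ sets that are not $\ssigma{0}{\gamma}{X}{}$; the rank theorem then keeps each of them non-$\ssigma{0}{\gamma}{}{}$ and yields $\ord_\kappa(X)=\kappa^+$.

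The main obstacle is the lower bound. One has to verify that the bookkept iteration, where each coding stage depends on names from earlier stages, really belongs to the class of Section 6 with the rank functions required by \cref{th: preserve generic pi set}. The issue is particularly delicate at limit levels $\alpha'$ below $\alpha$, where the generalized $\alpha$-forcing needed special treatment. Our first attempt would be to define the rank at stage $\xi$ by induction, combining the ranks of the components on supports of size ${<}\kappa$ and using $\kappa^{<\kappa}=\kappa$ to bound the number of conditions involved.
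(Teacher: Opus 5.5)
There is a genuine gap, and it lies at the step you flag as ``the point to verify''. You propose to code each bookkept $Y \subseteq X$ at a level $\alpha' < \alpha$, i.e.\ $\alpha' \le \beta$. That cannot work. Once every $\kappa$-Borel subset of $X$ has a $\ppi{0}{\alpha'}{X}{\kappa}$-code, so does every complement. Hence every $\kappa$-Borel set is also $\ssigma{0}{\alpha'}{X}{\kappa}$, and $\ord_\kappa(X) \le \alpha' \le \beta < \alpha$. In particular your lower-bound witness, the generic $\ppi{0}{\beta}{}{}$ set $G_\emptyset(0)$, becomes $\ssigma{0}{\beta}{}{}$; this is exactly the obstruction of \cref{rem: no general ranks}. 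Your hedge about ``$\bPi^0_\beta$ or $\bSigma^0_\alpha$ codes'' shows the same confusion, since $\bPi^0_\beta$ codes for all sets collapse the hierarchy.

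The requirement you want to check is therefore backwards. The class of iterations in \cref{subsec: alpha forcing ground model} demands that every iterand $\gamma>0$ with nontrivial parameters either lives on a space of size ${<}|X_0|$, or is a successor level strictly above the critical level: $\alpha_\gamma > \alpha_0 = \beta$. So coding on $X$ itself must happen at level at least $\beta+1=\alpha$, never below. Concretely, \cref{lem: small locus ground} with $\lambda=\kappa^+$ would, under your choice, need $|X|<\kappa^+$, contradicting $|X|>\kappa$.

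The correct choice, which is the paper's, is $\bQ_0 = \aforc_\beta(\emptyset,\emptyset,X)$ and $\bQ_\gamma = \aforc_{\beta+1}(\name{A}_\gamma, X\setminus\name{A}_\gamma, X)$ for $0<\gamma<2^\kappa$. Bookkeeping only needs to catch the $\kappa$-Borel subsets of $X$, which have codes in $\pre{\kappa}{\kappa}$ and hence names of size $\kappa$. Catching every subset of $X$ with length $(2^\kappa)^+$, as you propose, fails by counting: there are $\lambda^{|X|}$ nice names. It is also unnecessary.

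Every $\kappa$-Borel subset of $X$ then becomes $\ppi{0}{\alpha}{X}{\kappa}$, hence also $\ssigma{0}{\alpha}{X}{\kappa}$ via complements. This gives the upper bound.

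For the lower bound you need no new inductive rank construction, only the hypothesis check you left open:
\begin{itemize}
\item $X$ is a ground-model space, and each $\alpha_\gamma=\beta+1$ is a successor exceeding $\beta$. So \cref{th: rank function ground} makes every $\crank_H$ (for a locus $H$) a rank function for ordinals ${<}\beta$ with $h(\delta)=\delta+1$.
\item \cref{lem: small locus ground} with $\lambda=\kappa^+$ applies; its extra hypothesis is vacuous. For any $\kappa$-sized set of conditions it yields a locus with $|H(0)|\le\kappa$ on which they all have rank $0$.
\item The inequality $\crank_H(p)\ge\crank_{H(0)}(p(0))$ holds by definition.
\end{itemize}
\cref{th: preserve generic pi set} with $\lambda=\kappa$ and $Y=X$ then gives $G_\emptyset(0)\notin\ssigma{0}{\beta}{X}{\kappa}$.

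Your second treatment of $\alpha=\kappa^+$, a $\kappa^+$-length iteration of $\aforc_\gamma(\emptyset,\emptyset,X)$, is exactly the paper's and is fine.
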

The generality of the iterations we study allows for a partial result on setting $\ord_\kappa$ to a limit ordinal as well. Several variations of this theorem are possible and discussed. For example, we also construct models in which $\ord_\kappa$ is set for several spaces $X$ simultaneously and even separate the values of $\ord_\kappa$ on these spaces by their size. This gives us the following:
\begin{theorem*}
    Let $f$ be a function assigning to each cardinal $\lambda$ with $\kappa < \lambda \leq 2^\kappa$ an ordinal $1 < f(\lambda) \leq \kappa^+$ such that
    \begin{enumerate}
        \item $f$ is (not necessarily strictly) increasing,
        \item if $\lambda$ is a successor cardinal, then $f(\lambda)$ is a successor ordinal or $f(\lambda) = \kappa^+$,
        \item if $\lambda$ is a limit cardinal, then $f(\lambda) = \sup_{\lambda' < \lambda} f(\lambda')$.
    \end{enumerate}
    Then there exists a ${<}\kappa$-closed, $\kappa^+$-c.c.\ generic extension $V[G]$ of the universe such that
    \[
        V[G] \models \forall X \subseteq \pre{\kappa}{\kappa}, X \in V, |X| > \kappa: \ord_\kappa(X) = f(|X|).
    \]
\end{theorem*}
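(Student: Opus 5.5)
We prove the theorem by one long ${<}\kappa$-support iteration of generalized $\alpha$-forcings, of length $2^\kappa$ (or $(2^\kappa)^+$, depending on bookkeeping). Every ground model $X\subseteq\pre{\kappa}{\kappa}$ of size $\lambda$ has to be handled. Since there are $2^{2^\kappa}$ such $X$, the tasks cannot be listed individually; instead the iteration is organized by cardinality, which the monotonicity of $f$ makes possible.

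\textbf{Upper bound.} First fix a well-ordered list, inside $V$, of all pairs $(X,A)$ with $X\subseteq\pre{\kappa}{\kappa}$, $|X|>\kappa$ and $A$ a ($\kappa$-Borel, or arbitrary) subset of $X$. At stages handling $(X,A)$, use $f(|X|)$-forcing (for $f(|X|)=\beta+1$) to add a $\ppi{0}{\beta}{X}{}$-code for $A$, so that $A$ becomes $\ssigma{0}{f(|X|)}{X}{}$. Subsets of $X$ appearing in intermediate extensions are caught by interleaving the bookkeeping, using the strong $\kappa^+$-c.c.\ preserved by the iterability theorem of Section 5: each new $\kappa$-Borel set has a code living in an initial segment. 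When $f(\lambda)=\kappa^+$ nothing has to be done, since the hierarchy has length at most $\kappa^+$ anyway. Limit cardinals need no separate treatment: by (3) and the fact that a set of size $\lambda$ is the union of subsets of smaller size, each set $A$ is coded on pieces of size $\lambda'<\lambda$ of level below $f(\lambda')$. The least upper bound is therefore $\sup_{\lambda'<\lambda}f(\lambda')$. One point needs care here: an increasing union of $\ssigma{0}{\beta_i}{X_i}{}$ pieces reassembles to a set of level below the supremum. Strictly, $\ord_\kappa$ counts as the least $\alpha$ for which every set is $\ssigma{0}{\alpha}{}{}$, and at limits this is the supremum, which matches (3).

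\textbf{Lower bound.} This is the main obstacle. For $X$ of size $\lambda$ with $f(\lambda)=\beta+1$, we must show that in $V[G]$ some $\ppi{0}{\beta}{X}{}$ (or $\ssigma{0}{\beta}{}{}$) subset is not of lower level. The plan is to apply \cref{th: preserve generic pi set}. We isolate, inside the iteration, the generic $\ppi{0}{\beta}{}{}$ set added by a $\beta$-forcing dedicated to a subspace of $X$ of size $\kappa^+\le\lambda$. Alternatively, if $X$ is a ground model set, we rely on the fact that the iterands used for spaces $Y$ with $f(|Y|)\ge f(\lambda)$ only code at levels $\ge\beta+1$, while the iterands with $f(|Y|)<f(\lambda)$ act on spaces $Y$ of size $<\lambda$. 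Since $|X|>|Y|$, the set $X\setminus Y$ is large, and the coding should not affect a generic set on a size-$\lambda$ piece of $X$. The key work is to verify that this iteration belongs to the class isolated in Section 6, so that it carries the rich family of rank functions required by \cref{th: preserve generic pi set}. Concretely, the rank function for the generic set must be compatible with every iterand coding at lower levels on smaller spaces, exactly as in the single-space theorem.

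Hypothesis (2) is used in exactly this lower-bound step: at successor $\lambda$ the value is attained as a successor via a witness of size $\lambda$. Finally, ${<}\kappa$-closure (strategic, upgraded as in Section 5) and the $\kappa^+$-c.c.\ follow from the iteration theorem.
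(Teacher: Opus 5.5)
Your skeleton agrees with the paper's: bookkept codings $\aforc_{f(|Y|)}(A,Y\setminus A,Y)$ at successor levels, and \cref{th: preserve generic pi set} for lower bounds, using that low-level codings only act on smaller spaces. However, two steps fail as written.

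\textbf{Upper bound at limit values.} The first gap is the upper bound when $f(|X|)<\kappa^+$ is a limit ordinal. From $A\cap\Xi_i\in\ssigma{0}{\beta_i}{\Xi_i}{\kappa}$ you only get $A\cap\Xi_i=B_i\cap\Xi_i$ with $B_i\in\ssigma{0}{\beta_i}{X}{\kappa}$. The union $A=\bigcup_i(B_i\cap\Xi_i)$ has level below $f(|X|)$ in $X$ only if the pieces $\Xi_i$ are themselves simple subsets of $X$. Your iteration never acts on $X$, so nothing controls this. Your remark that $\ord_\kappa$ is a supremum at limits does not address it. The paper chooses pieces $\Xi^X_i$ of successor cardinalities cofinal in $|X|$, so that they receive direct codings at the successor levels $f(|\Xi^X_i|)$. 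Before the main iteration it runs a preparatory iteration of $\aforc_2(\Xi^X_i,X\setminus\Xi^X_i,X)$, which puts every $\Xi^X_i$ at level $2$ on $X$. This is placed first because such a low-level coding on a large space, if it appeared in a tail after an upward iterand for a smaller space, would violate the cardinality-separation assumption of Section 6. You also need $\cf(|X|)\le\kappa$ to reassemble by a $\kappa$-union. This holds because $|X|$ is a limit cardinal by (2), and (1)--(3) prevent $f$ from being eventually constant below $|X|$, so $\cf(|X|)=\cf(f(|X|))\le\kappa$.

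\textbf{Lower bound.} The iteration you actually describe contains only downward codings, so there is no generic set to preserve. You need an upward iterand $\aforc_\beta(\emptyset,\emptyset,X)$ for each of the $2^{2^\kappa}$ ground-model $X$ with $f(|X|)=\beta+1$. When $f(|X|)=\kappa^+$, a case you never treat, you need $\aforc_\alpha(\emptyset,\emptyset,X)$ for every $\alpha<\kappa^+$. Nothing prevents listing all tasks individually, since by \cref{prop: heart cc} any length is fine. But a length of $2^\kappa$ or $(2^\kappa)^+$ is too short; you yourself list $2^{2^\kappa}$ pairs $(X,A)$.

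Your first option, a generic set on a subspace of size $\kappa^+$, fails. If $f(\kappa^+)<f(|X|)$, the bookkeeping codes every $\kappa$-Borel subset of that subspace at level $f(\kappa^+)\le\beta$, which collapses the generic set. Your second option is the right one, and it is the paper's, with the generic set living on $X$ itself. In $V^{\bP_\gamma}$ the tail then satisfies the assumptions of Section 6: all coding levels are successors, and any coding of level at most $\beta$ acts on some $Y$ with $f(|Y|)<f(|X|)$, hence $|Y|<|X|$ by monotonicity. For $|X|$ a successor cardinal, \cref{lem: small locus ground} at the regular cardinal $|X|$ then yields hypothesis (2) of \cref{th: preserve generic pi set} with $\lambda=|X|^-$. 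Lower bounds at limit cardinals follow by monotonicity of $\ord_\kappa$ under subspaces together with the continuity of $f$.
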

Lastly, the section concludes with a phenomenon unique to the generalized setting - the preservation of definability for ground model objects. This phenomenon cannot occur on $\omega$, showing a divergence from the classical setting when studying the uncountable. We have already studied it in \cite{gen_borel_sets}, and apply it here by constructing a closed subset of $\pre{\kappa}{\kappa}$ of every possible order ${\leq}\kappa^+$.

The last Section 8 consist of a possible generalization of J. R. Steel's forcing with tagged trees to the uncountable setting. We show that standard arguments involving this forcing can be cast in the light of the rank forcing framework and go on to use this fact to derive sharp bounds for the Borel complexity of certain classes of well-founded trees. If we let $WF_\alpha$ be the class of well-founded tree of rank less than $\alpha$, then we get:

\begin{theorem*}
    The sets $WF_\alpha$ have the following complexity as $\kappa$-Borel subsets of $\powset(\pre{<\omega}{\kappa})$:
    \begin{itemize}
        \item For every $\alpha < \kappa^+$ the set $WF_{\kappa \cdot \alpha}$ is $\ssigma{0}{2\cdot \alpha}{}{\kappa}$ but not $\ppi{0}{2 \cdot \alpha}{}{\kappa}$.
        \item For every $\alpha < \kappa^+$ and $0 < \beta < \kappa$ the set $WF_{\kappa \cdot \alpha + \beta}$ is $\ppi{0}{2\cdot \alpha + 1}{}{\kappa}$ but not $\ssigma{0}{2 \cdot \alpha + 1}{}{\kappa}$.
    \end{itemize}
\end{theorem*}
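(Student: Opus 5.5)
The plan has two halves. First, the upper bounds come from a direct induction on the rank bound, using only closure properties of the $\kappa$-Borel classes. Second, the lower bounds come from the higher tagged-tree forcing of Section~8, combined with the ranked forcing machinery (in the form of \cref{th: preserve generic pi set} and its rank-function analogues).

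\emph{Upper bounds.} I would use the recursion $\trank(T) = \sup\setOf{\trank(T_{\langle \xi\rangle})+1}{\xi<\kappa,\ \langle\xi\rangle\in T}$, where $T_{\langle\xi\rangle}$ is the subtree above $\langle \xi\rangle$. It gives two clauses.
\begin{itemize}
\item Successor clause: $T\in WF_{\gamma+1}$ iff $T_{\langle\xi\rangle}\in WF_\gamma$ for all $\xi<\kappa$. So $WF_{\gamma+1}$ is a $\kappa$-sized intersection of continuous preimages of $WF_\gamma$.
\item Limit clause: for limit $\gamma<\kappa^+$ we have $WF_\gamma=\bigcup_{\delta<\gamma}WF_\delta$. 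This is a union of ${\le}\kappa$ sets, and of ${<}\kappa$ sets when $\gamma<\kappa$ or $\cf(\gamma)<\kappa$ is used cofinally.
\end{itemize}
The key auxiliary fact I would prove is the following: for $\xi\ge 1$, the class $\ppi{0}{2\xi+1}{}{\kappa}$ is closed under unions of ${<}\kappa$ many sets (and $\ppi{0}{1}{}{\kappa}$ as well). For closed sets this holds because ${<}\kappa$ intersections of basic open sets are open in the bounded topology. For higher levels I would write each set as $\bigcap_{i<\kappa}S_i$ with $S_i$ in a lower $\Sigma$ class, then distribute the ${<}\kappa$-union over the intersections. This yields an intersection indexed by ${}^{<\kappa}\kappa$-many functions, which has size $\kappa$ by $\kappa^{<\kappa}=\kappa$, of ${<}\kappa$-unions of $\Sigma$ sets.

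With this fact the induction runs as follows. $WF_\beta$ is closed for $\beta<\kappa$. Then $WF_{\kappa\cdot\alpha}$ is $\ssigma{0}{2\cdot\alpha}{}{\kappa}$, being a $\kappa$-union of the earlier $\ppi{0}{2\alpha'+1}{}{\kappa}$ sets (or of $\ppi{0}{2\alpha-1}{}{\kappa}$ sets if $\alpha$ is a successor). Finally $WF_{\kappa\cdot\alpha+\beta}$ is $\ppi{0}{2\cdot\alpha+1}{}{\kappa}$ for $0<\beta<\kappa$: at successor $\beta$ by the intersection clause, and at limit $\beta<\kappa$ by the ${<}\kappa$-union fact.

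\emph{Lower bounds.} I would argue by contradiction, assuming for instance that $WF_{\kappa\cdot\alpha}$ has a $\ppi{0}{2\cdot\alpha}{}{\kappa}$ code $c$. Then I would force with the higher Steel forcing, in which conditions are ${<}\kappa$-sized tagged trees with tags in $\kappa\cdot\alpha\cup\{\infty\}$. Section~8 casts this poset as a ranked forcing, with rank functions read off from the tags. Within that framework the key lemma I would establish is a retagging/forcing lemma, analogous to Steel's and Stern's. It says that if a condition $p$ forces a $\ppi{0}{\xi}{}{\kappa}$ statement about the generic tree, then so does every $q$ agreeing with $p$ up to rank roughly $\kappa\cdot\xi'$, where $\xi'\approx\xi/2$. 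The factor $2$ arises because a single level $\kappa$ of tag ordinals is needed to absorb each alternation of quantifiers of length $\kappa$ in the $\Sigma/\Pi$ code.

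Applying this lemma, I would build two generic trees. One has tag $\kappa\cdot\alpha'$ for cofinal $\alpha'$, making it well-founded of rank ${<}\kappa\cdot\alpha$. The other is tagged $\infty$, making it ill-founded or of rank $\ge\kappa\cdot\alpha$. The two trees cannot be distinguished by the code $c$, which contradicts the absoluteness of the meaning of the Borel code. This absoluteness holds because the forcing is ${<}\kappa$-strategically closed, so $c$ is still interpreted correctly. The successor case $\kappa\cdot\alpha+\beta$ versus $\ssigma{0}{2\cdot\alpha+1}{}{\kappa}$ is handled identically, with the tags shifted by $\beta$.

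I expect the main obstacle to be the retagging lemma at limit stages and at the $\kappa$-length quantifier steps. There one must show that ${<}\kappa$ many local retaggings can be amalgamated, which I would attempt using ${<}\kappa$-closure of the tag-extension ordering. One must also check that the rank bookkeeping loses exactly one factor of $\kappa$ per two Borel levels, matching the upper bound sharply.
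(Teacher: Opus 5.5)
Your upper-bound half is correct and is essentially the paper's argument. You even supply the distributivity proof, via $\kappa^{<\kappa}=\kappa$, that $\bPi^0_{2\cdot\xi+1}$ is closed under ${<}\kappa$-unions, which the paper only invokes.

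The lower-bound half, however, fails at the choice of forcing. With tags in $\kappa\cdot\alpha\cup\{\infty\}$, an $\infty$-tagged node densely acquires $\infty$-tagged children, so a generic tree whose root is tagged $\infty$ is ill-founded. Over $\kappa$, ill-foundedness is an \emph{open} property: a branch is witnessed by countably many, hence ${<}\kappa$ many, memberships, which is why $WF_{\kappa^+}$ is closed, as the paper stresses. So every generic tree of your forcing lies either in $WF_{\kappa\cdot\alpha}$ or in the open set of ill-founded trees. On that union, $WF_{\kappa\cdot\alpha}$ agrees with the closed set $WF_{\kappa^+}$; the forcing proves $t_G\in WF_{\kappa\cdot\alpha}\leftrightarrow t_G\in WF_{\kappa^+}$. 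Any contradiction you extract from how a $\bPi^0_{2\cdot\alpha}$ code behaves on these generics would therefore apply verbatim to the closed code for $WF_{\kappa^+}$, and would ``show'' that a closed set is not $\bPi^0_{2\cdot\alpha}$. Concretely, the retagging lemma you need, which lets $\infty$ be swapped for ordinal tags above the threshold, is already false for $\bSigma^0_1\subseteq\bPi^0_2$ statements. The condition whose root is tagged $\infty$ forces ``$t_G$ is ill-founded'', while any ordinal retagging of the root forces the opposite. Moreover, with ${<}\kappa$-closure, conditions below an $\infty$-node densely contain a whole $\omega$-branch, which can never be retagged by ordinals. This is why the paper discards $\infty$. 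In $\bS_{\kappa\cdot\alpha+\beta+1}$ all tags are ordinals, every generic tree is well-founded of rank equal to its root tag, and the non-member has rank exactly $\kappa\cdot\alpha+\beta$.

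The second missing piece is the exact form of the key lemma. Your ``rank roughly $\kappa\cdot\xi'$ with $\xi'\approx\xi/2$'' leaves it open precisely where sharpness is decided. The paper proves: if $p\forces\phi$ with $[\phi]\in\Pi_{2\cdot\beta}$, then $\pi(p,\kappa\cdot\beta)\forces\phi$. Here $\pi(p,\gamma)$ is a canonical retagging depending only on $p$ and $\gamma$, not on $\phi$, so $\kappa$-sized conjunctions cost nothing and no amalgamation via closure is needed. The proof uses the rank-function property of $\pi(p,\gamma)$ with $h(\gamma)=\gamma+\kappa$, plus its refinement for conditions of the form $\pi(t,\beta)$.

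The paper then proves the odd-level claim first. Assume $WF_{\kappa\cdot\alpha+\beta}=\bigcup_{i<\kappa}B_i$ with $B_i\in\bPi^0_{2\cdot\alpha}$. Start with root tag $\kappa\cdot\alpha$ and extend to some $p'$ forcing $t_G\in B_{i^*}$. Pass to $\pi(p',\kappa\cdot\alpha)$, in which the root only carries the promise ``$\geq\kappa\cdot\alpha$'', and re-tag the root to $\kappa\cdot\alpha+\beta$. Since $\beta$ may be $1$, the threshold has no slack.

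The even-level claim is then derived rather than attacked directly. $WF_{\kappa\cdot\alpha+1}$ is a $\kappa$-intersection of continuous preimages of $WF_{\kappa\cdot\alpha}$, so $WF_{\kappa\cdot\alpha}\in\bPi^0_{2\cdot\alpha}$ would put $WF_{\kappa\cdot\alpha+1}$ in $\bSigma^0_{2\cdot\alpha+1}$. Your direct attack via ``tag $\kappa\cdot\alpha'$ for cofinal $\alpha'$'' only makes sense for limit $\alpha$. For $\alpha=\alpha'+1$, the tags below $\kappa\cdot\alpha$ are $\kappa\cdot\alpha'+\delta$ with $\delta<\kappa$. You would need $\Pi_{2\cdot\alpha'+1}$ statements to survive retagging at a condition-dependent threshold strictly below $\kappa\cdot\alpha$. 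Your sketch does not supply this, and the paper's route avoids it.
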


This theorem is in line with known results from the countable setting, though this may be surprising in light of the fact that the set $WF$ of all well-founded trees behaves very differently in the generalized setting from the standpoint of complexity; we remark on this in \cref{subsec: wf trees}.

\subsection{Acknowledgments}
I am grateful to Claudio Agostini for several conversations and helping formulate the criticality constraint in the definition of $\alpha$-forcing. Furthermore, I would like to thank my advisor Martin Goldstern for his support and reading an advance version of this paper.

\section{Preliminaries and notation}

We lay the groundwork of reminding ourselves of some notions from (generalized) descriptive set theory and forcing; our notational conventions are mostly standard. A cursory viewing of this section is sufficient for the experienced reader.

\subsection{Topology}

Throughout this work, we will exclusively be considering regular uncountable cardinals $\kappa$ satisfying $\kappa = \kappa^{<\kappa}$. As usual in the setting of generalized descriptive set theory, we endow the generalized Baire space $\pre{\kappa}{\kappa}$ with the \markdef{bounded topology} generated by basic clopen subsets of the form $[\eta] = \setOf{x \in \pre{\kappa}{\kappa}}{\eta \subseteq x}$ for $\eta \in \pre{<\kappa}{\kappa}$. This work is concerned exclusively with topological spaces $X \subseteq \pre{\kappa}{\kappa}$ of the generalized Baire space together with the subspace topology.

\subsection{Trees}

Suppose $\lambda$ is a cardinal. A tree $T \subseteq \pre{<\lambda}{\lambda}$ is a set closed under initial segments. For a node $\eta \in T$ we write $\succ_T(\eta)$ for the set of immediate successors of $\eta$ in $T$. Likewise, we write $\succ^2_T(\eta)$ for $\bigcup_{\eta' \in \succ_T(\eta)} \succ_T(\eta')$, and $\succ^n_T(\eta)$ for $n < \omega$ is defined similarly. If $|\eta|$ is a successor ordinal, then we write $\pred_T(\eta)$ for the immediate predecessor node of $\eta$ in the tree. In this article we mainly consider well-founded trees, i.e.\ trees $T \subseteq \pre{<\omega}{\lambda}$ which have no infinite branch. If $T$ is such a well-founded tree, then it naturally carries a rank function $\trank_T : T \to \Ord$ defined as $\trank_T(\eta) = 0$ if $\succ_T(\eta) = \emptyset$ and $\trank_T(\eta) = \sup\setOf{\trank_T(\nu) + 1}{\nu \in \succ_T(\eta)}$ otherwise. We suppress the index $T$ in all of these functions if it is clear from context. Nodes with no successors are called \markdef{leaves}. Write $\leaf(T)$ for the set of leaves of $T$ and $\nonleaf(T) = T \minus \leaf(T)$. The closed subsets $C$ of $\pre{\kappa}{\kappa}$ can be identified with the sets of branches $[T]$ through trees $T \subseteq \pre{<\kappa}{\kappa}$.

\subsection{Forcing}
We follow the convention of forcing downwards, so $q$ is stronger than $p$ if $q \leq p$. The relation $p \comp q$ denotes compatibility, while $p \incomp q$ is incompatibility. If $\bP$ is a forcing notion, then $D \subseteq \bP$ is \markdef{dense} if for every $p \in \bP$ there is a $q \leq p$, $q \in D$. It is \markdef{open} if it is downwards closed. Finally, $D$ is \markdef{predense} if for every $p \in \bP$ there is a $q \in D$ with $q \comp p$. We denote names as $\name{\tau}$, and the canonical (check) name for a ground model object is denoted as $\check{\tau}$. The standard name for the generic filter added by a forcing notion is $\name{G}$. We say $\seq{\bP_\gamma, \name{\bQ}_\gamma}{\gamma < \gamma^*}$ is a forcing iteration with supports of size ${<}\lambda$ if for every $p \in \bP_{\gamma^*}$ all but ${<}\lambda$-many entries of $p$ are (the standard name for) the trivial condition $\mathbbm{1}$. We write $\bP_{\gamma^*}$ for the limit of such an iteration. Every $\bP_{\gamma^*}$-generic filter $G$ naturally induces, for each $\gamma < \gamma^*$, a $\bP_{\gamma}$-generic filter; we denote that filter by $G \restriction \gamma$.

\section{Borel sets}

Suppose $\lambda$ is a cardinal and $X \subseteq \pre{\kappa}{\kappa}$ is a space. Then the \markdef{$\lambda$-Borel sets} on $X$ are the smallest class of subsets of $X$ containing every (relatively) open subset $O \subseteq X$ that is closed under complements and unions of size $\lambda$; write $\bor{X}{\lambda}$ for this class. As usual, the $\lambda$-Borel sets can naturally be stratified into the \markdef{$\lambda$-Borel hierarchy} by letting $\ssigma{0}{1}{X}{\lambda}$ be the class of open subsets of $X$ and
\begin{gather*}
    \ssigma{0}{\alpha}{X}{\lambda} = \setOf{\bigcup_{i < \lambda} B_i}{\setOf{B_i}{i < \lambda} \subseteq \bigcup_{\beta < \alpha} \ppi{0}{\beta}{X}{\lambda}}\\
    \ppi{0}{\alpha}{X}{\lambda} = \setOf{X \minus B}{B \in \ssigma{0}{\alpha}{X}{\lambda}}.
\end{gather*}

In this work we are almost exclusively interested in the standard $\kappa$-Borel hierarchy on subspaces of $\pre{\kappa}{\kappa}$, and give only a very brief overview of relevant properties here. For a comprehensive study of Borel hierarchies we direct the reader to \cite{gen_borel_sets}; however, we caution the reader of the fact that in the aforementioned work, a more general approach is taken which defines the $\lambda$-Borel sets as the class closed under unions of length \textbf{less than $\lambda$}. Accordingly, it refers to the standard hierarchy as the $({<})\kappa^+$-Borel hierarchy, with the $({<})\kappa$-Borel hierarchy being a related phenomenon appearing in the case of a singular cardinal $\kappa$. In this work we adopt the more widespread convention of letting $\lambda$-Borel mean $({\leq}\lambda)$-Borel.

\subsection{Coding Borel sets}

Every $\lambda$-Borel set $B \subseteq X$ can be coded by means of a well-founded tree $T \subseteq \pre{<\omega}{\lambda}$ together with a function $f: \leaf(T) \to \pre{<\kappa}{\kappa}$ by letting $\mathcal{I}^f_T(\eta) = [f(\eta)] \cap X$ if $\eta \in \leaf(T)$ and 
\[
    \mathcal{I}^f_T(\eta) = \bigcap_{\eta' \in \succ_T(\eta)} X \minus \mathcal{I}^f_T(\eta'). 
\]
If it is clear from context, we often suppress the index $T$.

A set $B \subseteq X$ is $\ppi{0}{\alpha}{X}{\lambda}$ if and only if there exists a tree $T \subseteq \pre{<\omega}{\lambda}$ and $f : \leaf(T) \to \pre{<\kappa}{\kappa}$ such that $B = \mathcal{I}^f_T(\emptyset)$.

\begin{remark} \label{rem: universal tree}
    In fact, for every ordinal $\alpha < \lambda^+$ there exists a universal such $T$; i.e.\ a $T \subseteq \pre{<\omega}{\lambda}$ such that for each $B \in \ppi{0}{\alpha}{X}{\lambda}$ there exists an $f : \leaf(T) \to \pre{<\kappa}{\kappa}$ with $B = \mathcal{I}^f_T(\emptyset)$; see also \cref{def: ta}.
\end{remark}

\subsection{Order}

For a space $X \subseteq \pre{\kappa}{\kappa}$ and cardinal $\lambda$ (usually $\lambda = \kappa$) one might reason about the complexity of the $\lambda$-Borel hierarchy on $X$. A good measure of this complexity is the notion of the \markdef{length} or \markdef{order} of the space:

\begin{definition}
    Define 
    \[
    \ord_\lambda(X) \defas \min \setOf{\alpha}{\ssigma{0}{\alpha}{X}{\lambda} = \ppi{0}{\alpha}{X}{\lambda} = \bor{X}{\lambda}}.
    \]
\end{definition}

It is easy to see that $\ord_\lambda(X)$ can be at most $\lambda^+$. If $\ord_\lambda(X) < \lambda^+$, then we say the $\lambda$-Borel hierarchy \markdef{collapses} on $X$. Several equivalent criteria for $\ord_\lambda(X) \leq \alpha$ can be found in \cite[Proposition~3.15]{gen_borel_sets}.

A criterion for the non-collapse of the $\kappa$-Borel hierarchy can be derived from the perfect set property: using a standard argument involving the existence of universal sets at each level of the hierarchy, one can prove $\ord_\kappa\left(\pre{\kappa}{\kappa}\right) = \ord_\kappa \left( \pre{\kappa}{2} \right) = \kappa^+$ (\cite[Corollary~4.8]{gen_borel_sets}); in fact, any space $X$ into which $\pre{\kappa}{2}$ maps via a $\kappa$-Borel measurable embedding must satisfy $\ord_\kappa(X) = \kappa^+$ (\cite[Theorem~4.9]{gen_borel_sets}). The latter property can be thought of as a weak variant of the $\kappa$-perfect set property.

We summarize some easy observations about $\ord_\lambda$ in the form of a proposition:

\begin{proposition} \label{prop: easy order}
    Suppose $\lambda$ is a cardinal and $X \subseteq \pre{\kappa}{\kappa}$. Then
    \begin{itemize}
        \item $\ord_\lambda(X) = 1$ is equivalent to $X$ being discrete.
        \item If $|X| \leq \lambda$, then $\ord_\lambda(X) \leq 2$.
        \item For $\lambda = \kappa$, if $X$ contains a copy of $\pre{\kappa}{2}$ (as above), then $\ord_\kappa(X) = \kappa^+$.
    \end{itemize}
\end{proposition}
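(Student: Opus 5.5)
We treat the three items separately, working directly from the definition of $\ord_\lambda$ and the inductive definition of the hierarchy.

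\emph{First item.} Note that $\ord_\lambda(X)=1$ means $\ssigma{0}{1}{X}{\lambda}=\ppi{0}{1}{X}{\lambda}=\bor{X}{\lambda}$, i.e.\ every $\lambda$-Borel subset of $X$ is open (and closed).
\begin{itemize}
\item If $X$ is discrete, every subset of $X$ is open, so every $\lambda$-Borel set is open. Its complement is also open, so every Borel set is closed as well. Hence equality holds at level $1$.
\item Conversely, assume $\ord_\lambda(X)=1$ and let $x\in X$. The singleton $\{x\}=\bigcap_{\alpha<\kappa}[x\restriction\alpha]\cap X$ is an intersection of $\kappa$ clopen sets, so it is closed in the bounded topology. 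Closed sets are $\ppi{0}{1}{X}{\lambda}$, hence Borel, hence open by assumption. So $X$ is discrete.
\end{itemize}
(One could also avoid the intersection by noting that $\{x\}$ is closed directly, since the topology is Hausdorff.)

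\emph{Second item.} Assume $|X|\le\lambda$.
\begin{itemize}
\item Every subset $A\subseteq X$ is a union of at most $\lambda$ singletons. Each singleton is closed, i.e.\ $\ppi{0}{1}{X}{\lambda}$, so $A\in\ssigma{0}{2}{X}{\lambda}$.
\item Applying this to $X\minus A$ shows that $A$ is also $\ppi{0}{2}{X}{\lambda}$.
\end{itemize}
Thus $\ssigma{0}{2}{X}{\lambda}=\ppi{0}{2}{X}{\lambda}=\powset(X)=\bor{X}{\lambda}$, and $\ord_\lambda(X)\le 2$.

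\emph{Third item.} This is exactly the cited result \cite[Theorem~4.9]{gen_borel_sets}: a $\kappa$-Borel measurable embedding of $\pre{\kappa}{2}$ into $X$ forces $\ord_\kappa(X)=\kappa^+$. If we want a sketch rather than a citation, we argue as follows.
\begin{itemize}
\item Suppose toward a contradiction that $\ord_\kappa(X)=\alpha<\kappa^+$.
\item Fix the embedding $e\colon\pre{\kappa}{2}\to X$. Preimages under a Borel measurable map preserve being Borel.
\item Given a Borel $B\subseteq\pre{\kappa}{2}$, its image $e[B]$ is relatively Borel in $e[\pre{\kappa}{2}]$. Writing $e[B]=C\cap e[\pre{\kappa}{2}]$ with $C$ Borel in $X$, we get $C\in\ssigma{0}{\alpha}{X}{\kappa}$. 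Then $B=e^{-1}[C]$ lies in $\ssigma{0}{\beta}{\pre{\kappa}{2}}{\kappa}$ for some fixed $\beta<\kappa^+$ depending only on $\alpha$ and the complexity of $e$.
\item So the hierarchy on $\pre{\kappa}{2}$ would collapse, contradicting $\ord_\kappa(\pre{\kappa}{2})=\kappa^+$ \cite[Corollary~4.8]{gen_borel_sets}.
\end{itemize}

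The main obstacle is the uniform bound $\beta$ in the third bullet. It requires that preimages under $e$ raise complexity by a bounded amount. This holds if $e$ is continuous (no increase at all) or $\ssigma{0}{\gamma}{}{}$-measurable for a fixed $\gamma$. For the general case we rely on the cited theorem.
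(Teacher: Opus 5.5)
Your proposal is correct and matches the paper's proof. The paper handles the first two items the same way: singletons are closed (hence $\bPi^0_1$, hence open when the order is $1$), and every subset of a space of size at most $\lambda$ is a union of at most $\lambda$ singletons. It settles the third item by citing \cite[Theorem~4.9]{gen_borel_sets}, exactly as you do.

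Your optional sketch for the third item only goes through as written when $e$ is a homeomorphism onto its image. Besides the uniform bound you flag, the step that $e[B]$ is relatively Borel in $e[\pre{\kappa}{2}]$ is a Lusin--Souslin-type claim that you cannot take for granted for merely $\kappa$-Borel measurable injections when $\kappa>\omega$. So relying on the citation is the right call.
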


\begin{proof}
    The fact that $\ord_\lambda(X) = 1$ implies every singleton $\{x\}$ for $x \in X$ is open, hence $X$ is discrete. On the other hand, if $X$ is discrete, then every subset of $X$ is open. For the second claim, note that if $|X| \leq \lambda$, then every subset of $X$ is the ${\leq}\lambda$-union of singletons, hence $\ssigma{0}{2}{X}{\lambda}$.

    The last claim is discussed above.
\end{proof}

Though some few other eclectic topological conditions that directly impact the value of $\ord$ exist, they are few and far between. For $\kappa = \omega$, Sierpi\'nski subsets of the real line always satisfy $\ord_\omega(X) = 2$, and Luzin sets satisfy $\ord_\omega(X) = 3$ (see \cite{szpilrajn1930}). In the generalized context, it is straightforward to derive a definition for a $\kappa$-Luzin set $X$ as a space of size ${>}\kappa$ which has no $\kappa$-meager subspaces of size ${>}\kappa$; here the ideal of $\kappa$-meager sets is generated via ${\leq}\kappa$-closure from the nowhere dense sets. The fact that $\ord_\kappa(X) = 3$ then continues to hold true by the same proof, since the generalized Baire space satisfies the $\kappa$-Baire category theorem. The matter of a Sierpi\' nski set is more delicate, as there is no apparent way to generalize the notion of a measure to the uncountable context; indeed, a recent result of Agostini, Barrera and Dimonte \cite{agostini2026} has laid those ambitions to rest. Nevertheless, one may attempt to isolate an \enquote{artificial} ideal $\mathcal{I}$ that satisfies some/all of the classical measure zero ideal's desirable properties\footnote{One such \enquote{generalized null ideal} was proposed by Shelah \cite{shelahParallelNullIdeal2017}.}. Under even mild similarity assumptions to the null ideal, any such ideal $\mathcal{I}$ will satisfy the fact that every $\kappa$-Borel set $B$ can be written as the union $B = F \cup I$ of a $\ssigma{0}{2}{}{\kappa}$-set $F$ and a set $I \in \mathcal{I}$, implying $\ord_\kappa(X) = 2$ for $\kappa$-Sierpi\' nski sets derived from any such ideal $\mathcal{I}$.

Apart from the mild restrictions of \cref{prop: easy order}, the value of $\ord_\kappa(X)$ turns out to be very independent of ZFC. In this article, we continue work pioneered by A. Miller \cite{miller_length_1979} \cite{miller_descriptive_1995} and construct several models in which $\ord_\kappa(X)$ takes on arbitrary ordinal values for both a fixed space $X \subseteq \pre{\kappa}{\kappa}$ or multiple such spaces simultaneously; to this end, we will use his framework of ranked forcing and study an extension of his forcing notion of $\alpha$-forcing.

\section{Ranked forcing}

Following Miller \cite[Chapter~7]{miller_descriptive_1995}, we introduce the primary auxiliary framework of this article in the form of ranked forcing notions.

\begin{definition} \label{def: rank function}
    Let $\bP$ be a forcing notion. We say $\crank : \bP \to \Ord \cup \{\infty\}$ is a \textit{rank function} on $\bP$ if there exists a strictly increasing function $h_{\crank}$ on the ordinals with the following property:
    
    For each $p \in \bP$ and ordinal $\beta$ there exists a $q \in \bP$ with $q \comp p$ and $\crank(q) < h_{\crank}(\beta)$ such that for each $r \in \bP, \crank(r) < \beta$ we have
    \[
        r \comp q \implies r \comp p. 
    \]

    We say $\crank$ is a rank function for ordinals ${<}\beta^*$ if the above is true for all $\beta < \beta^*$. By convention, $\infty$ is strictly greater than every ordinal.
\end{definition}

Given a set $A$ of atoms, we consider formulas in the infinitary propositional language $\mathcal{L}_\infty(A)$, constructed from the set of atoms by repeatedly applying negation and forming arbitrary large disjunctions and conjunctions. We will assume the set $A$ consists of formulas in the forcing language of $\bP$, or at least that the statement \enquote{$p \forces \phi$} for $\phi \in A$ has a natural interpretation; therefore, every formula $\varphi \in \mathcal{L}_\infty(A)$ can be identified as a statement in the forcing language of $\bP$. Let $\mathcal{L}_\infty(A) /_{\bP}$ be the factorization of $\mathcal{L}_\infty(A)$ by the equivalence relation of forced equivalence, i.e.\ 
\[
\varphi \sim_\bP \psi \Leftrightarrow (\bP \forces \varphi \Leftrightarrow \psi)
\]
and denote by $[\varphi]$ the equivalence class of $\varphi$ under $\sim_\bP$. 

Formulas $[\varphi] \in \mathcal{L}_\infty(A) /_\bP$ can be stratified into the usual hierarchy by setting 
\begin{gather*}
    \Sigma_0(\mathcal{L}_\infty(A) /_\bP) = \setOf{[\neg a]}{a \in A}, \Pi_0(\mathcal{L}_\infty(A) /_\bP) = \setOf{[a]}{a \in A}, \\
    \Sigma_\alpha(\mathcal{L}_\infty(A) /_\bP) = \setOf{\left[\bigvee_{i \in I} \phi_i \right]}{\setOf{[\phi_i]}{i \in I} \subseteq \bigcup_{\alpha' < \alpha} \Pi_{\alpha'}(\mathcal{L}_\infty(A) /_\bP)} \text{ and } \\
    \Pi_{\alpha}(\mathcal{L}_\infty(A) /_\bP) = \setOf{[\neg \phi]}{\phi \in \Sigma_\alpha(\mathcal{L}_\infty(A) /_\bP)}.
\end{gather*}

If $f$ is a function mapping ordinals to ordinals, inductively define the powers of $f$ as 
\[
f^0(x) = x, f^{\beta + 1}(x) = f(f^\beta(x)) \text{ and } f^\lambda(x) = \sup_{\delta < \lambda} f^\delta(x).
\]

\begin{theorem}[\protect{\cite[Lemma~7.5]{miller_descriptive_1995}}] \label{th: rank-forcing}
    Let $\bP$ be a forcing notion, $\beta, \beta_0$ ordinals and $\crank$ be a rank function on $\bP$ for ordinals ${<}h_{\crank}^\beta(\beta_0)$. Assume that for all $p \in \bP$ and $a \in A$, if $p \forces a$, then there exists a $q \comp p, \crank(q) < \beta_0$ such that $q \forces a$.

    Then for all $p \in \bP$ and $[\varphi] \in \Sigma_{1+\beta}(\mathcal{L}_\infty(A)/_\bP)$ such that $p \forces \varphi$ there exists a $q \comp p, \crank(q) < h_{\crank}^\beta(\beta_0)$ with $q \forces \varphi$.
\end{theorem}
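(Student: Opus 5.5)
The proof is by induction on $\beta$. The inductive claim is exactly the statement for all $\beta' \leq \beta$. Two preliminary facts are used throughout. First, since $h = h_{\crank}$ is strictly increasing, $h(x) \geq x$. Second, the powers $h^\gamma(\beta_0)$ are therefore non-decreasing in $\gamma$. In particular $h^{\gamma+1}(\beta_0) \leq h^{\beta}(\beta_0)$ whenever $\gamma < \beta$. So the rank function hypothesis for ordinals ${<}h^\beta(\beta_0)$ applies to every $h^\gamma(\beta_0)$ with $\gamma < \beta$. Since everything is phrased via forcing, we may freely pass between a class $[\varphi]$ and any representative of it.

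\textbf{Key lemma (the $\Pi$-step).} Suppose $\gamma < \beta$, the theorem holds for $\gamma$, $[\psi] \in \Sigma_{1+\gamma}$, and $p \forces \neg\psi$. Then there is $q \comp p$ with $\crank(q) < h^{\gamma+1}(\beta_0)$ and $q \forces \neg\psi$.

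To prove it, apply \cref{def: rank function} to $p$ and the ordinal $h^\gamma(\beta_0)$. This gives $q \comp p$ with $\crank(q) < h(h^\gamma(\beta_0)) = h^{\gamma+1}(\beta_0)$, such that every $r$ with $\crank(r) < h^\gamma(\beta_0)$ and $r \comp q$ also satisfies $r \comp p$. We claim $q \forces \neg\psi$. If not, some $q' \leq q$ forces $\psi$. By the induction hypothesis for $\gamma$, there is $r \comp q'$ with $\crank(r) < h^\gamma(\beta_0)$ and $r \forces \psi$. Then $r \comp q$, hence $r \comp p$. This is impossible, since $p \forces \neg\psi$ and $r \forces \psi$.

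The same argument also covers the atomic level. For the base of $\Pi$-formulas, $\Pi_0$ classes are atoms $[a]$, and there the hypothesis of the theorem directly supplies $q$ with $\crank(q) < \beta_0$.

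\textbf{The $\Sigma$-step.} Let $[\varphi] \in \Sigma_{1+\beta}$, say $\varphi = \bigvee_{i \in I} \phi_i$ with each $[\phi_i] \in \Pi_{\alpha_i}$ for some $\alpha_i < 1+\beta$, and suppose $p \forces \varphi$. Extend $p$ to some $p' \leq p$ and pick an $i$ with $p' \forces \phi_i$.

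If $\alpha_i = 0$, then $\phi_i$ is an atom, and the hypothesis gives $q \comp p'$ with $\crank(q) < \beta_0 \leq h^\beta(\beta_0)$ and $q \forces \phi_i$.

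Otherwise $\alpha_i = 1+\gamma$ with $\gamma < \beta$, so $\phi_i \sim \neg\psi$ with $[\psi] \in \Sigma_{1+\gamma}$. The key lemma then gives $q \comp p'$ with $\crank(q) < h^{\gamma+1}(\beta_0) \leq h^\beta(\beta_0)$ and $q \forces \phi_i$.

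In both cases $q \forces \varphi$, and $q \comp p' \leq p$ yields $q \comp p$. The base case $\beta = 0$ is contained in this argument, since then only atoms occur and $h^0(\beta_0) = \beta_0$. Limit $\beta$ need no separate treatment either: each $\gamma < \beta$ gives $h^{\gamma+1}(\beta_0) \leq \sup_{\delta<\beta} h^\delta(\beta_0) = h^\beta(\beta_0)$.

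\textbf{Main obstacle.} The only delicate point is the key lemma. One must choose the right ordinal ($h^\gamma(\beta_0)$) at which to apply the rank function, so that the witness $r$ produced by the induction hypothesis has small enough rank to transfer compatibility from $q$ back to $p$. One must also check the index bookkeeping: $h^{\gamma+1}(\beta_0) \leq h^\beta(\beta_0)$, and the rank function is available below $h^\beta(\beta_0)$ for every $h^\gamma(\beta_0)$ with $\gamma < \beta$.
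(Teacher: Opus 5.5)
Your argument is the paper's proof: the same induction on $\beta$, and the same $\Pi$-step. You apply the rank function to the strengthened condition at the ordinal $h_{\crank}^{\gamma}(\beta_0)$. The induction hypothesis then supplies a witness $r$ of rank below $h_{\crank}^{\gamma}(\beta_0)$, and compatibility transfers from $q$ back to $p'$. Your separate treatment of atomic disjuncts is a small improvement. The paper writes every class in $\Sigma_{1+\beta}$ as $\bigvee_i \neg\varphi_i$ with $[\varphi_i]\in\Sigma_{1+\beta_i}$, and this does not literally cover $\Pi_0$ disjuncts when $\beta=1$.

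There is one unjustified inference, which the paper's proof also makes tacitly. From the monotonicity of $\gamma\mapsto h_{\crank}^{\gamma}(\beta_0)$ you conclude that the hypothesis \emph{rank function for ordinals ${<}h_{\crank}^{\beta}(\beta_0)$} may be applied at $h_{\crank}^{\gamma}(\beta_0)$ for $\gamma<\beta$. That needs the strict inequality $h_{\crank}^{\gamma}(\beta_0)<h_{\crank}^{\beta}(\beta_0)$, but a strictly increasing $h_{\crank}$ only satisfies $h_{\crank}(\xi)\geq\xi$.

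With fixed points the statement can actually fail. Set up the example as follows:
\begin{itemize}
\item $\bP$ consists of the binary sequences of length ${\leq}\omega$, ordered by end-extension, with $\crank(p)=|p|$;
\item $h_{\crank}(\xi)=\xi+1$ for $\xi<\omega$ and $h_{\crank}(\xi)=\xi$ otherwise, with $\beta_0=\omega$ and $\beta=1$;
\item the atom $a_{n,i}$ says that the generic sequence takes value $i$ at $n$.
\end{itemize}
Then $p\restriction\min(\delta,|p|)$ witnesses the rank-function property at every $\delta<\omega$, and $p\restriction(n+1)$ witnesses the hypothesis on atoms. Now fix $x\in\pre{\omega}{2}$ and consider $\bigwedge_n a_{n,x(n)}\sim_{\bP}\neg\bigvee_n a_{n,1-x(n)}$, which lies in $\Pi_1\subseteq\Sigma_2$. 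It is forced by $x$, but by no condition of rank ${<}\omega=h_{\crank}^{1}(\beta_0)$.

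The fix is to add the assumption $h_{\crank}(\xi)>\xi$. This holds for $\xi+1$ and $\xi+\kappa$, the only functions the paper uses. It gives $h_{\crank}^{\gamma}(\beta_0)<h_{\crank}^{\gamma+1}(\beta_0)\leq h_{\crank}^{\beta}(\beta_0)$, and then your proof goes through verbatim.
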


\begin{proof}
    By induction. For $\beta = 0$ we have $\bP \forces \varphi \Leftrightarrow \bigvee_{i \in I} a_i$, with $a_i \in A$. Therefore, if $p \forces \varphi$, we may strengthen $p$ to find a $p' \leq p$ and an $i \in I$ with $p' \forces a_i$. By assumption there is a $q \comp p'$ with $\crank(q) < \beta_0$ and $q \forces a_i$, so we are finished.

    Assume now that the statement holds for all ordinals ${<}\beta$. If $[\varphi] \in \Sigma_{1+\beta}(\mathcal{L}_\infty(A) /_\bP)$, then $\bP \forces \varphi \Leftrightarrow \bigvee_{i \in I} \neg \varphi_i$ with $[\varphi_i] \in \Sigma_{1 + \beta_i}(\mathcal{L}_\infty(A) /_\bP), \beta_i<\beta$. Find a $p' \leq p$ and $i \in I$ with $p' \forces \neg \varphi_i$. Since $\crank$ is a rank function, we can find a $q \comp p'$ such that $\crank(q) < h_{\crank}^{\beta_i + 1}(\beta_0) \leq h_{\crank}^{\beta}(\beta_0)$ and for all $r$ with $\crank(r) < h_{\crank}^{\beta_i}(\beta_0)$ we have $r \comp q \Rightarrow r \comp p$. We claim $q \forces \neg \varphi_i$. If this were not the case, we could find a $q' \leq q$ with $q' \forces \varphi_i$, and by the inductive assumption there is an $r \comp q'$ with $r \forces \varphi_i$ and $\crank(r) < h_{\crank}^{\beta_i}(\beta_0)$. But now $r \comp q$ which implies $r \comp p'$, contradiction.
\end{proof}

Typically $\beta_0 = 1$.

\section{$\alpha$-forcing} \label{sec: alpha forcing}

We introduce the primary forcing-theoretic tool used to control the $\kappa$-Borel hierarchy on a space $X \subseteq \pre{\kappa}{\kappa}$. This space will be fixed throughout the section.

We begin by inductively constructing, for each $\alpha < \kappa^+$, a well-founded tree $T_\alpha \subseteq \pre{<\omega}{\kappa}$. This tree $T_\alpha$ will, from this point onward, serve as a canonical template for the $\kappa$-Borel code of a $\ppi{0}{\alpha}{}{\kappa}$ set.

\begin{definition} \label{def: ta}
    For $\alpha < \kappa^+$ define
    \begin{itemize}
        \item $T_0 \defas \{\emptyset\}$,
        \item $T_{\alpha + 1} \defas \{\emptyset\} \cup \{i \conc \eta : i < \kappa, \eta \in T_\alpha\}$ and
        \item $T_\delta \defas \{\emptyset\} \cup \{i \conc \eta : i < \cf(\delta), \eta \in T_{c_\delta(i) + 4}\}$ when $\delta$ is a limit ordinal and $c_\delta: \cf(\delta) \to \delta$ is an arbitrary but fixed cofinal map.
    \end{itemize}
\end{definition}

\begin{remark}
    The tree $T_\alpha$ is universal for the $\alpha$-th level of the $\kappa$-Borel hierarchy in the sense of \cref{rem: universal tree}. Its form is relevant for future arguments; it is not the case that any universal tree will suffice. The integer of ${+}4$ in the limit case of the above definition is optimal for \cref{rem: criticality local} to hold.
\end{remark}

Working in the uncountable brings with itself the advent of nodes whose rank is a limit ordinal of cofinality less than $\kappa$. Such nodes require special treatment; the criticality constraint in Definition~\ref{def: alpha forcing} is in place to avoid a potential issue of overdetermination in conditions.

For notational expediency we introduce some jargon.

\begin{definition}
    An ordinal $\alpha$ is a \markdef{small limit} (ordinal) if it is a limit ordinal of cofinality $\cf(\alpha) < \kappa$.
    A node $\eta \in T_\alpha$ is a small limit node if its rank in $T_\alpha$ is a small limit ordinal.
\end{definition}

\begin{definition} \label{def: alpha forcing}
    For $A, B$ disjoint subsets of $X$ and $1 < \alpha < \kappa^+$,
    define $\aforc_\alpha(A, B, X)$ as the partially ordered set of all pairs $p = \langle f_p , R_p \rangle$ such that all of the following holds:
    \begin{enumerate}[label = {\alph*})]
        \item\label{def: alpha forcing-a} $f_p : \leaf(T_\alpha) \to \pre{<\kappa}{\kappa}$ is a partial function with $|f_p| < \kappa$.
        \item\label{def: alpha forcing-b} $R_p \subseteq \nonleaf(T_\alpha) \times X$, $|R_p| < \kappa$.
        \item\label{def: alpha forcing-c} If $\nu \in \succ(\eta)$ and $\langle \eta, x\rangle \in R_p$, then
        \begin{itemize}
            \item $\nu \in \nonleaf(T_\alpha)$ implies $\langle \nu, x\rangle \notin R_p$.
            \item $\nu \in \leaf(T_\alpha) \cap \dom(f_p)$ implies $x \notin \clopen{f_p(\nu)}$.
        \end{itemize}
        \end{enumerate}
    We also have two constraints involving the parameters $A,B$.
    \begin{enumerate}[resume*]
        \item\label{def: alpha forcing-d} $\setOf{x}{\langle \emptyset, x \rangle \in R_p} \cap B = \emptyset$.
        \item\label{def: alpha forcing-e} $\setOf{x}{\exists \eta \in \succ_{T_\alpha}(\emptyset) :  \langle \eta,x \rangle \in R_p} \cap A = \emptyset$.
    \end{enumerate}

    Before proceeding, we define an additional notion. A node $\eta \in T_\alpha$ is called $x$-critical (in $p$) if the following three conditions are satisfied.
    \begin{itemize}
        \item $\eta$ is a small limit node,
        \item $\langle \pred(\eta), x \rangle \in R_p$ or ($\pred(\eta) = \emptyset \wedge x \in A$) or ($\eta = \emptyset \wedge x \in B$) 
        \item there exists an $\eta' \in \succ^2(\eta)$ with $\langle \eta', x \rangle \in R_p$.
    \end{itemize}
    We simply say $\eta$ is critical if it is $x$-critical for some $x \in X$. We are now equipped to formulate the last constraint in the definition of a condition, called the \markdef{criticality constraint}:
    \begin{enumerate}[resume*]
        \item \label{def: alpha forcing-f} If $\eta$ is $x$-critical in $p$, then there exists a $\nu \in \succ(\eta)$ with $\langle \nu', x \rangle \notin R_p$ for each $\nu' \in \succ(\nu)$.
    \end{enumerate}
    
    The ordering is given by $q \leq p$ if and only if $f_p \subseteq f_q$ and $R_p \subseteq R_q$. The greatest element $\mathbbm{1}$ of the partial order is given by the pair $\langle \emptyset, \emptyset\rangle$.
\end{definition}

We also define a strict version of the poset.

\begin{definition}
    \markdef{Strict} $\alpha$-forcing is the partial order $\saforc_\alpha(A,B,X)$ containing all conditions $p = \langle f_p, R_p \rangle$ which satisfy the above properties, as well as the following strict criticality constraint:
    \begin{enumerate}
        \item[f')] If $\eta$ is $x$-critical in $p$, then there exists a $\nu \in \succ(\eta)$ with $\langle \nu, x \rangle \in R_p$.
    \end{enumerate}

\end{definition}

If ordered by the same relation as $\aforc_\alpha(A,B,X)$, the set $\saforc_\alpha(A,B,X)$ forms a forcing order. Since every condition $p \in \aforc_\alpha(A,B,X)$ can clearly be strengthened to a strict condition $p' \leq p$, the forcing $\saforc_\alpha(A,B,X)$ is a dense subforcing of $\aforc_\alpha(A,B,X)$, so they are near interchangeable from a forcing-theoretic perspective. Although $\aforc_\alpha(A,B,X)$ will turn out to merely be strategically ${<}\kappa$-closed, for technical reasons we are compelled to use it over the ${<}\kappa$-closed order of strict conditions.

\begin{remark}
    For $\kappa = \omega$ our definition is the same as in \cite{miller_length_1979}. There are no small limit nodes, so the notions of a condition and strict condition coincide. The same is true for uncountable $\kappa$ and $\alpha < \omega$, in which case this definition extends the one given in \cite{gen_borel_sets}.
\end{remark}

\begin{definition}
    If $G$ is $\aforc_\alpha(A,B,X)$-generic, define $f_G = \bigcup\{f_p : p \in G\}$ and $R_G = \bigcup\{R_p : p \in G\}$.
\end{definition}

By a density argument $f_G$ is a total function $f_G : \leaf(T_\alpha) \to \pre{<\kappa}{\kappa}$, thus we are able to interpret every $\eta \in T_\alpha$ as a $\kappa$-Borel subset $G_\eta = \mathcal{I}^{f_G}_{T_\alpha}(\eta)$ of $X$. A pair $\langle \eta, x\rangle \in R_p$ is meant to be understood as the promise that $x$ is going to be an element of $G_\eta$. The constraints we impose on conditions are crafted to ensure the semantic coherence of the relation $R_p$. The crucial Lemma~\ref{lem: D dense} directly yields Theorem~\ref{th: R interprets correctly}, showing the relationship between the syntactic relation $R_G$ and the semantic interpretation function $\eta \to G_\eta$. The $\ppi{0}{\alpha}{}{\kappa}$ set $G_\emptyset$ is the only relevant part of the generic filter that we focus on.

Before proceeding, there is an easy but important observation to be made.

\begin{remark} \label{rem: criticality local}
    The $x$-criticality of $\eta$ is a local property for which only the immediate neighborhood of $\eta$, namely $\{\pred(\eta)\} \cup \{\eta\} \cup \succ(\eta) \cup \succ^2(\eta)$ is relevant. When defining $T_\lambda$ for limit ordinals $\lambda$, the value $c_\lambda(i) + 4$ was chosen to ensure that distinct small limit nodes are sufficiently far apart so that their neighborhoods do not overlap nontrivially. In fact, for two small limit nodes $\eta_1 \neq \eta_2$, the eleven sets
    \[
        \{\pred(\eta_i)\}, \{\eta_i\}, \succ(\eta_i), \succ^2(\eta_i), \succ^3(\eta_i), \leaf(T_\alpha)
    \]
    for $i = 1,2$ are all pairwise disjoint, with the sole exception that $\pred(\eta_1) = \pred(\eta_2)$ is possible (any node whose successor is a small limit node has $\kappa$-many small limit nodes as successors). We will often make use of this fact implicitly in proofs to ensure that case distinctions are exhaustive, for example by tacitly assuming that a successor of a small limit node cannot simultaneously be the predecessor of another small limit node.
\end{remark}

\begin{lemma} \label{lem: D dense}
    Suppose $x \in X$ and $\eta \in \nonleaf(T_\alpha)$. Then the set
     \[
        D_{\eta, x} =
        \begin{cases}
            \! \setOf{p \in \aforc_\alpha(A,B,X)}{\langle \eta , x \rangle \in R_p \vee \exists \nu \in \succ(\eta) :
            \nu\in \dom(f_p)
            \land x \in \clopen{f_p(\nu)}}
        \ &\trank(\eta)=1,\\
        \! \setOf{p \in \aforc_\alpha(A,B,X)}{\langle \eta , x \rangle \in R_p \vee \exists \nu \in \succ(\eta) : \langle \nu , x \rangle  \in R_p}
        \ &\trank(\eta)>1
        \end{cases}
    \]
    is open dense in $\aforc_\alpha(A,B, X)$.
\end{lemma}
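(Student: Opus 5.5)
Openness is immediate. If $q \leq p$ then $f_p \subseteq f_q$ and $R_p \subseteq R_q$, and both disjuncts defining $D_{\eta,x}$ are preserved upward along these inclusions. The work is therefore in density. Fix $p \notin D_{\eta,x}$; in particular $\langle \eta, x\rangle \notin R_p$. In each case I would build $q \leq p$ by adding a single new pair, to $f_p$ or to $R_p$, and then verify clauses \ref{def: alpha forcing-a}--\ref{def: alpha forcing-f} for $q$, using \cref{rem: criticality local} to localize the verification.

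\emph{Case $\trank(\eta) = 1$.} All successors of $\eta$ are leaves. By the construction of $T_\alpha$ a rank-one node is the root of a copy of $T_1$, so it has $\kappa$ successors. Since $|f_p| < \kappa$, there is $\nu \in \succ(\eta) \minus \dom(f_p)$. The only pairs of $R_p$ that constrain the value $f(\nu)$ through clause \ref{def: alpha forcing-c} are the pairs $\langle \eta, y\rangle$, and each such $y$ differs from $x$. There are fewer than $\kappa$ of them, and $\kappa$ is regular, so some $\gamma < \kappa$ exceeds every first point of disagreement between $x$ and such a $y$. Put $f_q = f_p \cup \{\langle \nu, x \restriction \gamma\rangle\}$ and $R_q = R_p$. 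Then $x \in \clopen{f_q(\nu)}$ while no such $y$ lies in it. Criticality depends only on $R$, so clauses \ref{def: alpha forcing-d}--\ref{def: alpha forcing-f} transfer from $p$.

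\emph{Case $\trank(\eta) > 1$.} There are two candidate extensions: add $\langle \eta, x\rangle$, or add $\langle \nu, x\rangle$ for a suitable $\nu \in \succ(\eta)$.

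Adding $\langle \eta, x\rangle$ is blocked exactly when one of the following holds:
\begin{itemize}
\item $\langle \pred(\eta), x\rangle \in R_p$ (by clause \ref{def: alpha forcing-c});
\item $\eta \in \succ(\emptyset)$ and $x \in A$ (by clause \ref{def: alpha forcing-e});
\item $\eta = \emptyset$ and $x \in B$ (by clause \ref{def: alpha forcing-d}).
\end{itemize}
The downward half of clause \ref{def: alpha forcing-c} for $\langle \eta, x\rangle$ is automatic. The successors of $\eta$ of rank $0$ can be left undefined in $f$ where needed, and $\langle \nu, x\rangle \notin R_p$ for every $\nu \in \succ(\eta)$ because $p \notin D_{\eta,x}$.

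In the blocked situations I would instead add $\langle \nu, x\rangle$, choosing $\nu$ to be a "fresh" successor. That means $\nu$ is chosen so that there is no pair $\langle \nu', x\rangle \in R_p$ with $\nu' \in \succ(\nu)$, no leaf $\nu' \in \succ(\nu) \cap \dom(f_p)$ with $x \in \clopen{f_p(\nu')}$, and no pair in $R_p$ involving a node of $\succ(\nu) \cup \succ^2(\nu)$. The upward half of clause \ref{def: alpha forcing-c} holds because $\langle \eta, x\rangle \notin R_p$. The disjointness of $A$ and $B$ ensures that clauses \ref{def: alpha forcing-d} and \ref{def: alpha forcing-e} never block both options at the root.

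\emph{Main obstacle.} I expect the criticality constraint \ref{def: alpha forcing-f}, and the case where $\eta$ is a small limit node, to be the hard part.

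Suppose $\eta$ has only $\cf(\trank(\eta)) < \kappa$ successors. Then a counting argument no longer produces a fresh $\nu$, and we must use the criticality constraint of $p$ itself. In the blocked situation the node $\eta$ is essentially $x$-critical, or becomes so once the new pair is added. Clause \ref{def: alpha forcing-f} for $p$ then supplies a $\nu \in \succ(\eta)$ none of whose successors $\nu'$ has $\langle \nu', x\rangle \in R_p$. This is exactly the $\nu$ to use, and it simultaneously discharges clause \ref{def: alpha forcing-f} for $\eta$ in $q$. If $\eta$ is not critical, I would fall back on the counting argument: a non-small-limit node of rank at least $2$ has $\kappa$ successors.

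Next, the new pair could create fresh $x$-critical nodes. By \cref{rem: criticality local}, these can only be nodes $\zeta$ with the new node in $\{\pred(\zeta)\} \cup \succ^2(\zeta)$. If $\zeta \in \succ(\text{new node})$ is a small limit node, I would check that clause \ref{def: alpha forcing-f} for $\zeta$ is witnessed either by the freshness of the choice or, when adding $\langle \eta, x\rangle$, by clause \ref{def: alpha forcing-f} for $p$ together with $\langle \nu, x\rangle \notin R_p$. If instead the new node lies in $\succ^2(\zeta)$, then the witness for $\zeta$ is the unique successor of $\zeta$ lying between; here the disjointness in \cref{rem: criticality local} guarantees that no other small-limit neighbourhood interferes.

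The remaining verification is this case analysis over the positions of small limit nodes relative to $\eta$. I expect it to be routine but tedious, and it is the step most likely to need care at the root.
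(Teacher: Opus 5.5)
There is a genuine gap in your case $\trank(\eta)>1$. Your openness argument and the rank-one case are fine; the latter is essentially the paper's. The trouble is your default move of adding $\langle\eta,x\rangle$ unless one of your three obstructions holds: it can produce a non-condition, so the list is not ``exactly when''.

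Adding $\langle\eta,x\rangle$ switches on the predecessor clause of $x$-criticality at once for every small limit node $\zeta\in\succ(\eta)$. Your justification of clause \ref{def: alpha forcing-f} for such $\zeta$ (``by clause f) for $p$ together with $\langle\nu,x\rangle\notin R_p$'') presupposes that $\zeta$ was already $x$-critical in $p$. It need not be, and a non-critical $\zeta$ may carry exactly the pattern that f) forbids only at critical nodes.

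Here is a counterexample. Let $\alpha=\omega+1$, $A=B=\emptyset$ and $\eta=\emptyset$. Fix $\zeta\in\succ(\emptyset)$, which has rank $\omega$ and successors $\mu_i$ for $i<\omega$. Pick $\mu_i'\in\succ(\mu_i)$ and put $p=\langle\emptyset,\{\langle\mu_i',x\rangle : i<\omega\}\rangle$. No node is critical in $p$, so $p$ is even strict. Moreover $p\notin D_{\emptyset,x}$ and none of your obstructions applies. Yet in $p\cup\{\langle\emptyset,x\rangle\}$ the node $\zeta$ is $x$-critical while every $\mu_i$ has an $x$-paired successor. This is precisely the forbidden constellation from the remark after the lemma.

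There are related slips as well:
\begin{itemize}
\item When the new node lies in $\succ^2(\zeta)$, the successor of $\zeta$ ``lying between'' is exactly the one that can no longer witness f). For a non-strict $p$ in which it was the only witness, adding $\langle\eta,x\rangle$ at a grandchild of an already $x$-critical $\zeta$ also breaks f).
\item For a blocked small limit $\eta$ that is not $x$-critical, adding $\langle\nu,x\rangle$ with $\nu\in\succ(\eta)$ does not make $\eta$ critical, since the third bullet concerns $\succ^2(\eta)$. So any $\nu$ works there; the counting fallback you cite is unavailable but also unnecessary.
\item Your freshness must exclude the whole cone above $\nu$, not just two levels, because a small limit child of $\nu$ looks at $\succ^3(\nu)$.
\end{itemize}

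The missing idea is to place $x$ at $\eta$ itself only where this cannot switch on criticality elsewhere. The paper first passes to a strict condition. It then adds $\langle\eta,x\rangle$ only in two cases:
\begin{itemize}
\item $\eta=\emptyset$ with $x\in A$. Here the predecessor clause for the root's children already held in $p$.
\item $\eta$ is a small limit node whose second criticality bullet fails. By \cref{rem: criticality local}, small limit nodes have no small limit children and are not grandchildren of one.
\end{itemize}
An $x$-critical small limit $\eta$ already lies in $D_{\eta,x}$ by strictness.

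Every other $\eta$ of rank ${>}1$ has $\kappa$ successors. So you may add $\langle\nu,x\rangle$ for some $\nu\in\succ(\eta)$ with nothing at or above it in $R_p$; small limit children of such $\nu$ then have nothing $x$-paired in their $\succ^2$.

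The only place where the new pair lands in some $\succ^2(\hat\eta)$ is $\eta\in\succ(\hat\eta)$ with $\hat\eta$ a small limit node. The paper handles this by first extending into $D_{\hat\eta,x}$. Your blocked/unblocked split does work there. If $\langle\eta,x\rangle$ is blocked, then $\langle\hat\eta,x\rangle\in R_p$ or ($\hat\eta=\emptyset$ and $x\in A$), and in both cases $\hat\eta$ is not $x$-critical, so the fresh pair is harmless. If it is unblocked, adding $\langle\eta,x\rangle$ is safe, because $\eta$ has neither small limit children nor a small limit grandparent.
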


\begin{proof}
    It is obvious that the set $D_{\eta,x}$ is open (i.e.\ downwards closed).
    
    To prove density, let a condition $p \in \aforc_\alpha(A,B,X)$ be given. We aim to find a strengthening $p' \leq p$ with $p' \in D_{\eta,x}$. If $p \in D_{\eta, x}$ we are done, so assume $p \notin D_{\eta, x}$, which in particular entails $\langle \eta, x\rangle \notin R_p$. We also without loss of generality assume that $p \in \saforc_\alpha(A,B,X)$ is a strict condition. There are several cases to distinguish.

    \begin{itemize}
        \item If $\eta=\emptyset$ and $x\in A$, then set $p' = \langle f_p, R_p\cup \langle \emptyset, x\rangle \rangle$.
        \item If $\eta$ is a small limit node which is not $x$-critical in $p$, there are three cases:
        \begin{itemize}
            \item $\eta = \emptyset$ and $x \in B$: this means that $\langle \eta' , x \rangle \notin R_p$ for each $\eta' \in \succ^2(\eta)$, thus we may pick any $\nu \in \succ(\eta)$ and set $p' = \langle f_p, R_p \cup \langle \nu, x\rangle \rangle$.
            \item $\eta \in \succ(\emptyset)$ and $x \in A$: by the same argument, we may pick any $\nu \in \succ(\eta)$ and set $p' = \langle f_p, R_p \cup \langle \nu, x\rangle \rangle$.
            \item Otherwise, may set $p' = \langle f_p, R_p \cup \langle \eta, x\rangle \rangle$, since we also assume $p \notin D_{\eta,x}$.
        \end{itemize}

        \item If $\eta$ is a small limit node which is $x$-critical in $p$, the strict criticality constraint ensures $p \in D_{\eta,x}$.

        \item Assume $\eta$ is neither a small limit node nor the successor of a small limit node. Since $\kappa$ is regular, $|f_p|< \kappa$, and $|R_p| < \kappa$, while $\eta$ has $\kappa$-many successors, we can find a $\nu \in \succ(\eta)$ such that neither $\nu$ nor any node above $\nu$ appears in $f_p$ or $R_p$. Additionally, find an $s\in \pre{<\kappa}{\kappa}$ such that $x\in \clopen{s}$ and $\clopen{s}\cap \{y\mid  \langle \eta, y \rangle\in R_p\}=\emptyset$. But this means that 
        \[
            p' = 
            \begin{cases}
                \langle f_p \cup \langle \nu, s\rangle, R_p \rangle
                \qquad &\trank(\eta)=1,\\
                \langle f_p, R_p \cup \langle \nu, x\rangle \rangle
                \qquad &\trank(\nu)>1.
            \end{cases}
        \]
        is as desired.
        
        \item In the last case, we assume $\eta$ is the successor of a small limit node $\hat{\eta}$. We do an extra preparatory step and assume that $p \in D_{\hat{\eta},x}$, as we have already proved $D_{\hat{\eta},x}$ to be dense. We proceed just as before and find a $\nu \in \succ(\eta)$ such that neither $\nu$ nor any node above $\nu$ appears in $f_p$ or $R_p$, finally setting\footnote{In this case, we can be certain that $\trank(\nu)>1$.} $p' = \langle f_p, R_p \cup \langle \nu, x\rangle \rangle$. In this situation we have to keep in mind the possibility that $\hat{\eta}$ was not $x$-critical in $p$ yet is $x$-critical in $p'$ (since $\nu \in \succ^2(\eta)$). This is not an issue however, since we took care to ensure $p \in D_{\hat{\eta}, x}$ and thus the criticality constraint is satisfied. \qedhere
    \end{itemize}
\end{proof}

\begin{remark}
    In order for \cref{lem: D dense} to hold, it is necessary to prevent a certain forbidden constellation from appearing in a condition $p = \langle f_p, R_p \rangle$, as follows: for some small limit node $\eta \in T_\alpha$ we have $\langle \pred(\eta), x \rangle \in R_p$ and for each $\eta' \in \succ(\eta)$ there exists a $\nu \in \succ(\eta')$ with $\langle \nu, x \rangle \in R_p$. It is clear that if such a configuration appears in $p$, then there can be no strengthening of $p$ inside $D_{\eta, x}$, hence the set is not dense. In turn, \cref{th: R interprets correctly} would certainly fail as well.
    The criticality condition in $\aforc_\alpha(A,B,X)$ is crafted specifically so that this constellation is avoided in the forcing, without simultaneously compromising ${<}\kappa$-closure.
\end{remark}

We are now able to prove \cref{th: R interprets correctly} and \cref{cor: collapse A}, in the same way it was done in \cite{miller_descriptive_1995} for $\kappa = \omega$ and in \cite{gen_borel_sets} for $\alpha < \omega$. We recount the proof we gave in the latter work.

\begin{theorem}
\label{th: R interprets correctly}
    Let $G$ be $\aforc_\alpha(A,B,X)$-generic. Then for each $\eta \in \nonleaf(T_\alpha)$ and $x \in X$ we have $x \in G_\eta \Leftrightarrow \langle \eta, x \rangle \in R_G$.
\end{theorem}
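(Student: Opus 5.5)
Fix $x \in X$ and argue by induction on $\trank_{T_\alpha}(\eta)$ for $\eta \in \nonleaf(T_\alpha)$. Everything rests on two properties of $R_G$: the generic fragment of constraint \ref{def: alpha forcing-c}, and the genericity statement of \cref{lem: D dense}. Since $G$ is a filter, any finitely many pairs in $R_G$ and values of $f_G$ already occur together in a single condition $p \in G$, so \ref{def: alpha forcing-c} holds for $R_G$ and $f_G$. Concretely, if $\langle \eta, x \rangle \in R_G$ and $\nu \in \succ(\eta)$, then $\langle \nu, x\rangle \notin R_G$ when $\nu$ is a nonleaf, and $x \notin \clopen{f_G(\nu)}$ when $\nu$ is a leaf (using that $f_G$ is total). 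By \cref{lem: D dense}, $G$ meets $D_{\eta,x}$. Hence if $\langle \eta, x \rangle \notin R_G$, there is some $\nu \in \succ(\eta)$ with either $\langle \nu, x \rangle \in R_G$ (when $\trank(\eta) > 1$) or $x \in \clopen{f_G(\nu)}$ (when $\trank(\eta) = 1$).

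For the base case $\trank(\eta) = 1$, all successors of $\eta$ are leaves, and $G_\eta = \bigcap_{\nu \in \succ(\eta)} X \minus \clopen{f_G(\nu)}$. The direction \enquote{$\Leftarrow$} is exactly the second clause of \ref{def: alpha forcing-c} applied to $R_G$. The direction \enquote{$\Rightarrow$} is the contrapositive of meeting $D_{\eta,x}$: if $\langle \eta, x \rangle \notin R_G$, then $x \in \clopen{f_G(\nu)}$ for some successor $\nu$, so $x \notin G_\eta$.

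For the inductive step $\trank(\eta) > 1$, we have $G_\eta = \bigcap_{\nu \in \succ(\eta)} X \minus G_\nu$. If $\nu$ is a nonleaf successor, the induction hypothesis gives $x \in G_\nu \Leftrightarrow \langle \nu, x \rangle \in R_G$. Suppose $\langle \eta, x \rangle \in R_G$. For nonleaf successors $\nu$, the first clause of \ref{def: alpha forcing-c} gives $\langle \nu, x \rangle \notin R_G$, so $x \notin G_\nu$. Successors that are leaves, which can occur in $T_\alpha$ even when $\trank(\eta) > 1$, are handled by the second clause. Together these give $x \in G_\eta$. Conversely, if $\langle \eta, x \rangle \notin R_G$, then density of $D_{\eta,x}$ gives a successor $\nu$ with $\langle \nu, x \rangle \in R_G$. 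This $\nu$ must be a nonleaf, since $R_p \subseteq \nonleaf(T_\alpha) \times X$. By induction $x \in G_\nu$, so $x \notin G_\eta$.

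The only point needing care is checking that $D_{\eta,x}$ is stated in exactly the right form at small limit nodes and near the root. This is precisely what \cref{lem: D dense} supplies, after handling the criticality constraint, so no further obstacle remains. The constraints \ref{def: alpha forcing-d} and \ref{def: alpha forcing-e} are not needed for this statement; they matter only later, for the relationship between $G_\emptyset$ and the sets $A, B$.
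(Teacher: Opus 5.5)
Your proof is correct and is essentially the paper's argument: induction on the rank of $\eta$, with constraint~\ref{def: alpha forcing-c} transferred to $R_G$ and $f_G$ giving one direction, and the density of $D_{\eta,x}$ from \cref{lem: D dense} giving the other. One small inaccuracy, which does not affect validity: by the construction in \cref{def: ta}, leaves of $T_\alpha$ occur only as successors of rank-$1$ nodes, so your extra handling of leaf successors when $\trank(\eta) > 1$ is unnecessary, though harmless.
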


\begin{proof} 
    This theorem is an immediate consequence of Lemma~\ref{lem: D dense}.
    Notice that by definition, we get that an analogous statement holds about leaves $\eta \in \leaf(T_\alpha)$, namely
    \[x \in G_\eta \Leftrightarrow x \in \clopen{f_G(\eta)}.\] 

    Let us first consider the special case $\trank(\eta) = 1$; all successors of $\eta$ are leaves in $T_\alpha$. If $x \in G_\eta$, then by the semantics of the interpretation function $x \notin G_\nu = \clopen{f_G(\nu)}$ for each $\nu \in \succ(\eta)$. Since $D_{\eta, x}$ is dense by Lemma~\ref{lem: D dense}, then $G$ must meet $D_{\eta, x}$, and we can conclude $\langle \eta, x\rangle \in R_G$.

    On the other hand, if $\langle \eta,x\rangle \in R_G$, then for each $p \in G$ and $\nu \in \succ(\eta) \cap \dom(f_p)$ we have $x \notin \clopen{f_p(\nu)}$, and therefore $x \in G_\eta = \bigcap_{\nu \in \succ(\eta)} X \minus \clopen{f_G(\nu)}$.

    Let now $\eta \in T_\alpha$ be a node of rank greater than $1$ such that the statement of the lemma holds for all successors of $\eta$. Since the set $D_{\eta,x}$ is dense by Lemma \ref{lem: D dense}, we have 
    \[
        \langle \eta, x \rangle \in R_G \Leftrightarrow \forall \nu \in \succ(\eta) : \langle \nu, x \rangle \notin R_G.
    \]
    But using the inductive hypothesis this means
    \[
        \langle \eta, x \rangle \in R_G \Leftrightarrow (\forall \nu \in \succ(\eta) : \langle \nu, x \rangle \notin R_G )\Leftrightarrow (\forall \nu \in \succ(\eta) :x \notin G_\nu) \Leftrightarrow x \in G_\eta  . \qedhere
    \]
\end{proof}

\begin{corollary} \label{cor: collapse A}
    Let $G$ be $\aforc_\alpha(A,B,X)$-generic. Then $A \subseteq G_\emptyset \subseteq X \minus B$.
\end{corollary}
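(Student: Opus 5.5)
We read the corollary off \cref{th: R interprets correctly} applied to the root $\emptyset \in \nonleaf(T_\alpha)$; this node is not a leaf since $\alpha > 1$. That theorem gives, for every $x \in X$,
\[
x \in G_\emptyset \Leftrightarrow \langle \emptyset, x\rangle \in R_G.
\]
So both inclusions come down to facts about which pairs $\langle \emptyset, x\rangle$ can or must lie in $R_G$.

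For $G_\emptyset \subseteq X \minus B$: if $x \in G_\emptyset$, then $\langle \emptyset, x\rangle \in R_G$, hence $\langle \emptyset, x\rangle \in R_p$ for some $p \in G$. Constraint \ref{def: alpha forcing-d} of \cref{def: alpha forcing} says that no $x \in B$ can appear paired with $\emptyset$ in $R_p$. Therefore $x \notin B$.

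For $A \subseteq G_\emptyset$: fix $x \in A$. By \cref{lem: D dense}, the set $D_{\emptyset, x}$ is dense, so $G$ contains some $p \in D_{\emptyset,x}$.
\begin{itemize}
\item If $\trank(\emptyset) > 1$, this means either $\langle \emptyset, x\rangle \in R_p$ or $\langle \nu, x\rangle \in R_p$ for some $\nu \in \succ(\emptyset)$. The second option is excluded by constraint \ref{def: alpha forcing-e}, since $x \in A$. So $\langle \emptyset, x\rangle \in R_G$, and hence $x \in G_\emptyset$.
\item If $\trank(\emptyset) = 1$ (the case $\alpha = 2$), we must rule out $x \in \clopen{f_p(\nu)}$ for a leaf $\nu \in \succ(\emptyset)$. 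Constraint \ref{def: alpha forcing-e} does not speak about leaves. Instead we use the first bullet of the proof of \cref{lem: D dense}: for $x \in A$, the pair $\langle \emptyset, x\rangle$ can always be added to a condition. Hence the set of $p$ with $\langle \emptyset, x\rangle \in R_p$ is dense below any condition. Genericity gives such a $p \in G$, and $\langle \emptyset, x\rangle \in R_G$ follows.
\end{itemize}

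The only mildly delicate point is that adding $\langle \emptyset, x\rangle$ for $x \in A$ must preserve condition-hood, in particular constraint \ref{def: alpha forcing-c} and the criticality constraint \ref{def: alpha forcing-f}. For \ref{def: alpha forcing-c}, note that no $\langle \nu, x\rangle$ with $\nu \in \succ(\emptyset)$ ever occurs, by \ref{def: alpha forcing-e}. The leaf requirement must be checked against the values $f_p(\nu)$. As the argument above shows, it is simplest to argue uniformly via the density of $\{p : \langle \emptyset, x\rangle \in R_p\}$, which the lemma's proof establishes for $x \in A$. Criticality is unaffected, because $x \in A$ already counts toward criticality in the definition. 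This is the step I expect to need checking. Everything else is immediate from \cref{th: R interprets correctly}.
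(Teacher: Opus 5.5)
Your argument is correct and is essentially the paper's proof. For $G_\emptyset\subseteq X\minus B$ you use constraint \ref{def: alpha forcing-d} together with \cref{th: R interprets correctly} at the root, which is exactly what the paper means by ``follows similarly''. For $A\subseteq G_\emptyset$ you combine \ref{def: alpha forcing-e} with the density of $D_{\emptyset,x}$ to get $\langle\emptyset,x\rangle\in R_G$. The paper instead applies \cref{th: R interprets correctly} at the successors of the root to get $x\notin G_\nu$ for all $\nu\in\succ(\emptyset)$. The content is the same, since the inductive step of that theorem is precisely your density argument.

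One correction: your second case is vacuous and misdescribed. By construction $\trank_{T_\alpha}(\emptyset)=\alpha$. So for $\alpha=2$ the root has rank $2$ and its successors are rank-$1$ non-leaves, and since $\alpha>1$ your first bullet covers everything.

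The reasoning in your second bullet would also be invalid if the root did have rank $1$. Adding $\langle\emptyset,x\rangle$ would violate the leaf clause of \ref{def: alpha forcing-c} whenever $x\in\clopen{f_p(\nu)}$ for some leaf $\nu\in\succ(\emptyset)\cap\dom(f_p)$, and nothing in the definition forbids this for $x\in A$.

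The first bullet of the proof of \cref{lem: D dense} is sound exactly because $\succ(\emptyset)\cap\leaf(T_\alpha)=\emptyset$. The same fact settles the point you flagged as needing checking. Constraint \ref{def: alpha forcing-c} for $\langle\emptyset,x\rangle$ only requires $\langle\nu,x\rangle\notin R_p$ for the non-leaf successors $\nu$, and \ref{def: alpha forcing-e} guarantees this.
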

\begin{proof}
    The constraint involving $A$ in \cref{def: alpha forcing} tells us that $\setOf{x}{\exists \eta \in \succ_{T_\alpha}(\emptyset) :  \langle \eta,x \rangle \in R_p} \cap A$ is empty for all $p \in G$, which by Theorem~\ref{th: R interprets correctly} immediately implies that $G_\eta \cap A = \emptyset$ for all $\eta \in \succ(\emptyset)$. But this means $V[G] \models A \subseteq G_\emptyset$. The fact that $V[G] \models G_\emptyset \cap B = \emptyset$ follows similarly.
\end{proof}

In other words, forcing with $\aforc_\alpha(A, X \minus A, X)$ adds a $\ppi{0}{\alpha}{X}{\kappa}$-code for $A$. In practice, we will be interested in only two kinds of $\alpha$-forcings: the first one being $\aforc_\alpha(A,X \minus A, X)$, and the other being the forcing $\aforc_\alpha(\emptyset, \emptyset, X)$ with no restrictions on the generic set $G_\emptyset$. Though we cannot apply \cref{cor: collapse A} in this case, our hope will be that the $\ppi{0}{\alpha}{X}{\kappa}$-set $G_\emptyset$ added by this forcing is sufficiently \enquote{generic} as a subset of $X$.

\begin{remark} \label{rem: upwards downwards}
    We think of the partial order $\aforc_\alpha(\emptyset, \emptyset, X)$ as pushing \enquote{upwards}, as a fresh Borel set at the $\alpha$-th level of the hierarchy is added, which under some conditions provides a lower bound for $\ord_\kappa(X)$. Conversely, the forcing $\aforc_\alpha(A, X \minus A, X)$ pushes \enquote{downwards}, since it can be used to add a code for $A$ at the $\alpha$-th level of the Borel hierarchy on $X$, subsequently providing an upper bound for $\ord_\kappa(X)$. To pin $\ord_\kappa(X)$ to a concrete value $\alpha$, an iteration of these forcings can be used to simultaneously push both \enquote{upwards} and \enquote{downwards}.
\end{remark}

\begin{remark}
    The partial order of $\alpha$-forcing can be compared to another widely known coding forcing, namely almost disjoint forcing by Solovay (see \cite{jensen_applications_1970}, also \cite{lucke_sigma_2012} for a generalization to $\kappa$). His forcing adds a real $r_A$ that is almost disjoint to a ground model set $A \subseteq X \subseteq \pre{\kappa}{\kappa}$, hence coding the set $A$ in a $\ppi{0}{2}{X}{\kappa}$ manner. The forcing $\aforc_2(A, X \minus X, X)$ is structurally very similar. An exact analogue of almost disjoint forcing is given if the additional clause 
    \begin{enumerate}
        \item[g)] Enumerate all nodes of rank $1$ in $T_\alpha$ as $\seq{\eta_i}{i < \kappa}$. Then we require $f_p(\eta_{i+j} \conc k) = f_p(\eta_i \conc (k + j))$ for all $i,j,k < \kappa$.
    \end{enumerate}
    is added to \cref{def: alpha forcing}. I thank Lyubomyr Zdomskyy for pointing this out to me.
\end{remark}

\begin{remark}
    One can think of $\alpha$-forcing as adding $\kappa$-many $\kappa$-Cohen reals side-by-side, where the $i$-th $\kappa$-Cohen real $c_i : \kappa \to \pre{<\kappa}{\kappa}$ is given by the values of $\nu\conc i$, where $\nu$ ranges over all nodes of rank $1$ in $T_\alpha$. Unlike a ${<}\kappa$-supported product of $\kappa$-Cohen forcings, $\alpha$-forcing allows for nontrivial constraints to be placed on all $\kappa$-many of the generic reals simultaneously via the relation $R$; the value of $\alpha$ is encoded in the tree structure of $T_\alpha$. Note that unlike $\alpha$-forcing, a $\kappa$-Cohen extension cannot change $\ord_\kappa$ for ground model spaces \cite[Lemma~6.14]{gen_borel_sets}.
\end{remark}

\subsection{Regularity and iterability}

In this section we verify cardinal preservation under $\alpha$-forcing and iterations thereof.

Below we collect some properties of forcings that are standard and widely used.

\begin{definition}
    For a regular cardinal $\lambda$, a forcing notion $\bP$ is
    \begin{itemize}
        \item \markdef{${<}\lambda$-closed}, if every decreasing sequence $\seq{p_i}{i < 
        \delta}$ in $\bP$ of length $\delta < \lambda$ has a lower bound $p_\delta \leq p_i$. 
        \item \markdef{strategically ${<}\lambda$-closed}, if Player I has a winning strategy in the closure game $\Game(\bP, {<}\lambda)$, which is played as follows:
        \begin{itemize}
            \item Player I starts the game by playing the condition $p^I_0 = \mathbbm{1}_{\bP} \in \bP$.
            \item In every round $i < \lambda$, Player I starts by playing a condition $p^I_i$ with the property that $p^I_i \leq p^{II}_j$ for every $j < i$. Player II then responds by playing a $p^{II}_i \leq p^I_i$.
            \item Player II wins a run of the game if and only if for some $i < \lambda$, there is no suitable $p^I_i$ for Player I to play. 
        \end{itemize}
        \item \markdef{$\lambda$-c.c}. if every antichain in $\bP$ has size ${<}\lambda$.
        \item \markdef{$\lambda$-linked}, if $\bP = \bigcup_{i < \lambda} \bP_i$, where for each $i < \lambda$ and $p,q \in \bP_i$ we have $p \comp q$.
    \end{itemize}
\end{definition}

\begin{fact} \label{fact: incomp equiv}
    A set $P \subseteq \aforc_\alpha(A,B,X)$ has a common lower bound (i.e.\ a condition $r$ such that $r \leq p$ for all $p\in P$) if and only if $\bigcup P = \langle \bigcup_{p \in P} f_p , \bigcup_{p \in P} R_p\rangle$ is a condition. In that case, $\bigcup P$ is the greatest lower bound of $P$.
\end{fact}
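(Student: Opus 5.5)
The plan is to prove the two directions separately. Throughout, write $\bigcup P = \langle f^*, R^* \rangle$ with $f^* = \bigcup_{p \in P} f_p$ and $R^* = \bigcup_{p \in P} R_p$.

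For the easy direction, assume $\bigcup P$ is a condition. By the definition of the ordering ($q \leq p$ iff $f_p \subseteq f_q$ and $R_p \subseteq R_q$), $\bigcup P \leq p$ holds for every $p \in P$, so it is a common lower bound. Any other lower bound $r$ satisfies $f_p \subseteq f_r$ and $R_p \subseteq R_r$ for all $p \in P$. Hence $f^* \subseteq f_r$ and $R^* \subseteq R_r$, so $r \leq \bigcup P$. This shows $\bigcup P$ is the greatest lower bound, which settles both this direction and the final sentence of the fact.

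For the converse, let $r$ be a common lower bound. Then $f^* \subseteq f_r$ and $R^* \subseteq R_r$, and I check the clauses of \cref{def: alpha forcing} for $\langle f^*, R^*\rangle$ one at a time.
\begin{itemize}
    \item $f^*$ is a function because it is a subset of the function $f_r$.
    \item The size bounds $|f^*| < \kappa$ and $|R^*| < \kappa$ follow from $|f_r|, |R_r| < \kappa$.
    \item Clauses \ref{def: alpha forcing-b} through \ref{def: alpha forcing-e} are each \emph{downward hereditary}. Each one either forbids certain pairs from appearing or forbids a certain configuration among elements that do appear. Any violation inside $\langle f^*, R^* \rangle$ therefore uses only pairs that also lie in $r$, and would be a violation in $r$. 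For instance, if $\langle\eta,x\rangle\in R^*$ and $f^*(\nu)$ has $x\in[f^*(\nu)]$, then $f_r(\nu)=f^*(\nu)$ gives the same violation in $r$.
\end{itemize}

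The main obstacle is the criticality constraint \ref{def: alpha forcing-f}, which is not monotone in this naive sense. Shrinking $R$ can keep $\eta$ $x$-critical while also removing the witnessing $\nu$-configuration, or criticality itself can behave differently. Suppose $\eta$ is $x$-critical in $\bigcup P$. Its criticality witnesses are the small-limit status of $\eta$, the predecessor fact ($\langle \pred(\eta), x\rangle \in R^*$ or the $A$/$B$ alternatives), and some $\eta' \in \succ^2(\eta)$ with $\langle \eta', x\rangle \in R^* \subseteq R_r$. All of these persist in $r$, so $\eta$ is $x$-critical in $r$. Since $r$ satisfies \ref{def: alpha forcing-f}, there is $\nu \in \succ(\eta)$ with $\langle \nu', x\rangle \notin R_r$ for all $\nu' \in \succ(\nu)$. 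As $R^* \subseteq R_r$, the same $\nu$ satisfies $\langle \nu', x \rangle \notin R^*$, so \ref{def: alpha forcing-f} holds for $\bigcup P$.

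The key observation is that the requirement in \ref{def: alpha forcing-f} is a \emph{negative} statement about $R$, so it passes to subsets, while criticality is a \emph{positive} statement, so it passes to supersets. This pairing is exactly why the non-strict version $\aforc_\alpha$ was chosen over the strict one. For $\saforc_\alpha$ the analogous fact would fail, because clause f') is positive.
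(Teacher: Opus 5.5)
Your proof is correct. The paper records this as a Fact without proof, and your direct check is exactly the routine verification it leaves implicit: every clause passes from a common lower bound $r$ to $\bigcup P$ because $\bigcup_{p\in P} f_p \subseteq f_r$ and $\bigcup_{p\in P} R_p \subseteq R_r$, and for the criticality constraint $x$-criticality is upward monotone in $R$ while the required witness $\nu$ is a purely negative condition. One correction to your framing: shrinking $R$ can never destroy a witness for \ref{def: alpha forcing-f}, so that clause is just as downward hereditary as the others (it forbids the positive constellation described after \cref{lem: D dense}), which is exactly what your argument shows; your closing remark that the analogue fails for $\saforc_\alpha$ is right.
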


\begin{lemma} \label{lem: strict forcing is closed}
    The set $\saforc_\alpha(A,B,X)$ of strict conditions is ${<}\kappa$-closed.
\end{lemma}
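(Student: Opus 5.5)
We prove the lemma by taking unions: given a decreasing sequence $\seq{p_i}{i<\delta}$ in $\saforc_\alpha(A,B,X)$ with $\delta<\kappa$, put $p = \langle \bigcup_{i<\delta} f_{p_i}, \bigcup_{i<\delta} R_{p_i}\rangle$. By \cref{fact: incomp equiv} it suffices to check that $p$ is a strict condition, since it then lies below every $p_i$. Each clause of the definition is checked separately, always reducing to some single $p_i$.

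For clauses \ref{def: alpha forcing-a} and \ref{def: alpha forcing-b}, $f_p$ is a function because the $f_{p_i}$ form a $\subseteq$-chain of functions. Regularity of $\kappa$ gives $|f_p|,|R_p|<\kappa$, as these are unions of fewer than $\kappa$ sets each of size ${<}\kappa$. Clauses \ref{def: alpha forcing-c}, \ref{def: alpha forcing-d} and \ref{def: alpha forcing-e} only forbid configurations involving at most two pieces of data, for instance $\langle\eta,x\rangle\in R_p$ together with $\langle\nu,x\rangle\in R_p$, or together with $\nu\in\dom(f_p)$ and $x\in[f_p(\nu)]$. Any such configuration in $p$ already appears in $p_i$ for $i$ large enough, since the sequence is decreasing. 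This contradicts $p_i$ being a condition.

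The only clause needing care is the strict criticality constraint f'). It is not finitary in this sense, because being $x$-critical is a positive (existential) property. Suppose $\eta$ is $x$-critical in $p$. The witnesses for this are $\langle\pred(\eta),x\rangle\in R_p$ (or the clauses involving $A,B$, which do not depend on the condition) and some $\langle\eta',x\rangle\in R_p$ with $\eta'\in\succ^2(\eta)$. Both witnesses lie in some common $R_{p_i}$, because the sequence is decreasing and there are at most two of them. So $\eta$ is $x$-critical already in $p_i$, and strictness of $p_i$ gives $\nu\in\succ(\eta)$ with $\langle\nu,x\rangle\in R_{p_i}\subseteq R_p$.

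This is exactly why the strict version is closed: f') is a positive requirement and is preserved upward under unions. The weak clause \ref{def: alpha forcing-f} is a negative requirement over all $\nu$, and it can fail in the limit.

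It remains to check that $p$ also satisfies \ref{def: alpha forcing-f}, as every element of $\saforc_\alpha(A,B,X)$ must. Take the $\nu$ just found, with $\langle\nu,x\rangle\in R_p$, and let $\nu'\in\succ(\nu)$. Clause \ref{def: alpha forcing-c} for $p$, already verified, then gives $\langle\nu',x\rangle\notin R_p$ whenever $\nu'$ is a nonleaf. By \cref{rem: criticality local}, $\nu$ is not of rank $1$, so these $\nu'$ are indeed nonleaves. This last step is the main point needing attention, together with confirming that the definition of criticality involves only finitely many witnesses.
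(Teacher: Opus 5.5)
Your proof is correct and follows essentially the same route as the paper. You take the union, note that clauses a)--e) can only be violated by configurations already present in a single $p_i$, and reduce strict criticality to strictness of the single $p_i$ in which the (at most two) criticality witnesses already appear. Your explicit check of clause f), which the paper leaves implicit, is fine, but the appeal to \cref{rem: criticality local} is unnecessary: clause b) ($R_p\subseteq\nonleaf(T_\alpha)\times X$) already rules out pairs $\langle\nu',x\rangle$ with $\nu'$ a leaf.
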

\begin{proof}
    Let $\langle p_i : i < \delta \rangle$ with $\delta < \kappa$ be a decreasing sequence of strict conditions in $\saforc_\alpha(A,B,X)$. We show that $p = \bigcup_{i < \delta} p_i$ is a strict condition. Since any two conditions in the sequence are compatible, it suffices to check the strict criticality constraint. 
    To do this, let $\eta$ be $x$-critical in $p$ and $j < \delta$ be the first ordinal such that $\eta$ is $x$-critical in $p_j$. Since $p_j$ is a strict condition, there is a $\nu \in \succ(\eta)$ with $\langle \nu, x \rangle \in R_{p_j} \subseteq R_{p}$.
\end{proof}

\begin{fact} \label{fact: strat closed}
    $\aforc_\alpha(A,B,X)$ is strategically ${<}\kappa$-closed.
\end{fact}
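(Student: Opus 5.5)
The plan is to use the fact that the strict conditions $\saforc_\alpha(A,B,X)$ form a dense subforcing, and to let Player I move into the strict part at every turn. Concretely, Player I answers Player II's move $p^{II}_j$ with a strict $p^I_{j+1}\le p^{II}_j$. Such a condition exists because, as observed after the definition of strict $\alpha$-forcing, every condition of $\aforc_\alpha(A,B,X)$ can be strengthened to a strict one. At limit stages $i<\kappa$, Player I plays $p^I_i=\bigcup_{j<i}p^I_j=\bigcup_{j<i}p^{II}_j$. The two unions agree because the moves alternate and are decreasing.

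The key point is that the limit union is again a condition. The sequence $\seq{p^I_{j+1}}{j<i}$ consists of strict conditions and is decreasing in $\aforc_\alpha(A,B,X)$. Since $\saforc_\alpha(A,B,X)$ carries the same ordering, it is also a decreasing sequence there. By \cref{lem: strict forcing is closed}, its union is a strict condition, which is in particular a condition in $\aforc_\alpha(A,B,X)$. By \cref{fact: incomp equiv}, this union is the greatest lower bound of $\seq{p^{II}_j}{j<i}$, because it is sandwiched between the two interleaved sequences. So Player I always has a legal move and never loses before round $\kappa$, which gives a winning strategy.

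Two small points need checking. First, the game begins with $p^I_0=\mathbbm{1}=\langle\emptyset,\emptyset\rangle$, which is vacuously strict, so the argument also covers the first limit. Second, at successor rounds Player I needs a condition below only the last move of Player II, because the earlier moves are already above it. I expect the main obstacle to be the one the strategy avoids. Player II may play non-strict conditions, and a union of non-strict conditions can violate the criticality constraint: new $R$-pairs in $\succ^2(\eta)$ can accumulate over many stages until every $\nu\in\succ(\eta)$ is blocked. Inserting a strict condition at each successor step prevents this, since the unions are then effectively taken over strict conditions only.
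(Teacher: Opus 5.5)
Your proof is correct and follows the paper's argument. The paper appeals to the folklore fact that any forcing with a ${<}\kappa$-closed dense subforcing is strategically ${<}\kappa$-closed, because Player I can always play inside $\saforc_\alpha(A,B,X)$; you spell this out, using the density of strict conditions at successor rounds and \cref{lem: strict forcing is closed} at limit rounds.
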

\begin{proof}
    It is easy to see and folklore that this is true for every forcing $\bP$ which has a dense subforcing $\bQ$ that is ${<}\kappa$-closed; it is a winning strategy for Player I in $\Game(\bP, {<}\kappa)$ to simply always play a condition in $\bQ$.
\end{proof}

We also observe that $\alpha$-forcing is $\kappa$-linked; it is easy to see that $\kappa$-linkedness implies the $\kappa^+$-c.c..

\begin{lemma} \label{lem: k linked}
    $\aforc_\alpha(A,B,X)$ is $\kappa$-linked.
\end{lemma}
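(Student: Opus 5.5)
The plan is to partition $\aforc_\alpha(A,B,X)$ into $\kappa$-many pieces according to a small \enquote{type} of each condition. The type will be chosen so that two conditions of the same type have exactly the same $f$-part. It will also ensure that they say exactly the same thing about every point $x$ which appears in both of their $R$-parts. Then \cref{fact: incomp equiv} reduces compatibility to checking that the coordinatewise union $p \cup q = \langle f_p \cup f_q, R_p \cup R_q\rangle$ is again a condition.

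\textbf{Defining the type.} For $p$, let $S_p = \setOf{x \in X}{\exists \eta\, \langle \eta, x\rangle \in R_p}$ be the support of $R_p$, and for $x \in X$ let $N_p(x) = \setOf{\eta}{\langle \eta,x\rangle \in R_p}$. Since $|S_p| < \kappa$ and $\kappa$ is regular, there is an ordinal $\gamma_p < \kappa$ such that the map $x \mapsto x \restriction \gamma_p$ is injective on $S_p$: take the supremum of the $<\kappa$ many splitting levels of pairs of distinct points. The type of $p$ is the triple consisting of $f_p$, $\gamma_p$, and the partial function $x \restriction \gamma_p \mapsto N_p(x)$ for $x \in S_p$. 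Each component is a set of size ${<}\kappa$ of objects drawn from sets of size $\kappa$: $\leaf(T_\alpha)$, $\pre{<\kappa}{\kappa}$ and $T_\alpha$ all have size $\kappa$. Hence $\kappa^{<\kappa} = \kappa$ gives only $\kappa$ many types, and this yields the partition $\aforc_\alpha(A,B,X) = \bigcup_{i<\kappa} \bP_i$.

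\textbf{Compatibility within a piece.} Let $p, q$ have the same type. Then $f_p = f_q$. If $x \in S_p \cap S_q$, then $x \restriction \gamma$ is the same key in both ($\gamma_p = \gamma_q$), so $N_p(x) = N_q(x)$. Consequently, for each $x$, the section $N_{p\cup q}(x)$ equals $N_p(x)$ or $N_q(x)$, and if it is both they coincide. Now I check that clauses \ref{def: alpha forcing-a}--\ref{def: alpha forcing-f} are all \enquote{local in $x$} given $f$: each refers only to $f$, to the single section $N(x)$, and to whether $x \in A$ or $x \in B$. This includes the notion of $x$-criticality and the criticality constraint, which only involve pairs $\langle \cdot, x\rangle$ for one fixed $x$. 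The size bounds are trivial. Hence every clause for $p \cup q$ at a point $x$ is literally the same clause for $p$ or for $q$ at $x$, so $p \cup q$ is a condition. By \cref{fact: incomp equiv} it is a common lower bound, and $p \comp q$.

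\textbf{Expected main obstacle.} The only delicate point is verifying that the criticality constraint \ref{def: alpha forcing-f} really is pointwise. Criticality quantifies over nodes, namely $\pred(\eta)$, $\succ(\eta)$ and $\succ^2(\eta)$, but always with the same point $x$, so it depends only on $N(x)$. This is precisely why the type must record the whole section $N_p(x)$ rather than, say, merely the set of nodes used in $R_p$. If only the nodes were recorded, a pair $\langle \eta, x\rangle \in R_p$ and $\langle \nu, x\rangle \in R_q$ with $\nu \in \succ(\eta)$ would violate clause \ref{def: alpha forcing-c}.
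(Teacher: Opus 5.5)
Your proposal is correct and takes essentially the same route as the paper, which defers to \cite[Lemma~7.7]{gen_borel_sets}. That proof partitions conditions by $f_p$ together with the pattern of $R_p$ read through restrictions $x \restriction \gamma_p$ that separate the ${<}\kappa$ many points of $R_p$, counts the pieces using $\kappa^{<\kappa} = \kappa$, and checks that the coordinatewise union of two conditions in the same piece is a condition. Your check that every clause, including the new criticality constraint, involves only pairs $\langle \cdot, x \rangle$ for a single $x$ is exactly what justifies the paper's claim that the earlier proof carries over unchanged.
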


\begin{proof}
    The proof is identical to \cite[Lemma~7.7]{gen_borel_sets}. In the proof, $T_\alpha^0$ refers to $\leaf(T_\alpha)$ and $T_\alpha^{>0}$ to $\nonleaf(T_\alpha)$, respectively.
\end{proof}

\begin{fact}
    A ${<}\kappa$-supported iteration of (strategically) ${<}\kappa$-closed forcings is (strategically) ${<}\kappa$-closed.
\end{fact}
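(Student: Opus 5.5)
The statement is the standard fact that ${<}\kappa$-supported iterations preserve (strategic) ${<}\kappa$-closure. I will prove both versions by induction on the length $\gamma^*$ of the iteration $\seq{\bP_\gamma, \name{\bQ}_\gamma}{\gamma < \gamma^*}$. The hypothesis is that for each $\gamma$, $\bP_\gamma$ forces $\name{\bQ}_\gamma$ to be (strategically) ${<}\kappa$-closed.

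\textbf{The closed case.} Let $\seq{p_i}{i<\delta}$ be decreasing in $\bP_{\gamma^*}$ with $\delta<\kappa$. I define a lower bound $p$ coordinatewise by recursion on $\gamma<\gamma^*$:
\begin{itemize}
\item at each $\gamma$, assume $p\restriction\gamma$ is already a lower bound of the $p_i\restriction\gamma$;
\item then $p\restriction\gamma$ forces that $\seq{p_i(\gamma)}{i<\delta}$ is decreasing in $\name{\bQ}_\gamma$;
\item by the maximum principle, pick a name $p(\gamma)$ for a lower bound, and take $p(\gamma)=\mathbbm{1}$ whenever every $p_i(\gamma)$ is trivial.
\end{itemize}
Limit stages are automatic, since a condition is determined by its restrictions. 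The support of $p$ lies in $\bigcup_{i<\delta}\supp(p_i)$, a union of fewer than $\kappa$ sets each of size ${<}\kappa$. Since $\kappa$ is regular, this union has size ${<}\kappa$. This is the only place where the support size and the regularity of $\kappa$ enter.

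\textbf{The strategically closed case.} In $V^{\bP_\gamma}$, fix a name $\name{\sigma}_\gamma$ for a winning strategy of Player I in $\Game(\name{\bQ}_\gamma,{<}\kappa)$. Player I's strategy in $\Game(\bP_{\gamma^*},{<}\kappa)$ is as follows.
\begin{itemize}
\item At round $i$, Player I plays the condition $p^I_i$ whose $\gamma$-th coordinate is a name for $\name{\sigma}_\gamma$ applied to the sequence $\seq{p^{II}_j(\gamma)}{j<i}$.
\item This coordinate is set to $\mathbbm{1}$ when $\gamma\notin\bigcup_{j<i}\supp(p^{II}_j)$.
\end{itemize}
One subtlety must be handled. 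The local strategy $\name{\sigma}_\gamma$ expects Player I's move in round $0$ to be $\mathbbm{1}$, and it expects alternation that starts at the round where $\gamma$ first enters a support. I fix this by letting coordinate $\gamma$ run its own play of the local game. That local play begins at the first round where $\gamma$ enters the support, with the earlier rounds treated as trivial moves.

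It remains to show that $p^I_i$ is a legitimate move. I will check by induction on $\gamma$ that $p^I_i\restriction\gamma$ forces that the coordinate-$\gamma$ sequence so far is a legal partial play of $\Game(\name{\bQ}_\gamma,{<}\kappa)$ according to $\name{\sigma}_\gamma$. Hence the strategy yields a legal move there, and that move lies below all earlier $p^{II}_j(\gamma)$. Supports stay below $\kappa$ by the same regularity argument as in the closed case, because $i<\kappa$.

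\textbf{Main obstacle.} The hard step is the bookkeeping at limit rounds $i$. There I must show that $p^I_i\restriction\gamma$ forces the local play to be a legal play of length $i$ according to $\name{\sigma}_\gamma$. This requires the strategy applications at coordinates ${<}\gamma$ to be uniform names, so that any condition extending the earlier moves decides the history coherently. I would handle this by defining all the names $p^I_i(\gamma)$ by simultaneous recursion on the pair $(i,\gamma)$, using the maximum principle at each step. The result is a genuinely $\bP_\gamma$-name, chosen without consulting a particular generic filter.
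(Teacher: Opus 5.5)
Your argument is correct and is the standard folklore proof; the paper states this as a Fact without giving any proof. The closed case uses coordinatewise lower bounds via the maximum principle, with supports kept ${<}\kappa$ by the regularity of $\kappa$. The strategic case simulates names $\name{\sigma}_\gamma$ for local winning strategies coordinatewise, restarting each local play at the round where $\gamma$ first enters a support so that Player I's supports stay ${<}\kappa$.

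Your ``main obstacle'' is milder than you suggest. The name $p^I_i(\gamma)$ is determined outright by the names $p^I_j(\gamma), p^{II}_j(\gamma)$ for $j<i$, which are already in the history, so no simultaneous recursion on $(i,\gamma)$ is needed to define it. The induction on $\gamma$ is needed only to verify $p^I_i\restriction\gamma \le p^{II}_j\restriction\gamma$ for all $j<i$. That inequality is exactly what makes $p^I_i\restriction\gamma$ force the local play to be legal and according to $\name{\sigma}_\gamma$.
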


We also wish to make sure iterations of the forcing do not collapse cardinals. Even though the preservation of the countable chain conditions in finite support iterations is always given, the same cannot be said about the analogous statement in the generalized setting - see for instance \cite{roslanowski_explicit_2018}.

We introduce a gentle ad hoc strengthening of $\kappa$-linkedness satisfied by the partial orders at hand that is sufficient to preserve the $\kappa^+$-c.c.\ in iterations of strategically ${<}\kappa$-closed forcings. Comparable properties can be found in \cite[Lemma~V.5.14]{kunen_set_2013} or \cite[Lemma~55]{brendle_cichons_2016}. \cref{prop: heart cc} is an adaptation of such arguments to the strategically ${<}\kappa$-closed case. We leave it to the interested reader to check that under $\kappa$-linkedness, both the combination of ${<}\kappa$-closed + well-met from \cite[Lemma~V.5.14]{kunen_set_2013} and ${<}\kappa$-closed + canonical lower bounds from \cite[Lemma~55]{brendle_cichons_2016} implies $(\heartsuit)$.

\begin{definition} \label{def: heart}
    A forcing notion $\bP$ satisfies $(\heartsuit)$ if
    \[
        \bP = \bigcup_{i < \kappa} \bP_i
    \]
    is $\kappa$-linked and there exists a strategy $\sigma$ for Player I in the closure game $\Game(\bP, {<}\omega)$ with the following property: whenever $\seq{p^I_n, p^{II}_n}{n < \omega}$ and $\seq{q^I_n, q^{II}_n}{n < \omega}$ are two runs of the closure game in which Player I played according to $\sigma$ such that there exists $\seq{i_n}{n < \omega}$ with $\{p^{I}_n, q^{I}_n\} \subseteq \bP_{i_n}$ for each $n < \omega$, then there exists a condition $r \in \bP$ with $r \leq p^{II}_n, q^{II}_n$ for each $n < \omega$.
\end{definition}

\begin{proposition} \label{prop: heart cc}
    Let $\langle \bP_\gamma, \name{\bQ}_\gamma : \gamma < \gamma^* \rangle$ be a ${<}\kappa$-supported iteration consisting of strategically ${<}\kappa$-closed forcing notions satisfying $(\heartsuit)$. Then its limit $\bP_{\gamma^*}$ has the $\kappa^+$-c.c.
\end{proposition}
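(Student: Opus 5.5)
We prove the $\kappa^+$-c.c.\ by a $\Delta$-system argument combined with a fusion of length $\omega$ through the closure games supplied by $(\heartsuit)$. Since the $\bQ_\gamma$ are only strategically closed, the plan first fixes, for each $\gamma<\gamma^*$, a $\bP_\gamma$-name $\name{\sigma}_\gamma$ for a strategy witnessing $(\heartsuit)$ for $\name{\bQ}_\gamma$. We take this strategy to be for the longer game $\Game(\name{\bQ}_\gamma,{<}\kappa)$ and also winning there; this can be arranged by playing the $(\heartsuit)$-strategy on the first $\omega$ moves and continuing with a winning strategy afterwards, or, more simply, by only ever using the first $\omega$ moves. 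We also fix names $\name{c}_\gamma$ for the linking function $\name{\bQ}_\gamma\to\kappa$. By the standard fact quoted before the statement, $\bP_{\gamma^*}$ is strategically ${<}\kappa$-closed, so we may run simultaneous plays coordinatewise.

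Suppose $\seq{p_\xi}{\xi<\kappa^+}$ is a putative antichain in $\bP_{\gamma^*}$. For each $\xi$ we build, by recursion on $n<\omega$, a decreasing sequence $p_\xi=p_\xi^0\ge p_\xi^1\ge\dots$ in $\bP_{\gamma^*}$ together with, for every $\gamma$ in the current support, a run of $\Game(\name{\bQ}_\gamma,{<}\omega)$ in which Player I follows $\name{\sigma}_\gamma$. At stage $n$ we strengthen $p_\xi^n$ (using strategic closure of the iteration below $\gamma$, with bookkeeping so that every coordinate ever entering the support gets treated cofinally often) to $p_\xi^{n+1}$. This condition should decide, for each coordinate $\gamma$ in a set $s^n_\xi$ of size ${<}\kappa$, the value $i^n_\xi(\gamma)=\name{c}_\gamma(\name{p}^{I}_{\gamma,n})$ of Player I's $n$-th move. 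It should also force that $p_\xi^{n+1}(\gamma)$ is a Player II response below that move. Let $s_\xi=\bigcup_n s^n_\xi=\bigcup_n\supp(p_\xi^n)$, which has size ${<}\kappa$ since $\cf\kappa>\omega$. The condition $p_\xi^\omega$ is a lower bound of the $p^n_\xi$, obtained coordinatewise by continuing the strategic play; this is where strategic ${<}\kappa$-closure is used.

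Now apply the $\Delta$-system lemma, which is available since $\kappa^{<\kappa}=\kappa$ and the sets $s_\xi$ have size ${<}\kappa$. This yields $\kappa^+$ indices whose $s_\xi$ form a $\Delta$-system with root $r$. The index data form a function $\omega\times r\to\kappa$, and there are only $\kappa^{<\kappa}=\kappa$ possibilities once we use order types, i.e.\ identify $r$ with its order type and note that the relevant data, namely the values $i^n_\xi(\gamma)$ for $\gamma\in r$ and the stage at which $\gamma$ entered, are ${<}\kappa$ in number. So we may thin out further so that the data coincide for two indices $\xi\ne\zeta$.

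Finally, we build a common extension $q$ of $p_\xi$ and $p_\zeta$ by induction on $\gamma\le\gamma^*$, maintaining $q\restriction\gamma\le p^n_\xi\restriction\gamma,p^n_\zeta\restriction\gamma$ for all $n$. Outside $r$ the supports are disjoint, so we copy the relevant $p^\omega$. On $\gamma\in r$, the condition $q\restriction\gamma$ forces that the two runs are $\name{\sigma}_\gamma$-runs whose Player I moves lie in the same pieces $\bQ_{\gamma,i_n}$. By $(\heartsuit)$ there is a name $q(\gamma)$ for a common lower bound of all Player II moves of both runs. Limits of the induction are handled by ${<}\kappa$-support. The main obstacle is this step: the runs at coordinate $\gamma$ exist only as names, and the agreement of colours must be forced, not merely true for the decided values. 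We therefore ensure that $p^{n+1}_\xi\restriction\gamma$ decides the colours outright, which is exactly what the $\omega$-fold fusion with bookkeeping achieves. It is also delicate that coordinates of $r$ entering the support at different stages $n$ must have matching entry stages, and we include this in the thinned data.
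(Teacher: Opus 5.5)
Your proposal is essentially the paper's proof. Like the paper, you run an $\omega$-length fusion in which the next condition decides the colour of each Player~I move in the coordinatewise $(\heartsuit)$-runs, take a $\Delta$-system on the sets $\bigcup_n \supp(p^n_\xi)$, thin out so the colours on the root agree, and build the common lower bound coordinatewise using $(\heartsuit)$ on the root. Your explicit matching of entry stages makes precise a point the paper leaves implicit.

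The one step to phrase carefully is off the root. There $q\restriction\gamma$ need not extend $p^\omega_\xi\restriction\gamma$, so $p^\omega_\xi(\gamma)$ must be the canonical name for the strategy's $\omega$-th move; this works because $q\restriction\gamma$ forces the run at $\gamma$ to be a legal play. Alternatively, as the paper does, apply $(\heartsuit)$ to that run paired with itself below $q\restriction\gamma$.
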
 

\begin{proof}
    Since every $\name{\bQ}_\gamma$ is forced to be $\kappa$-linked, we may choose names $\name{\bQ}_{\gamma,i}$ such that
    \[
        \bP_\gamma \forces \bigcup_{i < \kappa} \name{\bQ}_{\gamma.i} \text{ is a $\kappa$-linked partition of } \name{\bQ}_\gamma.
    \]
    Likewise, let $\name{\sigma}^\heartsuit_\gamma$ be a name for the strategy given by $(\heartsuit)$ in $\Game(\name{\bQ}_\gamma, {<}\omega)$.

    Using strategic ${<}\kappa$-closure of the forcings, for any $p \in \bP_{\gamma^*}$ we can find a $\accentset{\ast}{p} \leq p$ and a sequence $\seq{i(p,\gamma)}{\gamma \in \supp(p)}$ such that for every $\gamma \in \supp(p)$,
    \[
        \accentset{\ast}{p} \forces p(\gamma) \in \name{\bQ}_{\gamma, i(p,\gamma)}.
    \]
    Hence to any $p = p^{II}_0$ we can assign a run $\vec{p} = \seq{p^{I}_n, p^{II}_n}{n  < \omega} $ of $\Game(\bP_{\gamma^*}, {<}\omega)$ such that
    \begin{itemize}
        \item $p^{I}_0 = \mathbbm{1}_{\bP_{\gamma^*}}$ and $p^{II}_0 = p$.
        \item For each $0 < n < \omega$, $p^{II}_n = \accentset{\ast}{\left(p^I_n\right)}$.
        \item For each $0 < n < \omega$ and $\gamma < \gamma^*$, $p^{I}_n(\gamma) = \mathbbm{1}_{\name{\bQ}_\gamma}$ if $\gamma \notin \supp(p^{II}_{n-1})$; otherwise, $p^{I}_n(\gamma)$ is to be constructed from $\name{\sigma}^\heartsuit_\gamma$ using the partial play $\seq{p^{I}_j(\gamma), p^{II}_j(\gamma)}{j < n}$ of $\Game(\name{\bQ}_\gamma, {<}\omega)$. Since by induction $p^{I}_n \restriction \gamma \leq p^{I}_j \restriction \gamma, p^{II}_j \restriction \gamma$ for $j < n$, we have $p^{I}_n \restriction \gamma \forces p^{I}_n(\gamma) \in \name{\bQ}_\gamma$.
    \end{itemize}
    Let us also denote $\bigcup_{n < \omega} \supp(p^{II}_n)$ as $\Sigma(\vec{p})$; note that $|\Sigma(\vec{p})| < \kappa$. Furthermore, we write $\vec{p}(\gamma)$ for $\seq{p^{I}_n(\gamma), p^{II}_n(\gamma)}{n  < \omega}$. Finally, let $i(\vec{p}, \gamma, n)$ for $\gamma \in \Sigma(\vec{p})$ denote the $i(p^I_n, \gamma)$ found by \[
    p^{II}_n = \accentset{\ast}{\left(p^I_n\right)}.
    \]

    We can now fix a set $A \subseteq \bP_{\gamma^*}$ of size $|A| = \kappa^+$. By the generalized $\Delta$-system lemma (cf.\ \cite[Theorem~9.19]{jechSetTheory2003}), we may assume that the sets $\Sigma(\vec{p})$ for $p \in A$ form a $\Delta$-system with root $S$, i.e.\ $\Sigma(\vec{p}) \cap \Sigma(\vec{q}) = S$ for $p \neq q$. Consider now the families $\seq{i(\vec{p}, \gamma,n)}{\gamma \in S, n < \omega}$ for $p \in A$; there are only $\kappa^{<\kappa}=\kappa$-many such families, hence we can fix $p \neq q$ from $A$ such that $i(\vec{p}, \gamma, n) = i(\vec{q}, \gamma, n)$ for all $\gamma \in S, n < \omega$.
        
    To prove that $A$ is not an antichain, by induction we will now construct a condition $r$ such that 
    \[
        r \restriction \gamma \forces \forall n < \omega : r(\gamma) \leq p^{I}_n, q^{I}_n,
    \]
    so in particular $r \leq p, q$. Assume $r \restriction \gamma$ is already constructed as desired. If $\gamma \notin \Sigma(\vec{p}) \cup \Sigma(\vec{q})$, simply set $r(\gamma) = \mathbbm{1}_{\name{\bQ}_\gamma}$. If $\gamma \in S$ notice that 
    \[
        r \restriction \gamma \forces \vec{p}(\gamma) \text{ and } \vec{q}(\gamma) \text{ are runs of $\Game(\name{\bQ}_\gamma, {<}\omega)$ played according to $\name{\sigma}^\heartsuit_\gamma$}.
    \]    
    Furthermore for each $n < \omega$, $r \restriction \gamma \leq p^{II}_n \restriction \gamma$ and $p^{II}_n \restriction \gamma \forces p^{I}_n(\gamma) \in \name{\bQ}_{\gamma, i(\vec{p}, \gamma, n)} = \name{\bQ}_{\gamma, i(\vec{q}, \gamma, n)}$ and similarly for $q$. Hence $r \restriction \gamma$ forces that we can apply $(\heartsuit)$ to find $r(\gamma)$.

    If $\gamma \in \Sigma(\vec{p}) \minus \Sigma(\vec{q})$ (or vice versa), the proof is similar but easier and we instead apply $(\heartsuit)$ to $\vec{p}(\gamma)$ and $\vec{p}(\gamma)$.
\end{proof}

\begin{lemma}
    The forcing $\aforc_\alpha(A,B,X)$ satisfies $(\heartsuit)$.
\end{lemma}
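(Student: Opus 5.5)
The plan is to prove the two ingredients of $(\heartsuit)$ with one partition. The pieces $\bP_i$ will record a complete \enquote{pattern} of a condition, so that conditions in the same piece, and also limits of runs sharing the same pieces, can simply be unioned. The condition for a union to be a condition will be checked with \cref{fact: incomp equiv}.

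\emph{The partition.} For $p \in \aforc_\alpha(A,B,X)$, let $X_p = \setOf{x}{\exists \eta\, \langle \eta, x\rangle \in R_p}$. Since $|X_p| < \kappa$ and $\kappa$ is regular, there is a least $\delta_p < \kappa$ such that $x \mapsto x \restriction \delta_p$ is injective on $X_p$. The code of $p$ is
\[
c(p) = \bigl\langle f_p,\ \delta_p,\ \setOf{\langle \eta, x\restriction\delta_p\rangle}{\langle \eta, x\rangle \in R_p},\ \setOf{\langle x\restriction \delta_p, \epsilon\rangle}{x \in X_p}\bigr\rangle,
\]
where $\epsilon \in \{0,1\}^2$ records whether $x \in A$ and whether $x \in B$. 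Each code is a set of size ${<}\kappa$ of objects from a set of size $\kappa^{<\kappa} = \kappa$, so there are only $\kappa$ codes. Let $\bP_i$ be the set of conditions with the $i$-th code.

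The key observation is this: if $c(p) = c(q)$ and $x \in X_p \cap X_q$, then $x \restriction \delta$ determines $x$ uniquely in both conditions. Hence $x$ carries exactly the same set of nodes in $R_p$ and in $R_q$. Consequently, for every $x$ the node set of $x$ in $R_p \cup R_q$ equals its node set in $R_p$ or in $R_q$. Clauses \ref{def: alpha forcing-c}--\ref{def: alpha forcing-e} and the criticality constraint \ref{def: alpha forcing-f} only ever involve one point $x$ at a time, the values of $f$, and membership in $A$ and $B$, and $f_p = f_q$. Each of these clauses therefore transfers from $p$ or $q$ to $\langle f_p, R_p \cup R_q\rangle$, so $\bP_i$ is linked.

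\emph{The strategy.} Let $\sigma$ tell Player I to play, at round $n > 0$, some strict condition $p^I_n \leq p^{II}_{n-1}$; this is possible since $\saforc_\alpha(A,B,X)$ is dense. Along a run the $p^I_n$ form a decreasing sequence of strict conditions, so $p_\omega = \bigcup_n p^I_n = \bigcup_n p^{II}_n$ is a strict condition by \cref{lem: strict forcing is closed}. Now take two such runs with $p^I_n, q^I_n \in \bP_{i_n}$ for all $n$, and put $r = \langle f_{p_\omega} \cup f_{q_\omega}, R_{p_\omega}\cup R_{q_\omega}\rangle$. The $f$-parts agree at each stage, so it remains to handle the $R$-parts.

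\emph{The main obstacle: verifying that $r$ is a condition, in particular the criticality constraint.} This requires showing that each $x$ carries the same node set in $p_\omega$ and in $q_\omega$ whenever it occurs in both. Suppose $x$ appears in $p^I_n$ and in $q^I_m$, and let $k = \max(n,m)$. Then $x \in X_{p^I_k} \cap X_{q^I_k}$. The common code at stage $k$ uses a common $\delta_k$, so the argument above shows that the node sets of $x$ in $p^I_k$ and $q^I_k$ coincide. Since this holds for all sufficiently large $k$, the node sets of $x$ in $p_\omega$ and $q_\omega$ coincide as well. Hence, for each $x$, the configuration in $r$ is literally the one in $p_\omega$ or in $q_\omega$. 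Clauses \ref{def: alpha forcing-c}--\ref{def: alpha forcing-e} and the (strict) criticality constraint then follow pointwise from the fact that $p_\omega$ and $q_\omega$ are conditions. So $r$ is a condition, and it is below all $p^{II}_n$ and $q^{II}_n$, as $(\heartsuit)$ requires.
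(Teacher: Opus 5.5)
Your proof is correct, but it puts the work in a different place than the paper's proof.

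\emph{The paper's route.} The paper takes the $\kappa$-linked partition as given and makes the \emph{strategy} smarter. Player~I plays strict conditions and, in addition, makes commitments: whenever some $x$ sits at a node of $\succ^2(\eta)$ for a small limit node $\eta$ while no node of $\succ(\eta)$ carries $x$, Player~I commits $x$ to $\eta$ or to $\pred(\eta)$. For the union $t$ of both runs, clauses a)--e) then follow from linkedness alone. A violation of these is witnessed by at most two items, so it already appears in some $p^I_n \cup q^I_n$. The criticality constraint is the one clause whose violation may need $\cf(\trank(\eta))$-many witnesses, spread over infinitely many stages and over both runs. The paper settles it with those commitments together with strict criticality.

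\emph{Your route.} You make the \emph{partition} finer instead. Your code records $f_p$ exactly and records $R_p$ up to renaming of points via a separating $\delta_p$. This forces each point to carry the same node set in both runs from the first stage at which it occurs in both. Hence $R_{p_\omega} \cup R_{q_\omega}$ is, point by point, a copy of $R_{p_\omega}$ or of $R_{q_\omega}$, and the cross-run mixing behind infinitary criticality failures cannot arise. The strategy then reduces to playing in the ${<}\kappa$-closed dense set $\saforc_\alpha(A,B,X)$.

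\emph{Comparison.} Your argument is self-contained, since it reproves $\kappa$-linkedness. It is essentially the ``closed dense subset plus canonical lower bounds'' argument the paper alludes to before \cref{def: heart}. The paper's strategy has the merit of working relative to an arbitrary $\kappa$-linked partition, which is why the paper can simply cite \cref{lem: k linked}.

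A minor point: the $A$/$B$-pattern $\epsilon$ in your code is harmless but unnecessary, since every clause refers to the point $x$ itself.
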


\begin{proof}
    We know the forcing is $\kappa$-linked, so we only need to describe the strategy $\sigma$: suppose $\seq{p^I_n, p^{II}_n}{j < n}$ is a partial run of $\Game(\aforc_\alpha(A,B,X), {<}\omega)$. Write $q \defas p^{II}_{n-1}$. Then we construct $r \defas p^{I}_n \leq q = p^{II}_{n-1}$ such that 
    \begin{enumerate}
        \item $r \in \saforc_\alpha(A,B,X)$ is a condition in the strict partial order
        \item Whenever $\eta \in T_\alpha$ is a small limit node and there is an $x \in X$, $\nu \in \succ^2(\eta)$ such that $\langle \nu, x \rangle \in R_q$ but $\langle \eta', x \rangle \notin R_q$ for all $\eta' \in \succ(\eta)$, then either $\langle \eta, x \rangle \in R_r$ or $\langle \pred(\eta), x \rangle \in R_r$.\footnote{The reader is left to check that even in the cases $\eta \in \{\emptyset\} \cup \succ(\emptyset)$ and $x \in A \cup B$ this can be satisfied since $r$ is a strict condition.}
    \end{enumerate}

    To see that $\sigma$ witnesses $(\heartsuit)$, let $\seq{p^I_n, p^{II}_n}{n < \omega}$ and $\seq{q^I_n, q^{II}_n}{n < \omega}$ be two runs of the closure game as in the definition of $(\heartsuit)$. Set $t \defas \langle f_t, R_t \rangle$ where
    \[
        f_t \defas \bigcup_{n < \omega} f_{p^{I}_n} \cup f_{q^I_n}, R_t \defas \bigcup_{n < \omega} R_{p^I_n} \cup R_{q^I_n}.
    \]
    We are done if we prove that $t \in \aforc(A,B,X)$ is a condition. Checking all but the criticality constraint is routine and left to the reader; we allude merely to the fact that a violation of any of these constraints would reflect down to imply $p^I_n \incomp q^I_n$ for some $n < \omega$. To verify the criticality constraint, assume $\eta$ is $x$-critical in $t$ and let $n < \omega$ be the first index such that any $\nu \in \succ^2(\eta)$ appears as $\langle \nu, x \rangle \in R_{p^I_n} \cup R_{q^I_n}$; without loss of generality, it appears in $R_{p^I_n}$. If there is an $\eta' \in \succ(\eta)$ with $\langle \eta' ,x \rangle \in R_{p^I_n}$, then this instance of the criticality constraint is satisfied in $t$. Otherwise, we ensured $\langle \eta, x \rangle \in R_{p^I_n}$ or $\langle \pred(\eta), x \rangle \in R_{p^I_n}$. If $\langle \pred(\eta), x \rangle \in R_{p^I_n}$, apply (strict) criticality in $p^I_n$. Lastly, if $\langle \eta, x \rangle \in R_{p^I_n} \subseteq R_t$, then $\eta$ was not actually $x$-critical to begin with.
\end{proof}

\begin{corollary}
    All ${<}\kappa$-supported forcing iterations of $\alpha$-forcing are strategically ${<}\kappa$-closed and have the $\kappa^+$-c.c., hence they do not collapse cardinals.
\end{corollary}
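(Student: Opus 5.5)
The corollary is assembled from the results already proved; no new argument is needed. The plan is to check the closure part and the chain condition part separately, and then combine them in the standard way to get preservation of cardinals.

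For strategic ${<}\kappa$-closure, \cref{fact: strat closed} shows that each iterand $\aforc_\alpha(A,B,X)$ is strategically ${<}\kappa$-closed. In an iteration the iterands are given by names, so this has to hold in the intermediate models $V^{\bP_\gamma}$. This is fine: the proof of \cref{fact: strat closed} is a ZFC argument using \cref{lem: strict forcing is closed}, so it relativizes to every intermediate model, and each $\name{\bQ}_\gamma$ is forced to be strategically ${<}\kappa$-closed. The stated Fact on ${<}\kappa$-supported iterations then gives that $\bP_{\gamma^*}$ is strategically ${<}\kappa$-closed. One caveat: $\kappa = \kappa^{<\kappa}$ is needed in the intermediate models. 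It is preserved, because strategically ${<}\kappa$-closed forcings add no new ${<}\kappa$-sequences of ordinals.

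For the $\kappa^+$-c.c., the preceding lemma shows that $\aforc_\alpha(A,B,X)$ satisfies $(\heartsuit)$. By the same relativization, $\bP_\gamma$ forces that $\name{\bQ}_\gamma$ satisfies $(\heartsuit)$. \cref{prop: heart cc} then applies directly and yields the $\kappa^+$-c.c.\ for $\bP_{\gamma^*}$. The one point to check is that the linked partition and the strategy used in $(\heartsuit)$, namely those from \cref{lem: k linked}, are available in $V^{\bP_\gamma}$. They are, since \cref{lem: k linked} uses only $\kappa^{<\kappa} = \kappa$ in the model where the iterand is defined.

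Finally, the combination. Strategic ${<}\kappa$-closure means no new sequences of length ${<}\kappa$ are added. Hence no cardinal ${\le}\kappa$ is collapsed, and cofinalities ${<}\kappa$ are preserved. The $\kappa^+$-c.c.\ preserves all cardinals and cofinalities ${\ge}\kappa^+$. Together these give preservation of all cardinals. The only mildly delicate step is the relativization of $(\heartsuit)$ and $\kappa^{<\kappa}=\kappa$ to the intermediate models. I expect this to be routine given the closure.
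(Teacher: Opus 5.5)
Your proposal is correct and follows the paper's route: the paper gives no separate argument, and the corollary is read off by combining \cref{fact: strat closed}, the stated fact on ${<}\kappa$-supported iterations of strategically ${<}\kappa$-closed forcings, the lemma that $\aforc_\alpha(A,B,X)$ satisfies $(\heartsuit)$, and \cref{prop: heart cc}, exactly as you do. One small slip: \cref{lem: k linked} supplies only the $\kappa$-linked partition, while the strategy required by $(\heartsuit)$ comes from the lemma verifying $(\heartsuit)$.
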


Although we work with the strategically ${<}\kappa$-closed representation of $\alpha$-forcing, by \cref{lem: strict forcing is closed} it is forcing-equivalent to a ${<}\kappa$-closed partial order. The same remains true in iterations.

\begin{fact}[Folklore]
    Suppose $\langle \bP_\gamma, \name{\bQ}_\gamma : \gamma < \gamma^* \rangle$ is a ${<}\kappa$-supported forcing iteration in which $\bP_\gamma \forces \enquote{\name{\bQ}_\gamma \text{ is strategically ${<}\kappa$-closed}}$ for each $\gamma < \gamma^*$. Suppose further that $\name{\bQ}'_\gamma$ is a $\bP_\gamma$-name for a forcing such that 
    \[
        \bP_\gamma \forces \name{\bQ}'_\gamma \text{ is a strategically ${<}\kappa$-closed dense subforcing of } \name{\bQ}_\gamma.
    \]
    Then the limit $\bP_{\gamma^*}'$ of the ${<}\kappa$-supported iteration\footnote{Note that this iteration is well-defined, since by induction $\bP'_\gamma$ is a dense subforcing of $\bP_\gamma$ and thus there is an equivalent $\bP'_\gamma$-name for $\name{\bQ}'$.} $\langle \bP'_\gamma, \name{\bQ}'_\gamma : \gamma < \gamma^* \rangle$ is a dense subforcing of $\bP_{\gamma^*}$.
\end{fact}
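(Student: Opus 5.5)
We will prove by induction on $\gamma \le \gamma^*$ the stronger, uniform claim: the ${<}\kappa$-supported iteration $\langle \bP'_\gamma, \name{\bQ}'_\gamma : \gamma < \gamma\rangle$ has a limit $\bP'_\gamma$ which is a dense subforcing of $\bP_\gamma$. Here $\bP'_\gamma$ is identified with the set of $p \in \bP_\gamma$ such that $p\restriction\beta \forces p(\beta) \in \name{\bQ}'_\beta$ for every $\beta \in \supp(p)$, with the order inherited from $\bP_\gamma$. The footnote's observation is carried along in the induction: once $\bP'_\beta$ is known to be dense in $\bP_\beta$, every $\bP_\beta$-name can be replaced by an equivalent $\bP'_\beta$-name, so $\name{\bQ}'_\beta$ makes sense as a $\bP'_\beta$-name and the iteration is well defined. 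In the same way we also record that $\bP'_\gamma$ is strategically ${<}\kappa$-closed, by the preceding Fact on iterations of strategically closed forcings.

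For the successor step $\gamma = \beta+1$, let $p \in \bP_{\beta+1}$ be given. First we note that $p \restriction \beta \forces p(\beta) \in \name{\bQ}_\beta$. By density of $\name{\bQ}'_\beta$ and the maximum principle, there is a $\bP_\beta$-name $\name{q}$ with $p\restriction\beta \forces \name{q} \in \name{\bQ}'_\beta \wedge \name{q} \le p(\beta)$. Next, by the inductive hypothesis, we choose $s \le p\restriction\beta$ with $s \in \bP'_\beta$. Then $s \conc \name{q} \le p$, and it lies in $\bP'_{\beta+1}$.

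The limit step is where the real work lies, and it is the main obstacle. Given $p \in \bP_\gamma$ with $\supp(p)$ of size ${<}\kappa$, we must simultaneously replace every coordinate $p(\beta)$, $\beta\in\supp(p)$, by a name for an element of $\name{\bQ}'_\beta$. We also need the resulting ground-model initial segments to actually lie in $\bP'$, not merely in $\bP$. A naive coordinate-by-coordinate replacement handles the names, since replacing a name $p(\beta)$ by a stronger name $\name{q}_\beta$ forced into $\name{\bQ}'_\beta$ never changes supports or earlier coordinates. We therefore define $p'$ by $p'(\beta) = \name{q}_\beta$ for $\beta\in\supp(p)$ and trivial elsewhere. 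Since $\mathbbm{1}_{\name{\bQ}_\beta}$ may be assumed to lie in $\name{\bQ}'_\beta$ (or else we simply take coordinates outside the support to be trivial by convention), we get $p' \le p$ with $\supp(p') = \supp(p)$ of size ${<}\kappa$. To verify $p'\in\bP'_\gamma$, we check each coordinate: $p'\restriction\beta \forces p'(\beta)\in\name{\bQ}'_\beta$ holds by construction, because $p'\restriction\beta \le p\restriction\beta$. At limit stages the only delicate point is that $\bP_\gamma$ is the ${<}\kappa$-support limit, so membership is decided coordinatewise and support size is preserved. In particular, no fusion or closure argument is needed once we work with full names; strategic closure enters only to guarantee the iterands $\name{\bQ}'_\beta$ are legitimate for the iteration fact.

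Finally, for density in the ordering sense, we must check that $\bP'_\gamma$ with the inherited order agrees with the iteration order computed from the $\bP'_\beta$-names. This follows from the inductive hypothesis: for dense $\bP'_\beta\subseteq\bP_\beta$, forcing statements over $\bP'_\beta$ and $\bP_\beta$ coincide, so $r\restriction\beta\forces_{\bP'_\beta} r(\beta)\le q(\beta)$ is equivalent to the same statement over $\bP_\beta$. Applying this at $\gamma=\gamma^*$ gives the Fact.
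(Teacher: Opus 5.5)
Your proof is correct. The paper gives no proof of this folklore fact beyond the footnote's remark that, inductively, $\bP'_\gamma$ is dense in $\bP_\gamma$, so that $\name{\bQ}'_\gamma$ may be read as a $\bP'_\gamma$-name. Your argument is the standard one that remark points to: you replace each coordinate $p(\beta)$ by a name $\name{q}_\beta$ obtained from the maximum principle, and you carry along the identification of $\bP'_\beta$-names and forcing relations with those of $\bP_\beta$.

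Two minor comments. First, the limit step is not really an obstacle. The replacement $p'(\beta) = \name{q}_\beta$ works the same way at every stage, because $p' \restriction \beta \le p \restriction \beta$ follows by induction on $\beta$ directly from the choice of the $\name{q}_\beta$. Likewise, as you note, strategic ${<}\kappa$-closure plays no role in density; it only makes $\bP'_{\gamma^*}$ itself strategically closed. Second, your set of $p \in \bP_\gamma$ with $p \restriction \beta \forces p(\beta) \in \name{\bQ}'_\beta$ is only equivalent to the iteration of the $\name{\bQ}'_\beta$, not literally equal to it. To land literally in $\bP'_{\gamma^*}$, choose $\name{q}_\beta$ with $\mathbbm{1} \forces \name{q}_\beta \in \name{\bQ}'_\beta$ (the maximum principle again) and translate it into a $\bP'_\beta$-name, which is routine.
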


\begin{corollary} \label{cor: strict dense iteration}
    Every ${<}\kappa$-supported iteration of $\alpha$-forcing is forcing-equivalent to an iteration of ${<}\kappa$-closed partial orders, namely the one consisting of the respective strict versions of $\alpha$-forcing.
\end{corollary}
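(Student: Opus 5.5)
The plan is to derive \cref{cor: strict dense iteration} directly from the preceding folklore fact, applied to the given ${<}\kappa$-supported iteration $\langle \bP_\gamma, \name{\bQ}_\gamma : \gamma < \gamma^* \rangle$. Here each $\name{\bQ}_\gamma$ is forced to be some $\alpha_\gamma$-forcing $\aforc_{\alpha_\gamma}(\name{A}_\gamma,\name{B}_\gamma,\name{X}_\gamma)$. For each $\gamma$ we let $\name{\bQ}'_\gamma$ be a $\bP_\gamma$-name for the corresponding strict version $\saforc_{\alpha_\gamma}(\name{A}_\gamma,\name{B}_\gamma,\name{X}_\gamma)$.

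First we verify the hypotheses of the fact in $V^{\bP_\gamma}$. By \cref{fact: strat closed}, each $\name{\bQ}_\gamma$ is forced to be strategically ${<}\kappa$-closed. That $\name{\bQ}'_\gamma$ is dense in $\name{\bQ}_\gamma$ is the observation made right after the definition of strict $\alpha$-forcing: any condition can be strengthened to a strict one. If one wants this spelled out, a strict strengthening is obtained by adding, for each of the ${<}\kappa$ many pairs $(\eta,x)$ with $\eta$ $x$-critical, the pair $\langle \nu, x\rangle$ for the witness $\nu$ from the criticality constraint. This witness satisfies $\langle \nu',x\rangle\notin R_p$ for all $\nu'\in\succ(\nu)$, so clause \ref{def: alpha forcing-c} is preserved. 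By \cref{rem: criticality local}, adding $\langle\nu,x\rangle$ creates no new criticality, since $\nu$ is neither a small limit node, nor the predecessor of one, nor in $\succ^2$ of one. By \cref{lem: strict forcing is closed}, which holds in any model of the relevant hypotheses and so in particular in $V^{\bP_\gamma}$, $\name{\bQ}'_\gamma$ is forced to be ${<}\kappa$-closed, hence strategically ${<}\kappa$-closed.

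The folklore fact then yields that the ${<}\kappa$-supported iteration $\langle \bP'_\gamma,\name{\bQ}'_\gamma\rangle$, with each $\name{\bQ}'_\gamma$ reinterpreted as a $\bP'_\gamma$-name via the density of $\bP'_\gamma$ in $\bP_\gamma$, has a limit $\bP'_{\gamma^*}$ that is dense in $\bP_{\gamma^*}$. A dense subforcing is forcing-equivalent to the whole forcing, which gives the first part of the claim. Each iterand of this iteration is forced to be the ${<}\kappa$-closed strict $\alpha$-forcing, so it is an iteration of ${<}\kappa$-closed partial orders as stated.

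The only point requiring care, and the one I expect to be the main obstacle, is absoluteness. \cref{lem: strict forcing is closed} and the density argument must be applied inside $V^{\bP_\gamma}$, where $\kappa$ remains regular and $\kappa^{<\kappa}=\kappa$ since no new ${<}\kappa$-sequences are added. The parameters $A,B,X$ may also be new there. Neither proof uses anything beyond the regularity of $\kappa$ and the combinatorics of $T_\alpha$, which is a ground-model object. I would check this explicitly and otherwise treat the rest as routine.
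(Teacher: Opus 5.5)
Your proposal is correct and takes the same route as the paper, which states the corollary as an immediate consequence of the preceding folklore fact. The ingredients you use are the paper's: \cref{fact: strat closed} for the iterands, density of strict conditions, and \cref{lem: strict forcing is closed} applied in $V^{\bP_\gamma}$. Your explicit density check via \cref{rem: criticality local} and your remark on why the lemmas hold in $V^{\bP_\gamma}$ supply details the paper leaves implicit.
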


\subsection{Preserving the generic $\Pi$-set} \label{subsec: preserve pi set}

Our strategy for constructing models satisfying various constellations of $\ord_\kappa$ will be to employ an iteration of $\alpha$-forcing. In the iteration, an upper bound for $\ord_\kappa(X)$ can be established straightforwardly by including sufficiently many iterands of the form $\aforc_\alpha(A, X \minus A, X)$ to add $\ppi{0}{\alpha}{X}{\kappa}$ codes for subsets of $X$. Establishing lower bounds $\ord_\kappa(X) \geq \alpha$ is much more difficult; our strategy is to first force with $\aforc_\alpha(\emptyset,\emptyset)$ to add a generic $\ppi{0}{\alpha}{}{\kappa}$ subset to $X$, and then use the machinery of ranked forcing to guarantee that this set does not drop in complexity throughout the iteration.

First, let us prove that a single $\alpha$-forcing is a ranked forcing notion. This is an instance of a more general proof that will be expanded upon in \cref{sec: iterating}, with the intrinsic importance of the concrete rank function(s) introduced here being reflected in Theorem~\ref{th: preserve generic pi set}. Secondly, it also serves the purpose of gently introducing the core argument the more intricate proofs in the next section are built around, liberated from the technicalities necessary to deal with iterations. 

\begin{definition} \label{def: rank function single}
    For $1 < \alpha < \kappa^+$, $H \subseteq X$ and $p \in \aforc_\alpha(\emptyset, \emptyset, X)$ define 
    \[
        \crank_H(p) \defas \sup\{\trank_{T_\alpha}(\eta) : \exists x \in X \minus H: \langle \eta, x \rangle \in R_p\}.
    \]
\end{definition}

We will prove that $\crank_H$ is a rank function on $\aforc_\alpha(\emptyset,\emptyset,X)$ with $h_{\crank_H}(\beta) = \beta + 1$. Recall that concretely, this means the following: for every $p \in \aforc_\alpha(\emptyset, \emptyset,X)$ and ordinal $\beta$ there exists a $q \in \aforc_\alpha(\emptyset,\emptyset,X)$ with $q \comp p$ and $\crank(q) \leq \beta$ such that for each $r \in \aforc_\alpha(\emptyset, \emptyset, X)$ with $\crank(r) < \beta$ we have
\[
    r \comp q \Rightarrow r \comp p.
\]
Notice that since $\crank_H(p) \leq \alpha $, it is only relevant to verify this for ordinals $\beta \leq \alpha$.

\begin{theorem} \label{th: rank function single}
    For every $1 < \alpha < \kappa^+$ and $H \subseteq X$ the function $\crank_H$ is a rank function on $\aforc_\alpha(\emptyset, \emptyset,X)$ with $h_{\crank_H}(\beta) = \beta + 1$.
\end{theorem}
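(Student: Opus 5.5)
The plan is to obtain $q$ from $p$ by \emph{forgetting} the high-rank promises about points outside $H$, and then to add a small amount of low-rank information that prevents any low-rank $r$ from contradicting what was forgotten. Fix $p$ and $\beta\le\alpha$. First replace $p$ by a stronger strict condition $p'\le p$. This loses nothing, since $r\comp q\Rightarrow r\comp p'$ implies $r\comp q\Rightarrow r\comp p$, and $q\comp p'$ gives $q\comp p$. So assume $p\in\saforc_\alpha(\emptyset,\emptyset,X)$. I will assume the cofinal maps $c_\delta$ of \cref{def: ta} may be taken non-decreasing. I expect this to be harmless, since only the universality of $T_\delta$ matters.

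\emph{Definition of $q$.} Let $q_0=\langle f_p, R_{q_0}\rangle$, where $R_{q_0}$ consists of the pairs $\langle\eta,x\rangle\in R_p$ with $x\in H$ or $\trank(\eta)\le\beta$. Since $\trank$ strictly decreases along successors and the criticality notion is monotone (if $\eta$ is $x$-critical in $q_0$ then it is $x$-critical in $p$), the constraints of \cref{def: alpha forcing} pass to $q_0$. In particular, the witness $\nu$ for \ref{def: alpha forcing-f} in $p$ still works in $q_0$, because the constraint is negative. So $q_0$ is a condition, $p\le q_0$, and $\crank_H(q_0)\le\beta$.

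The danger is a deleted pair $\langle\eta,x\rangle$ with $x\notin H$ and $\trank(\eta)>\beta$, where some $\nu\in\succ(\eta)$ has $\trank(\nu)<\beta$. An $r$ of rank $<\beta$ could then add $\langle\nu,x\rangle$, violating \ref{def: alpha forcing-c} against $p$. This can only happen when $\eta$ is a limit node: if $\trank(\eta)=\gamma+1$, all successors have rank $\gamma\ge\beta$. For a limit node, the successors of rank $<\beta$ correspond to the indices $i$ with $c_\delta(i)+4<\beta$. There are fewer than $\kappa$ of these, either because $\cf(\delta)<\kappa$ or by monotonicity of $c_\delta$ when $\cf(\delta)=\kappa$.

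For each such deleted pair and each such $\nu$, I block $\langle\nu,x\rangle$ as follows:
\begin{itemize}
\item if $\trank(\nu)>1$, add $\langle\nu',x\rangle$ for some $\nu'\in\succ(\nu)$ not mentioned in $p$;
\item if $\trank(\nu)=1$, pick a leaf $\nu'\in\succ(\nu)\setminus\dom(f_p)$ and set $f(\nu')=x\restriction\xi$ for some $\xi$ with $[x\restriction\xi]$ avoiding the fewer than $\kappa$ points $y$ with $\langle\nu,y\rangle\in R_p$.
\end{itemize}
Call the result $q$. There are fewer than $\kappa$ additions, all of rank $<\beta$, so $\crank_H(q)\le\beta$. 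I will check that $q\le q_0$ is a condition and that $q\comp p$ via $p\cup q$; by \cref{fact: incomp equiv} these are direct verifications, using freshness of $\nu'$ and the fact that $\langle\nu,x\rangle\notin R_p$, since $\langle\eta,x\rangle\in R_p$.

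\emph{Main step.} Given $r$ with $\crank_H(r)<\beta$ and $r\comp q$, I will show that $s=p\cup q\cup r$ is a condition, again by \cref{fact: incomp equiv}. For \ref{def: alpha forcing-a}, \ref{def: alpha forcing-b} and \ref{def: alpha forcing-c}, any violation involves two pairs in $R_p\cup R_q\cup R_r$. If neither comes from $R_p\setminus R_q$, the violation already lives in $q\cup r$, which is a condition. Otherwise one is $\langle\eta,x\rangle$ with $x\notin H$ and $\trank(\eta)>\beta$. If the other is in $R_r$, it has the same $x\notin H$ and so rank $<\beta$. It is therefore $\langle\nu,x\rangle$ with $\nu\in\succ(\eta)$, which the blocking pair $\langle\nu',x\rangle\in R_q$ (or the leaf value) makes incompatible with $r\cup q$; this is a contradiction. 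Leaf conflicts with $f_r$ cannot occur, since nodes of rank $1\le\beta$ are kept in $q$ when $\beta\ge1$.

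The hardest part is the criticality constraint \ref{def: alpha forcing-f} for $s$, because criticality is not preserved under unions. Suppose $\eta$ is $x$-critical in $s$. If $x\in H$, all relevant pairs lie in $q\cup r$ or in $p$, and I will argue that one of these two conditions already sees the criticality, so its witness survives; this needs care. If $x\notin H$, the pairs of $r$ at $x$ have rank $<\beta$. I will split according to whether $\pred(\eta)$ and the $\succ^2$-pair come from $p$ or from $r$, using \cref{rem: criticality local} to keep the cases disjoint. When $\langle\pred(\eta),x\rangle$ is a deleted pair, the strictness of $p$ supplies $\langle\eta',x\rangle\in R_p$ for some $\eta'\in\succ(\eta)$, and I will need to show that this kills criticality or yields a witness. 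I expect this case analysis to be the main obstacle. If it fails, I will enlarge the blocking step to also record, in $q$, a witness successor for each critical configuration of $p$ with $\trank\le\beta$.
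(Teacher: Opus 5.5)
\paragraph{Same approach as the paper.} Your architecture matches the paper's proof. Keep the pairs $\langle\eta,x\rangle$ with $x\in H$ or $\trank(\eta)\le\beta$. Then protect each deleted pair at a limit node by a fresh pair $\langle\nu',x\rangle$ below every successor $\nu$ of rank $<\beta$. The paper inserts these blocking pairs into $p$ before restricting, while you add them to $q$ after restricting; the two are equivalent. Two small comments on your setup:
\begin{enumerate}
\item Your assumption that the $c_\delta$ are non-decreasing is needed, not just harmless. The paper's count of fewer than $\kappa$ such $\nu$, made ``by the construction of $T_\alpha$'', silently uses it. For instance, let $c_\kappa(i)=0$ for even $i$, take $\eta\in\succ_{T_{\kappa+1}}(\emptyset)$, $H=\emptyset$, $R_p=\{\langle\eta,x\rangle\}$ and $\beta=5$. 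Any $q$ with $\crank_H(q)\le 5$ misses some rank-$4$ successor $\nu$ of $\eta$. Then $\langle\emptyset,\{\langle\nu,x\rangle\}\rangle$ is compatible with $q$ but not with $p$.
\item The rank-$1$ branch of your blocking step never occurs, since successors of limit nodes have rank $c_\delta(i)+4$.
\end{enumerate}

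\paragraph{The gap.} You defer the criticality constraint for $s=p\cup q\cup r$, and this is where the real content lies. The one concrete step in your sketch is also wrong as stated. A deleted $\langle\pred(\eta),x\rangle$ does not by itself let strictness of $p$ supply $\langle\eta',x\rangle\in R_p$ with $\eta'\in\succ(\eta)$. That needs $\eta$ to be $x$-critical in $p$, i.e.\ some $\succ^2(\eta)$-pair at $x$ already in $R_p$. Your fallback of recording extra witnesses in $q$ is not needed.

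\paragraph{The missing argument.} The constraint at $(\eta,x)$ only sees pairs at $x$.
\begin{enumerate}
\item If $x\in H$, every pair of $p$ at $x$ lies in $q$, so the constraint is inherited from the condition $q\cup r$.
\item If $x\notin H$ and $\langle\pred(\eta),x\rangle\in R_q\cup R_r$, then $\trank(\pred(\eta))\le\beta$. So all $\succ^2(\eta)$-pairs of $p$ at $x$ were kept, and again everything lives in $q\cup r$.
\item Otherwise $x\notin H$ and $\langle\pred(\eta),x\rangle\in R_p\setminus R_q$. Assume every $\nu\in\succ(\eta)$ has some $\nu'\in\succ(\nu)$ with $\langle\nu',x\rangle\in R_s$. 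None of these pairs is a blocking pair. A blocking pair hangs two levels below some $\eta_0$ with $\langle\eta_0,x\rangle\in R_p$. Here $\eta_0=\eta$ would contradict \ref{def: alpha forcing-c} in $p$, since $\langle\pred(\eta),x\rangle\in R_p$.
\begin{enumerate}
\item Suppose one of these $\langle\nu',x\rangle$ lies in $R_p$. Then $\eta$ is $x$-critical in the strict condition $p$, which gives $\bar\nu\in\succ(\eta)$ with $\langle\bar\nu,x\rangle\in R_p$. No successor of $\bar\nu$ carries $x$ in $R_p$, by \ref{def: alpha forcing-c}. None carries it in $R_r$ either: that is the \ref{def: alpha forcing-c}-conflict you already excluded, since $\bar\nu$ has successor rank. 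So $\bar\nu$ is a witness.
\item Suppose instead all the $\langle\nu',x\rangle$ lie in $R_r$. Use that $\crank_H$ is a \emph{supremum}. The ranks $\trank(\nu')$, with $\trank(\nu')+1=\trank(\nu)$, are cofinal in the limit $\trank(\eta)$. Hence $\trank(\eta)\le\crank_H(r)<\beta$. Since a limit node's predecessor has rank $\trank(\eta)+1$, we get $\trank(\pred(\eta))\le\beta$. So $\langle\pred(\eta),x\rangle$ was never deleted, a contradiction.
\end{enumerate}
\end{enumerate}
This is the content of the last bullet of the paper's proof.
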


\begin{proof}
    Suppose $p \in \aforc_\alpha(\emptyset,\emptyset,X)$ and $\beta \leq \alpha$ are given; we want to define a suitable $q$ as above. First, we strengthen $p$ such that the following is satisfied: for every pair $\langle \eta, x \rangle \in R_p$ such that $\trank(\eta) = \lambda > \beta$ is a limit ordinal and $\nu \in \succ(\eta)$ with $\trank(\nu) < \beta$ there exists a $\nu' \in \succ(\nu)$ with $\langle \nu' , x \rangle \in R_{p}$. This is possible, since there are ${<}\kappa$-many such $\eta, x$ and $\nu \in \succ(\eta)$ by the construction of $T_\alpha$. Furthermore, every such $\nu$ itself has $\kappa$-many successors in $T_\alpha$, since it is not a small limit node, so choose a $\nu' \in \succ(\nu)$ such that no node above $\nu'$ appears in $R_p$. Lastly, adding $\langle \nu', x\rangle$ to $R_p$ does not turn any node $x$-critical, see also Remark~\ref{rem: criticality local}.

    Now we define $q = \langle f_q, R_q \rangle$ with $f_q \defas f_p$ and
    \[
    R_q \defas \{ \langle \eta, x \rangle \in R_p : x \in H \vee \trank_{T_\alpha}(\eta) \leq \beta \}.
    \]

    We claim $q$ is as desired. First, it is self-evident that $q \in \aforc(\emptyset,\emptyset,X)$ is a condition; furthermore, we have $q \geq p$ and $\crank(q) \leq \beta$ by definition.

    Suppose now that $r \in \aforc(\emptyset,\emptyset,X)$ with $\crank(r) < \beta$ is compatible with $q$. Towards a contradiction, let us also assume it is incompatible with $p$. This means that $p \cup r$ is not a conditions, which can happen for a number of reasons. We are going to exclude all of them as a possibility. 
    \begin{itemize}
        \item $f_p \cup f_r$ is not a function. \\ This is impossible, since $f_p = f_q$.
        \item There is an $\eta \in T_\alpha$, $\nu \in \succ(\eta)$ and an $x \in X$ such that $\langle \eta, x \rangle \in R_r$ and $x \in [f_p(\eta)]$. \\ This also cannot happen because $f_p = f_q$.
        \item There is an $\eta \in T_\alpha$, $\nu \in \succ(\eta)$ and an $x \in X$ such that $\langle \eta, x \rangle \in R_r$ and $\langle \nu,x \rangle \in R_p$. \\ If $x \in H$, then we immediately get $\langle \nu, x \rangle \in R_q$, which contradicts $q \comp r$. If $x \notin H$, then $\trank(\nu) < \trank(\eta) < \beta$, so we also have $\langle \nu, x \rangle \in R_q$.
        \item There is an $\eta \in T_\alpha$, $\nu \in \succ(\eta)$ and an $x \in X$ such that $\langle \eta, x \rangle \in R_p$ and $x \in [f_r(\eta)]$. \\ Since we can generally assume $\beta > 0$ (otherwise there is nothing to show) and this situation in particular implies $\trank(\eta) = 1$, we have $\langle \eta, x \rangle \in R_q$. This is impossible.
        \item There is an $\eta \in T_\alpha$, $\nu \in \succ(\eta)$ and an $x \in X$ such that $\langle \eta, x \rangle \in R_p$ and $\langle \nu,x \rangle \in R_r$. \\ If $x \in H$, there is nothing to do and the contradiction is immediate. Otherwise, there are two cases:
        \begin{itemize}
            \item If $\trank(\eta)$ is a successor ordinal, then $\trank(\eta) = \trank(\nu) + 1$. Hence if $x \notin H$, then $\beta > \trank(\nu)$, so $\beta \geq \trank(\eta)$ and thus $\langle \eta, x \rangle \in R_q$.
            \item If $\trank(\eta)$ is a limit ordinal, then the preparation of $p$ at the beginning of the proof comes into play. Since $\beta > \trank(\nu)$, we have ensured that there exists a $\nu' \in \succ(\nu)$ so that $\langle \nu', x\rangle$ is in $R_p$, hence also $R_q$.
        \end{itemize}
        \item The only case left is that there is an $x \in X$, an $\eta \in T_\alpha$ such that $\langle \pred(\eta), x \rangle \in R_p \cup R_r$ and for each $\nu \in \succ(\eta)$ there is a $\nu' \in \succ(\nu)$ with $\langle \nu' , x \rangle \in R_p \cup R_r$. However, if $x \in H$ or $\langle \pred(\eta), x \rangle \in R_r$ (which implies $\beta$ is large enough), then the same incompatibility would appear between $q$ and $r$. Hence this is only interesting if $x \notin H$ and $\langle \pred(\eta), x \rangle \in R_p$. Likewise, if for some $\nu \in \succ(\eta)$ there is a $\nu' \in \succ(\nu)$ with $\langle \nu' ,x \rangle \in R_p$, then by the criticality condition for $p$ there has to be a $\bar{\nu} \in \succ(\eta)$ with $\langle \bar{\nu} , x \rangle \in R_p$. But from the previous cases we already know that this implies that for all $\bar{\nu}' \in \succ(\bar{\nu})$ we have $\langle \bar{\nu}', x \rangle \notin R_p \cup R_r$, so this cannot be. Thus the only remaining case is that all the $\nu'$ we found at the beginning must satisfy $\langle \nu' ,x \rangle \in R_r$. Since $x \notin H$, the difference in rank between each such $\nu'$ and its corresponding predecessor $\nu$ is $1$ and $\trank(\eta)$ is a limit, this means $\beta > \sup \setOf{\trank(\nu)}{\nu \in \succ(\eta)} = \trank(\eta)$. Hence $\beta \geq \trank(\pred(\eta)) = \trank(\eta) + 1$ and $\langle \pred(\eta), x \rangle \in R_q$. Contradiction. \qedhere
    \end{itemize}
\end{proof}

The next theorem builds the bridge between the abstract theory of ranked forcing and the structure of the Borel hierarchy. It encapsulates the core idea of several similar proofs in \cite{miller_length_1979}. Based on the role that $y^*$ performs in the proof, the core argument is also dubbed \enquote{the old switcheroo} by Miller. Note that we formulate it to show that the set $G_\emptyset(0)$ added by the first generic not only does not admit a simple definition at a lower level of the $\kappa$-Borel hierarchy, it also does not admit a simple definition at a lower level of the $\lambda$-Borel hierarchy for any $\lambda \geq \kappa$. This is relevant for future work.

\begin{theorem} \label{th: preserve generic pi set}
    Suppose $\lambda \geq \kappa$ is a cardinal and $\alpha$ is an ordinal with $1 < \alpha < \kappa^+$. Moreover, let $X \subseteq \pre{\kappa}{2}$ and $\name{\mathcal{P}}$ be a $\aforc_\alpha(\emptyset, \emptyset, X)$-name for a $\lambda^+$-c.c.\ forcing notion. Write
    \[
        \bP = \aforc_\alpha(\emptyset, \emptyset, X) \star \name{\mathcal{P}}.
    \]
    Suppose further that $\seq{\crank_i}{i \in I}$ is a family of functions such that the following is true:
    \begin{enumerate}
        \item For each $i \in I$, $\crank_i$ is a rank function on $\bP$ for ordinals ${<}\alpha$ with $h_{\crank_i}(\beta) = \beta + 1$. \label{enum: switcheroo 1}
        \item For each set $S \subseteq \bP$ of size $|S| \leq \lambda$ there exists an $i \in I$ with $\crank_i(p) = 0$ for all $p \in S$. \label{enum: switcheroo 2}
        \item For each $i \in I$ there exists an $H \subseteq X$ of size $|H| \leq \lambda$ such that $\crank_i(p) \geq \crank_H(p(0))$ for all $p \in \bP$. Here $\crank_H$ is the rank function on $\aforc_\alpha(\emptyset, \emptyset, X)$ from Definition~\ref{def: rank function single}. \label{enum: switcheroo 3}
    \end{enumerate}

    Then for each $Y \subseteq X$ with $|Y| > \lambda$,
    \[
        \bP \forces G_\emptyset(0) \cap Y \notin \ssigma{0}{\alpha}{Y}{\lambda},
    \]
    where $G_\emptyset(0)$ is the generic $\ppi{0}{\alpha}{X}{\kappa}$ subset of $X$ added by the first $\aforc_\alpha(\emptyset, \emptyset, X)$-generic filter.
\end{theorem}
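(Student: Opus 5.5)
Suppose towards a contradiction that some $p \in \bP$ and a $\bP$-name $\name{B}$ satisfy $p \forces \enquote{\name{B} \text{ is a } \ssigma{0}{\alpha}{Y}{\lambda} \text{ set with } \name{B} = G_\emptyset(0) \cap Y}$. The plan is to turn this into an infinitary formula, apply \cref{th: rank-forcing} with a suitable rank function $\crank_i$, and then perform Miller's \enquote{switcheroo} with a point $y^* \in Y \minus H$.

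\textbf{Step 1: the formula and the choice of $i$.} Strengthening $p$, fix names for a code $\langle T, f\rangle$ with $T \subseteq \pre{<\omega}{\lambda}$ of rank ${<}\alpha$ at the root (in the $\Sigma$-sense). Because $\name{\mathcal{P}}$ is $\lambda^+$-c.c.\ and the first iterand is $\kappa^+$-c.c.\ (and $\lambda\ge\kappa$), each value \enquote{$\name{f}(\eta) = s$} is decided by a maximal antichain of size ${\le}\lambda$. So the set $S \subseteq \bP$ of all conditions in these antichains, together with $p$, has size ${\le}\lambda$ (the tree has ${\le}\lambda$ nodes and there are ${\le}\kappa^{<\kappa}=\kappa$ possible values $s$). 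By \eqref{enum: switcheroo 2} pick $i$ with $\crank_i \restriction S = 0$, and by \eqref{enum: switcheroo 3} the corresponding $H$ with $|H|\le\lambda$. Since $|Y|>\lambda$, fix $y^* \in Y \minus H$. Let the atoms $A$ be the statements $\enquote{\name{f}(\eta)=s}$. For this fixed $y^*$, the statement $y^* \in \name{B}$ is then forced to be equivalent to a formula $\varphi$ in $\mathcal{L}_\infty(A)/_\bP$: at leaves it is $\bigvee_{s \ni y^*}\enquote{\name f(\eta) = s}$, and we unwind the code upward. A routine count shows $[\varphi] \in \Sigma_{1+\beta}$ with $1+\beta \le \alpha$ (minor bookkeeping since leaves give $\Sigma_1$ rather than atoms). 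Every atom forced by a condition is forced by a compatible condition from $S$, so the hypothesis of \cref{th: rank-forcing} holds with $\beta_0 = 1$, and by \eqref{enum: switcheroo 1} $h^\beta(1) = 1+\beta$ is the relevant bound ($\le \alpha$, a rank function for ordinals ${<}\alpha$ suffices by the exact indexing).

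\textbf{Step 2: the switcheroo.} Extend $p$ so that it decides $y^* \in G_\emptyset(0)$. Two cases arise: (a) $p$ forces $\langle \emptyset, y^*\rangle \in R$, or (b) it forces some $\langle \nu,y^*\rangle \in R$ with $\nu \in \succ(\emptyset)$, using \cref{lem: D dense} and \cref{th: R interprets correctly}. In case (a), $p \forces \varphi$ (or its $\Pi$-version $\neg\varphi$ in case (b)). \cref{th: rank-forcing} gives $q \comp p$ with $\crank_i(q) < \alpha$ (resp.\ the analogous bound for $\neg\varphi$, which lives one level lower, which is why the $\Sigma$ versus $\Pi$ distinction matters), with $q \forces \varphi$. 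By \eqref{enum: switcheroo 3}, $\crank_H(q(0)) < \alpha$, so $R_{q(0)}$ contains no pair $\langle\emptyset, x\rangle$ with $x \notin H$, in particular none with $x = y^*$. We may then extend $q$ in the first coordinate by adding $\langle \nu, y^*\rangle$ for a fresh $\nu \in \succ(\emptyset)$ not mentioned in $q(0)$. The check that this is a condition uses $\crank_H(q(0))<\alpha$ and \cref{rem: criticality local}; the compatibility issue at small limit nodes is handled as in \cref{th: rank function single}. Carry the rest of $q$ along, since it is a name over the first coordinate. The result forces $y^*\notin G_\emptyset(0)$ but $y^* \in \name{B}$, contradicting $p$'s forcing once we also remain compatible with $p$.

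\textbf{Main obstacle.} The hard part is precisely this last compatibility. We need the modified $q'$ to still be compatible with $p$, or else to derive the contradiction directly from $q' \forces \varphi \wedge y^*\notin G_\emptyset(0)$ together with $\bP$ forcing the equivalence of $\varphi$ with $y^* \in G_\emptyset(0)$. The latter route is what I would try first. The equivalence $y^*\in \name B \leftrightarrow y^* \in G_\emptyset(0)$ is only forced below $p$, so I would instead build $\varphi$ to already encode \enquote{$p \in \name G \Rightarrow \ldots$}, or fold $p$'s atoms into $S$ via \eqref{enum: switcheroo 2}. Alternatively, I would choose $\nu$ outside $\supp$ of $p(0)$ so that $q' \cup p$ remains a condition. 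Verifying that adding $\langle\nu,y^*\rangle$ respects the criticality constraint when $\alpha$ or $\alpha-1$ is a small limit is the delicate technical point.
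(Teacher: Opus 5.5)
Your skeleton matches the paper's: choose $S$, $i$, $H$ and $y^*\in Y\setminus H$, express $y^*\in\name{B}$ as a $\Sigma_\alpha$ sentence, apply \cref{th: rank-forcing} with $\beta_0=1$, then add $\langle\nu,y^*\rangle$ below the rank-bounded $q$. But the step you flag as the main obstacle is a genuine gap, and it is the heart of the switcheroo. With atoms only of the form ``$\name{f}(\eta)=s$'', the condition $q$ produced by \cref{th: rank-forcing} forces $\varphi$, i.e.\ $y^*\in\name{B}$, and nothing more. The link $\name{B}=G_\emptyset(0)\cap Y$ is forced only below $p$, while $q$ is merely compatible with your extension of $p$.

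Your fallback of choosing $\nu$ so that $q'$ stays compatible with $p$ cannot work. Compatibility in $\aforc_\alpha(\emptyset,\emptyset,X)\star\name{\mathcal{P}}$ also depends on the second coordinates, which you do not control. In fact any common extension of $q$ and $p$ forces $y^*\in G_\emptyset(0)$, so $q'$ is necessarily incompatible with $p$; that compatibility is exactly what has to be derived, not arranged.

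The missing idea is to put the equation itself into the language. Add the sentence $a\equiv$ ``$G_\emptyset(0)\cap Y\neq\name{B}$'' to the atoms, and add to $S$ a maximal antichain deciding it (size ${\le}\lambda$ by the $\lambda^+$-c.c.\ of $\bP$), so the atom hypothesis of \cref{th: rank-forcing} still has rank-$0$ witnesses. Then apply the theorem to $\varphi\wedge\neg a$. Writing $\varphi=\bigvee_\nu\neg\varphi_\nu$ with $\varphi_\nu\in\Sigma_{\trank(\nu)}$ and $\trank(\nu)\ge1$, the conjunction is $\bigvee_\nu\neg(\varphi_\nu\vee a)$, and $\varphi_\nu\vee a$ stays in $\Sigma_{\trank(\nu)}$. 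So the conjunction is still $\Sigma_\alpha$; this is where $\alpha\ge 2$ is used. Now $q$ itself forces both $y^*\in\name{B}$ and the equation. Hence $q'_\nu\le q$, which forces $y^*\notin G_\emptyset(0)$, is contradictory on its own, with no reference to $p$. Your suggestion to build $p$ into $\varphi$ would have to take this form: a conjunct whose atom is decided by $S$. An implication ``$p\in\name{G}\Rightarrow\dots$'' gives nothing.

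There are two smaller points. First, the case split in Step 2 is both flawed and unnecessary. $\neg\varphi$ is $\Pi_\alpha$, not one level lower, and \cref{th: rank-forcing} gives no rank bound ${<}\alpha$ for it. But since you put $p$ into $S$, hypothesis (3) gives $\crank_H(p(0))=0$, so $y^*$ does not occur in $R_{p(0)}$. You can therefore always take case (a) by adding $\langle\emptyset,y^*\rangle$ directly.

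Second, when $\alpha$ is a small limit, the root of $T_\alpha$ has only $\cf(\alpha)<\kappa$ successors, so a fresh $\nu\in\succ(\emptyset)$ need not exist. Instead use $\crank_H(q(0))<\alpha=\sup_{\nu\in\succ(\emptyset)}\trank(\nu)$ to find $\nu\in\succ(\emptyset)$ with $\langle\nu',y^*\rangle\notin R_{q(0)}$ for all $\nu'\in\succ(\nu)$. Since $\trank(\nu)$ has the form $\gamma+4$, adding $\langle\nu,y^*\rangle$ creates no critical node.
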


\begin{proof}
    Suppose this is not the case. Let $p \in \bP$ be a condition and $\name{J}$ be a $\bP$-name for a $\ssigma{0}{\alpha}{Y}{\lambda}$ set such that $p \forces G_\emptyset(0) \cap Y = \name{J}$. Furthermore, let $T$ be a universal well-founded tree $T_\alpha \subseteq \pre{<\omega}{\lambda}$ as in \cref{rem: universal tree} for the $\lambda$-Borel hierarchy on $\pre{\kappa}{\kappa}$, i.e.\ for every $\ssigma{0}{\alpha}{}{\lambda}$ set $B \subseteq \pre{\kappa}{\kappa}$ there exists an $f : \leaf(T) \to \pre{<\kappa}{\kappa}$ such that $B = \mathcal{I}^f_T(\emptyset)$. Hence we can find a name $\name{f}$ for an assignment $\name{f} : \leaf(T) \to \pre{<\kappa}{\kappa}$ such that 
    \[
        \bP \forces \name{J} =  Y \minus \mathcal{I}^{\name{f}}_{T}(\emptyset).
    \]
    By the $\lambda^+$-c.c., there is a set $S \subseteq \bP$ of size ${\leq}\lambda$ such that for each $q \in \bP$ and $j < \lambda$ there is an $s \in S$ compatible with $q$ which simultaneously decides $\name{\tau}\restriction j$ and the statement $\enquote{G_\emptyset(0) = \name{J}}$. 
    By \eqref{enum: switcheroo 2}, there is an $i^* \in I$ such that $\crank_{i^*}(s) = 0$ for all $s \in S$.
    Finally, by \eqref{enum: switcheroo 3}, we can find an $H^* \subseteq X$ of size $|H^*| \leq \lambda$ corresponding to $i^*$. Since $|Y| > \lambda$, we will from now on fix a $y^* \in Y \minus H^*$ and write
    \[
        \bar{A} \defas \setOf{\enquote{y^* \in [\name{f}(\eta)]}}{\eta \in \leaf(T)} \cup \{\enquote{G_\emptyset(0) \neq \name{J}}\}.
    \]
    
\begin{claim}
    For every $\eta \in T$ there exists a sentence $\varphi_\eta \in \Sigma_{\crank_T(\eta)}(\mathcal{L}_\infty(\bar{A})/_\bP)$ such that
    \[
        \bP \forces \varphi_\eta \Leftrightarrow y^* \notin \mathcal{I}^{\name{f}}_T(\eta)
    \]
\end{claim}

\begin{proof}[Proof of the claim]
    By induction on the structure of $T$. For every $\eta \in \leaf(T)$ the sentence $\enquote{y^* \notin [\name{f}(\eta)]}$ is in $\Sigma_0(\mathcal{L}_\infty(\bar{A})/_\bP)$. Secondly, if we assume the statement is true for all $\nu \in \succ(\eta)$, then set $\varphi_\eta \defas \bigvee_{\nu \in \succ(\eta)} \neg \varphi_\nu$. We know $\varphi_\eta \in \Sigma_{\beta}(\mathcal{L}_\infty(\bar{A})/_\bP)$ with $\beta = \sup \setOf{\trank_T(\nu) + 1}{\nu \in \succ(\eta)} = \trank_T(\eta)$. Moreover,
    \[
        \bP \forces \varphi_\eta \Leftrightarrow (\exists \nu \in \succ(\eta) : \neg \varphi_\nu) \Leftrightarrow \left(\exists \nu \in \succ(\eta) : y^* \in \mathcal{I}^{\name{f}}_T(\nu) \right) \Leftrightarrow y^* \notin \mathcal{I}^{\name{f}}_T(\eta). \qedhere
    \]
\end{proof}

    Now we consider the condition $p' \leq p$, where $p'(0) = \langle f_{p(0)}, R_{p(0)} \cup \langle \emptyset, y^* \rangle \rangle$ and $p'(1) = p(1)$. We have
    \[
        p' \forces y^* \in G_\emptyset(0) \wedge G_\emptyset(0) = \name{J},
    \]
    and also
    \[
        \bP \forces y^* \in \name{J} \Leftrightarrow y^* \notin \mathcal{I}^{\name{f}}_T(\emptyset) \Leftrightarrow \varphi_\emptyset.
    \]
    Since $\alpha \geq 2$, it is easy to check that $\enquote{\varphi_\emptyset \wedge G_\emptyset(0) = \name{J}} \in \Sigma_{\alpha}(\mathcal{L}_\infty(\bar{A})/_{\bP})$. This means that we can apply Theorem~\ref{th: rank-forcing} to $\crank_{i^*}$, $\beta_0 = 1$ and $\varphi_\emptyset \wedge G_\emptyset(0) = \name{J}$ to learn that there is a $q \in \bP$ with $\crank_{i^*}(q) < \alpha$, $q \comp p'$ and 
    \[
        q \forces y^* \in G_\emptyset(0) \wedge G_\emptyset(0) = \name{J}.
    \]
    Since $y^* \in Y \minus H^*$ and $H^*, i^*$ satisfy \eqref{enum: switcheroo 3}, we know $\langle \emptyset, y^* \rangle \notin R_{q(0)}$. We claim that there is a $\nu \in \succ_{T_\alpha}(\emptyset)$ such that, purely from a syntactic standpoint, $q'_\nu \leq q$ defined as $q'_\nu(0) = \langle f_{q(0)}, R_{q(0)} \cup \langle \nu, y^* \rangle \rangle$ and $q'_\nu(1) = q(1)$ is a condition in $\bP$. If $\alpha$ is not a small limit ordinal, $\emptyset$ has $\kappa$-many successors in $T_\alpha$, hence there is a $\nu \in \succ_{T_\alpha}(\emptyset)$ such that no node above $\nu$ appears in $R_{q(0)}$; any such $\nu$ suffices. If $\alpha$ is a small limit ordinal, then there exists a $\nu \in \succ_{T_\alpha}(\emptyset)$ such that for every $\nu' \in \succ(\nu)$ we have $\langle \nu' , y^* \rangle \notin R_{q(0)}$. If this were not the case, then we would have $\crank_H(q(0)) = \alpha$, since for such $\nu' \in \succ(\nu)$ we have $\trank_{T_\alpha}(\nu) = \trank_{T_\alpha}(\nu') + 1$ and $\alpha = \trank_{T_\alpha}(\emptyset) = \sup \setOf{\trank_{T_\alpha}(\nu)}{\nu \in \succ(\emptyset)}$. Hence we can choose such a $\nu$ as above, and since $\trank_{T_\alpha}(\nu)$ is of the form $\gamma + 4$ for some ordinal $\gamma$, the criticality constraint is also satisfied in $q'_\nu(0)$, thus $q'_\nu$ is a condition. But this is absurd, because now
    \[
        q'_\nu \forces G_\emptyset(0) = \name{J} \wedge y^* \in \name{J} \wedge y^* \notin G_\emptyset(0). \qedhere
    \]
\end{proof}

\section{Iterating $\alpha$-forcing} \label{sec: iterating}

As will be shown in Section~\ref{sec: applications}, several consistency results and surgical modifications surrounding the Borel hierarchy on subspaces of $\pre{\kappa}{\kappa}$ can be achieved by suitably applying $\alpha$-forcing. Throughout these applications, the core argument yielding lower bounds for the Borel complexity of certain sets will boil down to an observation that a specific iteration of $\alpha$-forcing, or at least a dense subforcing thereof consisting of well-behaved conditions, is a ranked forcing notion in the sense of Definition~\ref{def: rank function} for a class of rank functions admitting a parameter (see e.g.\ Definition~\ref{def: locus ground}) so that \cref{th: preserve generic pi set} applies. The parameter serves as a way to exclude a \enquote{small} (generally: of size ${\leq}\kappa$) set of conditions from being ranked; this has to be done in a way that the structure of the iteration is not disturbed too much, so that enough relevant information about subsequent iterands is preserved.

Our starting point for all that follows herein will be ${<}\kappa$-supported iteration
\[
\langle \bP_\gamma, \name{\bQ}_\gamma : \gamma < \gamma^* \rangle
\]
of $\alpha$-forcing with $\bP_\gamma \forces \name{\bQ}_\gamma = \aforc_{\alpha_\gamma}(\name{A}_\gamma,\name{B}_\gamma,\name{X}_\gamma)$. The first forcing is of the form $\bQ_0 = \aforc_{\alpha_0}(\emptyset, \emptyset, X_0)$. We write $\bP = \bP_{\gamma^*}$ for the ${<}\kappa$-limit of the iteration. This iteration and its parameters will be fixed for the rest of the section.

\begin{remark} \label{rem: no general ranks}
It soon becomes clear that finding rank functions for an iteration at this level of generality is fruitless. This is already true for a two-step iteration; suppose one is given the forcing $\bP = \aforc_{\alpha}(\emptyset,\emptyset,X) \star \aforc_{\alpha}(X \minus G_\emptyset(0), G_\emptyset(0), X)$, where $G_\emptyset(0)$ refers to the generic $\ppi{0}{\alpha}{X}{\kappa}$ set as usual. Then the second forcing adds a $\ppi{0}{\alpha}{X}{\kappa}$-code for $X \minus G_\emptyset(0)$, in particular $\bP \forces G_\emptyset(0) \in \ssigma{0}{\alpha}{X}{\kappa}$. By the contrapositive of \cref{th: preserve generic pi set} there can be no sufficiently rich family of rank functions on $\bP$. To deal with this problem, one considers only iterations of a more restricted form.
\end{remark}

Let us first summarize some previously known results and types of iterations which have been verified to be ranked forcing notions (for a class of rank functions appropriate for Theorem~\ref{th: preserve generic pi set}):

\begin{itemize}
    \item For $\kappa = \omega$ and 
    \[
        \bP_\gamma \forces \name{A}_\gamma = \name{B}_\gamma = \emptyset, \text{ and } \name{X}_\gamma = \pre{\omega}{2} \cap V^{\bP_\gamma},
    \]
    the proof of the first part of \cite[Theorem~22]{miller_length_1979} verifies that the iteration is a ranked forcing notion. 
    \item The case $\kappa = \omega$,
    \[
        \bP_\gamma \forces \name{X}_\gamma = \check{X}_\gamma
    \]
    where the sequence $\seq{X_\gamma}{\gamma < \gamma^*}$ is in $V$ and $\alpha_\gamma = \alpha_0 + 1$ for $0 < \gamma < \gamma^*$, this is --- up to straightforward adjustments --- the content of \cite[Theorem~34]{miller_length_1979}.
    \item In \cite[Section~7]{gen_borel_sets} we extend the above proof of \cite[Theorem~34]{miller_length_1979} to regular uncountable cardinals $\kappa = \kappa^{<\kappa}$ and iterations with $\bP \forces \name{X}_\gamma = \check{X}$ for an $X \in V$ and $\alpha_\gamma = \alpha_0 + 1$ for $0 < \gamma < \gamma^*$. However, we also assume $\alpha_0 < \omega$ is a finite ordinal.
    
\end{itemize}

Miller's main two results suggest viewing two distinct kinds of iterations, namely
\begin{itemize}
    \item those in which only ground model spaces $\check{X}_\gamma$ appear, hence only ground model spaces can effectively be studied in the forcing extension, and
    \item those in which (proper) names for spaces $\name{X}_\gamma$ appear. Via bookkeeping, such iterations are able to derive statements about all spaces appearing in the final model.
\end{itemize}

Notably, relatively little is known about iterations where (non-ground model) names $\name{X}_\gamma$ appear. The original proof of \cite[Theorem~22]{miller_length_1979} uses only iterands of the form $\aforc_{\alpha_\gamma}(\emptyset, \emptyset, \pre{\omega}{\omega} \cap V^{\bP_\gamma})$, hence it can only provide lower bounds for $\ord_\omega$ - it forces \enquote{upwards} (cf.\ \cref{rem: upwards downwards}). We note that this theorem has a second part, in which for a successor ordinal $2 \leq \beta_0 < \omega_1$ a model is constructed where the spectrum of $\ord_\omega(X)$ has the nontrivial value
\[
    \setOf{\ord_\omega(X)}{X \subseteq \pre{\omega}{2}} = \setOf{\beta}{\beta_0 \leq \beta \leq \omega_1}.
\]
The proof also contains iterands $\aforc_\alpha(A, X \minus A, X)$ that push the hierarchy \enquote{downwards}; however, this only happens on a select sequence of ground model spaces $\seq{X_\alpha}{\beta_0 \leq \alpha < \omega_1} \in V$ consisting of mutually Cohen-generic reals over a smaller model $M \subseteq V$. The properties of Cohen forcing are extensively utilized in order to separate the downwards-pushing aspect of the iteration applied to the ground model spaces $X_\alpha$ from the upwards-pushing aspect of the iteration which, via bookkeeping, ensures $\ord_\omega(X) \geq \beta_0$ holds in the final model for every uncountable $X \subseteq \pre{\omega}{2}$.

On the other hand, as long as one restricts their attention only to ground model spaces, stronger results can be derived. Since one no longer has to work with names for reals $\name{\tau}$ but is instead able to fully decide these names as ground model objects in $\pre{\kappa}{\kappa}$ (cf.\ \cref{prop: good dense}), the existence of rank function is easier to formulate and verify. In such iterations, \cite[Theorem~34]{miller_length_1979} subsumes also the case of downwards-pushing iterands of the form $\aforc_\alpha(A, X \minus A,X)$. However, even here one has to be careful to only admit certain kinds of parameters as to circumvent the counterexample from \cref{rem: no general ranks}. 

The main contribution of this paper is to fully generalize the technique of \cite[Theorem~34]{miller_length_1979} to the case of an uncountable $\kappa = \kappa^{<\kappa}$ while incorporating the idea of \cite[Theorem~52]{miller_length_1979} to provide models for the consistent separation of $\ord_\kappa(X)$ for ground model spaces $X$ by cardinality.

For $\kappa = \omega$, a method for incorporating downwards-pushing iterands in an iteration of $\alpha$-forcing that controls all spaces in the final model has been developed as well and will appear in upcoming work.

\subsection{Iterations with ground model spaces} \label{subsec: alpha forcing ground model}

We work with an iteration as above for which all spaces $X_\gamma$ appearing therein are in the ground model. We simply write $X_\gamma$ instead of $\name{X}_\gamma$. Recall that $A_0 = B_0 = \emptyset$.

\begin{assumption} \label{ass: ground model spaces}
    For each $\gamma < \gamma^*$ we have $\bP_\gamma \forces \name{X}_\gamma = \check{X}_\gamma$ for a space $X_\gamma \in V$.
\end{assumption}

The next is a technical assumption required for some arguments.

\begin{assumption} \label{ass: successor ordinals}
    For each $\gamma < \gamma^*$, if $\bP_\gamma \not \forces \name{A}_\gamma = \name{B}_\gamma = \emptyset$, then $\alpha_\gamma$ is a successor ordinal.
\end{assumption}

Lastly, in order to steer clear of \cref{rem: no general ranks}, we assume the iteration satisfies the following:

\begin{assumption}
    For each $0 < \gamma < \gamma^*$, at least one of the following is the case:
    \begin{itemize}
        \item $\alpha_\gamma > \alpha_0$
        \item $|X_\gamma| < |X_0|$
        \item $\bP_\gamma \forces \name{A}_\gamma = \name{B}_\gamma = \emptyset$.
    \end{itemize}
\end{assumption}

\cref{ass: ground model spaces} enables a much easier analysis of the forcing by passing to a dense subset of conditions of a more tangible form. In several key aspects, the iteration behaves nearly like a product of forcings; for instance, entries in a conditions may be decided as ground model objects. However, since the names $\name{A}_\gamma, \name{B}_\gamma$ do not describe ground model objects, the iteration cannot be treated as a literal product.

\begin{definition}
    A condition $p \in \bP$ is \markdef{good} if for each $\gamma < \gamma^*$, there exist $f_{p(\gamma)}, R_{p(\gamma)} \in V$ such that $p(\gamma)$ is the standard name for the pair
    $\langle \check{f}_{p(\gamma)}, \check{R}_{p(\gamma)} \rangle$.
\end{definition}

Good conditions can be identified with conditions in the product $\prod_{\gamma < \gamma^*} \aforc_{\alpha_\gamma}(\emptyset, \emptyset, X_\gamma)$. Conversely, conditions in the product are also in $\bP$ if they correctly decide membership of the sets $\name{A}_\gamma, \name{B}_\gamma$. Concretely, for a $p \in \prod_{\gamma < \gamma^*} \aforc_{\alpha_\gamma}(\emptyset, \emptyset, X_\gamma)$ we have $p \in \bP$ if and only if for each $\gamma < \gamma^*$, $p \restriction \gamma$ (as a condition in $\bP_\gamma$) forces $x \notin \name{B}_\gamma$ for $x$ with $\langle \emptyset, x \rangle \in R_{p(\gamma)}$ and $y \notin \name{A}_\gamma$ for $y$ with $\langle \eta, y \rangle \in R_{p(\gamma)}, \eta \in \succ(\emptyset)$.

For two good conditions $p$ and $q$, we can denote by $p \cup q$ the pointwise union of the two, i.e.
\[
    (p \cup q)(\gamma) \defas \langle f_{p(\gamma)} \cup f_{q(\gamma)}, R_{p(\gamma)} \cup R_{q(\gamma)} \rangle.
\]
Similarly we define $\bigcup S$ for a set $S$ of good conditions. We warn the reader that even if $p,q$ are compatible, their pointwise union might fail to be a condition. We also write $\points(R_{p(\gamma)})$ for the set $\setOf{x \in X_\gamma}{\exists \eta \in T_{\alpha_\gamma}: \langle \eta, x \rangle \in R_{p(\gamma)}}$.

We first record that although \cref{fact: incomp equiv} no longer holds for the iteration, two variants of it do, in the form of \cref{lem: mixing sequence} and \cref{lem: mixing} (c.f. \cite[Lemma~23]{miller_length_1979}).

\begin{lemma} \label{lem: mixing sequence}
    Suppose $G$ is $\bP$-generic and $\seq{p_i}{i < \delta}, \delta < \kappa$ is a \textbf{decreasing} sequence of good conditions in $G$. Then $\hat{p} \defas \bigcup_{i < \delta} p_i$ is a condition and $\hat{p} \in G$.
\end{lemma}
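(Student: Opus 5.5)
We argue by induction on $\gamma \le \gamma^*$, proving simultaneously that $\hat p\restriction\gamma$ is a condition of $\bP_\gamma$ and that $\hat p\restriction\gamma \in G\restriction\gamma$. Limit stages are immediate. The support of $\hat p$ is the union of fewer than $\kappa$ supports of size ${<}\kappa$, so by regularity of $\kappa$ it has size ${<}\kappa$. Membership in a limit stage follows from membership at all earlier stages, because the filter $G\restriction\gamma$ at a limit of a ${<}\kappa$-supported iteration is determined by its restrictions. The real work is at the successor step $\gamma \to \gamma+1$. We assume $\hat p\restriction\gamma \in \bP_\gamma$ with $\hat p\restriction\gamma\in G\restriction\gamma$, and we must show that $\hat p\restriction\gamma$ forces $\hat p(\gamma) = \langle \bigcup_i f_{p_i(\gamma)}, \bigcup_i R_{p_i(\gamma)}\rangle$ to lie in $\aforc_{\alpha_\gamma}(\name A_\gamma,\name B_\gamma,X_\gamma)$. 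We must also show that $\hat p(\gamma)$ belongs to the $\gamma$-th generic $G(\gamma)$.

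For the successor step we work in $V[G\restriction\gamma]$. Since the sequence is decreasing and each $p_i\in G$, every $p_i(\gamma)$ evaluates to a condition of $\bQ_\gamma = \aforc_{\alpha_\gamma}(A_\gamma,B_\gamma,X_\gamma)$, and these conditions form a decreasing sequence in $G(\gamma)$. We then verify the clauses of \cref{def: alpha forcing} for the union. Clauses a) and b) hold because $\delta<\kappa$ and $\kappa$ is regular. Clauses c), d) and e) are local: any violation involves at most two pairs, which already appear in some single $p_j(\gamma)$, and $p_j(\gamma)$ is a condition. In d) and e) the sets $A_\gamma, B_\gamma$ are the ones evaluated in $V[G\restriction\gamma]$, which is exactly where $p_j(\gamma)$ satisfies them.

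The main obstacle is the criticality constraint f), since $\aforc_\alpha$ itself is not ${<}\kappa$-closed and a union of non-strict conditions may fail it. Here we use that all $p_i(\gamma)$ lie in the generic filter $G(\gamma)$. Suppose $\eta$ is $x$-critical in the union. Then $\langle \pred(\eta),x\rangle$ (or the corresponding $A$/$B$ condition) and some $\langle\eta',x\rangle$ with $\eta'\in\succ^2(\eta)$ appear in the union, hence in a single $p_j(\gamma)$. By \cref{th: R interprets correctly} this gives $x\in G_{\pred(\eta)}$ (resp.\ $x\in A$ or $x\notin B$). Since $G_{\pred(\eta)}$ is the intersection of the complements of its successor sets, we get $x\notin G_\eta$. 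Hence some $\nu\in\succ(\eta)$ satisfies $x\in G_\nu$, which means $x\notin G_{\nu'}$ for every $\nu'\in\succ(\nu)$. Applying \cref{th: R interprets correctly} again, no pair $\langle\nu',x\rangle$ with $\nu'\in\succ(\nu)$ lies in $R_{G(\gamma)} \supseteq \bigcup_i R_{p_i(\gamma)}$. This $\nu$ witnesses f).

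It remains to show that $\hat p(\gamma)\in G(\gamma)$. Genericity suggests a density argument: for any $r\in G(\gamma)$, the conditions $r$ and each $p_i(\gamma)$ have a common extension. The cleanest route is the observation that $\bigcup G(\gamma) = \langle f_{G(\gamma)},R_{G(\gamma)}\rangle$ extends every $p_i(\gamma)$. Any condition $s$ whose components are contained in $f_{G(\gamma)}$ and $R_{G(\gamma)}$ should then lie in $G(\gamma)$. To prove this, we take $r\in G(\gamma)$ forcing that $f_{G(\gamma)}$ and $R_{G(\gamma)}$ contain $s$. If $r\perp s$, then by \cref{fact: incomp equiv} the union $r\cup s$ violates some clause. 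We must show that this violation already forces a contradiction with the interpretation given by \cref{th: R interprets correctly}, via the same semantic argument as in the previous paragraph. Having established $r\comp s$ for every $r\in G(\gamma)$, a standard genericity argument yields $s\in G(\gamma)$. Finally, because $p_i\restriction\gamma$ and the $\hat p\restriction\gamma$ induction hypothesis lie in the generic, $\hat p\restriction\gamma$ can be taken to force all of this. This completes the successor step.
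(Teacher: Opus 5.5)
\textbf{There is a genuine gap, and it cannot be closed.} It sits in the last sentence of your successor step: ``$\hat p\restriction\gamma$ can be taken to force all of this''. Everything before it is carried out inside $V[G\restriction\gamma][G(\gamma)]$ and uses that $p_i(\gamma)\in G(\gamma)$ for this particular $G$. That covers your verification of the criticality constraint via \cref{th: R interprets correctly} and $A\subseteq G_\emptyset$, and also your membership argument.

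From $\hat p\restriction\gamma\in G\restriction\gamma$ you only get that \emph{some} condition of $G\restriction\gamma$ forces $\hat p(\gamma)\in\name{\bQ}_\gamma$. But $\hat p\restriction(\gamma+1)\in\bP_{\gamma+1}$ requires $\hat p\restriction\gamma$ itself to force it. That means the conclusion must hold in every $V[H]$ with $\hat p\restriction\gamma\in H$, and such an $H$ need not extend to a generic containing the $p_i(\gamma)$. The dependence is real: when $\alpha_\gamma$ is the successor of a small limit, whether a node of $\succ(\emptyset)$ is $x$-critical depends on ``$x\in\name{A}_\gamma$'', which $\hat p\restriction\gamma$ need not decide.

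Here is a concrete counterexample. Let $\kappa>\omega$, $\bQ_0=\aforc_2(\emptyset,\emptyset,X)$ and $\name{\bQ}_1=\aforc_{\omega+1}(G_\emptyset(0),X\minus G_\emptyset(0),X)$; all three standing assumptions hold.
\begin{enumerate}
\item In $T_{\omega+1}$ the node $\eta=\langle 0\rangle$ is a small limit node with $\succ(\eta)=\{\langle 0,j\rangle : j<\omega\}$.
\item Fix $x\in X$ and let $p_n(0)=\mathbbm{1}$ and $p_n(1)=\langle\emptyset,\{\langle\langle 0,j,0\rangle,x\rangle : j<n\}\rangle$. Each $p_n$ is a good condition: if $x\in\name{A}_1$, then $\langle 0,n\rangle$ witnesses the criticality constraint.
\item If $G\restriction 1$ contains $\langle\emptyset,\{\langle\langle 0\rangle,x\rangle\}\rangle$, then $x\notin G_\emptyset(0)$. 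So no node is $x$-critical in the union $u$ of the $p_n(1)$, $u$ is a condition in $V[G\restriction 1]$, and we may choose $G(1)\ni u$. Then all $p_n$ lie in $G$.
\item Below $\langle\emptyset,\{\langle\emptyset,x\rangle\}\rangle\in\bQ_0$, however, we have $x\in\name{A}_1$. So $\eta$ is $x$-critical in $u$, while every $\nu\in\succ(\eta)$ has a successor paired with $x$. Hence $\mathbbm{1}\not\forces u\in\name{\bQ}_1$ and $\hat p=\langle\mathbbm{1},u\rangle\notin\bP$.
\end{enumerate}
So the statement, read literally, fails.

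\textbf{Comparison with the paper.} The paper's proof is the one-line ``straightforward induction over $\gamma$ using \cref{fact: incomp equiv}''. It has the same shape as yours and passes silently over the same point. Its intended successor case runs as follows. In $V[G\restriction\gamma]$ the $p_i(\gamma)$ lie in $G(\gamma)$, so strategic ${<}\kappa$-closure of $\bQ_\gamma$ gives a common lower bound in $G(\gamma)$. \cref{fact: incomp equiv} then makes $\bigcup_i p_i(\gamma)$ a condition and the greatest lower bound, hence in $G(\gamma)$ by upward closure. This is shorter than your route through \cref{th: R interprets correctly}, and it settles the membership claim you left at ``we must show''.

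What both arguments actually establish is the coordinatewise statement in $V[G]$: for every $\gamma$, $\bigcup_i p_i(\gamma)$ is a condition of $\bQ_\gamma$ in $V[G\restriction\gamma]$ and lies in $G(\gamma)$. To obtain an honest element of $\bP\cap G$ below all $p_i$, you must strengthen the lower coordinates inside $G$. Use conditions deciding ``$x\in\name{A}_\gamma$'' and ``$x\in\name{B}_\gamma$'' for the ${<}\kappa$ relevant $x$, exactly as the paper does with the conditions $q_y$ in the proof of \cref{lem: mixing}.

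\textbf{Smaller points.}
\begin{enumerate}
\item Your limit-stage membership claim is not a general fact about ${<}\kappa$-supported limits. It needs an argument, for instance the strategic-closure density argument.
\item In the $B$-case of your criticality argument you want $x\in B$, whence $x\notin G_\emptyset$, not ``$x\notin B$''.
\end{enumerate}
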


\begin{proof} \label{prop: good dense}
    A straightforward induction over $\gamma \leq \gamma^*$ using \cref{fact: incomp equiv}.
\end{proof}

\begin{proposition}
    The set of good conditions is dense in $\bP$.
\end{proposition}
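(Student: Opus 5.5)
We prove density by the standard ``deciding the iteration coordinatewise'' argument, using that every iterand is strategically ${<}\kappa$-closed and that all spaces $X_\gamma$ are ground model objects (\cref{ass: ground model spaces}). Fix $p \in \bP$. We build a decreasing sequence $\seq{p_n}{n < \omega}$ below $p = p_0$, together with ground model objects $f^n_\gamma, R^n_\gamma$ for $\gamma \in \supp(p_n)$, such that
\[
    p_{n+1} \restriction \gamma \forces p_n(\gamma) = \langle \check{f}^n_\gamma, \check{R}^n_\gamma \rangle.
\]
To avoid problems with closure at the limit, we run this inside the strict dense subiteration of \cref{cor: strict dense iteration}, which is ${<}\kappa$-closed. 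Equivalently, at each coordinate we let Player I play in $\saforc_{\alpha_\gamma}$, so that lower bounds of countable decreasing sequences exist.

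First we handle a single step. Given $p_n$, we strengthen by recursion on $\gamma \in \supp(p_n)$ (a set of size ${<}\kappa$) to decide the value of $p_n(\gamma)$. A name for a pair consisting of a partial function and a relation of size ${<}\kappa$ with $\dom \subseteq T_{\alpha_\gamma}$ and values in $\pre{<\kappa}{\kappa}$ or $X_\gamma$ is a name for a subset of a ground model set of size ${<}\kappa$. By ${<}\kappa$-(strategic) closure of $\bP_\gamma$ no new ${<}\kappa$-sequences of ground model elements are added, so such a name can be decided by strengthening $p_n \restriction \gamma$. Since $\supp(p_n)$ may have size up to some $\mu < \kappa$, we perform the recursion along its increasing enumeration. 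At limit stages ${<}\mu$ we use ${<}\kappa$-closure of the strict iteration to take lower bounds. The resulting lengths stay ${<}\kappa$ since $\kappa$ is regular. This yields $p_{n+1} \leq p_n$ with the decided values, and we also arrange $\supp(p_{n+1}) \supseteq \supp(p_n)$.

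Then we take $\hat p$ with $\hat p(\gamma) = \langle \bigcup_n f^n_\gamma, \bigcup_n R^n_\gamma \rangle$ on $\bigcup_n \supp(p_n)$, which is a set of size ${<}\kappa$ (if $\operatorname{cf}$ issues arise, run the construction for $\kappa$-small length $\omega$; $\omega < \kappa$). We verify by induction on $\gamma$ that $\hat p \restriction \gamma \in \bP_\gamma$ and $\hat p \restriction \gamma \leq p_n \restriction \gamma$ for all $n$. At stage $\gamma$, $\hat p \restriction \gamma$ forces that the checked values are a decreasing sequence of strict conditions in $\name{\bQ}_\gamma$. By \cref{lem: strict forcing is closed} and \cref{fact: incomp equiv}, their union is then a strict condition, so $\hat p(\gamma)$ is as required and lies below each $p_n(\gamma)$. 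Hence $\hat p$ is good and $\hat p \leq p$.

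The main obstacle is the bookkeeping showing that the union at coordinate $\gamma$ is a condition in $\aforc_{\alpha_\gamma}(\name A_\gamma, \name B_\gamma, X_\gamma)$ as interpreted in $V^{\bP_\gamma}$. Constraints d) and e) involve the names $\name A_\gamma, \name B_\gamma$. Each $p_{n+1} \restriction \gamma$ forces the decided pairs to respect them, and the lower bound inherits these forcing statements. The criticality constraint is handled by working with strict conditions, which are closed under unions of decreasing sequences by \cref{lem: strict forcing is closed}. If needed, one replaces the $\omega$-length construction by a direct recursion along $\supp$ using only strategic closure.
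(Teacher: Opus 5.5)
Your argument is correct, but it is organized differently from the paper's.

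\textbf{The paper's route.} The paper proves density in $\bP_\gamma$ by induction on $\gamma \le \gamma^*$.
\begin{itemize}
\item At a successor step it decides only the new coordinate $p(\gamma)$ as a ground model pair, below a good condition supplied by the induction hypothesis.
\item Limits of cofinality $\ge \kappa$ are trivial by the support size.
\item At limits of cofinality ${<}\kappa$ it builds, inside a generic filter $G$, a $\cf(\gamma)$-sequence of good conditions along a continuous cofinal sequence. It glues them with \cref{lem: mixing sequence}.
\end{itemize}
The generic filter stands in for the missing ${<}\kappa$-closure of the non-strict $\aforc$: a union of decreasing good conditions is only guaranteed to be a condition when all of them lie in $G$.

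\textbf{Your route.} You run a single $\omega$-length fusion over all coordinates at once, deciding each name $p_n(\gamma)$ by $p_{n+1}\restriction\gamma$. You then glue coordinatewise, using that unions of decreasing sequences of strict conditions are strict conditions (\cref{lem: strict forcing is closed}), applied in $V^{\bP_\gamma}$ below $\hat p\restriction\gamma$. This avoids the induction on the length of the iteration, the cofinality case split, and the detour through a generic filter. It also gives slightly more: good conditions that are strict at every coordinate.

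\textbf{What each buys.} Your price is reliance on the strict dense subiteration of \cref{cor: strict dense iteration} and its ${<}\kappa$-closure. The paper's argument stays entirely inside $\aforc$. Its gluing device is the same one reused in \cref{lem: mixing} and \cref{lem: immaculate dense}, which is where it pays off.

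\textbf{Two small remarks.}
\begin{itemize}
\item The inner recursion along $\supp(p_n)$ is unnecessary. By distributivity of $\bP$, some $s \le p_n$ decides the whole ${<}\kappa$-sequence $\langle p_n(\gamma) : \gamma \in \supp(p_n)\rangle$. Since each $p_n(\gamma)$ is a $\bP_\gamma$-name, $s\restriction\gamma$ then already decides $p_n(\gamma)$.
\item Taking $p_0 = p$ non-strict is harmless. The union at each coordinate depends only on the tail $n \ge 1$ of the decreasing sequence of decided values.
\end{itemize}
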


\begin{proof}
    Standard. We prove that the set of good conditions is dense in $\bP_\gamma$ for each $\gamma \leq \gamma^*$ by induction. The base case of $\gamma = 0$ is trivial. In the successor case $\gamma \to \gamma + 1$, notice that the forcing $\bP_{\gamma}$ is strategically ${<}\kappa$-closed, and thus no new sequences of elements of $V$ of length ${<}\kappa$ are added. Since both $T_{\alpha_\gamma}$ and, crucially, $X_\gamma$ are in the ground model, for every $p \in \bP_{\gamma+1}$ we have $p \restriction \gamma \forces p(\gamma) \in V$. Thus can find a good condition $p' \leq p \restriction \gamma$ and a pair $\langle f, R \rangle \in V$ such that $p' \forces p(\gamma) = \langle \check{f}, \check{R} \rangle$. The condition $(p' \conc \langle \check{f}, \check{R} \rangle) \leq p$ is therefore good.

    Assume now $\gamma \leq \gamma^*$ is a limit. If $\cf(\gamma) \geq \kappa$, then we are done, since the iteration is with supports of size ${<}\kappa$. So assume $\cf(\gamma) < \kappa$ and let $\seq{\gamma_i}{i < \cf(\gamma)}$ be a continuous cofinal sequence in $\gamma$. For $p \in \bP_\gamma$, we now work in the generic extension and fix a $\bP_\gamma$-generic filter $G$ containing $p$; we construct a decreasing sequence $\seq{p_i}{i < \cf(\gamma)}$ satisfying the following for each $i < \cf(\gamma)$:
    \begin{itemize}
        \item $p_i \in G \restriction \gamma_i$,
        \item $p_i$ is good,
        \item $p_i \leq p \restriction \gamma_i$,
        \item $p_{i+1} \restriction \gamma_i \leq p_i$ and
        \item if $i < \cf(\gamma)$ is a limit ordinal, then $p_i = \bigcup_{i' < i} p_{i'}$.
    \end{itemize}

    The successor step in the construction of the $p_i$'s uses the inductive assumption, and the definition of $p_i$ for $i$ limit in the last bullet point yields a condition by \cref{lem: mixing sequence}. Likewise, the lemma gives us the fact that $p^* = \bigcup_{i < \cf(\delta)} p_i$ is a condition, and it is easy to see that it is both good and stronger than $p$.
\end{proof}

For the rest of this section, all conditions are assumed to be good, even if it is not explicitly stated.

We now move to define the family of putative rank functions on $\bP$\footnote{Technically, the dense subset of $\bP$ consisting of good conditions.}. In the single-step case of the forcing the rank functions carried a parameter $H \subseteq X$; similarly, a sufficiently rich set of admissible parameters can be found in the iteration case.

\begin{definition} \label{def: locus ground}
    We say $H = \seq{H(\gamma)}{\gamma < \gamma^*}$ is a \markdef{locus} if for each $\gamma < \gamma^*$
    \begin{enumerate}
        \item $H(\gamma) \subseteq X_\gamma$,
        \item if $\gamma > 0$, $\alpha_\gamma \leq \alpha_0$, and $\bP_\gamma \not \forces \name{A}_\gamma = \name{B}_\gamma = \emptyset$, then $H(\gamma) = \emptyset$ or $H(\gamma) = X_\gamma$,
        \item for all $x \in H(\gamma)$ the set
        \[
            D = \left\{p \in \bP_\gamma : 
            \begin{gathered}
                \forall \delta < \gamma: \points(R_{p(\delta)}) \subseteq H(\delta) \text{ and} \\
                p \text{ decides } \enquote{x \in \name{A}_\gamma}, \enquote{x \in \name{B}_\gamma}
            \end{gathered}
            \right\}
        \]
        is predense in $\bP_\gamma$, i.e.\ for each $p \in \bP_\gamma$ there exists a $q \in D$ with $q \comp p$.

        Write $\supp(H) = \setOf{\gamma < \gamma^*}{H(\gamma) \neq \emptyset}$. We always assume $0 \in \supp(H(0))$.
    \end{enumerate}
\end{definition}

In the same breath, we introduce the putative notion of rank.

\begin{definition} \label{def: rank ground}
    For a $p \in \bP$ and a locus $H$ define
    \[
        \crank_H(p) \defas \sup_{\gamma < \gamma^*} \sup \setOf{\trank_{T_{\alpha_\gamma}}(\eta)}{\exists x \in X_\gamma \minus H(\gamma): \langle \eta, x \rangle \in R_{p(\gamma)}}
    \]
    if $\supp(p) \subseteq \supp(H)$ and $\crank_H(p) = \infty$ otherwise.
\end{definition}

\begin{remark}
    Proofs will often proceed by induction on the support of a condition $p \in \bP$. As such, we will refer to a notion of goodness, locus and rank relative to $\bP_\gamma$ for $\gamma < \gamma^*$; they are to be defined relative to the partial iteration $\seq{\bP_\delta, \name{\bQ}_\delta}{\delta < \gamma}$. Abusing notation, we write $\crank_H(p)$ for $\crank_{H \restriction \gamma}(p)$.
\end{remark}

\begin{remark}
    In light of Definition~\ref{def: rank ground}, the requirement $\forall \delta < \gamma: \points(R_{p(\delta)}) \subseteq H(\delta)$ in Definition~\ref{def: locus ground} is equivalent to $\crank_H(p \restriction \gamma) = 0$. 
\end{remark}

Working towards applying Theorem~\ref{th: preserve generic pi set}, for a given set of conditions we should be able to find a locus that is not too large on which they vanish.

\begin{lemma} \label{lem: small locus ground}    
Let $\lambda > \kappa$ be a regular cardinal and $F \subseteq \bP$ be a set of size $|F| < \lambda$. If $\lambda > |X_\gamma|$ for all $0 < \gamma < \gamma^*$ with $\alpha_\gamma \leq \alpha_0$ and $\bP_\gamma \not \forces \name{A}_\gamma = \name{B}_\gamma = \emptyset$, then there exists a locus $H$ such that $\crank_H(p) = 0$ for all $p \in F$ and $|H(0)| < \lambda$.
\end{lemma}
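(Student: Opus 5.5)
We build $H$ by a closing-off argument of length $\omega$ (or $\kappa$), exploiting the $\kappa^+$-c.c.\ of $\bP$ and of each $\bP_\gamma$ together with the regularity of $\lambda>\kappa$. Throughout, all conditions are good.

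\emph{Defining $H$.} For a set $E$ of conditions, let $\supp(E)=\bigcup_{p\in E}\supp(p)$ and $\points_\gamma(E)=\bigcup_{p\in E}\points(R_{p(\gamma)})$. We construct an increasing sequence $\seq{H_n}{n<\omega}$ of sequences $H_n=\seq{H_n(\gamma)}{\gamma<\gamma^*}$ and sets of conditions $F_n$ with $F_0=F$, arranged so that $|H_n(0)|<\lambda$ and $|F_n|<\lambda$. At stage $n$ we do the following.
\begin{enumerate}
    \item Put $\points_\gamma(F_n)$ into $H_{n+1}(\gamma)$.
    \item For the exceptional coordinates $\gamma>0$ with $\alpha_\gamma\le\alpha_0$ and $\bP_\gamma\not\forces\name{A}_\gamma=\name{B}_\gamma=\emptyset$, if $\gamma\in\supp(F_n)$, set $H_{n+1}(\gamma)=X_\gamma$. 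This set has size ${<}\lambda$ by hypothesis.
    \item For every $\gamma$ and every $x\in H_n(\gamma)$, choose a maximal antichain $M_{\gamma,x}\subseteq\bP_\gamma$ of good conditions deciding \enquote{$x\in\name{A}_\gamma$} and \enquote{$x\in\name{B}_\gamma$}. By the $\kappa^+$-c.c.\ it has size ${\le}\kappa$. Add $M_{\gamma,x}$ to $F_{n+1}$.
\end{enumerate}
Finally let $H=\bigcup_n H_n$.

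\emph{Size bound.} $|H(0)|<\lambda$ follows from regularity of $\lambda>\kappa$: each stage adds ${<}\lambda$ many points and conditions. The only large contributions are the full spaces $X_\gamma$ at exceptional coordinates, each of size ${<}\lambda$, and each $x$ contributes ${\le}\kappa$ conditions. One must check that only ${<}\lambda$ many coordinates $\gamma$ enter, which holds since supports have size ${<}\kappa$ and $|F_n|<\lambda$. If $\lambda=\kappa^+$ with $\kappa$ singular-free issues arise from the countable union, this is still fine since $\lambda$ is regular uncountable. The bound is only needed at coordinate $0$, but the same argument bounds every coordinate in the non-exceptional case.

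\emph{Verifying the locus conditions.}
\begin{itemize}
    \item Clause (1) is trivial.
    \item Clause (2) holds because at exceptional coordinates we only ever put in all of $X_\gamma$ or nothing. The delicate point is that a point may have entered $H(\gamma)$ at such a $\gamma$ via step (1); but then $\gamma\in\supp(F_n)$, so step (2) fills in all of $X_\gamma$.
    \item $\crank_H(p)=0$ for $p\in F$, and $\supp(F)\subseteq\supp(H)$, both by step (1). We ensure $0\in\supp(H)$ by adding some point of $X_0$ if necessary.
    \item Clause (3) is the main obstacle. We need $D$ to be predense, where $D$ consists of conditions $q\in\bP_\gamma$ with $\crank_H(q\restriction\gamma)=0$ that decide $x$. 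The antichain $M_{\gamma,x}$ was added to some $F_{n+1}$, so by step (1) at stage $n+1$ every $q\in M_{\gamma,x}$ has all its points in $H$. Moreover, its support lies in $\supp(H)$, because every coordinate in $\supp(q)$ receives points (good conditions with nontrivial entries have nonempty $R$ or $f$).
\end{itemize}

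\emph{Handling entries with only an $f$-part.} A coordinate where $q$ has only an $f$-part would fall outside $\supp(H)$. To avoid this, at such coordinates we add an arbitrary point of $X_\gamma$ to $H(\gamma)$; at exceptional coordinates we add all of $X_\gamma$, which is harmless by the hypothesis on the sizes of the exceptional spaces. Since $M_{\gamma,x}$ is a maximal antichain contained in $D$, $D$ is predense.
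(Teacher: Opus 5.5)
Your proposal is correct and is essentially the paper's argument written out by hand. The paper closes off under the same operations (points and supports of conditions, maximal antichains of deciding conditions of size ${\le}\kappa$, whole exceptional spaces $X_\gamma$) all at once: it takes $M=\bigcup_{j<\omega}M_j$, a chain of elementary submodels each of size ${<}\lambda$, and sets $H(\gamma)=X_\gamma\cap M$ for $\gamma\in M$. That choice makes $H(\gamma)\neq\emptyset$ on every coordinate in $M$, so your $f$-only-coordinate issue is handled automatically; in your version, state explicitly that those dummy points are added during the stages (folded into step (1)), not after forming $H=\bigcup_n H_n$, so that step (3) also supplies antichains for them.
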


\begin{proof}
    Construct an increasing chain $\seq{M_j}{j \leq \omega}$ of elementary submodels of a sufficiently large initial segment of the universe such that $\{\bP, F\} \cup F \cup \kappa \subseteq M_0$ and for all $j < \omega$, $|M_j| < \lambda$ and $M_j \cup \{M_j\} \cup \sup(M_j \cap \lambda) \subseteq M_{j + 1}$. Write $M = \bigcup_{j < \omega} M_j$. Note that for every $u \in M$ with $|u| < \lambda$ we have $u \in M_j$ for some $j < \omega$, hence $|u| \subseteq M_{j+1} \subseteq M$ and thus $u \subseteq M$.
    
    Set $H(\gamma) \defas X_\gamma \cap M$ for $\gamma \in M$ and $H(\gamma) = \emptyset$ otherwise. Clearly $|H(0)| < \lambda$. To see that $H$ is a locus, take $\gamma < \gamma^*$ with $H(\gamma) \neq \emptyset$, in particular $\gamma \in M$. If $\alpha_\gamma \leq \alpha_0$ and $\bP_\gamma \not \forces \name{A}_\gamma = \name{B}_\gamma = \emptyset$, then $\lambda > |X_\gamma|$, so $X_\gamma \subseteq M$. 
    
    To check the third requirement, take $x \in H(\gamma)$. Then the set
    \[
        \setOf{p \in \bP_\gamma}{p \text{ decides } \enquote{x \in \name{A}_\gamma}, \enquote{x \in \name{B}_\gamma}}
    \]
    is dense (and definable in $M$). In $M$, find a maximal antichain $A$ in this dense subset of $\bP_\gamma$; we will show that $\crank_H(p) = 0$ for all $p \in A$. By the $\kappa^+$-c.c., $A \subseteq M$ and for all $\delta \in \supp(p)$, $\points(R_{p(\delta)})$ is definable in $M$. Because of its size it is also a subset of $M$, so $\crank_H(p) = 0$. 
\end{proof}

\begin{remark}
    If we can prove that $\crank_H$ is a rank function for ordinals ${<}\alpha_0$, then Theorem~\ref{th: preserve generic pi set} applies to the iteration and any $\lambda$ satisfying the assumptions of Lemma~\ref{lem: small locus ground}.
\end{remark}

\begin{definition}
    A good condition $p \in \bP$ is \markdef{immaculate} with respect to a locus $H$ if for each $\gamma < \gamma^*$ and $x \in \points(R_{p(\gamma)}) \cap H(\gamma)$ there exists a $t \geq p \restriction \gamma$ so that $\crank_H(t) = 0$ and $t$ decides \enquote{$x \in \name{A}_\gamma$} and \enquote{$x \in \name{B}_\gamma$}.
\end{definition}

The next two lemmas take place in the generic extension.

\begin{lemma} \label{lem: mixing}
    Suppose $G$ is $\bP$-generic and $\seq{p_i}{i < \delta}$ are ${<}\kappa$-many conditions in $G$, each of rank $\crank_H(p_i) \leq \beta < \alpha_0$. Then there exists a $\hat{p} \in G$ with $\hat{p} \leq p_i$ for all $i < \delta$ and $\crank_H(\hat{p}) \leq \beta$.
\end{lemma}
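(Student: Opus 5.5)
Since $\kappa$ is regular and $\delta<\kappa$, the natural idea is to take the pointwise union of the $p_i$. This fails as stated: the $p_i$ need not be decreasing, and even compatible good conditions can have a pointwise union that is not a condition. The plan is therefore to build $\hat p$ coordinate by coordinate, by induction on $\gamma\le\gamma^*$, as a good condition in $G\restriction\gamma$ below all $p_i\restriction\gamma$, keeping the rank at most $\beta$ throughout. The final condition is the coordinatewise union of the $p_i$ plus extra pairs $\langle\eta,x\rangle$ with $x\in H(\gamma)$, which cost no rank, together with only finitely many additional nodes per instance, each of rank at most $\beta$.

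\emph{Successor step.} Suppose $\hat p\restriction\gamma\in G\restriction\gamma$ is good, below all $p_i\restriction\gamma$, and has rank at most $\beta$. Consider the candidate
\[
    \hat p(\gamma)=\Big\langle \bigcup_{i<\delta} f_{p_i(\gamma)},\ \bigcup_{i<\delta} R_{p_i(\gamma)}\Big\rangle,
\]
and verify that it is a condition of $\aforc_{\alpha_\gamma}(A_\gamma,B_\gamma,X_\gamma)$ in $V[G\restriction\gamma]$, so that it is forced by some strengthening of $\hat p\restriction\gamma$ in $G$.
\begin{itemize}
\item Clauses a)--e): all $p_i\in G$, so any two are compatible. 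By \cref{fact: incomp equiv}, applied in $V[G\restriction\gamma]$ to the $\gamma$-th coordinates, which all lie in the generic filter there, every pairwise clause (disjointness from $[f]$, the successor-exclusion clause, and clauses d), e) relative to $A_\gamma,B_\gamma$) holds for the union.
\item Clause f), criticality: this is the only non-pairwise clause, and by \cref{fact: incomp equiv} again the union of the $\gamma$-th coordinates of \emph{all} conditions in $G$ below level $\gamma$ is coherent. By Theorem~\ref{th: R interprets correctly} in the $\gamma$-th generic, $R$ is semantically correct. So if $\eta$ becomes $x$-critical in the union, there is some $\nu\in\succ(\eta)$ whose successors avoid $x$ in $R_G$.
\end{itemize}

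\emph{Repairing criticality.} For each such critical instance, add the single pair $\langle\nu,x\rangle$; it lies in $R_G$, so it is consistent with $G$. We add only ${<}\kappa$ many such pairs. The rank concern is this: if $x\notin H(\gamma)$, then by Remark~\ref{rem: criticality local} $\trank(\nu)<\trank(\eta)$, and $\eta$ is a small limit node with $\langle\pred(\eta),x\rangle$ or $\langle\eta',x\rangle$ ($\eta'\in\succ^2(\eta)$) already present at rank at most $\beta$. Since $\eta'\in\succ^2(\eta)$ and $\eta$ has limit rank, we need the chosen $\nu$ to be one of rank at most $\beta$, i.e.\ $\trank(\nu)\le\beta$. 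Choosing $\nu$ correctly is the main obstacle. My approach is to choose $\nu$ to be the element of $\succ(\eta)$ that is the predecessor of an $\eta'$ already occurring, because that $\nu$ has rank $\trank(\eta')+1\le\beta$ only when ranks are successors. Concretely, nodes of $T_{\alpha}$ in $\succ(\eta)$ have rank $c(i)+4$, while their successors have rank $c(i)+3$. If $\trank(\eta')=c(i)+3\le\beta$, then $\trank(\nu)=c(i)+4$, which could exceed $\beta$ only when $\beta=c(i)+3$. In that boundary case I would instead use genericity to argue that some $\nu$ with $\langle\nu,x\rangle\in R_G$ has rank at most $\beta$. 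Failing that, fall back on the case $\eta=\emptyset$ with $x\in A\cup B$, which concerns only coordinates where $x\in H(\gamma)$ or $\gamma=0$ with $A_0=B_0=\emptyset$.

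\emph{Limit steps.} These are handled exactly as in \cref{lem: mixing sequence}: at a limit $\gamma$ the partial constructions cohere, so their union is a condition in $G\restriction\gamma$, since supports have size ${<}\kappa$. The rank of the union is the supremum of the coordinate ranks, hence at most $\beta$. Finally, $\supp(\hat p)=\bigcup_i\supp(p_i)\subseteq\supp(H)$, which completes the argument.
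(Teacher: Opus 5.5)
There is a genuine gap in the phrase ``so that it is forced by some strengthening of $\hat p\restriction\gamma$ in $G$''. That strengthening has no rank bound, and bounding it is the whole content of the lemma.

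It is automatic that $\langle f,R\rangle=\bigl\langle\bigcup_i f_{p_i(\gamma)},\bigcup_i R_{p_i(\gamma)}\bigr\rangle$ is a condition in $V[G\restriction\gamma]$. The $p_i(\gamma)$ are ${<}\kappa$ many elements of a generic filter for a strategically ${<}\kappa$-closed forcing, so they have a common lower bound, and \cref{fact: incomp equiv} then gives every clause, f) included. Your criticality repair is therefore unnecessary, which is fortunate, since you leave its boundary case open. Clauses a)--c) and the criticality instances not mentioning $\name{A}_\gamma$ are ground-model facts about $\langle f,R\rangle$. Clauses d), e) are forced by $\hat p\restriction\gamma\le p_i\restriction\gamma$. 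Since $\alpha_\gamma$ is a successor (Assumption~\ref{ass: successor ordinals}), the root is not a small limit node. What remains is a small limit node $\eta\in\succ(\emptyset)$ and a point $y$ such that every $\nu\in\succ(\eta)$ has some $\nu'\in\succ(\nu)$ with $\langle\nu',y\rangle\in R$, where the witnesses may come from different $p_i$. Then $\langle f,R\rangle$ is a condition only if $y\notin\name{A}_\gamma$, and no $p_i\restriction\gamma$ (hence not $\hat p\restriction\gamma$) needs to force this.

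The missing idea is to force $y\notin\name{A}_\gamma$ at rank $0$, which takes three steps.
\begin{enumerate}
\item Show $y\in H(\gamma)$. If $\alpha_\gamma\le\alpha_0$, clause (2) of \cref{def: locus ground} gives $H(\gamma)\in\{\emptyset,X_\gamma\}$, and $H(\gamma)=\emptyset$ would make every $p_i(\gamma)$ trivial because $\crank_H(p_i)<\infty$. If $\alpha_\gamma>\alpha_0$ and $y\notin H(\gamma)$, the pairs $\langle\nu',y\rangle$ give $\beta\ge\sup_\nu\trank(\nu')=\trank(\eta)=\alpha_\gamma-1\ge\alpha_0$, contradicting $\beta<\alpha_0$.
\item Clause (3) of \cref{def: locus ground} then yields $q_y\in G\restriction\gamma$ with $\crank_H(q_y)=0$ deciding, hence forcing, $y\notin\name{A}_\gamma$.
\item Apply the lemma for $\bP_\gamma$, as induction hypothesis, to the ${<}\kappa$-sized family $\setOf{p_i\restriction\gamma}{i<\delta}\cup\setOf{q_y}{y\in Y}$, and put $\langle f,R\rangle$ on top of the result.
\end{enumerate}

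This is also why the induction must be on the lemma itself for each $\bP_\gamma$, not a coordinate-by-coordinate construction. Successor steps strengthen earlier coordinates, so your limit step (``the partial constructions cohere'') is unjustified. For $\cf(\gamma)<\kappa$ you need a decreasing sequence $q_j\in G\restriction\gamma_j$, obtained by applying the induction hypothesis to $\setOf{p_i\restriction\gamma_j}{i<\delta}\cup\setOf{q_{j'}}{j'<j}$, followed by \cref{lem: mixing sequence}. For $\cf(\gamma)\ge\kappa$, the supports of the $p_i$ are bounded below $\gamma$.
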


\begin{proof}
    We prove that this is true for every partial iteration $\bP_\gamma$, $\gamma \leq \gamma^*$ by induction on $\gamma$.

    The base case $\gamma = 0$ is trivial, since $\bP_0 = \{\mathbbm{1}\}$. If $\gamma$ is a limit and $\cf(\gamma) \geq \kappa$, then $P \subseteq G \restriction \delta$ for some $\delta < \gamma$, hence we are done. If instead $\cf(\gamma) < \kappa$, fix a cofinal sequence $\seq{\gamma_i}{j < \cf(\gamma)}$ and a $\bP_\gamma$-generic filter $G$. Now inductively construct a sequence $\seq{q_j}{j < \cf(\gamma)}$ such that $q_j \in G \restriction \gamma_j$ is found using this lemma applied to $\bP_{\gamma_j}$ and 
    \[
        \setOf{p_i \restriction \gamma_j}{i < \delta} \cup \setOf{q_{j'}}{j' < j}.
    \]
    The sequence now satisfies $q_{j} \restriction \gamma_{j'} \leq q_{j'}$ for $j' < j$. Using \cref{lem: mixing sequence} and the fact that the sequence is decreasing, one can now see that $\hat{p}$ defined as $\hat{p} \defas \bigcup_{j < \cf(\gamma)} q_j $ has all properties we are looking for.
    
    Let us now focus on the successor case, and assume the statement holds for $\gamma$. We want to show it holds for $\gamma + 1$. Let thus $G$ be $\bP_{\gamma+1}$-generic; write $G = G \restriction \gamma \star G(\gamma)$. As a notational convenience, let us write $f = \bigcup_{i < \delta} f_{p_i(\gamma)}$ and $R = \bigcup_{i < \delta} R_{p_i(\gamma)}$. 

    If $\bP_\gamma \forces \name{A}_\gamma = \name{B}_\gamma = \emptyset$, then we simply let $\hat{p} \restriction \gamma \in G \restriction \gamma$ be a condition constructed inductively from this lemma applied to $\seq{p_i \restriction \gamma}{i < \delta}$ and $\hat{p}(\gamma) = \langle f, R \rangle$. By \cref{fact: incomp equiv}, we are finished.
    
    Otherwise, we know by \cref{ass: successor ordinals} that $\alpha_\gamma$ is a successor ordinal. Let now $Y$ be the set of all $y \in X_\gamma$ for which there exists an $\eta(y) \in \succ_{T_{\alpha_\gamma}}(\emptyset)$ such that for all $\nu \in \succ(\eta(y))$ there is a $\nu'(\nu,y) \in \succ(\nu)$ with $\langle \nu'(\nu,y), y \rangle \in R$. We first note that all such $y$ must be in $H(\gamma)$, because otherwise we would have $\beta \geq \sup_{i < \delta} \crank_H(p_i) \geq \setOf{\trank(\nu'(\nu,y) + 1}{\nu \in \succ(\eta(y))} = \trank(\eta(y))$, which means $\beta + 1 \geq \trank(\eta(y)) + 1 = \alpha_\gamma > \alpha_0$. Since $V[G \restriction \gamma] \models \setOf{p_i(\gamma)}{i < \delta} \subseteq G(\gamma)$, we in particular have $V[G \restriction \gamma] \models \enquote{\eta(y) \text{ is not $y$-critical in some lower bound of the $p_i$'s}}$ and thus $V[G \restriction \gamma] \models y \notin \name{A}_\gamma$. Since we know $Y \subseteq H(\gamma)$, by \cref{def: locus ground} we can find for each $y \in Y$ a $q_y \in G \restriction \gamma$ with $q_y \forces y \notin \name{A}_\gamma$. By induction, we can now apply this lemma to $G_\gamma$ and 
    \[  
    \setOf{p_i \restriction \gamma}{i < \delta} \cup \setOf{q_y}{y \in Y}
    \]
    to get a $q \in G \restriction \gamma$ with $\crank_H(q) \leq \beta$. Now let $\hat{p} \restriction \gamma \defas q$ and $\hat{p}(\gamma) \defas \langle f, R \rangle$. Again following \cref{fact: incomp equiv}, we see that $\hat{p} \restriction \gamma \forces \hat{p}(\gamma) \in \name{\bQ}_\gamma$. The only nontrivial issue that could have arisen is criticality at successors of $\emptyset$, which we have taken care of in the proof.
\end{proof}

\begin{lemma} \label{lem: immaculate dense}
    Let $H$ be a locus, $G$ a $\bP$-generic filter, $p \in G$. Then there exists a $p' \in G$, $p' \leq p$ that is immaculate with respect to $H$ and $\crank_H(p) = \crank_H(p')$.
\end{lemma}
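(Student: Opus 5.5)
We build $p'$ inside the generic extension $V[G]$ by recursion on $\gamma \leq \gamma^*$, producing good conditions $p'_\gamma \in G\restriction\gamma$ with $p'_\gamma \leq p\restriction\gamma$ and $\crank_H(p'_\gamma) = \crank_H(p\restriction\gamma)$. We also keep $p'_{\gamma}(\delta) = p(\delta)$ for all coordinates $\delta<\gamma$, so that only the ``lower'' coordinates are strengthened by adding conditions of rank $0$. Immaculateness only concerns the finitely many relevant data at each coordinate: the points $x \in \points(R_{p(\gamma)}) \cap H(\gamma)$. There are ${<}\kappa$ of them, since $|R_{p(\gamma)}|<\kappa$, and the requirement is that $p'\restriction\gamma$ extends some rank-$0$ condition deciding $x\in \name{A}_\gamma$ and $x \in \name{B}_\gamma$.

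At a successor step $\gamma \to \gamma+1$, fix $x \in \points(R_{p(\gamma)})\cap H(\gamma)$. By clause (3) of \cref{def: locus ground}, the set $D$ of conditions $t\in\bP_\gamma$ with $\crank_H(t)=0$ deciding both statements is predense in $\bP_\gamma$. Hence genericity of $G\restriction\gamma$ gives $t_x \in G\restriction\gamma \cap D$. We then apply \cref{lem: mixing} in $\bP_\gamma$ to the ${<}\kappa$-many conditions
\[
\{p'_\gamma\} \cup \setOf{t_x}{x \in \points(R_{p(\gamma)})\cap H(\gamma)},
\]
all lying in $G\restriction\gamma$ and all of rank at most $\beta \defas \crank_H(p)$. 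This yields $q \in G\restriction\gamma$ below all of them with $\crank_H(q)\le\beta$. Set $p'_{\gamma+1} = q \conc p(\gamma)$. It lies in $G\restriction(\gamma+1)$ because $q\in G\restriction\gamma$ and $p(\gamma)$ is evaluated in $G(\gamma)$. Immaculateness at $\gamma$ holds with witness $t_x \ge q$.

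The rank must not increase. We check that the conditions added at earlier coordinates contribute only points of $H$ or nodes of rank bounded by existing ones. \cref{lem: mixing} as stated gives only $\le \beta$, which combined with $p'\leq p$ yields $\ge$ only coordinatewise. If \cref{lem: mixing} is applied with bound $\crank_H(p)$ this gives equality: the $t_x$ have rank $0$, so any new $R$-pairs outside $H$ in $\hat p$ come from the union (\cref{fact: incomp equiv}-style construction in the proof of \cref{lem: mixing}). So I would inspect that proof and note that $\hat p$ is built as pointwise unions, giving $\crank_H(\hat p)=\max(\crank_H(p_i))$. One caveat: the supports of the $t_x$ must be within $\supp(H)$, which holds since they have rank $0<\infty$. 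Also, immaculateness at coordinates $\delta<\gamma$ already secured is preserved, because the $\delta$-th coordinate of $p'$ stays $p(\delta)$ and restrictions only get stronger.

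At limit $\gamma$ with $\cf(\gamma)\ge\kappa$, the support bound makes it trivial. For $\cf(\gamma)<\kappa$ we take a continuous cofinal sequence, arrange the $p'_{\gamma_i}$ to be decreasing (via \cref{lem: mixing} at each step), and take the union using \cref{lem: mixing sequence}, which stays in $G$. Ranks of unions are the sup, hence still $\crank_H(p)$. The main obstacle I expect is this bookkeeping: ensuring that the coordinate-$\gamma$ entries of the condition are not altered by later mixing steps. Mixing at stage $\gamma'>\gamma$ may enlarge $R$ at coordinate $\gamma$, introducing new points $x\in H(\gamma)$ that need witnesses. To handle this, I would instead iterate $\omega$ times (or $<\kappa$ times) closing off, re-treating each coordinate whenever new $H$-points appear, and take the union at the end via \cref{lem: mixing sequence}.
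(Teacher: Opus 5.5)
The $\omega$-fold closure you propose at the very end is exactly the paper's proof. The coordinate-by-coordinate recursion before it, however, does not work and should be discarded rather than patched.

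The invariant $p'_\gamma(\delta)=p(\delta)$ for $\delta<\gamma$ is incompatible with your successor step. The condition $q$ supplied by \cref{lem: mixing} lies below the witnesses $t_x$, and inside the proof of that lemma also below further witnesses $q_y$. The pairs in $R_{t_x(\delta)}$ for $\delta<\gamma$ therefore end up in $R_{q(\delta)}$. That the $t_x$ have rank $0$ only tells you these new points lie in $H(\delta)$. Those are precisely the points for which immaculacy demands a rank-$0$ condition above $p'\restriction\delta$ deciding ``$y\in\name{A}_\delta$'' and ``$y\in\name{B}_\delta$''. Nothing you chose at stage $\delta$ provides one, since those points did not exist yet. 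So the claim that immaculacy already secured at lower coordinates is preserved is false, and a single upward pass over $\gamma$ never closes off.

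Once you adopt the $\omega$-fold closure, no recursion on $\gamma$ is needed, because \cref{lem: mixing} already acts on all of $\bP$. Put $q_0=p$. Given $q_j\in G$, collect all $x\in\points(R_{q_j(\gamma)})\cap H(\gamma)$ for all $\gamma\in\supp(q_j)$ at once. There are fewer than $\kappa$ of them in total by regularity of $\kappa$, not finitely many as you wrote. Choose witnesses in $G\restriction\gamma$ from the predense set of \cref{def: locus ground}, and let $q_{j+1}\in G$ be given by \cref{lem: mixing} applied to $q_j$ and these witnesses. Then $p'=\bigcup_{j<\omega}q_j\in G$ by \cref{lem: mixing sequence}. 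Every point of $R_{p'(\gamma)}$ already occurs in some $R_{q_j(\gamma)}$, and its witness $t$ satisfies $t\geq q_{j+1}\restriction\gamma\geq p'\restriction\gamma$.

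For the rank you need not inspect the internals of \cref{lem: mixing}, whose output is not merely a pointwise union anyway. $\crank_H(p')\geq\crank_H(p)$ holds because $R_{p(\gamma)}\subseteq R_{p'(\gamma)}$ for all $\gamma$. $\crank_H(p')\leq\crank_H(p)$ holds because each $q_j$ has rank at most $\crank_H(p)$ and the rank of the union is the supremum. Like the paper, this tacitly needs $\crank_H(p)<\alpha_0$, which is the hypothesis of \cref{lem: mixing}.
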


\begin{proof}
    As usual, let $G \restriction \gamma$ for $\gamma < \gamma^*$ denote the $\bP_\gamma$-generic filter produced as a projection of $G$. We will construct a decreasing sequence $\seq{q_j}{j < \omega}$ of conditions in $G$ such that 
    \begin{itemize}
        \item $q_0 = p$,
        \item for every $j < \omega, \gamma < \gamma^*$ and $x \in \points(R_{q_j(\gamma)}) \cap H(\gamma)$ there exists a $t \geq q_{j+1} \restriction \gamma$ such that $\crank_H(t) = 0$ and $t$ decides \enquote{$x \in \name{A}_\gamma$} and \enquote{$x \in \name{B}_\gamma$}.
    \end{itemize}

    To find this sequence, fix $j < \omega$ and for every $\gamma \in \supp(q_j)$, let $K_\gamma = H(\gamma) \cap \points(R_{q_j(\gamma)})$.
    This is a set of size ${<}\kappa$. For each $x \in K_\gamma$ there exists, by Definition~\ref{def: locus ground}, a $q_j^x \in G \restriction \gamma$ with $\crank_H(q_j^x) = 0$ such that $q_j^x$ decides \enquote{$x \in \name{A}_\gamma$} and \enquote{$x \in \name{B}_\gamma$}. Now let $q_{j+1}$ be yielded by \cref{lem: mixing} applied to $G$ and $\setOf{q_j^x}{x \in K_\gamma, \gamma \in \supp(q_j)} \cup \{q_j\}$.

    Now let $p'$ be defined as $p'(\gamma) \defas \bigcup_{j < \omega} q_j$. Since the $q_j$'s are a decreasing sequence, by \cref{lem: mixing sequence} we know $p'$ is a condition and $p' \in G$. A brief moment of contemplation should convince the reader that $p'$ is immaculate. 
\end{proof}

\begin{theorem} \label{th: rank function ground}
    For any locus $H$, $\crank_H$ is a rank function on $\bP$ for ordinals ${<}\alpha_0$ and $h_{\crank_H}(\beta) = \beta+1$.    
\end{theorem}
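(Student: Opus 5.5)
We follow the single-step proof of \cref{th: rank function single} coordinatewise, using good conditions throughout. Fix a locus $H$, a good $p \in \bP$ and $\beta < \alpha_0$. We want a good $q \comp p$ with $\crank_H(q) \leq \beta$ such that every good $r$ with $\crank_H(r) < \beta$ compatible with $q$ is compatible with $p$.

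\textbf{Construction of $q$.} First apply the preparation from \cref{th: rank function single} to every coordinate. For each $\gamma \in \supp(p)$, each $\langle \eta, x\rangle \in R_{p(\gamma)}$ with $\trank(\eta)$ a limit ${>}\beta$, and each $\nu \in \succ(\eta)$ with $\trank(\nu) < \beta$, add a pair $\langle \nu', x\rangle$ with $\nu'\in\succ(\nu)$ fresh. There are only ${<}\kappa$ such additions, and they create no new criticality. Next use \cref{lem: immaculate dense} to make $p$ immaculate with respect to $H$, working below a generic containing $p$; since only compatibility matters, we may replace $p$ by this stronger condition. We also want $\supp(p) \subseteq \supp(H)$; if that fails, $\crank_H(p)=\infty$ and every $r$ of rank ${<}\beta$ lives on $\supp(H)$, so we simply discard the coordinates outside $\supp(H)$ when defining $q$. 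Now define $q$ coordinatewise by $f_{q(\gamma)} = f_{p(\gamma)}$ for $\gamma\in\supp(H)$ and
\[
R_{q(\gamma)} = \setOf{\langle\eta,x\rangle\in R_{p(\gamma)}}{x\in H(\gamma)\vee \trank(\eta)\leq\beta}.
\]
Then $q \geq p$ pointwise, and $\crank_H(q)\le\beta$ is clear. The work in this step is checking that $q \in \bP$: at each $\gamma$, $q\restriction\gamma$ must force that the pairs at $\emptyset$ and at $\succ(\emptyset)$ avoid $\name{B}_\gamma$ and $\name{A}_\gamma$.

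\textbf{Checking $q \in \bP$, the main obstacle.} Since $q\restriction\gamma$ is weaker than $p\restriction\gamma$, membership in $\bP$ is not inherited. We handle it by cases on the coordinate $\gamma$.
\begin{itemize}
\item If $\name{A}_\gamma=\name{B}_\gamma=\emptyset$, there is nothing to check.
\item If $\alpha_\gamma>\alpha_0>\beta$, the kept pairs with $x\notin H(\gamma)$ have $\trank(\eta)\le\beta<\alpha_\gamma-1$, so they sit neither at $\emptyset$ nor at $\succ(\emptyset)$ ($\alpha_\gamma$ is a successor by \cref{ass: successor ordinals}).
\item If $\alpha_\gamma\le\alpha_0$, then $H(\gamma)\in\{\emptyset,X_\gamma\}$, and in the $X_\gamma$ case all points lie in $H(\gamma)$.
\end{itemize}
For points $x\in H(\gamma)$ we use immaculacy: $p\restriction\gamma$ lies below some $t$ of rank $0$ deciding \enquote{$x\in\name{A}_\gamma$} and \enquote{$x\in\name{B}_\gamma$}. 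Compatibility with $p\restriction\gamma$ forces $t$ to decide these the right way, and by induction $t$ is compatible with $q\restriction\gamma$ in the required sense. This needs the induction hypothesis phrased so that rank-$0$ deciders stay compatible with $q$. I expect some care here; the likely fix is to add all such $t$ to $q\restriction\gamma$ via \cref{lem: mixing}, which keeps the rank $\le\beta$.

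\textbf{Compatibility transfer.} Let $r$ be good with $\crank_H(r)<\beta$ and $r\comp q$, and prove $r\comp p$ by induction on $\gamma\le\gamma^*$ through the partial iterations. Inside a generic $G$ containing $q$ and $r$, we build $\hat p\restriction\gamma\in G\restriction\gamma$ below $p\restriction\gamma$ and $r\restriction\gamma$, using \cref{lem: mixing sequence} at limit stages of small cofinality. At a successor step $\gamma$, the pointwise union $p(\gamma)\cup r(\gamma)$ must be a condition. The syntactic clashes (a)--(c) and criticality are excluded exactly as in the case analysis of \cref{th: rank function single}, using the preparation for the limit-rank cases. The new clashes come from \ref{def: alpha forcing-d} and \ref{def: alpha forcing-e}: $r$ might add at $\succ(\emptyset)$ a point $x\in\name{A}_\gamma$, together with the criticality interaction with $\emptyset$ treated in \cref{lem: mixing}. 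Such pairs in $r$ with $x\notin H$ have rank ${<}\beta<\alpha_\gamma-1$ in the nontrivial cases, so they cannot occur. For $x\in H(\gamma)$, the obstruction depends only on $r$ and $q$, and $r\comp q$ rules it out. Finally, to meet \ref{def: alpha forcing-d} and \ref{def: alpha forcing-e} for the union in $V[G\restriction\gamma]$, we argue as in \cref{lem: mixing}: points forced into $Y$ must lie in $H(\gamma)$, because otherwise $\beta+1\ge\alpha_\gamma>\alpha_0$.
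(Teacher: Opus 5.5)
Your definition of $q$ (keep the pairs with $x\in H(\gamma)$ or $\trank(\eta)\le\beta$) and your use of immaculacy match the paper, but there is a genuine gap. Your preparation of $p$ is too weak, and the step that should exclude a criticality failure at a node $\eta\in\succ(\emptyset)$ in $p(\gamma)\cup r(\gamma)$ does not go through. The argument you borrow from \cref{lem: mixing} (``otherwise $\beta+1\ge\alpha_\gamma>\alpha_0$'') needs \emph{every} blocking pair $\langle\nu',x\rangle$, $\nu'\in\succ^2(\eta)$, to come from a condition of rank $\le\beta$. Here $p$ has arbitrary rank, so the blocking can be split: high ranks from $p$, low ranks from $r$.

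Concretely, let $\alpha_\gamma=\lambda+1$ with $\lambda>\alpha_0\ge5$ a small limit, $c_\lambda(0)=0$ and $c_\lambda(j)\ge\alpha_0$ for $j>0$. Let $\name{A}_\gamma=\check{\{x\}}$ with $x\notin H(\gamma)\neq\emptyset$, and $\beta=4$. Take $\eta\in\succ(\emptyset)$, and let $\eta'_0\in\succ(\eta)$ be its successor of rank $4$. Let $p(\gamma)$ consist of one pair $\langle\nu',x\rangle$, $\nu'\in\succ(\eta')$, for each $\eta'\in\succ(\eta)\minus\{\eta'_0\}$. This is a condition (the criticality constraint at $\eta$ is met via $\eta'_0$), it is immaculate, and your limit-rank preparation leaves it untouched. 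Your $q(\gamma)$ then has no $x$-pairs at all. Now let $r$ be the condition whose only pair is $\langle\nu'_0,x\rangle$ at coordinate $\gamma$, with $\nu'_0\in\succ(\eta'_0)$. Then $\crank_H(r)=3<\beta$ and $r\comp q$. But $p\cup r$ blocks every successor of the $x$-critical node $\eta$, so $r\incomp p$.

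The missing ingredient is the paper's third preparatory property. Take a coordinate where $\alpha_\gamma$ is the successor of a small limit, and $\eta\in\succ(\emptyset)$ with $\langle\eta,x\rangle\notin R_{p(\gamma)}$, some $x$-pair in $\succ^2(\eta)$, and some still-free $\eta'\in\succ(\eta)$ with $\trank(\eta')\le\beta$. Then one adds $\langle\eta',x\rangle$ to $R_{p(\gamma)}$. Having rank $\le\beta$, this pair survives into $q$, so any $r$ blocking $\eta'$ already clashes with $q$ via clause~\ref{def: alpha forcing-c}.

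You also need the paper's second property. The last case in the proof of \cref{th: rank function single}, which you import, uses that $p$ satisfies the \emph{strict} criticality constraint. In the iteration you cannot simply pass to a strict condition, because criticality at $\succ(\emptyset)$ depends on the undecided $\name{A}_\gamma$. So one adds a witness $\langle\eta',x\rangle$, $\eta'\in\succ(\eta)$, whenever $\eta$ is critical because of a pair at $\pred(\eta)$. Without it, the same split-blocking example works one level lower, with a pair at $\pred(\eta)$ of rank $>\beta+1$. Neither addition introduces new points, so performed after \cref{lem: immaculate dense} (as in the paper) they preserve immaculacy.

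Two smaller corrections:
\begin{enumerate}
\item To show $q\in\bP$ no mixing is needed. A rank-$0$ condition $t\ge p\restriction\gamma$ has all its pairs in $R_p$ with points in $H$, so $q\restriction\gamma\le t$ outright. You must also check the criticality clause at $\succ(\emptyset)$ for $q$ itself, not only \ref{def: alpha forcing-d} and \ref{def: alpha forcing-e}.
\item In the transfer step you cannot build $\hat p\restriction\gamma\le p\restriction\gamma$ inside a generic through $q$ and $r$, since such a generic need not contain $p$. Instead, show directly that the pointwise union $p\cup r$ is a condition. A generic is needed only to replace $r$ by an immaculate strengthening that is still compatible with $q$.
\end{enumerate}
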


\begin{proof}
    Fixing an ordinal $\beta < \alpha_0$, and a $p_0 \in \bP$, we produce a condition $q \in \bP, q \comp p_0, \crank_H(q) \leq \beta$ such that for each $r \in \bP, \crank_H(r) < \beta$ we have
    \[
        r \comp q \Rightarrow r \comp p_0.
    \]

    However, instead of constructing $q$ from $p_0$, we will instead first find a stronger condition $p \leq p_0$ and work from there; if we can verify the property for $p$ and $q$, then it is clearly also true for $p_0$ and $q$. First, by Lemma~\ref{lem: immaculate dense} we may assume $p_0$ is immaculate. But this is not the only preparatory step we need to perform; we further strengthen $p_0$ to a $p \leq p_0$, but only in such a way that $\supp(p) = \supp(p_0)$ and $\points(R_{p(\gamma)}) = \points(R_{p_0(\gamma)})$ for all $\gamma < \gamma^*$. This way, $p$ remains immaculate. The condition $p$ should be constructed from $p_0$ by adding ${<}\kappa$-many pairs to $R_{p_0}$ as to witness that the following three properties are satisfied for every $\gamma \in \supp(p)$:
    \begin{itemize}   
        \item For all $\eta \in T_{\alpha_\gamma}$ and $x \in X_\gamma$ where $\trank(\pred(\eta)) > \beta$ is a limit ordinal, $\trank(\eta) < \beta$ and $\langle \pred(\eta), x \rangle \in R_{p(\gamma)}$ there should exist a $\nu \in \succ(\eta)$ with $\langle \nu, x \rangle \in R_{p(\gamma)}$. This can be made to hold: by the construction of the tree $T_{\alpha_\gamma}$ at limit nodes, there are ${<}\kappa$-many of such $\eta, x$ in $R_{p_0(\gamma)}$. Since now $\eta$ has $\kappa$-many successors in $T_{\alpha_\gamma}$, we can find a $\nu \in \succ(\eta)$ so that we can add the tuple $\langle \nu, x \rangle$ to $R_{p(\gamma)}$.
        \item For all $\eta \in T_{\alpha_\gamma}$ such that $p \restriction \gamma \forces \enquote{\eta \text{ is $x$-critical}}$ and $\langle \pred(\eta), x \rangle \in R_{p(\gamma)}$ there is an $\eta' \in \succ(\eta)$ with $\langle \eta', x \rangle \in R_{p(\gamma)}$. This is nearly the same as saying that $p$ should be a strict condition, with the exception being instances of the criticality constraint which arise at successors of the root because of membership of $\name{A}_\gamma$.
        \item The third property is relevant only for those $\gamma$ for which $\alpha_\gamma$ is the successor of a small limit ordinal. Here we consider all $\eta \in \succ_{T_{\alpha_\gamma}}(\emptyset)$ and $x \in X_\gamma$ for which
        \begin{itemize}
            \item $\langle \eta, x \rangle \notin R_{p(\gamma)}$,
            \item there exists a $\nu \in \succ^2(\eta)$ with $\langle \nu, x \rangle \in R_{p(\gamma)}$ and
            \item there also exists an $\eta' \in \succ(\eta)$, $\trank(\eta') \leq \beta$ such that $\langle \nu', x \rangle \notin R_{p(\gamma)}$ for all $\nu' \in \succ(\eta')$.
        \end{itemize}
        For every such $\eta$ and $x$ there should exists an $\eta'$ as above such that $\langle \eta', x\rangle \in R_{p(\gamma)}$.
    \end{itemize}

    There is some nuance here. By Remark~\ref{rem: criticality local}, all limit nodes are vertically spaced out far enough from each other so that by passing through $p_0$ once and collecting all necessary witnesses into $R_p$, no new instances of these properties are created that need to be taken care of, and so we are thereby done in constructing $p \leq p_0$. From this point onward, we simply work with $p$.

    Finally, we define $q$ at each coordinate $\gamma \in \supp(p) \cap \supp(H)$ as
    \[
        q(\gamma) \defas \langle f_{p(\gamma)}, R_{q(\gamma)} \rangle, 
    \]
    where
    \[
        R_{q(\gamma)} \defas \setOf{\langle \eta, x \rangle \in R_{p(\gamma)}}{x \in H(\gamma) \vee \trank_{T_{\alpha_\gamma}}(\eta) \leq \beta}.
    \]
    For $\gamma \notin \supp(\bar{p}) \cap \supp(H)$, we are letting $q(\gamma) = \mathbbm{1}$.

Above all, we must first check that $q$ is actually a condition.

\begin{claim}
    $q$ is a condition in $\bP$.
\end{claim}

\begin{proof}
     If it were not a condition, then there is an index $\gamma$ such that $q \restriction \gamma \in \bP_\gamma$ is a condition, but $q \restriction \gamma \not \forces q(\gamma) \in \name{\bQ}_\gamma$. Looking at the definition of $\alpha$-forcing and the fact that we are dealing with good conditions whose relation $R_{\bar{p}}$ is decided in the ground model, there are only two conceivable ways this could happen. The first of these is if there was an $x \in X_\gamma$ such that either
    \begin{enumerate}[label=\alph*)]
        \item $\langle \emptyset, x \rangle \in R_{q(\gamma)}$ but $q \restriction \gamma \not \forces x \notin \name{B}_\gamma$ or \label{enum: ground th a}
        \item $\langle \eta, x \rangle \in R_{q(\gamma)}$ for an $\eta \in \succ(\emptyset)$ but $q \restriction \gamma \not \forces x \notin \name{A}_\gamma$. \label{enum: ground th b}
    \end{enumerate}
    So let us assume \ref{enum: ground th b} is the case for an $\eta$ and $x$. The case of \ref{enum: ground th a} can be refuted similarly and easier. Since we necessarily now have $\bP_\gamma \not \forces \name{A}_\gamma = \name{B}_\gamma = \emptyset$, we either have $\alpha_\gamma \leq \alpha_0$; in this case, since $\gamma \in \supp(H)$, it follows that $H(\gamma) = X_\gamma$, so $x \in H(\gamma)$. If $\alpha_\gamma > \alpha_0$, then by definition of $R_{q(\gamma)}$ either $x \in H(\gamma)$ or $\trank(\eta) \leq \beta$. But $\alpha_\gamma$ is a successor ordinal, so $\trank(\eta) + 1 = \alpha_\gamma$. So we have $\beta < \alpha_0 \leq \trank(\eta)$. We can conclude that the reason $\langle \eta, x\rangle$ is in $R_{q(\gamma)}$ can only be that $x \in H(\gamma)$.

    So in any case, we know $x \in H(\gamma)$. At this point, the immaculacy of $p$ comes into play. We know that there is a $t \geq p \restriction \gamma$ with $\crank_H(t) = 0$ so that $t$ decides \enquote{$x \in \name{A}_\gamma$} and \enquote{$x \in \name{B}_\gamma$}. Since $\langle \eta, x \rangle \in R_{p(\gamma)}$, $t$ decides \enquote{$x \in \name{A}_\gamma$} and is compatible with $p \restriction \gamma$, this can only mean $t \forces x \notin \name{A}_\gamma$. But now $f_{q \restriction \gamma} = f_{p \restriction \gamma}$ and also every $\langle \nu, y \rangle$ from $R_{p(\delta)}$ with $y \in H(\delta)$ appears in $R_{q(\delta)}$ for $\delta < \gamma$, so actually $t \geq q \restriction \gamma$. This means $q \restriction \gamma \forces x \notin \name{A}_\gamma$, contradiction.

    The second possibility for $q \restriction \gamma \not \forces q(\gamma) \in \name{\bQ}_\gamma$ is that there exists an $\eta \in \succ(\emptyset)$ and for each $\nu \in \succ(\eta)$ a $\nu'(\nu) \in \succ(\nu)$ with $\langle \nu', x \rangle \in R_{q(\gamma)}$ but $q \restriction \gamma \not \forces x \notin \name{A}_\gamma$. Note that the analogous statement for $\eta = \emptyset$ and $q \restriction \gamma \not \forces x \notin \name{B}_\gamma$ is impossible, as $\alpha_\gamma$ cannot be a limit ordinal unless $\bP_\gamma \forces \name{A}_\gamma = \name{B}_\gamma = \emptyset$. Circling back to the situation at hand, we will once again argue that $x \in H(\gamma)$ is implied. If $x \notin H(\gamma)$, then $\beta \geq \crank_H(q) \geq \sup \setOf{\trank(\nu'(\nu))}{\nu \in \succ(\eta)} = \trank(\eta) = \alpha_\gamma - 1 \geq \alpha_0$. This cannot happen, so $x \in H(\gamma)$. But by the immaculacy of $p$, this means there exists a $t \geq p \restriction \gamma$ with $\crank_H(t) = 0$ such that $t$ decides \enquote{$x \in \name{A}_\gamma$} and \enquote{$x \in \name{B}_\gamma$}. Since $R_{q(\gamma)} \subseteq R_{p(\gamma)}$, it must be the case that $t \forces x \notin \name{A}_\gamma$. At this point it is crucial to realize that $t \geq p \restriction \gamma$ and $\crank_H(t) = 0$ implies $t \geq q \restriction \gamma$ by the construction of $q$, so $q \restriction \gamma \forces x \notin \name{A}_\gamma$. This contradicts the assumption of this paragraph.

    We have verified that $q \geq p$ is a condition.
\end{proof}

\begin{claim}
    $q$ has the desired property.
\end{claim}

\begin{proof}
    Suppose $r \in \bP$ is a condition with $\crank_H(r) < \beta$ and $r \comp q$. Without loss of generality, we take $r$ to be immaculate. To see why, suppose $r_0$ is given with $\crank_H(r) < \beta$ and $r_0 \comp q$. Then arguing in the generic extension, let $G$ be a $\bP$-generic filter containing $r_0$ and $q$ and apply Lemma~\ref{lem: immaculate dense} to get an $r \leq r_0$ immaculate with $\crank_H(r) = \crank_H(r_0)$. This $r$ is still compatible with $q$.
    
    We will prove that $p \cup r$ is a condition. For the sake of a contradiction, assume not. This implies the existence of a $\gamma < \gamma^*$ such that $p \restriction \gamma \cup r \restriction \gamma \in \bP_\gamma$ but $p \restriction \gamma \cup r \restriction \gamma \not \forces p(\gamma) \cup r(\gamma) \in \name{\bQ}_\gamma$. There is a list of possible reasons for this to consider. Several of these appear already in the proof of Theorem~\ref{th: rank function single}, and the reasoning for their disqualification may be copied verbatim. By the first five bullet points of the corresponding part of Theorem~\ref{th: rank function single}, we know that $p(\gamma) \cup r(\gamma)$ satisfies condition \ref{def: alpha forcing-c} in Definition~\ref{def: alpha forcing}. Here, the first property we demanded to hold for $p$ comes into play. Furthermore, conditions \ref{def: alpha forcing-d} and \ref{def: alpha forcing-e} follow by the fact that the initial segment of the respective condition forces the requisite statement; for example, for each $x$ with $\langle \emptyset, x \rangle$ we have $p \restriction \gamma \forces x \notin \name{B}_\gamma$ if $\langle \emptyset ,x \rangle \in R_{p(\gamma)}$. The only remaining reason that $p(\gamma) \cup r(\gamma)$ is not a condition is due to failing the criticality constraint. So assume that for a small limit node $\eta \in T_{\alpha_\gamma}$ and every $\eta' \in \succ(\eta)$ there exists a $\nu(\eta') \in \succ(\eta')$ with $\langle \nu(\eta'), x \rangle \in R_{p(\gamma)} \cup R_{r(\gamma)}$ but $p \restriction \gamma \cup r \restriction \gamma \not \forces \enquote{\eta \text{ is not $x$-critical}}$. Here we always choose $\nu(\eta')$ to be in $R_{p(\gamma)}$, if it can be chosen that way. We distinguish cases:
    \begin{itemize}
        \item $\langle \pred(\eta), x \rangle \in R_{p(\gamma)} \cup R_{r(\gamma)}$: This cannot be, by the same proof as in the sixth bullet point in Theorem~\ref{th: rank function single}. We make use of the second property that we demanded for $p$.
        \item $\eta = \emptyset$: This cannot happen, since the possibility that $\eta$ is $x$-critical now entails $\bP_\gamma \not \forces \name{B}_\gamma = \emptyset$, which implies $\alpha_\gamma$ is not a limit ordinal.
        \item $\eta \in \succ(\emptyset)$ and $p \restriction \gamma \cup r \restriction \gamma \not \forces x \notin \name{A}_\gamma$: The interesting case. 
        \begin{itemize}
            \item $x \in H(\gamma)$: Since both $p$ and $r$ are immaculate and $x$ must appear in one of them, there is a condition $t \in \bP_\gamma$ with $\crank_H(t) = 0$ and $t \forces x \in \name{A}_\gamma$ such that either $t \geq p \restriction \gamma$ or $t \geq r \restriction \gamma$. Just like before, we realize that $t \geq p \restriction \gamma$ and $\crank_H(t) = 0$ implies $t \geq q \restriction \gamma$ by the construction of $q$, which overall implies that any condition $s$ stronger than $q$ and $r$ forces $s \restriction \gamma \forces x \in \name{A}_\gamma$. But all pairs $\langle \nu(\eta'), x \rangle$ for $\eta' \in \succ(\eta)$ must appear in $R_{s(\gamma)}$, so overall this situation contradicts the fact that $s$ is a condition.
            \item $x \notin H(\gamma)$: By the assumptions on the iteration and the definition of a locus, we can conclude $\alpha_\gamma > \alpha_0$ in this case. Since $\crank_H(r) < \beta$, $\beta < \alpha_0 < \alpha_\gamma$ and $\alpha_\gamma$ is a successor ordinal, we know $\beta < \trank_{T_{\alpha_\gamma}}(\eta)$. Since $\trank(\eta) = \sup\setOf{\trank(\nu(\eta'))}{\eta' \in \succ(\eta)}$, under the assumption $x \notin H(\gamma)$ this implies that for all $\eta' \in \succ(\eta)$ with $\trank(\eta') \geq \beta + 1$ we have $\langle \nu(\eta'), x \rangle \in R_{p(\gamma)}$. If the rest of the $\langle \nu(\eta'), x \rangle$ are all also in $R_{p(\gamma)}$, then it must mean that $p \restriction \gamma \forces x \notin \name{A}_\gamma$ and we are done. So for one $\eta' \in \succ(\eta)$ with $\trank(\eta') \leq \beta$, we have $\langle \nu(\eta'), x \rangle \notin R_{p(\gamma)}$. Recalling our preparatory step at the beginning of the Theorem, this is exactly the situation we took care to prepare for in the third property we demand for $p$; it follows that in fact, $\langle \eta' ,  x \rangle \in R_{p(\gamma)}$ for some $\eta' \in \succ(\eta)$ with $\trank(\eta') \leq \beta$. This yields a contradiction. \qedhere
        \end{itemize}
    \end{itemize}  
\end{proof}

This concludes the proof of Theorem~\ref{th: rank function ground}.
\end{proof}

\section{Applications of $\alpha$-forcing} \label{sec: applications}

We present several models that are h by using an iteration of $\alpha$-forcing. Although the forcing itself is only strategically ${<}\kappa$-closed, iterations are forcing-equivalent to iterations of the strict partial order $\saforc$, as stated in \cref{cor: strict dense iteration}. Thus we are able write that the forcing extension is ${<}\kappa$-closed into the claims of the theorems.

\begin{theorem} \label{th: set order of set}
    Let $X \subseteq \pre{\kappa}{\kappa}$ be a set of size $|X| > \kappa$ and $1 < \alpha < \kappa^+$ be a successor ordinal. Then there exists a ${<}\kappa$-closed, $\kappa^+$-c.c.\ generic extension $V[G]$ of the universe such that
    \[
        V[G] \models \ord_\kappa(X) = \alpha.
    \]

    The same is also true for $\alpha = \kappa^+$.
\end{theorem}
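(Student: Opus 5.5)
The plan is an iteration of $\alpha$-forcing satisfying the assumptions of \cref{subsec: alpha forcing ground model} with all spaces equal to the ground model space $X$. The proof of $\ord_\kappa(X) \geq \alpha$ will use \cref{th: preserve generic pi set}, and $\ord_\kappa(X) \leq \alpha$ will come from bookkeeping. It is convenient to assume $X \subseteq \pre{\kappa}{2}$, as in \cref{th: preserve generic pi set}. This is harmless, since $\pre{\kappa}{\kappa}$ is homeomorphic to a subspace of $\pre{\kappa}{2}$ and $\ord_\kappa$ is a topological invariant.

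For successor $\alpha = \alpha_0 + 1$ with $\alpha_0 \geq 1$, I would proceed as follows. Let $\mu = (2^{|X|})^{V}$, which equals $\mu^{\kappa}$ after passing to a cardinal of cofinality ${>}\kappa$ if needed. Consider a ${<}\kappa$-supported iteration of length $\gamma^* = \mu$ with $\bQ_0 = \aforc_{\alpha_0}(\emptyset,\emptyset,X)$. For $\gamma > 0$ take $\name{\bQ}_\gamma = \aforc_{\alpha}(\name{A}_\gamma, X \minus \name{A}_\gamma, X)$, where a bookkeeping function hands each $\bP_\gamma$-name for a $\kappa$-Borel subset of $X$ to cofinally many stages. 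This is possible by the $\kappa^+$-c.c.\ and $\kappa^{<\kappa}=\kappa$, since every subset of $X$ in the final model has a nice name appearing at some stage of cofinality ${\geq}\kappa$.

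The assumptions are then checked as follows. The spaces are all $X \in V$. Every iterand with nontrivial $A,B$ has successor index $\alpha$. For $\gamma > 0$ we have $\alpha_\gamma = \alpha > \alpha_0$, so the third assumption holds and the second clause of \cref{def: locus ground} is vacuous. The corollary following \cref{prop: heart cc}, together with \cref{cor: strict dense iteration}, gives that the extension is ${<}\kappa$-closed and $\kappa^+$-c.c. The upper bound is then immediate from \cref{cor: collapse A}: every $\kappa$-Borel $A \subseteq X$ in $V[G]$ is $\ppi{0}{\alpha}{X}{\kappa}$ and, by applying the same to $X \minus A$, also $\ssigma{0}{\alpha}{X}{\kappa}$.

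For the lower bound I would apply \cref{th: preserve generic pi set} with $\lambda = \kappa$, the ordinal $\alpha_0$, $Y = X$, and $\name{\mathcal{P}}$ the tail iteration, which is $\kappa^+$-c.c. The rank functions are the $\crank_H$ for loci $H$, restricted to the dense set of good conditions. These are rank functions for ordinals ${<}\alpha_0$ by \cref{th: rank function ground}. Applying \cref{lem: small locus ground} with the regular cardinal $\kappa^+$ supplies, for each set $S$ of size ${\leq}\kappa$, a locus with $|H(0)| \leq \kappa$ on which all of $S$ has rank $0$; the hypothesis on $|X_\gamma|$ is vacuous here. Clause \eqref{enum: switcheroo 3} then holds with $H(0)$, because $\crank_H(p) \geq \crank_{H(0)}(p(0))$ by definition. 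This yields that $G_\emptyset(0)$ is a $\ppi{0}{\alpha_0}{}{\kappa}$ set which is not $\ssigma{0}{\alpha_0}{X}{\kappa}$, so $\ord_\kappa(X) > \alpha_0$. When $\alpha = 2$ the iterand $\bQ_0$ is not needed, since $|X| > \kappa$ already gives $\ord_\kappa(X) \geq 2$ because $X$ is not discrete; in that case I would start the iteration with a trivial forcing, or simply omit the lower-bound argument.

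For $\alpha = \kappa^+$ I would instead iterate, with length $\kappa^+$ (or a product-like iteration), the forcings $\aforc_{\beta}(\emptyset,\emptyset,X)$ for every successor $\beta < \kappa^+$, with no downward iterands at all. For each $\beta$, I would reorder the iteration so that the relevant iterand comes first. This is legitimate because iterands with ground model spaces and trivial parameters commute, via good conditions forming a product. \cref{th: preserve generic pi set} then gives a set that is not $\ssigma{0}{\beta-1}{X}{\kappa}$, so the hierarchy does not collapse below any $\beta$.

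I expect the main obstacle to be the verification of the hypotheses of \cref{th: preserve generic pi set}. In particular, the rank functions there are defined only on good conditions, so one has to check that passing to this dense subforcing is harmless. One also has to confirm that the bookkeeping can be arranged so that $\name{A}_\gamma$ names sets in the final model while still respecting \cref{ass: successor ordinals}.
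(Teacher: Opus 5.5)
Your proposal is correct and is essentially the paper's proof. It uses the same iteration (first $\aforc_{\alpha_0}(\emptyset,\emptyset,X)$, then bookkept iterands $\aforc_{\alpha}(\name{A}_\gamma, X \minus \name{A}_\gamma, X)$), gets the upper bound from \cref{cor: collapse A}, and gets the lower bound from \cref{th: preserve generic pi set} with the locus rank functions $\crank_H$ and \cref{lem: small locus ground} at $\kappa^+$. The differences are cosmetic: the paper iterates only to length $2^\kappa$, and for $\alpha=\kappa^+$ it applies the theorem to the tail iteration in each intermediate model rather than reordering a product. Your separate treatment of $\alpha = 2$ via non-discreteness of $X$ is a small improvement, since the paper's recipe would formally call for the undefined forcing $\aforc_1$.
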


\begin{proof}
    We first consider the case of a successor $\alpha$, writing $\alpha = \alpha_0 + 1$. Consider the ${<}\kappa$-supported forcing iteration $\langle \bP_\gamma, \name{\bQ}_\gamma : \gamma < 2^\kappa \rangle$, where $\bQ_0 = \aforc_{\alpha_0}(\emptyset, \emptyset, X)$ and for $\gamma > 0$, $\bP_\gamma \forces \name{\bQ}_\gamma = \aforc_{\alpha_0 + 1}(\name{A}_\gamma, X \minus \name{A}_\gamma, X)$. By use of a bookkeeping argument, where we keep in mind that every $\kappa$-Borel subset is coded by an element of $\pre{\kappa}{\kappa}$, we can make sure that every $\kappa$-Borel subset of $X$ in the final model eventually appears as an $\name{A}_\gamma$. This means that every such set is $\ppi{0}{\alpha}{X}{\kappa}$ in $V[G]$ for a $\bP_{2^\kappa}$-generic filter $G$, thus $V[G] \models \ord_\kappa(X) \leq \alpha$.

    To show that $V[G]$, we use Theorem~\ref{th: preserve generic pi set} to get
    \[
    V[G] \models G_\emptyset(0) \notin \ppi{0}{\alpha_0}{X}{\kappa}.
    \]
    To see that the assumptions of the theorem are satisfied, we consider the family 
    \[
        \seq{\crank_H}{\text{ $H$ is a locus with $|H(0)| < \kappa^+$}};
    \]
    by \cref{th: rank function ground}, these are all rank functions for ordinals ${<}\alpha_0$ with $h_{\crank_H}(\beta) = \beta+1$. Furthermore, for every $S \subseteq \bP_{2^\kappa}$, $|S| \leq \kappa$ there exists by \cref{lem: small locus ground} a locus $H$ with $|H(0)| \leq \kappa$ and $\crank_H(p) = 0$ for all $p \in S$. Lastly, we have $\crank_H(p) \geq \crank_{H(0)}(p(0))$.

    Thus \cref{th: preserve generic pi set} takes hold and we get $V[G] \models \ord_\kappa(X) = \alpha$.

    For $\alpha = \kappa^+$, we simply use the forcing iteration $\langle \bP_\gamma, \name{\bQ}_\gamma : \gamma < \kappa^+ \rangle$, where $\bP_\gamma \forces \name{\bQ}_\gamma = \aforc_{\gamma}(\emptyset,\emptyset,X)$. Then in every intermediate model (see the proof of \cref{th: set order of set limit} for a more detailed explanation) \cref{th: preserve generic pi set} holds, which implies that $V^{\bP_{\kappa^+}} \models G_\emptyset(\gamma) \notin \ssigma{0}{\gamma}{X}{\kappa}$ for each $\gamma < \kappa^+$. Hence we have $V^{\bP_{\kappa^+}} \models \ord_\kappa(X) = \kappa^+$.
\end{proof}

One might wonder about setting the order of a space to be a limit ordinal other than $\kappa^+$. In this direction, we have a partial result. Since it appears that we are not able to control the limit levels of the Borel hierarchy directly (cf.\ \cref{ass: successor ordinals}), we have to resort to doing so indirectly by decomposing the space into smaller pieces.

\begin{theorem} \label{th: set order of set limit}
    Suppose $X \subseteq \pre{\kappa}{\kappa}$ is a space whose size $|X|$ is a limit cardinal of cofinality $\lambda$, $|X| > \kappa$. Furthermore let $\zeta < \kappa^+$ be a limit ordinal of cofinality $\lambda$. Then there exists a ${<}\kappa$-closed, $\kappa^+$-c.c.\ generic extension $V[G]$ of the universe such that
    \[
        V[G] \models \ord_\kappa(X) = \zeta.
    \] 
\end{theorem}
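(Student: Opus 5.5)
We decompose $X$ into an increasing chain of pieces of smaller size, push each piece to a successor level below $\zeta$, and push the whole space upward at cofinally many levels. Let $\mu = |X|$, fix a strictly increasing continuous sequence of cardinals $\seq{\mu_i}{i < \lambda}$ cofinal in $\mu$ with $\mu_0 > \kappa$, and an increasing sequence of successor ordinals $\seq{\zeta_i}{i < \lambda}$ with $1 < \zeta_i < \zeta$, cofinal in $\zeta$. Write $X = \bigcup_{i<\lambda} X_i$ with $X_i \subseteq X_{i'}$ for $i < i'$ and $|X_i| = \mu_i^+$; we may take the $\mu_i$ to be successors' predecessors so that $|X_i| < \mu$. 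We then iterate with ${<}\kappa$-support, of length $2^\kappa \cdot \lambda$ (or any bookkeeping length), the following iterands. For each $i<\lambda$ there is an \enquote{upward} iterand $\aforc_{\zeta_i - 1}(\emptyset,\emptyset,X_i)$, suitably placed so that \cref{th: preserve generic pi set} can later be applied with it as the first coordinate. For each $i$ and, via bookkeeping, every name for a $\kappa$-Borel subset $\name{A}$ of $X$, there is a \enquote{downward} iterand $\aforc_{\zeta_i}(\name{A}\cap X_i, X_i \minus \name{A}, X_i)$.

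For the upper bound, take any $\kappa$-Borel $A \subseteq X$ in $V[G]$. Each $A \cap X_i$ is $\ppi{0}{\zeta_i}{X_i}{\kappa}$, so it equals $C_i \cap X_i$ for some $\ppi{0}{\zeta_i}{}{\kappa}$ code $C_i$. Since $\cf(\zeta) = \lambda \le \kappa$ (if $\lambda > \kappa$ the statement is vacuous, as $\zeta < \kappa^+$), we have $\lambda$-many pieces. The problem is to glue $A = \bigcup_i (C_i \cap X_i)$ into a set of complexity $<\zeta$, which is not automatic because the $X_i$ need not be Borel in $X$. To fix this, we also add downward iterands coding each $X_i$ itself as a $\ppi{0}{2}{X}{\kappa}$ subset of $X$ (e.g.\ $\aforc_{2}(X_i, X\minus X_i, X)$; note $|X_i|<|X|$). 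Then $A = \bigcup_i (C_i\cap X_i \minus \bigcup_{i'<i}X_{i'})$ is a $\lambda$-union of sets of complexity below $\zeta$, hence $\ssigma{0}{\zeta}{X}{\kappa}$. Applying the same argument to the complement shows it is also $\ppi{0}{\zeta}{X}{\kappa}$, which gives $\ord_\kappa(X) \le \zeta$.

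For the lower bound, fix $i$. We apply \cref{th: preserve generic pi set} to the tail of the iteration starting at the upward iterand $\aforc_{\zeta_i-1}(\emptyset,\emptyset,X_i)$, working over the intermediate model, as in the $\kappa^+$ case of \cref{th: set order of set}. We use loci and \cref{th: rank function ground} with first space $X_i$ and $\alpha_0 = \zeta_i - 1$, taking $\lambda$ in \cref{lem: small locus ground} to be $\kappa^+$. This yields $G_\emptyset \cap X_i \notin \ssigma{0}{\zeta_i-1}{X_i}{\kappa}$, and therefore $\ord_\kappa(X) \ge \ord_\kappa(X_i) \ge \zeta_i$. Taking the supremum over $i$ gives $\ord_\kappa(X) \ge \zeta$.

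The main obstacle is checking the third assumption preceding \cref{def: locus ground} for every later downward iterand relative to the chosen upward iterand. An iterand $\aforc_{\zeta_j}(\cdot,\cdot,X_j)$ with $\zeta_j \le \zeta_i - 1$ has $j<i$, hence $|X_j| < |X_i|$, which is fine. Iterands with $j \ge i$ satisfy $\zeta_j > \zeta_i - 1$, which is also fine. The iterands coding the $X_j$ at level $2$ are the troublesome ones, since $|X| > |X_i|$ and $2 \le \zeta_i-1$ may hold. The fix I would attempt first is to perform all these level-$2$ codings at the very start of the iteration, before any upward iterand, and to redo the analysis over that intermediate model. Since the iteration only uses ground model spaces, this is harmless. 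Finally, \cref{lem: small locus ground} requires $\kappa^+ > |X_\gamma|$ for the small-level downward spaces, which fails here. I would therefore instead use the cardinality-separated hypothesis $\lambda' = |X_i|$ together with the $\lambda'$-version of \cref{th: preserve generic pi set}, and deduce the $\kappa$-Borel non-membership from it.
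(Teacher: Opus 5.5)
Your proposal is correct and follows the paper's proof essentially step for step. The shared steps are: a preparatory level-$2$ coding of an increasing decomposition $X=\bigcup_{i<\lambda}X_i$ with $|X_i|=\mu_i^+$; an upward iterand on each $X_i$ followed by bookkept downward iterands on $X_i$ one level higher; the gluing argument for the upper bound; and the lower bound via \cref{th: preserve generic pi set} over the intermediate model, with \cref{lem: small locus ground} applied at the cardinal $|X_i|=\mu_i^+$, which is exactly the paper's $\Theta_i^+$.

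A few precision points. The cardinal fed into \cref{th: preserve generic pi set} itself must be $\mu_i$ rather than $|X_i|$, since the theorem needs $|X_i|$ to exceed it; non-membership at the $\mu_i$-Borel level then yields the $\kappa$-Borel statement. You also need $\zeta_i\geq 3$ so that $\aforc_{\zeta_i-1}$ is defined. Finally, the iteration length should have cofinality ${>}\kappa$ (e.g.\ $2^\kappa$) for the bookkeeping to work.
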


\begin{proof}
    Let us first fix a cofinal sequence $\seq{\zeta_i}{i < \lambda}$ in $\zeta$, as well as a strictly increasing sequence $\seq{\Theta_i}{i < \lambda}$ consisting of cardinals cofinal in $\Theta = |X|$, with $\Theta_0 > \kappa$. Furthermore, we choose an increasing sequence $\seq{\Xi_i}{i < \lambda}$ of subsets of $X$ such that $X = \bigcup_{i < \lambda} \Xi_i$ and $|\Xi_i| = \Theta_i^{+}$. By first forcing with a preparatory partial order ($\alpha$-forcing for $\alpha = 2$), we may assume that $\Xi_i \in \ssigma{0}{2}{X}{\kappa}$ for each $i < \lambda$.

    The iteration of $\alpha$-forcing used to construct the desired model is $\langle \bP_\gamma, \name{\bQ}_\gamma: \gamma < 2^\kappa \rangle$, where 
    \begin{itemize}
        \item $\bP_i \forces \name{\bQ}_i = \aforc_{\zeta_i}(\emptyset, \emptyset, \Xi_i)$ for $i < \lambda$
        \item for every $\gamma$ with $\lambda \leq \gamma < 2^\kappa$ there exists a $j(\gamma) < \lambda$ such that 
        \[
            \bP_\gamma \forces \name{\bQ}_\gamma = \aforc_{\zeta_{j(\gamma)} + 1}(\name{A}_\gamma, \Xi_{j(\gamma)} \minus \name{A}_\gamma, \Xi_{j(\gamma)})
        \]
        \item by bookkeeping, we ensure that for each $i < \lambda$, every $\kappa$-Borel subset of $\Xi_i$ in the final model appears as an $\name{A}_\gamma$ as above, for a $\gamma$ with $j(\gamma) = i$. 
    \end{itemize}
    The last bullet point ensures that $\ord_\kappa(\Xi_i) \leq \zeta_i + 1$ for each $i < \lambda$. To see that $\ord_\kappa(\Xi_i) = \zeta_i + 1$, we once again invoke \cref{th: preserve generic pi set}, which will give us $\bP_{2^\kappa} \forces G_\emptyset(i) \notin \ssigma{0}{\zeta_i}{\Xi_i}{\kappa}$ for each $i < \lambda$. Here $G_\emptyset(i)$ is the generic $\ppi{0}{\zeta_i}{\Xi_i}{\kappa}$ set added by the $i$-th generic filter in the iteration. From a technical perspective, to use the theorem in this manner we are working in the extension $V^{\bP_i}$ and applying it to the tail iteration $\bP_{i, 2^\kappa} = \langle \bP_\gamma, \name{\bQ}_\gamma: i \leq \gamma < 2^\kappa \rangle$. 
    
    \begin{claim}
        For every $i < \lambda$ we have $\bP_i \forces \bP_{i, 2^\kappa} \forces G_\emptyset(i) \notin \ssigma{0}{\zeta_i}{\Xi_i}{\kappa}$.
    \end{claim}
    \begin{proof}[Proof of the claim]
        We work in $V^{\bP_i}$ and check that \cref{th: preserve generic pi set} is in fact applicable for the cardinal $\Theta_i$. As before, we do this by claiming that
        \[
            \seq{\crank_H}{\text{ $H$ is a locus with $|H(0)| \leq \Theta_i$}}
        \]
        witnesses the assumptions of the theorem. We only prove that \cref{enum: switcheroo 2} is satisfied, the rest is the same as in \cref{th: set order of set}. But if $\gamma > i$ with $\alpha_\gamma \leq \alpha_i$ and $\bP_\gamma \not \forces \name{A}_\gamma = \name{B}_\gamma = \emptyset$, then $j(\gamma) \leq i$ and hence $|X_\gamma| = \Theta_{j(\gamma)}^{+} \leq \Theta_i$. Thus \cref{lem: small locus ground} (applied to the cardinal $\Theta_i^{+}$) implies that \cref{enum: switcheroo 2} in \cref{th: preserve generic pi set} is satisfied. Since $|\Xi_i| = \Theta_i^{+}$, we thus in fact even have
        \[
            V^{\bP_i} \models \bP_{i, 2^\kappa} \forces G_\emptyset(i) \notin \ssigma{0}{\zeta_i}{\Xi_i}{\Theta_i},
        \]
        so in particular the claim is true.
    \end{proof}

    This means that for every $i < \lambda$ we have
    \[
        \bP_{2^\kappa} \forces \ord_\kappa(\Xi_i) = \zeta_i + 1.
    \]
    Since $\cf(\zeta) = \lambda$, we have $\lambda \leq \kappa$. Hence if $B \subseteq X$ is a $\kappa$-Borel subset of $X$ in the final model, then for every $i < \lambda$, we have $V^{\bP_{2^\kappa}} \models B \in \ssigma{0}{\zeta_i + 1}{\Xi_i}{\kappa}$. This immediately yields $V^{\bP_{2^\kappa}} \models B \in \ssigma{0}{\zeta}{X}{\kappa}$, and since $\ord_\kappa(X) \geq \sup_{i < \lambda} \ord_\kappa(\Xi_i)$, we get
    \[
        V^{\bP_{2^\kappa}} \models \ord_\kappa(X) = \zeta. \qedhere
    \]
\end{proof}

\begin{remark}
    In fact, by \cite[Proposition~4.4]{gen_borel_sets}, and particularly \cite[Claim~4.4.1]{gen_borel_sets}, every space $X \subseteq \pre{\kappa}{\kappa}$ whose order is a limit ordinal $\zeta$ has a similar form to the one constructed in \cref{th: set order of set limit}; there exists a sequence of disjoint closed subspaces whose orders converge to $\zeta$.
\end{remark}

Other variations of \cref{th: set order of set} are possible by using a longer iteration of $\alpha$-forcing. By fixing a regular cardinal $\lambda \geq \kappa$ and enumerating every $\lambda$-Borel subset of $X$ in an iteration of $\alpha$-forcing of cofinality ${>}\lambda$, we get the consistency of $\ord_\lambda(X) = \alpha$ for any successor ordinal $\alpha$ and $X \subseteq \pre{\kappa}{\kappa}, |X| > \lambda$. This is because of the stronger conclusion given by \cref{th: preserve generic pi set}.
    
If we instead iterate up to $\gamma^* = 2^{|X|}$, with a bookkeeping argument we are able to catch \textit{every} subset of $X$ as an $\name{A}_\gamma$; this means that not only do we set $\ord_\kappa(X)$ to a desired ordinal, but we also get $\powset(X) = \ssigma{0}{\alpha}{X}{\kappa}$, Using \cref{th: preserve generic pi set} and previously employed arguments, we get a characterization for a consistent assignment of $\ord_\lambda$ for several $\lambda$ simultaneously. We record this observation in the form of a corollary.

\begin{corollary} \label{cor: single set make all borel}
    Suppose $X \subseteq \pre{\kappa}{\kappa}$ and $\alpha$ is a successor ordinal. Then there is a ${<}\kappa$-closed, $\kappa^+$-c.c.\ generic extension $V[G]$ in which $\powset(X) = \ssigma{0}{\alpha}{X}{\kappa}$ and for infinite cardinals $\lambda$,
    \[
        \ord_\lambda(X) =
        \begin{cases}
            1 &\text{ if $X$ is discrete} \\
            2 &\text{ if $X$ is not discrete and $|X| \leq \lambda$} \\
            \alpha &\text{ if $X$ is not discrete and $\kappa \leq \lambda < |X|$}.
        \end{cases}
    \]
\end{corollary}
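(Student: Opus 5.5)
We split into cases by the shape of $X$. If $X$ is discrete, \cref{prop: easy order} gives $\ord_\lambda(X)=1$ in any extension, and discreteness is absolute between $V$ and $V[G]$ because no new bounded sequences are added. If $X$ is not discrete, the extension must do two jobs: make every subset of $X$ become $\ssigma{0}{\alpha}{X}{\kappa}$, and keep $\ord_\lambda(X)\ge\alpha$ for every $\lambda$ with $\kappa\le\lambda<|X|$. The value $2$ in the case $|X|\le\lambda$ then follows from \cref{prop: easy order}: we get $\ord_\lambda(X)\le 2$, and non-discreteness forces $\ord_\lambda(X)\ne 1$. If $|X|\le\kappa$, the last case is vacuous and we may use the trivial forcing, since every subset is already a $\kappa$-union of singletons and hence $\ssigma{0}{2}{X}{\kappa}\subseteq\ssigma{0}{\alpha}{X}{\kappa}$. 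So we assume $|X|>\kappa$.

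Write $\alpha=\alpha_0+1$, with $\alpha_0\ge1$; we may assume $\alpha_0\geq 2$, since $\alpha=2$ only requires coding all subsets at level $2$, and for that there is no lower bound to protect beyond non-discreteness. Take the ${<}\kappa$-supported iteration of length $\gamma^*=2^{|X|}$ (computed in $V$) with $\bQ_0=\aforc_{\alpha_0}(\emptyset,\emptyset,X)$ and $\bP_\gamma\forces\name{\bQ}_\gamma=\aforc_{\alpha}(\name{A}_\gamma,X\minus\name{A}_\gamma,X)$ for $\gamma>0$. The iteration is ${<}\kappa$-closed up to density by \cref{cor: strict dense iteration} and $\kappa^+$-c.c.\ by \cref{prop: heart cc}. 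Each subset of $X$ in the final model has a nice $\bP$-name determined by $|X|\cdot\kappa$ antichains of size $\le\kappa$. A standard bookkeeping argument, using $(2^{|X|})^{|X|}=2^{|X|}$ and the c.c.\ to see that every such name already appears at an initial stage, therefore lets us list all subsets as the $\name{A}_\gamma$. By \cref{cor: collapse A}, every subset of $X$ is then $\ppi{0}{\alpha}{X}{\kappa}$ in $V[G]$. Since complements of subsets are again subsets, it is also $\ssigma{0}{\alpha}{X}{\kappa}$, so $\powset(X)=\ssigma{0}{\alpha}{X}{\kappa}$. This gives $\ord_\lambda(X)\le\alpha$ for all $\lambda\ge\kappa$, because $\kappa$-Borel classes are contained in the corresponding $\lambda$-Borel classes.

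For the lower bound, fix $\lambda$ with $\kappa\le\lambda<|X|$. We apply \cref{th: preserve generic pi set} with this $\lambda$, the iteration as $\bP$, and $Y=X$. The tail is $\kappa^+$-c.c., hence $\lambda^+$-c.c. We check the assumptions of \cref{ass: ground model spaces}, \cref{ass: successor ordinals} and the third assumption: all spaces equal $X$, and every later iterand has $\alpha_\gamma=\alpha>\alpha_0$. For the rank family we take $\crank_H$ where $H$ ranges over loci with $|H(0)|\le\lambda$. \cref{th: rank function ground} says these are rank functions for ordinals ${<}\alpha_0$ with $h(\beta)=\beta+1$. For condition \eqref{enum: switcheroo 2} we use \cref{lem: small locus ground} with the regular cardinal $\lambda^+$. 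Its side hypothesis is vacuous, since no $\gamma>0$ has $\alpha_\gamma\le\alpha_0$ together with nontrivial $\name{A}_\gamma$. This produces a locus $H$ with $|H(0)|<\lambda^+$ that vanishes on any prescribed set of size $\le\lambda$. Condition \eqref{enum: switcheroo 3} holds because $\crank_H(p)\ge\crank_{H(0)}(p(0))$. Hence $G_\emptyset(0)\notin\ssigma{0}{\alpha_0}{X}{\lambda}$, which gives $\ord_\lambda(X)>\alpha_0$, that is, $\ord_\lambda(X)=\alpha$.

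The main obstacle is the technicality in \cref{th: preserve generic pi set}. It is stated for $X\subseteq\pre{\kappa}{2}$ and for $\bP$ written as $\aforc\star\name{\mathcal P}$ with a $\lambda^+$-c.c.\ tail. We first move $X$ into $\pre{\kappa}{2}$ by a homeomorphic copy, or simply observe that the proof nowhere uses the restriction. We must also check that the lengthened iteration to $2^{|X|}$ causes no problems: the locus construction in \cref{lem: small locus ground} uses elementary submodels of size ${<}\lambda^+$ and never refers to $\gamma^*$, so it is unaffected. The $\lambda=\kappa$ case is already handled by \cref{th: set order of set}.
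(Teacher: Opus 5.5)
Your proposal is correct and follows essentially the same route as the paper: iterate to length $2^{|X|}$ with $\aforc_{\alpha_0}(\emptyset,\emptyset,X)$ first and $\aforc_{\alpha}(\name{A}_\gamma, X\minus\name{A}_\gamma, X)$ afterwards, and bookkeep every subset of $X$ to get $\powset(X)=\ssigma{0}{\alpha}{X}{\kappa}$. The lower bound for each $\kappa\le\lambda<|X|$ then comes from \cref{th: preserve generic pi set}, using loci of size $\le\lambda$ obtained from \cref{lem: small locus ground} at the regular cardinal $\lambda^+$; this correctly covers singular $\lambda$, and your separate treatment of $\alpha=2$ avoids the undefined $\aforc_1$.

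Two small touch-ups. First, the step that every nice name appears at an initial stage of the iteration relies on $\cf(2^{|X|})>|X|$ (König's theorem), not only on the chain condition. Second, your closing appeal to \cref{th: set order of set} for $\lambda=\kappa$ is unnecessary, since your general argument already handles $\lambda=\kappa$ for this longer iteration.
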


\subsection{Setting $\ord_\kappa$ for many spaces simultaneously}

Given the scope and generality of the iteration investigated in Section~\ref{subsec: alpha forcing ground model}, we can even exert much more refined control over the $\kappa$-Borel structure on $\pre{\kappa}{\kappa}$. In this section, assignments of $\ord_\kappa$ for several spaces $X$ at once are considered. While we continue being limited by the fact that all spaces modified throughout the constructed must be already contained in the ground model, it will turn out that we can consistently separate spaces by their size, i.e.\ construct a model of $\ord_\kappa(X) < \ord_\kappa(Y)$ if $|X| < |Y|$. The core argument for this separation has already appeared in \cref{th: set order of set limit}.

\begin{theorem} \label{th: set order of all spaces}
    Let $f$ be a function assigning to each cardinal $\lambda$ with $\kappa < \lambda \leq 2^\kappa$ an ordinal $1 < f(\lambda) \leq \kappa^+$ such that
    \begin{enumerate}
        \item $f$ is (not necessarily strictly) increasing,
        \item if $\lambda$ is a successor cardinal, then $f(\lambda)$ is a successor ordinal or $f(\lambda) = \kappa^+$,
        \item Continuity: if $\lambda$ is a limit cardinal, then $f(\lambda) = \sup_{\lambda' < \lambda} f(\lambda')$.
    \end{enumerate}
    Then there exists a ${<}\kappa$-closed, $\kappa^+$-c.c.\ generic extension $V[G]$ of the universe such that
    \[
        V[G] \models \forall X \subseteq \pre{\kappa}{\kappa}, X \in V, |X| > \kappa: \ord_\kappa(X) = f(|X|).
    \]
\end{theorem}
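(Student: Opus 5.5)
We will build one long ${<}\kappa$-supported iteration of $\alpha$-forcing satisfying the three assumptions of \cref{subsec: alpha forcing ground model}, combining the upward-pushing iterands from \cref{th: set order of set} with the size-separation idea from \cref{th: set order of set limit}. First we handle successor cardinals $\lambda$ with $f(\lambda) = \alpha_\lambda + 1$ a successor ordinal. For every ground model $X \subseteq \pre{\kappa}{\kappa}$ with $|X| = \lambda$ we add an iterand $\aforc_{\alpha_\lambda}(\emptyset,\emptyset,X)$, placed among the first iterands and bookkept so that each such $X$ is the space of some upward iterand. We then add downward iterands $\aforc_{\alpha_\lambda+1}(\name{A}_\gamma, X \minus \name{A}_\gamma, X)$ for all such $X$, and bookkeep them so that every $\kappa$-Borel subset of every ground model space $X$ of successor size is coded at level $f(|X|)$. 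These downward iterands respect \cref{ass: successor ordinals}. The length of the iteration can be taken to be $2^\kappa$, or $(2^\kappa)^{+}$-many stages with the usual bookkeeping; by the $\kappa^+$-c.c.\ each Borel set has a code of size $\kappa$ appearing at some stage. If $f(\lambda) = \kappa^+$, we instead use only upward iterands $\aforc_\gamma(\emptyset,\emptyset,X)$ for all $\gamma < \kappa^+$, as in the $\kappa^+$ case of \cref{th: set order of set}.

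The upper bounds then follow from the bookkeeping for spaces of successor size. For a space $X$ of limit size $\lambda$ we use continuity and monotonicity of $f$. Writing $X = \bigcup_{i<\cf\lambda}\Xi_i$ with $|\Xi_i|$ successor cardinals below $\lambda$, each $\Xi_i$ has order at most $f(|\Xi_i|) < f(\lambda)$ when $f(\lambda)$ is a limit. We also need the $\Xi_i$ to be $\ssigma{0}{2}{X}{\kappa}$, so we include preparatory $2$-forcing iterands for these decompositions, which is harmless since $2 \le$ everything. If $\cf\lambda \le \kappa$, we then conclude $\ord_\kappa(X) \le f(\lambda)$ as in the end of \cref{th: set order of set limit}. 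If $\cf\lambda > \kappa$, every $\kappa$-Borel subset of $X$ is decided on a $\kappa$-sized piece of information. Here I would rather just bookkeep directly at the ground-model level: any $\kappa$-Borel $B\subseteq X$ restricted to a piece of smaller size uses lower levels. This case needs care, and if necessary I would choose the decomposition with $\cf\lambda\le\kappa$-many pieces by also using that $f(\lambda)<\kappa^+$ forces cofinality ${\le}\kappa$ when $f$ is strictly cofinal.

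For the lower bounds, fix a ground model $X$ with $|X|=\lambda$ and the upward iterand $\gamma_X$ with $\alpha_{\gamma_X}=\alpha_\lambda$. As in the claim in \cref{th: set order of set limit}, we work in $V^{\bP_{\gamma_X}}$ and apply \cref{th: preserve generic pi set} to the tail iteration with the family $\crank_H$ for loci with $|H(0)|\le\lambda$, using \cref{th: rank function ground}. The key check is the hypothesis of \cref{lem: small locus ground} with the cardinal $\lambda^+$ (or $\lambda$ when it is regular). Every later iterand $\gamma$ with nontrivial $\name{A}_\gamma$ and $\alpha_\gamma \le \alpha_\lambda$ must have $|X_\gamma| < \lambda^+$. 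Iterands with smaller or equal size are fine, and monotonicity of $f$ guarantees that spaces of larger size have $\alpha_\gamma = f(|X_\gamma|)-1 \ge \alpha_\lambda$.

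The main obstacle is the equality case. If $|X_\gamma| > \lambda$ but $f(|X_\gamma|) = f(\lambda)$, then $\alpha_\gamma = \alpha_\lambda + 1 > \alpha_\lambda$, so this is still fine. Downward iterands on spaces of the same size $\lambda$ have $\alpha_\gamma = \alpha_\lambda+1 > \alpha_0$, so they are also fine. Thus the third assumption of the iteration holds relative to the tail starting at $\gamma_X$. We must also reorder so that this third assumption is checked only for the tail, which is what the relativization to $V^{\bP_{\gamma_X}}$ achieves, and all spaces remain ground model spaces. The result is $G_\emptyset(\gamma_X)\notin\ssigma{0}{\alpha_\lambda}{X}{\kappa}$, hence $\ord_\kappa(X)\ge f(\lambda)$. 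For limit $\lambda$ we get $\ord_\kappa(X) \ge \sup_{\lambda'<\lambda} f(\lambda') = f(\lambda)$ via subspaces of size $\lambda'$, since subspaces have order at most that of $X$.
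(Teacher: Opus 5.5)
Your lower-bound half matches the paper: upward iterands come first, then \cref{th: preserve generic pi set} is applied in $V^{\bP_{\gamma_X}}$ to the tail, with monotonicity of $f$ ensuring that every nontrivial downward iterand at level ${\le}\alpha_\lambda$ lives on a space of size ${<}\lambda$. The upper bound, however, has a genuine gap.

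You put downward iterands only on spaces of successor size. You treat a space $X$ of limit size $\lambda$ through the decomposition $X=\bigcup_{i<\cf\lambda}\Xi_i$, which needs $\cf\lambda\le\kappa$. When $f(\lambda)<\kappa^+$ is a limit ordinal this is automatic: $f$ is then not eventually constant below $\lambda$, so $\cf\lambda=\cf f(\lambda)\le\kappa$. But continuity also allows $f$ to be eventually constant below $\lambda$ with a successor value $f(\lambda)<\kappa^+$, and then nothing bounds $\cf\lambda$. For example, take $f$ constant with value $3$ and $|X|$ a limit cardinal of cofinality $\kappa^+$, which exists below $2^\kappa$ when $2^\kappa$ is large. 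Such an $X$ is a union of $\kappa^+$ pieces, not $\kappa$ many. No iterand in your construction codes its subsets at level $f(\lambda)$, and the remark about ``$\kappa$-sized information'' does not yield a $\boldsymbol{\Sigma}^0_{f(\lambda)}$ definition.

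The paper avoids this by adding downward iterands $\aforc_{f(|X|)}(A,X\setminus A,X)$ for \emph{every} ground-model $X$ with $f(|X|)$ a successor ordinal, whatever the size of $X$. Your lower-bound check survives this change: a downward iterand at level $f(|Y|)\le\alpha_\lambda<f(\lambda)$ still forces $|Y|<\lambda$ by monotonicity.

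Three smaller points also need repair.

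First, the loci must satisfy $|H(0)|<\lambda$, not $|H(0)|\le\lambda$ as you wrote. Apply \cref{lem: small locus ground} at the regular cardinal $\lambda$ itself, and \cref{th: preserve generic pi set} with the predecessor cardinal of $\lambda$. With $|H(0)|\le\lambda=|X|$ (your $\lambda^+$ option), that theorem only concerns $Y\subseteq X$ with $|Y|>|X|$ and says nothing. The check at $\lambda$ does go through, because downward iterands on spaces of size ${\ge}\lambda$ sit at level ${\ge}\alpha_\lambda+1$.

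Second, an iteration of length $2^\kappa$ or $(2^\kappa)^+$ cannot contain an upward iterand for each of the $2^{2^\kappa}$ ground-model spaces, let alone downward iterands for every pair of a space and a Borel set. The length must be at least $2^{2^\kappa}$.

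Third, the preparatory $\aforc_2(\Xi_i,X\setminus\Xi_i,X)$ iterands are not harmless ``since $2\le$ everything''. Low-level downward iterands on large spaces are exactly what the third assumption on the iteration and the hypothesis of \cref{lem: small locus ground} exclude after an upward iterand. They are harmless only because, as in the paper, they are forced first and absorbed into the ground model.
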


\begin{proof}
    For every $X \subseteq \pre{\kappa}{\kappa}$ for which $|X|$ is a limit cardinal and $f(|X|)$ is a limit ordinal\footnote{$f(|X|)$ can be a successor ordinal, which is the case of and only if $f$ is eventually constant below $|X|$ with value ${<}\kappa^+$.}, we first fix a cofinal sequence $\seq{\zeta_i^X}{i < \cf(|X|)}$ in $|X|$ consisting of successor cardinals. Notice that now $\seq{f(\zeta_i^X)}{i < \cf(|X|)}$ is a sequence of successor ordinals cofinal in $f(|X|)$ by the continuity of $f$. Lastly, fix an $\subseteq$-increasing sequence $\seq{\Xi_i^X}{i < \cf(|X|)}$ of subsets of $X$ with $|\Xi_i^X| = \zeta_i$. The first step in our construction is going to be a preparatory forcing, namely the generic extension by a iteration of $\alpha$-forcing with the property that for each $X$ as above and $i < \cf(|X|)$, the forcing $\aforc_2(\Xi_i^X, X \minus \Xi_i^X, X)$ appears. Hence we are allowed to assume, without loss of generality, that the ground model $V$ already satisfies $V \models \Xi_i^X \in \ssigma{0}{2}{X}{\kappa}$ for every such $X,i$.

    For the main construction of the forcing, we take an iteration of $\alpha$-forcing $\bP = \langle \bP_\gamma, \name{\bQ}_\gamma : \gamma < \gamma^* \rangle$ of sufficient length such that the following, and \textit{only} the following, forcings appear:
    \begin{enumerate}
        \item Enumerate all subsets of $\pre{\kappa}{\kappa}$ in the ground model as $\seq{X_j}{j < 2^{2^\kappa}}$. Then the first $2^{2^\kappa}$-many iterands of $\bP$ should look as follows:
        \begin{itemize}
            \item If $f(|X_j|) = f(|X_j|)^- + 1$ is a successor ordinal, then there is a $\gamma < 2^{2^\kappa}$ with $\bP_\gamma \forces \name{\bQ}_\gamma = \aforc_{f(|X_j|)^-}(\emptyset, \emptyset, X_j)$.
            \item If $f(|X_j|) = \kappa^+$, then for every $\alpha < \kappa^+$ there is a $\gamma < 2^{2^\kappa}$ with $\bP_\gamma \forces \name{\bQ}_\gamma = \aforc_{\alpha}(\emptyset, \emptyset, X_j)$.
            \item If $f(|X_j|) < \kappa^+$ is a limit ordinal, then it follows that $|X_j|$ is a limit cardinal. For this case we put no requirements on the iteration.
        \end{itemize}
        \item For iterands $\gamma \geq 2^{2^\kappa}$ there should exist a $Y(\gamma) \subseteq \pre{\kappa}{\kappa}$ such that $f(|Y(\gamma)|)$ is a successor ordinal and $\bP_\gamma \forces \name{\bQ}_\gamma = \aforc_{f(|Y(\gamma)|)}(\name{A}_\gamma, Y(\gamma) \minus \name{A}_\gamma, Y(\gamma))$.
        \item By bookkeeping, ensure that for every $X \in V$ where $f(|X|)$ is a successor ordinal and for every name $\name{A}$ for a $\kappa$-Borel subset of $X$ in the final model there exists a $\gamma < \gamma^*$ such that $Y(\gamma) = X$ and $\bP_\gamma \forces \name{A}_\gamma = \name{A}$.
    \end{enumerate}

    Firstly, let us verify the upper bounds claimed in the statement of the theorem. Let thus $X \subseteq \pre{\kappa}{\kappa}$ be given. If $f(|X|)$ is a successor ordinal, then the third clause above guarantees that every $\kappa$-Borel subset of $X$ in $V^\bP$ appears as the parameter for one of the forcings $\aforc_{f(|X|)}(\name{A}, X \minus \name{A}, X)$ in the iteration; hence in the final model, we have $V^\bP \models \forall A \subseteq X, A \in \bor{X}{\kappa}: A \in \ssigma{0}{f(|X|)}{X}{\kappa}$, which entails $V^\bP \models \ord_\kappa(X) \leq f(|X|)$. If $f(|X|) = \kappa^+$, there is nothing to prove. Lastly, if $f(|X|) < \kappa^+$ is a limit, then $|X|$ is a limit cardinal. From the beginning of the proof, we recover the sequence $\seq{\Xi_i^X}{i < \cf(|X|)}$ and notice that since $|\Xi_i^X|$ are all successor cardinals, we have $V^\bP \models \ord_\kappa(\Xi_i^X) \leq f(|\Xi_i^X|) = f(\zeta_i^X)$. In particular, for every $A \subseteq X$, $A \in V^\bP$ and $i < \cf(|X|)$ we have $V^\bP \models A \cap \Xi_i^X \in \ssigma{0}{f(\zeta_i^X)}{\Xi_i^X}{\kappa}$. Since we also made sure that $V^\bP \models \Xi_i^X \in \ssigma{0}{2}{X}{\kappa}$, we have
    \[
        V^\bP \models A = \bigcup_{i < \cf(|X|)} [A \cap \Xi_i^X] \in \ssigma{0}{ \sup_{i < \cf(|X|)} f(\zeta_i^X)}{X}{\kappa} = \ssigma{0}{f(|X|)}{X}{\kappa}.
    \]
    Note that $\cf(|X|) \leq \kappa$ must be true here since $\cf(f(|X|)) \leq \kappa$. Hence $V^\bP \models \ord_\kappa(X) \leq f(|X|)$ as well.

    We now turn to the lower bounds. It will be enough for us to verify the lower bound only for $X$ for which $|X|$ is a successor cardinal. To see why, we firstly remark that for spaces $X$ for which $f(|X|) < \kappa^+$ is a limit, the statement $V^\bP \models \ord_\kappa(X) = f(|X|)$ will follow immediately from the fact that $V^\bP \models \ord_\kappa(\Xi_i^X) = f(\zeta_i^X)$, $i < \cf(|X|)$. Likewise, if the first cardinal $\Theta$ for which $f(\Theta) = \kappa^+$ is a limit cardinal, then $\cf(\Theta) = \kappa^+$, so for any $X$ with $|X| = \Theta$ there is a sequence of subspaces of $X$ whose order converges to $\kappa^+$ by the continuity of $f$, hence $\ord_\kappa(X) = \kappa^+$ by the monotonicity of $\ord_\kappa$.

    Let thus $X \subseteq \pre{\kappa}{\kappa}$ be given whose size is a successor cardinal $|X| = (|X|^-)^+$. If\footnote{Here we employ the nonstandard yet evocative notation of letting $\theta^-$ denote the preceding cardinal if $\theta$ is a successor cardinal and the preceding ordinal if $\theta$ is a successor ordinal.} $f(X) = f(|X|)^- + 1$ is a successor, then there is a $\gamma < 2^{2^\kappa}$ with $\bP_\gamma \forces \name{\bQ}_\gamma = \aforc_{f(|X|)^-}(\emptyset, \emptyset, X)$. Once again, working in the $\gamma$-th model, we claim that \cref{th: preserve generic pi set} applies with $\lambda = |X|^-$. To see why, it suffices to know that \cref{lem: small locus ground} holds for $\lambda = |X|$; but if $\gamma' > \gamma$ with $\alpha_{\gamma'} \leq f(|X|)^-$ in this iteration, then it must be the case that 
    \[
    \bP_{\gamma'} \forces \name{\bQ}_{\gamma'} = \aforc_{f(|Y(\gamma')|)}(\name{A}_{\gamma'}, Y(\gamma') \minus \name{A}_{\gamma'}, Y(\gamma'))
    \]
    where $|Y(\gamma')| < |X|$ by the monotonicity of $f$. Hence we are done, \cref{th: preserve generic pi set} applies and $V^\bP \models \ord_\kappa(X) = f(|X|)$. The remaining case that $|X|$ is a successor cardinal but $f(|X|) = \kappa^+$ can be proven the same way, where we now use the fact that in the first part of the iteration we added generics for $\aforc_\alpha(\emptyset,\emptyset,X)$ for every $\alpha < \kappa^+$.

    This concludes the proof of the theorem.
\end{proof}

\begin{remark}
    The continuity requirement on $f$ in \cref{th: set order of all spaces} is necessary. Suppose we have an increasing sequence $\seq{\Theta_i}{i < \delta}, \delta \leq \kappa$ of cardinals, a corresponding increasing sequence $\seq{\zeta_i}{i < \delta}$ of ordinals and (any, not necessarily a forcing extension) an outer model $V' \supseteq V$ of the universe satisfying
    \[
    V' \models \forall X \subseteq \pre{\kappa}{\kappa}, X \in V: |X| = \Theta_i \Rightarrow \ord_\kappa(X) = \zeta_i.
    \]
    Then consider any $\seq{X_i}{i < \delta}$ with $|X_i| = \Theta_i$ which are clopen separated, i.e.\ for any $i$ there is a clopen set $C$ with $X_i \subseteq C$ such that $X_j \cap C = \emptyset$ for $j \neq i$. Then for $X = \bigcup_{i < \delta} X_i$ we know $X_i \in \ssigma{0}{1}{X}{\kappa}$, hence $V' \models \ord_\kappa(X) = \sup_{i < \delta} \zeta_i$ by familiar arguments.
\end{remark}

\begin{remark}
    To have a model of $\alpha_1 = \ord_\kappa(X) < \ord_\kappa(Y) = \alpha_2$, it is at least necessary that $Y$ does not embed into $X$ via a continuous mapping (or even a $\kappa$-Borel measurable mapping of low complexity), and moreover that this property is preserved by the forcing employed. If $|X| < |Y|$, then this is surely the case, but we do not exclude the possibility that a finer analysis of the ranked forcing properties of $\alpha$-forcing could yields a less restrictive requirement. As it stands, the only case of separating the order of two spaces which does not rely on differing cardinalities is in the second part of \cite[Theorem~22]{miller_length_1979}, in which this was done specifically for a set of mutually Cohen generic reals. It does not seem that these arguments can be applied more broadly.
\end{remark}

\subsection{Preserving definability}

It is a ubiquitous fact in descriptive set theory that every Borel subset $B$ of the reals satisfies the \textit{perfect set property}, namely that it is either countable or there exists a continuous embedding $\pre{\omega}{2} \hookrightarrow B$. When passing to the study of generalized descriptive set theory of an uncountable cardinal, one quickly notices that many topological dichotomy theorems no longer hold sway; this is the case with the perfect set property as well, and it is easy to (consistently) construct even closed counterexamples, such as $\kappa$-Kurepa trees.

An argument of Silver \cite{silver} shows that whenever one is given a tree $T \subseteq \pre{<\omega_1}{\omega_1}$, then this tree does not gain any new branches in $\sigma$-closed forcing extensions unless it had a perfect subtree in the ground model. This was later extended by L\"ucke to the class of analytic sets \cite[Lemma~7.6]{lucke_sigma_2012} and adapted to the case of $\kappa$-Borel codes in \cite{gen_borel_sets}. For a $\kappa$-Borel set $X \subseteq \pre{\kappa}{\kappa}$ and an outer model $V' \supseteq V$ of the universe, the notation $X^{(V')}$ refers to the reinterpretation of a\footnote{This is sensible mostly when $V$ is a $\ssigma{1}{1}{}{\kappa}$-elementary submodel of $V'$, in which case the choice of code does not matter.} $\kappa$-Borel code for $X$ as a $\kappa$-Borel subset of $\pre{\kappa}{\kappa} \cap V'$ in the outer model.

\begin{proposition}[\protect{\cite[Lemma~7.6]{lucke_sigma_2012}}, \protect{\cite[Corollary~6.10]{gen_borel_sets}}] \label{prop: perfect no new elems}
    Let $X \subseteq \pre{\kappa}{\kappa}$ be $\kappa$-Borel. Then the following are equivalent:
    \begin{enumerate}
        \item Every ${<}\kappa$-closed forcing $\bP$ which adds a new subset of $\kappa$ adds a new element to $X$, i.e.\ $\bP \forces X^{(V)} \subsetneq X^{(V^\bP)}$.
        \item There exists a ${<}\kappa$-closed forcing which adds a new element to $X$.
        \item $X$ has a perfect subset in $V$: there exists a continuous embedding of $\pre{\kappa}{2} \hookrightarrow X$ with closed image.
        \item There exists a $\kappa$-Borel measurable\footnote{A function is $\kappa$-Borel measurable if the preimage of every open set is $\kappa$-Borel.} injection of $\pre{\kappa}{2}$ into $X$.
    \end{enumerate}
\end{proposition}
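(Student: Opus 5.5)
The natural route is the cycle of implications (3) $\Rightarrow$ (1) $\Rightarrow$ (2) $\Rightarrow$ (3), together with (3) $\Leftrightarrow$ (4) handled separately. Throughout, $\kappa$-Borel sets are treated via their codes, and I use that ${<}\kappa$-closed forcing adds no new elements of $\pre{<\kappa}{\kappa}$. This makes the reinterpretation $X^{(V^\bP)}$ well-behaved: membership of a ground-model $x$ in a coded set is evaluated by induction on the code tree and is absolute between $V$ and $V^\bP$.

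\textbf{(3) $\Rightarrow$ (1).} Let $e\colon \pre{\kappa}{2}\hookrightarrow X$ be a continuous embedding with closed image, coded in $V$ by a monotone map on $\pre{<\kappa}{2}$. Any new subset of $\kappa$ yields a new $z\in\pre{\kappa}{2}$, and $e(z)$, computed from the code, is a new element of $\pre{\kappa}{\kappa}$, since $e$ is injective and $e^{-1}$ is continuous on the image. It remains to see $e(z)\in X^{(V^\bP)}$. The statement $\forall z\,(e(z)\in X)$ is $\Pi^1_1$ in the codes. I would prove its upward absoluteness for ${<}\kappa$-closed forcing by induction on the Borel code, using that $\mathrm{ran}(e)$ is closed and that ${<}\kappa$-closure lets us build, in $V$, a descending sequence of conditions deciding longer and longer initial segments of $z$ (a fusion argument). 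Alternatively I would cite the corresponding preservation lemma in \cite{gen_borel_sets}.

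\textbf{(1) $\Rightarrow$ (2)} is immediate: take $\kappa$-Cohen forcing $\mathrm{Add}(\kappa,1)$, which is ${<}\kappa$-closed and adds a new subset of $\kappa$.

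\textbf{(2) $\Rightarrow$ (3)} is the heart of the proof and the generalization of Silver's argument. Let $\bP$ be ${<}\kappa$-closed and $\name{x}$ a name with $p\forces \name{x}\in X\setminus V$. Since $\name{x}$ is forced to be new, below any condition there are two incompatible extensions deciding incompatible initial segments of $\name{x}$. Using ${<}\kappa$-closure at limit stages, I build a tree of conditions $\langle p_s : s\in\pre{<\kappa}{2}\rangle$ with $p_s\forces t_s\subseteq\name{x}$, where $t_{s\conc0}\perp t_{s\conc1}$ and the $t_s$ grow continuously. The map $z\mapsto\bigcup_{\alpha<\kappa} t_{z\restriction\alpha}$ is then a continuous embedding with closed image, and closedness requires care at limit levels of the fusion.

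The main obstacle is showing that the image lies inside $X$. The image is not generic along a single branch, so I would simultaneously decide, along the fusion, the evaluation of the Borel code of $X$ at $\name{x}$. Concretely, for each node $\eta$ of the code tree, the condition $p_s$ should decide $\name{x}\in\mathcal{I}^f(\eta)$ and, where needed, commit to a witnessing successor node. This is done for only ${<}\kappa$ many nodes at each stage, arranged by a bookkeeping so that each branch meets all relevant decisions. An induction on rank then shows that each branch limit satisfies the code. Since this is exactly Lücke's argument \cite[Lemma~7.6]{lucke_sigma_2012} specialized to $\kappa$-Borel sets, I expect to cite it, as the paper does.

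\textbf{(3) $\Leftrightarrow$ (4).} The direction (3) $\Rightarrow$ (4) is trivial. For (4) $\Rightarrow$ (3), I would compose with (1): a $\kappa$-Borel injection $g$ sends a Cohen-generic $z\in\pre{\kappa}{2}$ to a new element $g(z)$ of $X$, new because $g$ is injective and, by absoluteness of its code, $g(z)\in V$ would force $z\in V$. This gives (2), and hence (3).
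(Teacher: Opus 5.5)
Your proposal is correct, and it is essentially the argument behind the results the paper cites in place of a proof: L\"ucke's generalization of Silver's fusion for (2)$\Rightarrow$(3), $\Pi^1_1$-absoluteness between $V$ and ${<}\kappa$-closed extensions for (3)$\Rightarrow$(1), and a $\kappa$-Cohen real pushed through the Borel injection for (4)$\Rightarrow$(2). Deciding the code's evaluation together with witnessing successor nodes along each branch is exactly the $\Sigma^1_1$-witness device of \cite[Lemma~7.6]{lucke_sigma_2012}. The one imprecision is in (3)$\Rightarrow$(1): the absoluteness of $\forall z\,(e(z)\in X)$ does not really come from an induction on the code. It follows by pulling the $\Sigma^1_1$ negation back to $V$ with a single $\kappa$-length descending sequence of conditions deciding $z$ together with a witnessing evaluation of the code. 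This is the same device as in your (2)$\Rightarrow$(3), without the splitting.
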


Since every model in this section is constructed as a ${<}\kappa$-closed forcing extension of the ground model, they can all be decorated with the additional clause that if any such space $X$ is $\kappa$-Borel definable as a subset of $\pre{\kappa}{\kappa} \cap V$ and $X$ does not contain a perfect subset in $V$, then the conclusion of the respective theorem holds and additionally the same $\kappa$-Borel code also defines $X$ as a subset of $\pre{\kappa}{\kappa} \cap V^{\bP}$ in the extension. For example, we get a variant of \cref{th: set order of set}:

\begin{corollary}
    Let $X \subseteq \pre{\kappa}{\kappa}$ be a set of size $|X| > \kappa$ and $1 < \alpha \leq \kappa^+$ be either a successor ordinal or $\alpha = \kappa^+$. Assume $X$ is a $\kappa$-Borel subset of $\pre{\kappa}{\kappa}$ in $V$ no perfect subset.
    Then there is a ${<}\kappa$-closed, $\kappa^+$-c.c.\ forcing extension which satisfies 
    \[
    V[G] \models \ord_\kappa(X^{(V)}) = \alpha \text{ and } X^{(V)} = X^{(V[G])}.
    \]
\end{corollary}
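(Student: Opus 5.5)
We combine \cref{th: set order of set} with \cref{prop: perfect no new elems}. Concretely, we take the forcing $\bP$ built in the proof of \cref{th: set order of set} for the space $X$ and the ordinal $\alpha$. For $\alpha = \alpha_0+1$ this is the ${<}\kappa$-supported iteration of length $2^\kappa$ beginning with $\aforc_{\alpha_0}(\emptyset,\emptyset,X)$ and continuing with $\aforc_{\alpha_0+1}(\name{A}_\gamma, X\minus\name{A}_\gamma, X)$. For $\alpha=\kappa^+$ it is the iteration of $\aforc_\gamma(\emptyset,\emptyset,X)$ for $\gamma<\kappa^+$. By \cref{cor: strict dense iteration} we may replace $\bP$ by the forcing-equivalent iteration of strict $\alpha$-forcings, which is ${<}\kappa$-closed. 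Together with \cref{prop: heart cc}, this gives that $\bP$ is ${<}\kappa$-closed and $\kappa^+$-c.c.

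Since $X$ is $\kappa$-Borel in $V$ and has no perfect subset there, we argue that $\bP$ adds no new element to $X$. If it did, item (2) of \cref{prop: perfect no new elems} would hold for the ${<}\kappa$-closed forcing $\bP$, and item (3) would then give a perfect subset of $X$ in $V$, contradicting the hypothesis. Hence $\bP \forces X^{(V)} = X^{(V^\bP)}$. Here $X^{(V)}$ denotes the reinterpretation of the code in $V$, which is just the ground model set $X$.

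It remains to see that $V[G]\models \ord_\kappa(X)=\alpha$, where $X$ is the ground model set $X^{(V)}$. This is exactly the conclusion of \cref{th: set order of set}, because the spaces there are ground model sets. In particular, the iteration is set up for the ground model space $X$, as \cref{ass: ground model spaces} requires. Since $X^{(V[G])}=X$, the statement $\ord_\kappa(X^{(V)})=\alpha$ is unambiguous: the same code defines the space whose order we have computed.

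The only point needing care is that \cref{prop: perfect no new elems} should be applied to an honestly ${<}\kappa$-closed forcing, not just a strategically closed one. I would resolve this by passing to the dense strict subiteration, which has the same generic extensions. I would also check that the perfect-set equivalence does not depend on the particular code chosen. Both points are already addressed in the cited results, so I expect no real obstacle.
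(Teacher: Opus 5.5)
Your proposal is correct and matches the paper's argument. The paper likewise notes that the extension from \cref{th: set order of set} is (forcing-equivalent to) a ${<}\kappa$-closed, $\kappa^+$-c.c.\ iteration. By \cref{prop: perfect no new elems}, a $\kappa$-Borel $X$ with no perfect subset in $V$ therefore gains no new elements, so the computation of $\ord_\kappa$ for the ground model set $X$ applies unchanged to $X^{(V[G])}$.
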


The following is a result of Hamkins; we cite it in the form presented in \cite[Theorem~6.12]{gen_borel_sets}.

\begin{theorem}[\protect{\cite[Key~Lemma]{hamkinsGapForcing2001}}] \label{th: hamkins}
    Let $\bP$ be a nontrivial forcing notion of size $|\bP| < \kappa$. Then for any closed set $X \subseteq \pre{\kappa}{\kappa}$ in the ground model,
    \[
        V^\bP \models X^{(V)} \text{ is closed with no perfect subset}.
    \]
\end{theorem}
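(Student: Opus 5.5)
The plan is to reduce the statement to the two standard consequences of smallness of $\bP$: (i) every set of ground model objects added by $\bP$ is covered by fewer than $\kappa$ ground model pieces, and (ii) $\bP$ adds a new subset $r$ of some $\delta<\kappa$, because it is nontrivial. Closedness of $X^{(V)}$ is not the issue. Write $X=[T]$ for a tree $T\in V$. Since $\bP$ adds no new elements of $\pre{<\kappa}{\kappa}$ beyond those coded by ${<}\kappa$-sequences of $V$-objects that are already in $V$ (this needs checking), $X^{(V)}$ is again the set of branches of a tree in $V[G]$. So the whole task is to rule out a perfect subset. I will read $X^{(V)}$ as the ground model set of points $X\cap V$ viewed in $V[G]$; this is the reading under which the statement is nontrivial, since e.g.\ for $X=\pre{\kappa}{\kappa}$ a perfect subset in $V[G]$ certainly exists otherwise.

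Suppose towards a contradiction that $p\forces$ \enquote{$\name{S}$ is a perfect tree with $[\name{S}]\subseteq X\cap V$}. Let $G\ni p$ be generic and $S=\name{S}^G$. For $q\in G$ put $S_q\defas\setOf{s}{q\forces \check s\in\name{S}}\in V$.

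The first step is to show $S=\bigcup_{q\in G}S_q$ and $[S]\subseteq\bigcup_{q\in G}[S_q]$. For the second inclusion, take $b\in[S]$. Then $b\in V$, so some $q\in G$ forces $\check b\in[\name{S}]$, which gives $b\in[S_q]$; the first equality follows from this by extending nodes to branches. Since $|G|\le|\bP|<\kappa$, the body $[S]$, which is homeomorphic to $\pre{\kappa}{2}$, is now a union of fewer than $\kappa$ relatively closed sets $[S_q]\cap[S]$.

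The second step applies the $\kappa$-Baire category theorem for $\pre{\kappa}{2}$ in $V[G]$. It yields a single $q\in G$ and a node $s\in S$ such that the cone $S\restriction s$ satisfies $[S\restriction s]\subseteq[S_q]$. Thus, in $V[G]$, the ground model tree $S_q$ contains a perfect subtree $S'$ all of whose branches lie in $V$.

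The third step, which I expect to be the main obstacle, is to derive a contradiction from the existence of a perfect $S'\subseteq S_q$, with $S_q\in V$ and $[S']\subseteq V$. My first idea is to encode the new set $r\subseteq\delta$ into a branch $b$ of $S'$ by choosing the $r(\xi)$-th successor at the $\xi$-th splitting level, for $\xi<\delta$, and then continuing arbitrarily. The difficulty is that $S'$ itself lives only in $V[G]$, so $b\in V$ need not let one decode $r$ in $V$.

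To circumvent this I would invoke the approximation property of small forcing from Hamkins' gap forcing. For a forcing of size ${<}\kappa$, a set $A\subseteq V$ with $A\in V[G]$ belongs to $V$ as soon as $A\cap a\in V$ for every $a\in V$ with $|a|^V<\kappa$. There are then two cases.

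\emph{Case 1: $S'$ can be chosen in $V$.} Then $S'$ is a perfect tree of the ground model. The ground model embedding $\pre{\kappa}{2}\hookrightarrow[S']$, applied to $r\conc 0^{\kappa}$, produces a branch from which $r$ is computable in $V$. So this branch is not in $V$, which is a contradiction.

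\emph{Case 2: otherwise.} Then some small $a\in V$ witnesses $S'\cap a\notin V$. I would then perform a fusion of length $\kappa$ inside $S'$. Using the fewer than $\kappa$ conditions of $\bP$ and the splitting of $S'$, at each stage I would pick successors so that the resulting branch is forced to disagree with every ground model candidate decided by each $q\le p$. This would produce a branch of $S'$ outside $V$, again a contradiction.

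Case 2 is where I expect the real work to lie, and I would first try to make it go through by a pure counting argument over the ${<}\kappa$ conditions at each splitting level.
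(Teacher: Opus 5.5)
Your proposal has a genuine gap: Case 2 is only a plan. The fusion that "disagrees with every ground model candidate" has no mechanism as written, since there are $2^\kappa$ ground-model candidates and only ${<}\kappa$ conditions. The missing idea is that Case 2 never arises, and your own Step 2 already shows this. Because $q\in G$, every node that $q$ forces into $\name{S}$ lies in $S$, so $S_q\subseteq S$. Conversely, $[S\restriction s]\subseteq[S_q]$ together with prunedness of $S$ gives $S\restriction s\subseteq S_q$. Hence $S\restriction s=S_q\restriction s$, and this tree lies in $V$, because $S_q\in V$ and $s\in S\subseteq V$. So you always land in Case 1 with $S'=S\restriction s$. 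Neither the approximation property nor a fusion is needed.

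Case 1 itself is not argued correctly. Being perfect in $V$ yields no contradiction: $\pre{\kappa}{2}\cap V$ is perfect in $V$. Nor can a ground-model embedding be applied to $r\conc 0^\kappa\notin V$, since e.g.\ $\pre{<\kappa}{2}\cap V$ is no longer ${<}\kappa$-closed in $V[G]$. What you must use is that $S'$ is perfect, hence ${<}\kappa$-closed, in $V[G]$. In $V[G]$, follow $r$ through the first $\delta$ splitting nodes of $S'$ above $s$, taking the $r(\xi)$-th successor in a fixed ground-model well-order. At limit stages take unions; these are nodes of $S'$ by closure in $V[G]$. The resulting node $s_\delta$ lies in $S'\subseteq V$. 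In $V$ one then recovers $r$ from $s_\delta$ and $S'$ by reading off which successor was taken at each splitting node below $s_\delta$, which is a contradiction.

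Your justification of closedness is also wrong as stated: $\bP$ does add new elements of $\pre{<\kappa}{\kappa}$, namely $r$. What holds is that $\bP$ adds no new branch to a tree in $V$ of height $\kappa$. Suppose all initial segments of $y\in V[G]$ lie in $V$, and pick for each $\alpha<\kappa$ some $q_\alpha\in G$ deciding $\name{y}\restriction\alpha$. Since $|G|<\kappa=\cf(\kappa)$, a single $q$ serves unboundedly many $\alpha$, so $q$ decides $\name{y}$ and $y\in V$. Thus $X^{(V)}=[T]^{V[G]}$ is closed. The paper gives no proof here and only cites Hamkins' approximation lemma; with these repairs your Baire-category route becomes a self-contained elementary proof.
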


Consider the generic extension of $V$ obtained by first forcing with a partial order of size ${<}\kappa$ and then following up with an iteration of $\aforc_2(X, (\pre{\kappa}{\kappa} \cap V) \minus X, \pre{\kappa}{\kappa} \cap V)$ where every $X \subseteq \pre{\kappa}{\kappa}, X \in V$ appears as a parameter; similarly to the extension used in \cref{cor: single set make all borel}. Then after the first forcing, $\pre{\kappa}{\kappa} \cap V$ is $\ppi{0}{1}{\pre{\kappa}{\kappa}}{\kappa}$ by \cref{th: hamkins}, and by \cref{prop: perfect no new elems} this is also true in the final model. Hence the final model will satisfy $X \in \ssigma{0}{2}{\pre{\kappa}{\kappa}}{\kappa}$ for all $X \subseteq \pre{\kappa}{\kappa}, X \in V$. Afterwards, any of the forcing iterations from theorems in this section may be applied to change the values of $\ord_\kappa(X)$ for $X \in V$.

As an example, we formulate a variant of \cref{th: set order of all spaces}.

\begin{corollary}
    Suppose $f$ is a function satisfying the properties laid out in \cref{th: set order of all spaces} and $\bP$ is a nontrivial forcing of size ${<}\kappa$. Then there exists a $\bP$-name $\name{\bQ}$ for a ${<}\kappa$-closed, $\kappa^+$-c.c.\ forcing such that 
    \[
        V^{\bP \star \name{\bQ}} \models \forall X \subseteq \pre{\kappa}{\kappa}, X \in V, |X| > \kappa: \ord_\kappa(X) = f(|X|) \wedge X \in \ssigma{0}{2}{\pre{\kappa}{\kappa}}{\kappa}.
    \]
\end{corollary}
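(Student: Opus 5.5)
We first force with $\bP$, and then, in $V^{\bP}$, build $\name{\bQ}$ as the concatenation of two ${<}\kappa$-supported iterations of $\alpha$-forcing. The first block makes every ground model $X$ a $\ssigma{0}{2}{}{\kappa}$ subset of the full space. The second block is the iteration from \cref{th: set order of all spaces}, including its preparatory part.

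\textbf{Step 1.} Work in $V^{\bP}$ and write $C \defas \pre{\kappa}{\kappa} \cap V$. Since $\pre{\kappa}{\kappa}$ is closed in $V$ and $\bP$ is nontrivial of size ${<}\kappa$, \cref{th: hamkins} tells us that $C$ is closed in $V^{\bP}$ with no perfect subset. Note also that $\bP$ adds no new subsets of size ${<}\kappa$ of $V$ that matter here: since $|\bP|<\kappa$, $\kappa$ stays regular and $\kappa^{<\kappa}=\kappa$ is preserved. Hence $\alpha$-forcing and the results of Sections~5--7 apply in $V^{\bP}$.

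\textbf{Step 2.} Define the first block. It is an iteration of forcings $\aforc_2(X, C \minus X, C)$, where $X$ ranges over all $X \subseteq \pre{\kappa}{\kappa}$ with $X \in V$. Here $C$ and each such $X$ are sets lying in $V^{\bP}$, so this is an iteration with ground model spaces in the sense of \cref{ass: ground model spaces}, taking $V^{\bP}$ as the ground. By \cref{cor: collapse A}, each such $X$ acquires a $\ppi{0}{2}{C}{\kappa}$ code. We therefore obtain $X \in \ssigma{0}{2}{C}{\kappa}$, indeed even $\ppi{0}{2}{C}{\kappa}$.

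\textbf{Step 3.} Follow with the full construction from the proof of \cref{th: set order of all spaces}, again applied to the spaces $X \in V$ of size ${>}\kappa$. Cardinalities are computed in $V^{\bP}$, but they agree with those in $V$ because $\bP$ is small. This gives $\ord_\kappa(X) = f(|X|)$ in the final model.

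There is one point to check: the upper and lower bound arguments of that theorem were carried out in a model whose ground is $V$, whereas now the ground is the extension after Step 2. This is harmless. The theorem only requires the spaces to lie in the ground of the iteration, and the spaces $X \in V$ do lie in the intermediate model. The preparatory block of \cref{th: set order of all spaces} is allowed to be preceded by any ${<}\kappa$-closed, $\kappa^+$-c.c.\ forcing, since the theorem is a statement about an arbitrary ground. Moreover, \cref{cor: strict dense iteration} shows that the whole of $\name{\bQ}$ is ${<}\kappa$-closed and $\kappa^+$-c.c.

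\textbf{Step 4 (main point).} We must show that $X \in \ssigma{0}{2}{\pre{\kappa}{\kappa}}{\kappa}$ survives to the final model. The relativized statement $X \in \ssigma{0}{2}{C}{\kappa}$ is witnessed by a code, and codes persist to outer models: a witness for $X \cap C$ as a union of closed sets relativized to $C$ remains a witness. The real issue is whether $C$ stays closed in the full space, so that $\ssigma{0}{2}$ on $C$ transfers to $\ssigma{0}{2}$ on $\pre{\kappa}{\kappa}$.

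To handle this, take the tree $T \in V^{\bP}$ with $[T] = C$ in $V^{\bP}$. Since $[T]$ has no perfect subset there, \cref{prop: perfect no new elems} (applied to ${<}\kappa$-closed $\name{\bQ}$, with the equivalence of (2) and (3)) shows that $\name{\bQ}$ adds no new branches to $T$. So $[T] = C$ holds in the final model as well, and $C$ is closed there.

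Now $X = X\cap C$ is a ${\leq}\kappa$-union of sets of the form $F_i \cap C$ with $F_i$ closed, and each such set is closed in $\pre{\kappa}{\kappa}$. This yields $X \in \ssigma{0}{2}{\pre{\kappa}{\kappa}}{\kappa}$.

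I expect the only delicate part to be verifying that the iteration from \cref{th: set order of all spaces} can legitimately be run over the nontrivial ground produced by Step 2. I would check that the assumptions used there are preserved, namely the ground model spaces assumption and the cardinality bookkeeping.
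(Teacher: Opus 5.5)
This is correct and is essentially the paper's own argument: a nontrivial forcing of size ${<}\kappa$ together with \cref{th: hamkins} makes $C=\pre{\kappa}{\kappa}\cap V$ closed with no perfect subset, the block of $\aforc_2(X, C\minus X, C)$ codes every ground-model $X$, the iteration of \cref{th: set order of all spaces} then sets the orders, and \cref{prop: perfect no new elems} keeps $C$ closed in the final model. Two small slips are worth fixing. First, $X\in\ssigma{0}{2}{C}{\kappa}$ follows because $C\minus X$ also occurs as a parameter, not because a $\ppi{0}{2}{C}{\kappa}$ code for $X$ alone would give it. Second, a nontrivial forcing of size ${<}\kappa$ does add new bounded subsets of $\kappa$; this is harmless, since only the regularity of $\kappa$ and $\kappa^{<\kappa}=\kappa$ are needed.
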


We also formulate an extension of our previous joint result with Agostini, Motto Ros and Pitton.

\begin{corollary}
    For any $0 < \alpha \leq \kappa^+$ there consistently exists a closed subset $X \subseteq \pre{\kappa}{\kappa}$ with $\ord_\kappa(X) = \alpha$.
\end{corollary}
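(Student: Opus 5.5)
The plan is to split into the cases $\alpha = 1$, $\alpha$ a successor ordinal ${>}1$, $\alpha = \kappa^+$, and $\alpha$ a limit ordinal ${<}\kappa^+$. Each case combines the preparatory "Hamkins + coding" step described just above with one of the earlier theorems.

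For $\alpha = 1$, any closed discrete set works, for instance a singleton, by \cref{prop: easy order}. In the remaining cases I start by choosing a closed set $X \subseteq \pre{\kappa}{\kappa}$ in $V$ of size ${>}\kappa$. For the limit case, $X$ should have size a limit cardinal of the appropriate cofinality, or else be obtained by gluing. I then force with a nontrivial $\bP$ of size ${<}\kappa$. By \cref{th: hamkins}, in $V^\bP$ the set $X^{(V)}$ is still closed and has no perfect subset. Next I apply the ${<}\kappa$-closed, $\kappa^+$-c.c.\ iteration of \cref{th: set order of set} over $V^\bP$ to the space $X^{(V)}$, with target $\alpha$; this handles both the successor case and $\alpha = \kappa^+$. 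This requires $|X| > \kappa$ in $V^\bP$, which holds because the small forcing preserves cardinals.

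By \cref{prop: perfect no new elems}, applied in $V^\bP$, the ${<}\kappa$-closed iteration adds no new branches to the tree of $X$. Hence in the final model $X^{(V^\bP)} = X^{(V[G])}$, so the space whose order we pinned down is still closed in $\pre{\kappa}{\kappa}$. This is the key point: closedness means the same code defines the same set, and that set is exactly the one we controlled.

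For limit $\alpha = \zeta < \kappa^+$ there are two routes. The first is to take $X$ with $|X|$ a limit cardinal of cofinality $\cf(\zeta)$ and apply \cref{th: set order of set limit}. Its preparatory step makes the pieces $\Xi_i$ $\ssigma{0}{2}{}{\kappa}$ in $X$ and does not affect closedness of $X$ itself. The second, simpler route, if such a cardinal is unavailable, follows the remark after \cref{th: set order of all spaces}. I pick clopen-separated closed pieces $X_i$, $i < \cf(\zeta)$, each sitting inside a basic clopen $[\langle i \rangle]$, and arrange $\ord_\kappa(X_i) = \zeta_i + 1$ simultaneously by one iteration as in \cref{th: set order of all spaces}, or by a product of the single-space iterations using distinct ground spaces. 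Then $X = \bigcup_i X_i$ is closed, since $\cf(\zeta) \le \kappa$ and the pieces lie in disjoint basic clopens. The order of a clopen-separated union is the supremum, so $\ord_\kappa(X) = \zeta$.

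The main obstacle is the simultaneous control needed for the separated pieces in the limit case. With the second route, \cref{th: set order of all spaces} separates orders only by cardinality, so I would choose the $X_i$ of strictly increasing sizes $\Theta_i^+$. This needs $2^\kappa$ large enough, which can be arranged by a preliminary Cohen forcing at $\kappa$ before the small forcing. The lower-bound argument then goes through as in \cref{th: set order of set limit}.
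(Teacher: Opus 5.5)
\textbf{The gap is in the limit case.} Your successor case is the paper's argument: small forcing, then \cref{th: hamkins}, then the iteration of \cref{th: set order of set}, with \cref{prop: perfect no new elems} keeping the set closed. Your $\alpha=\kappa^+$ case also works, but you could simply take $X=\pre{\kappa}{\kappa}$, which is closed with $\ord_\kappa(X)=\kappa^+$ outright.

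Both of your limit routes need closed subsets of $\pre{\kappa}{\kappa}$ whose cardinalities lie strictly between $\kappa$ and $2^\kappa$. Route 1 needs a closed set of size a limit cardinal $\Lambda>\kappa$ with $\cf(\Lambda)=\cf(\zeta)\le\kappa$; by K\"onig's theorem ($\cf(2^\kappa)>\kappa$) such a $\Lambda$ is necessarily ${<}2^\kappa$. Route 2 needs closed pieces of $\cf(\zeta)$-many strictly increasing sizes $\Theta_i^+$, and their clopen-separated union is again such a set, so the two routes need the same thing. A closed set of size ${<}2^\kappa$ has no perfect subset, so these are Kurepa-type objects whose existence is not provable; under GCH there is no room for them at all.

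Your remedy does not produce them. The preliminary $\kappa$-Cohen forcing only creates room for such cardinalities by enlarging $2^\kappa$; it does not by itself produce closed sets realizing them. The single small forcing afterwards only turns ground-model closed sets into closed sets without perfect subsets \emph{of their ground-model cardinality}. For example, $\pre{\kappa}{\kappa}\cap V$ becomes closed, but of size $2^\kappa$, whose cofinality exceeds $\kappa$, and nothing you describe cuts it into pieces of the required sizes.

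\textbf{What is missing is a forcing that creates closed sets of intermediate size,} and this is exactly what the paper uses. The Holy--L\"ucke forcing $\bK$ adds a tree $T\subseteq\pre{<\kappa}{\kappa}$ such that $[T]$ has no perfect subset and $|[T]|=(2^\kappa)^V$. It is ${<}\kappa$-closed and satisfies $(\heartsuit)$, so by \cref{prop: heart cc} one can start from $2^\kappa=\kappa^+$ and alternate $\bK$ with forcings enlarging $2^\kappa$ in a ${<}\kappa$-supported iteration of length $\cf(\Lambda)$. Each later stage is ${<}\kappa$-closed, so \cref{prop: perfect no new elems} freezes each $[T_i]$. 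The sizes $|[T_i]|$ therefore increase to $\Lambda$, and gluing the trees (e.g.\ in disjoint cones, as in your route 2) yields a closed set of size $\Lambda$. Only then can your route 1 be carried out, via \cref{th: set order of set limit}, with \cref{prop: perfect no new elems} keeping the set closed.

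Separately, your fallback of a product of single-space iterations on pieces of equal size is not supported by the paper's machinery. \cref{lem: small locus ground} requires the later downward-pushing iterands at levels $\le\alpha_0$ to act on spaces of strictly smaller cardinality. Moreover, homeomorphic pieces can never receive different orders.
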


\begin{proof}
    For $\alpha = 1$, take any $X$ with $|X| = 1$; likewise, for $\alpha = \kappa^+$ the whole space $X = \pre{\kappa}{\kappa}$ suffices.

    For successor $\alpha$, first force with a forcing of size ${<}\kappa$ to turn $\pre{\kappa}{\kappa} \cap V$ into a closed set with no perfect subset, and then apply \cref{th: set order of set}. Then $\ord_\kappa(\pre{\kappa}{\kappa} \cap V) = \alpha$ in the extension, and by \cref{prop: perfect no new elems} it remains closed.

    For $\alpha < \kappa^+$ limit it is enough to construct a closed set $X \subseteq \pre{\kappa}{\kappa}$, where $|X| > \kappa$ is a limit cardinal with $\cf(|X|) = \cf(\alpha)$. Afterwards, we can apply \cref{th: hamkins} and \cref{th: set order of set limit} to get the stated conclusion. Let us thus take fix any cardinal $\Lambda > \kappa$ of cofinality $\cf(\Lambda) = \cf(\alpha)$. One possible way to construct a model with a closed set $X \subseteq \pre{\kappa}{\kappa}$ of size $\Lambda$ is as follows: work of Holy and L\"ucke (see for example \cite[Definition~2.1,Theorem~2.3]{holy_simplest_2017}) implies that there exists a forcing $\bK$ which adds a tree $T \subseteq \pre{<\kappa}{\kappa}$ such that
    \[
        V^{\bK} \models [T] \text{ has no perfect subset}
    \]
    and\footnote{To see this, note that the trees $\dot{T}_\alpha^G$ constructed in \cite[Theorem~2.3]{holy_simplest_2017} for $A = \pre{\kappa}{\kappa} \cap V$ are all the same up to ${<}\kappa$-many modifications, hence all of the same size.} $V^{\bK} \models |[T]| = 2^\kappa = \left(2^{\kappa}\right)^V$. The forcing is ${<\kappa}$-closed and has a strong version of the $\kappa^+$-c.c., in particular it satisfies $(\heartsuit)$ and is iterable by \cref{prop: heart cc}. Hence to construct our desired model, start with a model of $2^\kappa = \kappa^+$ and in a ${<}\kappa$-supported $\cf(\Lambda)$-long iteration alternate between the forcing $\bK$ and a forcing increasing $2^\kappa$. Since the generic trees $T$ added by $\bK$ have no perfect subset, this method allows us to add a sequence $\seq{T_i}{i < \cf(\Lambda)}$ of trees such that $\seq{|[T_i]|}{i < \cf(\Lambda)}$ is a strictly increasing sequence of cardinals approaching $\Lambda$. Then in the final model, $T^* = \bigcup_{i < \cf(\Lambda)} T_i$ is a tree with $|[T^*]| = \Lambda$.
\end{proof}

\section{$\kappa$-Steel forcing} \label{sec: steel}

In this section, we consider a possible generalization of Steel's forcing with tagged trees \cite{steelForcingTaggedTrees1978} and show that it fits the framework of ranked forcing. As an application, we calculate the $\kappa$-Borel complexity of certain classes of well-founded trees.

We first define a dense subset of the $\kappa$-Steel forcing.

\begin{definition}
    Let $\alpha \leq \kappa^+$ be an ordinal. Define the forcing $\bS_\alpha'$ consisting of all pairs $p = \langle t_p, \rho_p \rangle$ such that
    \begin{itemize}
        \item $t_p \subseteq \pre{<\omega}{\kappa}$ is a well-founded tree of size ${<}\kappa$,
        \item $\rho_p : t_p \to \alpha$ is a function such that for all $\eta \subsetneq \nu$ we have $\rho_p(\eta) > \rho_p(\nu)$.
    \end{itemize}
    The ordering is given by $q \leq p$ if and only if $t_p \subseteq t_q$ and $\rho_p \subseteq \rho_q$.
\end{definition}

The function $\rho_p$ is called a \textit{tagging} of $t_p$. Every generic filter $G \subseteq \bS_\alpha'$ defines a pair $\langle t_G, \rho_G\rangle \defas \bigcup_{p \in G} \langle t_p, \rho_p\rangle$. By density, it is not hard to see that $t_G \subseteq \pre{<\omega}{\kappa}$ is a well-founded tree and $\rho_G : t_G \to \alpha$ assigns to each node its well-founded rank in $t_G$.

This definition is in line with usual treatments of Steel's forcing for $\kappa = \omega$. However, for the purposes of introducing a ranking function, it is more convenient to work with an adjusted partial order which allows for partial taggings and contains $\bS_\alpha'$ as a dense subset. One change from the countable setting is the introduction of ${<}\kappa$-closure as a desirable regularity property for the forcing; this causes tension with the fact that well-foundedness is intrinsically tied to $\omega$. As a consequence, and given the fact that our application involves coding Borel codes, we eschew the possibility of an $\infty$-tag, which classically is used to control the non-well-founded part of the generic tree. A different generalization of the forcing consisting of non-well-founded trees appears in work of S. D. Friedman \cite{friedman}.

\begin{definition}
    Let $\alpha \leq \kappa^+$ be an ordinal. The \textit{$\kappa$-Steel forcing} $\bS_\alpha$ consist of all triples $p = \langle t_p, \rho_p, \bar{\rho}_p \rangle$ such that
    \begin{itemize}
        \item $t_p \subseteq \pre{<\omega}{\kappa}$ is a well-founded tree of size ${<}\kappa$,
        \item $\rho_p, \bar{\rho}_p : t_p \to \alpha$ are partial functions with $
        \dom(\rho_p) \cup \dom(\bar{\rho}_p) = t_p$ and $\dom(\rho_p) \cap \dom(\bar{\rho}_p) = \emptyset$,
        \item for each pair $\eta, \nu$ with $\nu \in \succ(\eta)$ one of the following is true:
        \begin{itemize}
            \item $\{\nu, \eta\} \subseteq \dom(\rho_p)$ and $\rho_p(\nu) < \rho_p(\eta)$,
            \item $\{\nu, \eta\} \subseteq \dom(\bar{\rho}_p)$ and $\bar{\rho}_p(\nu) < \bar{\rho}_p(\eta)$,
            \item $\nu \in \dom(\rho_p), \eta \in \dom(\bar{\rho}_p)$ and $\rho_p(\eta) < \bar{\rho}_p(\eta)$.
        \end{itemize}
    \end{itemize}
    The ordering is given by $q \leq p$ if and only if $t_p \subseteq t_q$, $\rho_p \subseteq \rho_q$ and for each $\eta \in \dom(\bar{\rho}_p)$ we either have 
    \begin{itemize}
        \item  $\eta \in \dom(\bar{\rho}_q)$ and $\bar{\rho}_q(\eta) \geq \bar{\rho}_p(\eta)$ or
        \item $\eta \in \dom(\rho_q)$ and $\rho_q(\eta) \geq \bar{\rho}_p(\eta)$.
    \end{itemize}
\end{definition}

By definition, $\bS_\alpha'$ is a dense subforcing of $\bS_\alpha$, hence the difference between them is inconsequential regarding the generated forcing extension. The statement $\bar{\rho}_p(\eta) = \beta$ is interpreted as a promise that the (eventual) tag of $\eta$ in $t_G$ will be greater or equal to $\beta$.

\begin{lemma}
    $\bS_\alpha'$ is ${<}\kappa$-closed.
\end{lemma}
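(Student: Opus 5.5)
The natural plan is to take unions. Let $\seq{p_i}{i < \delta}$ with $\delta < \kappa$ be a decreasing sequence in $\bS_\alpha'$, so $t_{p_j} \subseteq t_{p_i}$ and $\rho_{p_j} \subseteq \rho_{p_i}$ whenever $j < i$. The candidate lower bound is
\[
    p_\delta \defas \left\langle \bigcup_{i<\delta} t_{p_i}, \bigcup_{i<\delta} \rho_{p_i} \right\rangle .
\]
We then check that $p_\delta$ is a condition; the inequality $p_\delta \leq p_i$ for every $i$ is then immediate from the definition of the ordering.

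First, $t_{p_\delta}$ is a tree. It is a union of trees, hence a subset of $\pre{<\omega}{\kappa}$ closed under initial segments. It has size ${<}\kappa$ because it is a union of $\delta < \kappa$ sets, each of size ${<}\kappa$, and $\kappa$ is regular.

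Second, $\rho_{p_\delta}$ is a function defined on all of $t_{p_\delta}$ with values in $\alpha$. It is well defined because the $\rho_{p_i}$ form a $\subseteq$-chain, so any two of them agree on common arguments.

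Third, the tagging is strictly decreasing along the tree. Take $\eta \subsetneq \nu$ in $t_{p_\delta}$. Then $\nu \in t_{p_i}$ for some $i$, and since $t_{p_i}$ is closed under initial segments, $\eta \in t_{p_i}$ as well. The condition $p_i$ gives $\rho_{p_i}(\eta) > \rho_{p_i}(\nu)$, and these values coincide with $\rho_{p_\delta}(\eta)$ and $\rho_{p_\delta}(\nu)$.

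The only point that looks like it needs care is well-foundedness of the union, since well-foundedness is the property tied to $\omega$ and is not obviously preserved. Here the tagging does the work. An infinite branch $\seq{\eta_n}{n<\omega}$ through $t_{p_\delta}$ would give a strictly decreasing sequence of ordinals $\rho_{p_\delta}(\eta_0) > \rho_{p_\delta}(\eta_1) > \cdots$, which is impossible. So $t_{p_\delta}$ is well-founded and $p_\delta$ is a condition, which proves ${<}\kappa$-closure. None of these steps is a real obstacle. The only point of substance is to read well-foundedness off the tagging rather than trying to verify it directly.
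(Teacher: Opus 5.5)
Your proof is correct and follows the paper's argument: take the coordinatewise union of the sequence and obtain well-foundedness from the fact that the tagging would turn an infinite branch into a strictly decreasing $\omega$-sequence of ordinals. The paper states only this last step, and your extra checks (size ${<}\kappa$ by regularity, closure under initial segments, and that the union of the $\rho_{p_i}$ is a strictly decreasing function on the whole tree) correctly fill in what it leaves implicit.
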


\begin{proof}
    Suppose $\langle t_i, \rho_i \rangle, i < \delta < \kappa$ is a decreasing sequence of conditions in $\bS_\alpha'$. Then forming $\langle t , \rho \rangle = \bigcup_{i < \delta} \langle t_i, \rho_i \rangle$, the only thing that needs to be checked is that $t$ is well-founded. But if $\seq{\eta_j}{j < \omega}$ were a branch of $t$, then $\seq{\rho(\eta_j)}{j < \omega}$ would form a strictly decreasing sequence of ordinals. 
\end{proof}

Hence $\bS_\alpha$ is strategically ${<}\kappa$-closed, cf.\ \cref{fact: strat closed}.

For a condition $p \in \bS_\alpha$ we define the rank
\[
    \crank(p) \defas \sup \setOf{\max(\rho_p(\eta), \bar{\rho}_p(\nu))}{\eta \in \dom(\rho_p), \nu \in \dom(\bar{\rho}_p)}.
\]
Furthermore, for $p \in \bS_\alpha$ and an ordinal $\beta$ define
\[
    \pi(p,\beta) \defas \langle t_p, \rho', \bar{\rho}' \rangle
\]
with $\rho' \defas \rho_p \restriction \setOf{\eta \in \dom(\rho_p)}{\rho_p(\eta) < \beta}$ and 
\[
    \bar{\rho}'(\eta) \defas
    \begin{cases}
        \bar{\rho}_p(\eta) & \text{ if } \eta \in \dom(\bar{\rho}_p) \text{ and } \bar{\rho}_p(\eta) < \beta \\
        \max \left(\beta, \sup \left\{
        \begin{gathered}
            \max(\rho'(\nu_1) + 1, \bar{\rho}'(\nu_2) + 1) : \\
            \nu_1 \in \succ(\eta) \cap \dom(\rho'),\\ 
            \nu_2 \in \succ(\eta) \cap \dom(\bar{\rho}')
        \end{gathered}
        \right\} \right) & \text{ otherwise.}
    \end{cases}
\]

for $\eta \in t_p \minus \dom(\rho')$. It is easy to see that for $\beta < \beta'$ we have 
\[
    p = \pi(p, \crank(p) + 1) \leq \pi(p,\beta') \leq \pi(p,\beta).
\]
Since $|t_p| < \kappa$, we can see by induction on the nodes of $t_p$ that $\crank(\pi(p,\beta)) < \beta + \kappa$.

\begin{theorem} \label{th: steel rank function}
    The function $\crank$ is a rank function on $\bS_\alpha$ with
    and $h_{\crank}(\beta) = \beta + \kappa$. In fact, if $\beta < \alpha$, for every $p \in \bS_\alpha$ and $\beta$ we may choose $q = \pi(p,\beta)$ in Definition~\ref{def: rank function}. 
\end{theorem}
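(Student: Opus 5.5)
We take $q \defas \pi(p,\beta)$ and check the three requirements of Definition~\ref{def: rank function}. First, $q$ is a condition and $p \leq q$, so $q \comp p$ trivially. The monotonicity chain noted before the theorem gives $p \leq \pi(p,\beta)$. To see that $q$ is a condition, we check the three alternatives for each edge $\nu \in \succ(\eta)$ of $t_p$; this uses that the new $\bar{\rho}'$-values are defined bottom-up as a supremum of successor tags plus one. Second, $\crank(q) < \beta + \kappa$ follows from the remark preceding the theorem. That remark is proved by induction on the well-founded tree $t_p$ of size ${<}\kappa$: every new value $\bar{\rho}'(\eta)$ is at most $\beta + \trank_{t_p}(\eta)$, and since $|t_p| < \kappa$ and $\kappa$ is regular, every such rank is ${<}\kappa$. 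So all tags of $q$ lie below $\beta + \kappa$, and we may take $h_{\crank}(\beta) = \beta + \kappa$, which is strictly increasing.

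The main work is the third requirement. Let $r \in \bS_\alpha$ with $\crank(r) < \beta$ and $r \comp q$, and fix a common extension $s \leq q, r$. We may shrink $s$ to lie in the dense set $\bS'_\alpha$, so that $s = \langle t_s, \rho_s \rangle$ is a fully tagged tree with a genuine strictly decreasing tagging. We then build a condition $s' \leq p, r$. Its tree is $t_s \cup t_p = t_s$. Its tagging keeps $\rho_s$ on nodes with $\rho_s < \beta$ and retags the remaining nodes from scratch. We retag so that the old values of $p$ on the nodes of $t_p$ with $\rho_p \geq \beta$ or $\bar{\rho}_p \geq \beta$ are respected, and all other nodes get large ordinals.

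Concretely, define the new tag of $\eta$ by recursion on $t_s$ from the leaves up. If $\rho_s(\eta) < \beta$, keep it. Otherwise set it to the maximum of three quantities: the supremum of the new tags of the successors of $\eta$, plus one; $\rho_p(\eta)$ if $\eta \in \dom(\rho_p)$; and $\bar{\rho}_p(\eta)$ if $\eta \in \dom(\bar{\rho}_p)$. We will need that $\alpha$ is large enough for these values to stay below $\alpha$; this is the reason for the restriction $\beta < \alpha$. If necessary we can instead take the tag equal to $\rho_p(\eta)$ exactly, which is what the next check requires.

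The crux is consistency on nodes $\eta \in \dom(\rho_p)$ with $\rho_p(\eta) \geq \beta$. There we must have \emph{exactly} the new tag equal to $\rho_p(\eta)$. So we must show that every successor $\nu$ of $\eta$ in $t_s$, including nodes added outside $t_p$, receives a new tag below $\rho_p(\eta)$. For a successor $\nu \in t_p$ this is the tagging condition of $p$. For successors of low $s$-tag, we have $\rho_s(\nu) < \beta \leq \rho_p(\eta)$. For a new node $\nu \notin t_p$ with $\rho_s(\nu) \geq \beta$, we need some argument. Here we use $q$: we have $\bar{\rho}'(\eta) \geq \beta$ in $q$, and $s \leq q$ forces $\rho_s(\eta) \geq \beta$. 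Any part of $t_s$ above $\eta$ with high tags cannot come from $r$, since $\crank(r) < \beta$. So the retagging for such new nodes can be chosen freely, and we let high new nodes take the minimal admissible value, namely the supremum of their successors' tags plus one. We then argue by induction that these values stay below $\rho_p(\eta)$. I expect this induction to be the main obstacle, and I would try to bound the new tags by $\rho_s$ itself, or more precisely by $\max(\beta,\cdot)$ of $\rho_s$. Finally, $s' \leq r$ holds because $r$'s tags are all ${<}\beta$ and are preserved, and $r$'s $\bar{\rho}$-promises below $\beta$ are kept by monotonicity. $s' \leq p$ holds by construction. Hence $r \comp p$.
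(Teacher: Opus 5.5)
Your first two steps are fine: $q=\pi(p,\beta)$ is a condition with $p\le q$, and the bound $\bar\rho'(\eta)\le\beta+\trank_{t_p}(\eta)$ together with $|t_p|<\kappa$ gives $\crank(q)<\beta+\kappa$.

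The gap is in the third step, where you build $s'$ on the tree $t_s$ of an arbitrary common extension $s\le q,r$. Since $q$ is strictly weaker than $p$, such an $s$ can be incompatible with $p$. The obstruction then sits in the tree $t_s$ itself, so no retagging removes it. Here is a concrete instance:
\begin{itemize}
\item Let $\beta=1$, $\alpha=3$ and $p=\langle\{\emptyset\},\{\emptyset\mapsto 1\},\emptyset\rangle$. Then $q=\pi(p,1)$ only promises $\bar\rho_q(\emptyset)=1$.
\item Let $r=\langle\{\emptyset\},\emptyset,\{\emptyset\mapsto 0\}\rangle$, which has rank $0$.
\item The fully tagged $s$ with $t_s=\{\emptyset,\langle 0\rangle,\langle 0,0\rangle\}$ and tags $2,1,0$ lies below both $q$ and $r$.
\item But $\emptyset$ has rank $2$ in $t_s$, so no strictly decreasing tagging of $t_s$ gives $\emptyset$ the value $\rho_p(\emptyset)=1$.
\end{itemize}
In your recursion, the new node $\langle 0\rangle\notin t_p$ receives tag $1$, which is not below $\rho_p(\emptyset)$. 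So the induction you flagged as the main obstacle is false, not merely hard.

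Independently, your maximum omits $\bar\rho_r(\eta)$. A high-tagged leaf of $t_s$ outside $t_p$ can then be retagged below an $r$-promise, which breaks $s'\le r$.

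The repair is to delete, rather than retag, the nodes of $t_s$ outside $t_p\cup t_r$. Your own remark that high-tagged new nodes cannot come from $r$ is exactly why this is harmless, and once you do it, $s$ is not needed at all. This is the paper's proof.
\begin{itemize}
\item Take $\hat p=\langle t_p\cup t_r,\rho',\emptyset\rangle$, where $\rho'(\eta)$ is the maximum of whichever of $\rho_p(\eta),\bar\rho_p(\eta),\rho_r(\eta),\bar\rho_r(\eta)$ are defined.
\item $\hat p$ is a condition. Let $\nu$ be a successor of $\eta$, and suppose the maximum at $\nu$ is attained by a value of $p$ (resp.\ $r$). Then $\nu,\eta$ form an edge of $t_p$ (resp.\ $t_r$), so that value lies strictly below the corresponding value at $\eta$, which is at most $\rho'(\eta)$.
\item For $\eta\in\dom(\rho_p)$, the $r$-value at $\eta$ is at most $\rho_p(\eta)$. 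This is trivial if $\rho_p(\eta)\ge\beta>\crank(r)$. Otherwise $q$ keeps the definite tag $\rho_p(\eta)$, and compatibility of $q$ with $r$ bounds the $r$-value.
\item For $\eta\in\dom(\rho_r)$, the $p$-value at $\eta$ is at most $\rho_r(\eta)$. If that value is below $\beta$, then $q$ retains it and compatibility bounds it by $\rho_r(\eta)$. If it were ${\ge}\beta$, then $\bar\rho_q(\eta)\ge\beta>\rho_r(\eta)$ would give $q\incomp r$.
\end{itemize}
Compatibility of $q$ and $r$ is thus used only pointwise, and these two bounds give $\hat p\le p,r$.
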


\begin{proof}
    Fix a condition $p \in \bS_\alpha$ and an ordinal $\beta < \alpha$; let $q \defas \pi(p,\beta)$. Note that we have already remarked that $p \leq q$ and $\crank(q) < \beta + \kappa$. We wish to show that for every $r \in \bS_\alpha, \crank(r) < \beta$ we have 
    \[
    r \comp q \implies r \comp p.
    \]

    Let such an $r$ be given. To see that $r \comp p$, we will construct a $\hat{p} \leq r,p$ with $\hat{p} \in \bS_\alpha'$. Define $\hat{p} = \langle t_p \cup t_r, \rho', \emptyset \rangle$, where $\rho'(\eta)$ is simply defined as the maximum of $\rho_p(\eta), \bar{\rho}_p(\eta), \rho_r(\eta), \bar{\rho}_r(\eta)$, or only those of these values which are defined. By making, for each $\eta$, a case distinction on the value at which this maximum is attained, we can see that $\hat{p}$ is a condition. As an example, let us assume $\nu \in \succ(\eta)$ and $\rho'(\nu) = \rho_p(\nu)$. Then we know either $\rho_p(\eta) > \rho_p(\nu)$ or $\bar{\rho}_p(\eta) > \rho_p(\nu)$. In any case, $\rho'(\nu) < \max(\rho_p(\eta), \bar{\rho}_p(\eta)) \leq \rho'(\eta)$. Other cases follow similarly. 
    
    The nontrivial part is to show $\hat{p} \leq p,r$. The only way $\hat{p} \leq p$ could fail is if there is an $\eta \in t_p \cap \dom(\rho_p)$ such that $\rho'(\eta) > \rho_p(\eta)$, so let us assume this is the case. In particular, this implies $\eta \in t_r$. We distinguish cases.
    \begin{itemize}
        \item If $\rho_p(\eta) \geq \beta$, then we know $\eta \in \dom(\bar{\rho}_q)$ and $\bar{\rho}_q(\eta) \geq \beta$. But since $r \comp q$ and $\crank(r) < \beta$, this implies $\eta \in \dom(\bar{\rho}_r)$ and $\bar{\rho}_r(\eta) < \beta$. Overall $\rho'(\eta) = \rho_p(\eta)$.
        \item If $\rho_p(\eta) < \beta$, then $\rho_q(\eta) = \rho_p(\eta)$. This immediately implies that either $\eta \in \dom(\rho_r)$ and $\rho_r(\eta) = \rho_p(\eta)$ or $\eta \in \dom(\bar{\rho}_r)$ and $\bar{\rho}_r(\eta) \leq \rho_p(\eta)$. In any case, $\rho'(\eta) = \rho_p(\eta)$.
    \end{itemize}
    
    Similarly, we also prove $\hat{p} \leq r$. If this were not the case, then there is an $\eta \in t_p \cap t_r$ such that $\rho_p(\eta) > \rho_r(\eta)$ or $\bar{\rho}_p(\eta) > \rho_r(\eta)$.
    \begin{itemize}
        \item If $\rho_r(\eta) < \rho_p(\eta) < \beta$, then $\rho_q(\eta) = \rho_p(\eta)$ and thus $q \incomp r$.
        \item If $\rho_r(\eta) < \bar{\rho}_p(\eta) < \beta$, then again $\bar{\rho}_q(\eta) = \bar{\rho}_p(\eta)$, hence $q \incomp r$.
        \item If $\rho_p(\eta) \geq \beta$ or $\bar{\rho}_p(\eta) \geq \beta$, then $\bar{\rho}_q(\eta) \geq \beta$. But $\crank(r) < \beta$, hence $q \incomp r$. \qedhere
    \end{itemize}    
\end{proof}

For some $\beta$ we can refine the previous result if we only consider conditions $r$ of a special form.

\begin{lemma}
    Let $p \in \bS_\alpha$ and $\beta < \alpha$. Setting
    \[
    \beta' \defas \sup \setOf{\max(\rho_p(\eta), \bar{\rho}_p(\nu))}{\rho_p(\eta) < \beta + \kappa, \bar{\rho}_p(\nu) < \beta + \kappa} + 1,
    \]
    define $q \defas \pi(p, \beta')$. Then for every condition $r$ of the form $r = \pi(t, \beta)$ for some $t \in \bS_\alpha$ we have
    \[
        t \comp q \Rightarrow t \comp p.
    \]
\end{lemma}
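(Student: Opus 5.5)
The plan is to repeat the argument of \cref{th: steel rank function}, replacing the bound $\crank(r) < \beta$ by the finer information on the shape of $r = \pi(t,\beta)$. I read the conclusion as $r \comp q \Rightarrow r \comp p$; the $t$ in the displayed implication appears to be a typo for $r$. The key observation is that the tags of $p$ split into two groups: those below $\beta'$, and those at least $\beta + \kappa$. No tag of $p$ lies in $[\beta', \beta+\kappa)$, since by definition of $\beta'$ every tag of $p$ below $\beta+\kappa$ is already below $\beta'$. On the other hand, the remark preceding \cref{th: steel rank function} gives $\crank(r) = \crank(\pi(t,\beta)) < \beta + \kappa$. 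So every exact or lower-bound tag of $r$ is strictly below every large tag of $p$.

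First I fix $r = \pi(t,\beta)$ with $r \comp q$, where $q = \pi(p,\beta')$. As in \cref{th: steel rank function}, I form $\hat p = \langle t_p \cup t_r, \rho', \emptyset\rangle \in \bS_\alpha'$. Here $\rho'(\eta)$ is the maximum of whichever of $\rho_p(\eta), \bar\rho_p(\eta), \rho_r(\eta), \bar\rho_r(\eta)$ are defined. That $\hat p$ is a condition follows from the same casework on where the maximum is attained as in that proof. It then remains to check $\hat p \le p$ and $\hat p \le r$. Both reduce to showing that no exact tag gets raised: for $\eta \in \dom(\rho_p)$ we need $\rho'(\eta) = \rho_p(\eta)$, and for $\eta \in \dom(\rho_r)$ we need $\rho'(\eta) = \rho_r(\eta)$. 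The lower-bound tags are automatically respected by taking maxima.

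To show that no exact tag of $p$ is raised, I split according to the size of the tag. If $\rho_p(\eta) < \beta'$, then $q$ carries the same exact tag, so compatibility of $r$ with $q$ forces $\rho_r(\eta) = \rho_p(\eta)$ or $\bar\rho_r(\eta) \le \rho_p(\eta)$, exactly as in the earlier proof. If $\rho_p(\eta) \ge \beta'$, then by the gap observation $\rho_p(\eta) \ge \beta+\kappa$, and every tag of $r$ at $\eta$ is $< \beta+\kappa \le \rho_p(\eta)$, so the maximum is still $\rho_p(\eta)$. For $\hat p \le r$, suppose $p$ had a tag at $\eta$ exceeding an exact tag $\rho_r(\eta) < \beta$. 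If $p$'s tag is below $\beta'$, it is copied verbatim into $q$, contradicting $q \comp r$. If it is at least $\beta+\kappa$, then $q$ has $\bar\rho_q(\eta) \ge \beta' > \beta > \rho_r(\eta)$, which again contradicts $q \comp r$.

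The step I expect to be the main obstacle is the middle range in the comparison with $r$. This concerns nodes where $r$ has a lower-bound tag $\bar\rho_r(\eta) \in [\beta, \beta+\kappa)$ coming from the truncation in $\pi(t,\beta)$, while $p$ (hence $q$) has an exact tag in $[\beta, \beta')$. Here I would use the explicit definition of $\pi$. The value $\bar\rho_r(\eta)$ is $\max(\beta, \sup(\text{successor tags} + 1))$, so it is determined by $r$'s tags on successors. Combining this with compatibility of $r$ and $q$, I would argue by induction on the well-founded tree $t_p \cup t_r$, from leaves upward, that $\bar\rho_r(\eta) \le \rho_p(\eta)$. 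This is exactly what is needed so that taking the maximum does not disturb $p$'s exact tag.
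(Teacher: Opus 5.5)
Your route is the paper's: you build $\hat{p}$ from maxima of tags and use that $p$ has no tag in $[\beta',\beta+\kappa)$ while $\crank(\pi(t,\beta))<\beta+\kappa$. You then check that no exact tag is raised. Reading the conclusion as $r \comp q \Rightarrow r \comp p$ is also what the paper's proof establishes.

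\textbf{The failing step.} In the direction $\hat{p}\le r$ you use $\bar{\rho}_q(\eta)\ge\beta'>\beta$. The definition only makes $\beta'$ exceed the tags of $p$ that lie below $\beta+\kappa$, so $\beta'$ can be far below $\beta$. In that case the lemma itself is false:
\begin{itemize}
\item Take $\alpha>\kappa$ and $\beta=2$, so $\beta+\kappa=\kappa$. Let $p$ have tree $\{\emptyset\}$ with the single exact tag $\rho_p(\emptyset)=\kappa$.
\item Then $\beta'=1$, and $q=\pi(p,1)$ has the single lower-bound tag $\bar{\rho}_q(\emptyset)=\max(1,0)=1$.
\item Let $t$ have tree $\{\emptyset\}$ with the single exact tag $\rho_t(\emptyset)=1$. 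Since $1<2$, we get $r=\pi(t,2)=t$, and $r\le q$ because $\rho_r(\emptyset)=1\ge\bar{\rho}_q(\emptyset)$.
\item Yet $r\incomp p$: a common extension would carry both exact tags $1$ and $\kappa$ at the root.
\end{itemize}

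\textbf{The paper has the same hole, and the fix.} The paper's proof relies on the same unstated inequality. Its fourth and sixth cases derive a contradiction from $\bar{\rho}_q(\eta)\ge\beta'$ together with $\rho_r(\eta)<\beta$. The repair belongs in the statement: put $\beta$ into the supremum defining $\beta'$. Then $\beta<\beta'\le\beta+\kappa$; the supremum stays below $\beta+\kappa$ because $p$ has fewer than $\kappa$ tags and $\kappa$ is regular. Also $p$ still has no tag in $[\beta',\beta+\kappa)$. With this change your argument goes through as written.

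\textbf{The ``main obstacle''.} The case in your last paragraph is not an obstacle, and no induction over $t_p\cup t_r$ is needed. If $\rho_p(\eta)<\beta'$, you already noted that $q$ carries the same exact tag. So $q\comp r$ alone gives $\bar{\rho}_r(\eta)\le\rho_p(\eta)$, whatever the origin of $\bar{\rho}_r(\eta)$. This is the paper's second case, which it handles in one line.
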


\begin{proof}
    Similar to Theorem~\ref{th: steel rank function}. For a given condition $p$ and $r = \pi(t,\beta)$ with $r \comp q$, let us once again define $\hat{p}$ as before; we claim $\hat{p} \leq p,r$. If this is not true, there must exist an $\eta \in t_p \cap t_r$ such that one of the following is the case:
    \begin{itemize}
        \item $\rho_p(\eta) < \rho_r(\eta)$: Since $r = \pi(t,\beta)$, we have $\rho_r(\eta) < \beta$, this would imply $\rho_p(\eta) < \beta$ and thus $\rho_q(\eta) = \rho_p(\eta)$, hence $q \incomp r$.
        \item $\rho_p(\eta) < \bar{\rho}_r(\eta)$: In particular $\rho_p(\eta) < \beta + \kappa$. This means $\rho_p(\eta) < \beta'$ by definition of $\beta'$, and hence $\rho_q(\eta) = \rho_p(\eta)$.
        \item $\rho_r(\eta) < \rho_p(\eta) < \beta'$: In this case $\rho_q(\eta) = \rho_p(\eta)$, so $q \incomp r$.
        \item $\rho_r(\eta) < \rho_p(\eta)$ and $\rho_p(\eta) \geq \beta + \kappa$: We have $\bar{\rho}_q(\eta) \geq \beta'$ but $\rho_r(\eta) < \beta$, which cannot be.
        \item $\rho_r(\eta) < \bar{\rho}_p(\eta) < \beta'$: Once again $\bar{\rho}_q(\eta) = \bar{\rho}_p(\eta)$, which means $q \incomp r$.
        \item $\rho_r(\eta) < \bar{\rho}_p(\eta)$ and $\bar{\rho}_p(\eta) \geq \beta + \kappa$: We have $\bar{\rho}_q \geq \beta'$, but $\rho_r(\eta) < \beta$, contradiction. \qedhere
    \end{itemize}
\end{proof}

In the literature, the fact that $\crank$ from Theorem~\ref{th: steel rank function} is a rank function and thus Theorem~\ref{th: rank-forcing} takes hold for this language is usually presented in the form of the \textit{retagging lemma} \cite[Lemma~1]{steelForcingTaggedTrees1978}. A prominent use of the forcing was in Harrington's work on the connection between analytic determinacy and the existence of sharp for reals \cite{harrington_analytic_1978}. An analysis of the complexity of sentences in the language considered by Steel shows that the bounds derived here are more efficient that those in \cite[Lemma~1]{steelForcingTaggedTrees1978}. However, there is still room for improvement; the next subsection will contain even sharper bounds on the rank of the conditions necessary to force certain sentences, together with a proof of their optimality. 

\subsection{Calculating the complexity of well-founded trees} \label{subsec: wf trees}

Throughout this section we are dealing with $\kappa$-Borel subsets of $\powset(\pre{<\omega}{\kappa})$. Recall that $\powset(\pre{<\omega}{\kappa})$ carries a topology derived from the natural isomorphism to $\pre{\kappa}{2}$. Concretely, this topology is generated by a family of basic clopen sets which consists of all ${<}\kappa$-sized intersections of sets of the form $\setOf{T \subseteq \pre{<\omega}{\kappa}}{\eta \in T}$ and $\setOf{T \subseteq \pre{<\omega}{\kappa}}{\eta \notin T}$ for $\eta \in \pre{<\omega}{\kappa}$.

\begin{definition}
    For an ordinal $\alpha \leq \kappa^+$ we set
    \[
        WF_{\alpha} = \{x \subseteq \pre{<\omega}{\kappa} : x \text{ is a well-founded tree of rank ${<}\alpha$}\}.
    \]
    
\end{definition}

By the same argument as in \cite[Lemma~25.10]{jechSetTheory2003}, it is known that $WF_{<\alpha}$ is $\kappa$-Borel in $\powset(\pre{<\omega}{\kappa}) \simeq 2^\kappa$ for any $\alpha < \kappa^+$. The exact Borel complexity of these sets has in the classical case been calculated by Stern \cite{stern} and is intimately connected with rank arguments over Steel's forcing. In this section, we will show that analogous bound hold in the generalized setting. Here it should be noted that investigating well-founded trees is a matter where the generalized setting would ordinarily diverge from the classical one. The class of all well-founded subtrees of $\pre{<\omega}{\omega}$ is a complete\footnote{A set $X$ is complete for a class $\boldsymbol{\Gamma}$ if $X \in \boldsymbol{\Gamma}$ and every set $Y \in \boldsymbol{\Gamma}$ is a continuous preimage of $X$.} set for the class $\ppi{1}{1}{\powset(\pre{<\omega}{\omega})}{\omega}$, hence it in particular is not Borel. On the contrary, the class $WF_{\kappa^+}$ of all well-founded subtrees of $\pre{<\omega}{\kappa}$ is \textit{closed} in $\powset(\pre{<\omega}{\kappa})$, since having an infinite branch is coded in an initial ${<}\kappa$-sized segment of the tree. For this reason, it might not be expected that the classes of bounded rank still behave similarly.

For the duration of this section, let $\bar{A} = \setOf{\enquote{t_G \in B}}{B \text{ is a basic clopen set in } \powset(\pre{<\omega}{\kappa})}$.

\begin{lemma} \label{lem: steel set to form}
    Let $B$ be a (code for a) $\ppi{0}{\beta}{}{\kappa}$ subset of $\powset(\pre{<\omega}{\kappa})$. Then for $\alpha > \beta$ there exists a sentence $[\phi] \in \Pi_\beta(\mathcal{L}_\infty(\bar{A})/_{\bS_\alpha})$ such that
    \[
    \bS_\alpha \forces \phi \Leftrightarrow t_G \in B.
    \]
\end{lemma}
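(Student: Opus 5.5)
The plan is a straightforward induction on $\beta$ along the $\kappa$-Borel code of $B$, mirroring the claim inside the proof of \cref{th: preserve generic pi set}. I fix a code $(T,f)$ for $B$, with $T \subseteq \pre{<\omega}{\kappa}$ well-founded of rank $\beta$ and $f$ assigning basic clopen sets of $\powset(\pre{<\omega}{\kappa})$ to leaves, so that $B = \mathcal{I}^f_T(\emptyset)$. For every $\eta \in T$ I then produce a sentence $\phi_\eta$ with $[\phi_\eta] \in \Pi_{\trank_T(\eta)}(\mathcal{L}_\infty(\bar{A})/_{\bS_\alpha})$ satisfying
\[
    \bS_\alpha \forces \phi_\eta \Leftrightarrow t_G \in \mathcal{I}^f_T(\eta).
\]
The desired sentence is $\phi \defas \phi_\emptyset$; membership in $\Pi_\beta$ for a node of smaller rank follows from monotonicity of the hierarchy.

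For the base case, let $\eta$ be a leaf. I set $\phi_\eta$ to be the atom \enquote{$t_G \in f(\eta)$}, which lies in $\bar{A}$. By definition this is $\Pi_0$, and it is forced equivalent to $t_G \in \mathcal{I}^f_T(\eta)$.

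For the inductive step at a non-leaf $\eta$, I put
\[
    \phi_\eta \defas \neg \bigvee_{\nu \in \succ(\eta)} \neg \phi_\nu.
\]
Each $[\phi_\nu]$ is in $\Pi_{\trank(\nu)}$ with $\trank(\nu) < \trank(\eta)$, so the disjunction is $\Sigma_{\trank(\eta)}$ and its negation is $\Pi_{\trank(\eta)}$, exactly as in the definition of the stratification. The forced equivalence follows from the semantics $\mathcal{I}^f_T(\eta) = \bigcap_{\nu} \big(\powset(\pre{<\omega}{\kappa}) \minus \mathcal{I}^f_T(\nu)\big)$ together with the inductive hypothesis.

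There is a mismatch here: the coding in the paper alternates complements, so $\mathcal{I}(\eta)$ is an intersection of complements of $\mathcal{I}(\nu)$. Under that coding the correct statement is that $\mathcal{I}(\eta)$ is $\Pi_{\trank}$ if each $\mathcal{I}(\nu)$ is $\Pi_{<\trank}$ via the complement. I will therefore phrase the induction in terms of the formula and its negation, tracking the indices carefully.

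The main point needing care is the interaction of the indexing conventions. I have to check the following:
\begin{itemize}
    \item the hierarchy on $\mathcal{L}_\infty(\bar{A})/_{\bS_\alpha}$ starts with atoms at $\Pi_0$ while Borel sets start with open sets at $\bSigma^0_1$;
    \item basic clopen sets are both atoms and negated atoms up to forced equivalence, since the complement of a basic clopen set is a ${<}\kappa$ union of basic clopens;
    \item the hypothesis $\alpha > \beta$ is used only to ensure $\bS_\alpha$ is a nontrivial forcing in which $t_G$ makes sense. The forced equivalences are purely syntactic, because the interpretation of a Borel code commutes with forcing at the level of each node.
\end{itemize}
If the offset comes out wrong by one at the base level, I would absorb it by treating the open basic sets as $\Sigma_1$ unions of atoms. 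This is legitimate because a ${<}\kappa$-size conjunction of atoms is equivalent to a single atom, since $\bar{A}$ is closed under ${<}\kappa$ intersections.
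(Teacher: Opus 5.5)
Your strategy, induction along the code of $B$ with one sentence per node, is the same as the paper's. The paper inducts on $\beta$, writes $B=\bigcap_{i<\kappa}B_i$ with $B_i\in\bSigma^0_{\beta'}$ for some $\beta'<\beta$, and sets $\phi=\bigwedge_i\phi_i$. However, your inductive step as written is wrong. Your sentence $\phi_\eta=\neg\bigvee_{\nu\in\succ(\eta)}\neg\phi_\nu$ is forced-equivalent to $\bigwedge_\nu\phi_\nu$, i.e.\ to $t_G\in\bigcap_\nu\mathcal{I}^f_T(\nu)$. The coding, however, gives $\mathcal{I}^f_T(\eta)=\bigcap_\nu\bigl(\powset(\pre{<\omega}{\kappa})\minus\mathcal{I}^f_T(\nu)\bigr)$. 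The complexity is off as well. The disjuncts $\neg\phi_\nu$ are $\Sigma_{\trank(\nu)}$, not $\Pi_{\trank(\nu)}$. So when $\trank(\nu)=\trank(\eta)-1$, the definition of the stratification only places the disjunction in $\Sigma_{\trank(\eta)+1}$. Your computation (``each $[\phi_\nu]$ is $\Pi_{\trank(\nu)}$, so the disjunction is $\Sigma_{\trank(\eta)}$'') applies to a different formula than the one you displayed. This is exactly the ``mismatch'' you noticed, but you left it unresolved rather than fixing it.

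The fix is to drop one negation: set $\phi_\eta\defas\neg\bigvee_{\nu\in\succ(\eta)}\phi_\nu$. Then $\bigvee_\nu\phi_\nu\in\Sigma_{\trank(\eta)}$ directly by definition, so $[\phi_\eta]\in\Pi_{\trank(\eta)}$. Moreover, by the inductive hypothesis, $\bS_\alpha\forces\phi_\eta\leftrightarrow\forall\nu\in\succ(\eta)\,(t_G\notin\mathcal{I}^f_T(\nu))\leftrightarrow t_G\in\mathcal{I}^f_T(\eta)$. You probably imported the extra negation from the claim in the proof of \cref{th: preserve generic pi set}. There $\varphi_\eta$ expresses non-membership and is $\Sigma$, so the disjunct $\neg\varphi_\nu$ expresses membership, which is what your $\phi_\nu$ already does. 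With this correction the indices line up exactly. Leaves are atoms in $\Pi_0$, matching the paper's convention that $\bPi^0_0$ denotes basic clopen sets, and a node of rank $\gamma$ yields $\Pi_\gamma$. So no offset adjustment is needed, and the hypothesis $\alpha>\beta$ plays no role in the argument.
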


\begin{proof}
    The construction of the code $B$ immediately yields the sentence $\phi$. If $\beta = 0$, then we slightly abuse notation to let $\ppi{0}{0}{\powset(\pre{<\omega}{\kappa})}{\kappa}$ denote the class of basic clopen subsets of $\powset(\pre{<\omega}{\kappa})$; then there exists such a sentence $\phi$ by definition. Otherwise, $\beta > 0$ and $B = \bigcap_{i < \kappa} B_i$, where $B_i \in \bigcup_{\beta' < \beta} \ssigma{0}{\beta'}{\powset(\pre{<\omega}{\kappa})}{\kappa}$. By induction there exist sentences $[\phi_i] \in \bigcup_{\beta' < \beta} \Sigma_{\beta'}(\mathcal{L}_\infty(\bar{A})/_{\bS_\alpha})$ such that 
    \[
        \bS_\alpha \forces \phi_i \leftrightarrow t_G \in B_i
    \]
    for each $i < \kappa$. Hence $\phi \defas \bigwedge_{i < \kappa} \phi_i$ works. 
\end{proof}

\begin{lemma} \label{lem: steel forcing atoms}
    Let $p \in \bS_\alpha$ and $a \in \bar{A}$ such that $p \forces a$. Then, letting
    \[
        \beta = \sup \setOf{\max(\rho_p(\eta_1), \bar{\rho}_p(\eta_2))}{\rho_p(\eta_1) < \omega, \bar{\rho}_p(\eta_2) < \omega} + 1,
    \]
    we have $\pi(p,\beta) \forces a$.
\end{lemma}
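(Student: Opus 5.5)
We show that $\pi(p,\beta)$ already decides the basic clopen set in $a$, because that decision depends only on finite-rank information in $p$. Write $a = \enquote{t_G \in B}$, where $B$ is a basic clopen set. So $B$ is a ${<}\kappa$-sized intersection of sets $\setOf{T}{\eta \in T}$ and $\setOf{T}{\eta \notin T}$, and $a$ is equivalent to a conjunction of ${<}\kappa$-many statements $\enquote{\eta \in t_G}$ and $\enquote{\eta \notin t_G}$. It therefore suffices to treat a single such statement forced by $p$ and to show that $q \defas \pi(p,\beta)$ forces it as well.

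For a positive literal $\enquote{\eta \in t_G}$, first note that $p \forces \eta \in t_G$ already implies $\eta \in t_p$. Otherwise a density argument would let us extend $p$ by declaring some ancestor of $\eta$ (or $\eta$ itself) a leaf or untaggable, giving an extension forcing $\eta \notin t_G$. Since $t_q = t_p$ and $t_G \supseteq t_q$ for $q \in G$, we get $q \forces \eta \in t_G$.

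For a negative literal $\enquote{\eta \notin t_G}$, we argue as follows. If $\eta \in t_p$ the statement is false below $p$, so $\eta \notin t_p$, and $p$ must prevent $\eta$ from ever entering the tree. We claim this happens only when some node $\eta' \subsetneq \eta$ with $\eta' \in t_p$ (take the longest such initial segment in $t_p$) has $\rho_p(\eta') = 0$. Indeed, if instead $\eta' \in \dom(\bar\rho_p)$, or $\rho_p(\eta') > 0$, we extend $p$ by adding the finite chain from $\eta'$ to $\eta$ with strictly decreasing tags, giving an extension that forces $\eta \in t_G$. The main obstacle is precisely here: the chain from $\eta'$ to $\eta$ has finite length $n$. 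If $\rho_p(\eta') = m < \omega$ but $m < n$, adding the chain fails, so $p$ forces $\eta \notin t_G$ whenever $\rho_p(\eta') < |\eta| - |\eta'|$. Since that value is finite, it is below $\omega$ and hence below $\beta$ by the choice of $\beta$. Thus $\rho_q(\eta') = \rho_p(\eta')$ by the definition of $\pi$, and the same obstruction is present in $q$. In the remaining case, $\eta' \in \dom(\bar\rho_p)$ or $\rho_p(\eta')$ is infinite. Then we can always add a decreasing chain of tags below $\rho_p(\eta')$, or below an enlarged value above $\bar\rho_p(\eta')$ (tags live in $\alpha$, and we would verify that $\alpha$ is large enough), so $p$ does not force $\eta \notin t_G$.

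Finally we assemble the literals. Every conjunct of $a$ forced by $p$ is forced by $q$ for the same reason, and a ${<}\kappa$ conjunction of forced statements is forced. The argument also needs some bookkeeping on the definition of $\beta$: $\beta$ is at least one more than every finite tag, so $\pi$ preserves every finite tag exactly in $\rho_q$. The $\bar\rho$ values are only changed to values ${\geq}\beta$, and these never create the obstruction, which requires a finite true tag.
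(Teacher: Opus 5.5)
Your reduction to single literals and your treatment of negative literals are fine. The gap is in the positive case: the claim that $p \forces \eta \in t_G$ implies $\eta \in t_p$ is false. Take $t_p = \{\emptyset\}$, $\rho_p(\emptyset) = 1$, $\bar\rho_p = \emptyset$ and $\eta = \langle 0 \rangle$. Every $r \leq p$ still has $\rho_r(\emptyset) = 1$. So if $\eta \notin t_r$, you can add $\eta$ with $\rho$-tag $0$. Thus $\setOf{r}{\eta \in t_r}$ is dense below $p$ and $p \forces \eta \in t_G$, although $\eta \notin t_p$.

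Your density argument fails because in $\bS_\alpha$ the only way to keep a node out of the tree is through the tag of an ancestor. $\rho$-tags are frozen under extension and $\bar\rho$-promises can only grow, so an ancestor with a positive tag or promise can never be made a leaf. There is also no mechanism for excluding $\eta$ itself. In fact every non-leaf node of $t_G$ has all $\kappa$ successors in $t_G$.

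The correct criterion, which the paper uses, is the following: for $\eta \neq \emptyset$, $p \forces \eta \in t_G$ if and only if $\pred(\eta) \in t_p$ and ($\rho_p(\pred(\eta)) > 0$ or $\bar\rho_p(\pred(\eta)) > 0$). So you must show that this positivity survives in $q = \pi(p,\beta)$; $t_q = t_p$ alone is not enough. Tags and promises below $\beta$ are copied into $q$, while values ${\geq}\beta$ become promises $\bar\rho_q(\pred(\eta)) \geq \beta \geq 1$. Hence already $\pi(p,1) \forces \eta \in t_G$. Your closing remark that $\bar\rho$-values are only changed to values ${\geq}\beta$ is exactly the needed ingredient, but you never apply it to positive literals with $\eta \notin t_p$.

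Your negative case matches the paper's argument, once your first guess ``$\rho_p(\eta') = 0$'' is replaced, as you then do, by $\rho_p(\eta') < |\eta| - |\eta'|$ for the longest initial segment $\eta'$ of $\eta$ in $t_p$. This is equivalent to the paper's condition, since $\dom(\rho_p)$ is closed under successors in $t_p$ and tags strictly decrease along branches. That finite tag is below $\beta$, so it is kept unchanged in $q$.
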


\begin{proof}
    It is enough to consider $a = \enquote{\eta \in t_G}$ and $a = \enquote{\eta \notin t_G}$ for some $\eta \in \pre{<\omega}{\kappa}$.

    Note that $p \forces \eta \in t_G$ if and only if $\pred(\eta) \in t_p$ and $\rho(\pred(\eta)) > 0 \vee \bar{\rho}(\pred(\eta)) > 0$. In either case, $p \forces \eta \in t_G$ implies $\pi(p,1) \forces \eta \in t_G$. On the other hand, $p \forces \eta \notin t_G$ if and only if there exists an $i < n = \dom(\eta)$ such that $\eta \restriction i \in t_p$ and $\rho_p(\eta \restriction i) < n - i$. Hence if $p \forces \eta \notin t_G$, then $\pi(p, \beta) \forces \eta \notin t_G$.
\end{proof}

Notably, the choice $\pi(p, \beta)$ of condition in the previous lemma does not depend on $a$. We exploit this additional structure to prove Lemma~\ref{lem: improved ranked forcing steel}, improving on the bounds yielded by Theorem~\ref{th: rank-forcing}. For $\kappa = \omega$, \cite[Lemma~1.3]{stern} asserts an analogous statement; unfortunately, we have been unable to locate an extant proof of this main auxiliary result of Stern's article. It appears to have been omitted from the published version. \cite{zafrany_borel_1989} and \cite{miller_borel_1983} credit an unpublished work of Garland \cite{garland} for proving a stronger form of Theorem~\ref{th: complexity of wf trees} avoiding the use of forcing and instead constructing continuous reductions. Lastly, Zafrany \cite{zafrany_borel_1989} attributes the original proof of \cref{th: complexity of wf trees} for $\kappa=\omega$ to Luzin, whilst developing a framework to treat the complexity of an array of classes related to trees. We do not know whether Zafrany's methods generalize into the uncountable.

\begin{lemma} \label{lem: improved ranked forcing steel}
    Let $\alpha$ be large enough, $\beta \geq 1$ and $[\phi] \in \Pi_{2 \cdot \beta}(\mathcal{L}_\infty(\bar{A})/_{\bS_\alpha})$. If $p \forces \phi$, then already $\pi(p, \kappa \cdot \beta) \forces \phi$.
\end{lemma}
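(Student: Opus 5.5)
We argue by induction on $\beta \geq 1$, following the induction in the proof of \cref{th: rank-forcing}, except that we keep the explicit retagging map $\pi(\cdot,\cdot)$ rather than an abstract witness $q$. The key tool is the following compatibility principle. We extract it from the proof of \cref{th: steel rank function} together with the refined lemma that follows it.

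\emph{Compatibility principle.} Let $\gamma$ be a limit ordinal. If $r$ has $\crank(r) < \gamma$, or more generally $r = \pi(t,\gamma')$ for some $\gamma'$ with $\gamma' + \kappa \leq \gamma$, and if $r \comp \pi(p,\gamma)$, then $r \comp p$.

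The reason the multiplier $\kappa$ appears is this. In $\pi(p,\gamma)$ the retagged values $\bar\rho$ lie in $[\gamma, \gamma + \kappa)$. Any condition of rank below $\gamma$ cannot see the difference between $\bar\rho$-tags in $[\gamma,\gamma+\kappa)$ and the true tags of $p$ that are ${\geq}\gamma$.

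\emph{Base case, $\beta = 1$.} Write $\phi \in \Pi_2$ as $\bigwedge_i \psi_i$, where each $\psi_i \in \Sigma_1$ is a disjunction of negated $\Pi_0$-atoms (or a disjunction of $\Sigma_0$-formulas). Suppose $\pi(p,\kappa) \not\forces \phi$. Then some $s \leq \pi(p,\kappa)$ and some $i$ satisfy $s \forces \neg\psi_i$, that is, $s$ forces a conjunction of atoms.

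We apply \cref{lem: steel forcing atoms} to $s$, with a single $\beta_s < \omega$ that works for all atoms at once. This is the point where it matters that the choice of $\pi(s,\beta_s)$ does not depend on the atom. We obtain $r = \pi(s,n)$ with $n < \omega$ and $r \forces \neg\psi_i$. Since $r$ is compatible with $\pi(p,\kappa)$ and $n + \kappa = \kappa$, the compatibility principle gives $r \comp p$. This contradicts $p \forces \psi_i$.

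\emph{Successor step, $\beta \to \beta+1$.} Write $\phi \in \Pi_{2\beta+2}$ as $\bigwedge_i \neg\bigwedge_j \neg\theta_{ij}$, where each $\theta_{ij} \in \Pi_{2\beta}$. Equivalently, $\phi$ is a conjunction of $\Sigma_{2\beta+1}$-formulas, each of which is a disjunction of $\Pi_{2\beta}$-formulas.

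Suppose $s \leq \pi(p,\kappa\cdot(\beta+1))$ forces $\neg\psi_i$, that is, $s$ forces $\neg\theta_{ij}$ for every $j$. We cannot use the induction hypothesis on $\neg\theta_{ij}$, since it is $\Sigma$ rather than $\Pi$. The plan is to absorb the extra quantifier into the extra factor of $\kappa$.

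First strengthen $s$ so that it decides enough. Then compare $\pi(s,\kappa\cdot\beta + \delta)$ for a suitable $\delta < \kappa$, using the refined lemma with $\beta' = \sup\{\ldots < \kappa\cdot\beta+\kappa\}+1$. We claim that $\pi(s,\kappa\cdot\beta+\delta)$ still forces every $\neg\theta_{ij}$.

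To prove the claim, suppose some $u \leq \pi(s,\kappa\cdot\beta+\delta)$ forces $\theta_{ij}$. By the induction hypothesis, $\pi(u,\kappa\cdot\beta) \forces \theta_{ij}$. By the refined lemma, $\pi(u,\kappa\cdot\beta)$ is compatible with $s$, which is a contradiction.

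Finally, $\pi(s,\kappa\cdot\beta+\delta)$ has rank below $\kappa\cdot(\beta+1)$. By the compatibility principle it is compatible with $p$, which contradicts $p \forces \psi_i$.

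\emph{Limit step.} For limit $\beta$, $\Pi_{2\beta}$-formulas are conjunctions of negations of formulas in $\Pi_{2\beta'}$ with $\beta' < \beta$. The same argument works with $\kappa\cdot\beta' + \kappa \leq \kappa\cdot\beta$.

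\emph{Main obstacle.} We expect the hardest step to be the claim in the successor case. There we must choose $\delta < \kappa$ uniformly in $j$ and verify the refined compatibility lemma in exactly the form needed: $\pi(u,\kappa\beta) \comp \pi(s,\beta') \Rightarrow \pi(u,\kappa\beta) \comp s$. This requires checking that $|t_s| < \kappa$ keeps all relevant tags of $s$ below $\kappa\cdot\beta + \kappa$ inside a bounded segment $<\kappa\cdot\beta+\delta$, which is possible by regularity of $\kappa$.
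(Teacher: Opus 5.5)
Your successor and limit steps are correct and are essentially the paper's argument. The paper also passes to $s'=\pi(s,\beta')$, where $\beta'$ is the supremum of the tags of $s$ below $\kappa\cdot\beta+\kappa$ plus one. It gets $s'\comp p$ from Theorem~\ref{th: steel rank function}, and then combines the induction hypothesis, applied to a condition forcing a positive disjunct $\phi^i_j$, with the refined lemma after Theorem~\ref{th: steel rank function}. You prove $s'\forces\neg\psi_i$ first and use Theorem~\ref{th: steel rank function} last, which works equally well.

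The gap is in your base case, where you apply Lemma~\ref{lem: steel forcing atoms} to $s$, a condition forcing $\neg\psi_i$. In the paper's hierarchy, $\Sigma_1$ consists of disjunctions of $\Pi_0$-formulas, i.e.\ of \emph{positive} atoms. So for $\psi_i=\bigvee_j a_j$, the formula $\neg\psi_i$ is a conjunction of \emph{negated} atoms. A negated atom is a disjunction of atoms, since the complement of a basic clopen set is open. A positive atom, however, is in general not a disjunction of negated atoms, so your reading of $\Sigma_1$ misses this case.

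Lemma~\ref{lem: steel forcing atoms} genuinely fails for negated atoms when $\kappa>\omega$. Take $t_s=\{\emptyset\}$ with $\rho_s(\emptyset)=\omega$, and let $a$ be \enquote{$t_G\in B$} for the basic clopen set $B=\{T : \langle 0,k\rangle\conc 0^k\in T \text{ for all } k<\omega\}$.
\begin{itemize}
\item $s\forces\neg a$: $\langle 0\rangle$ gets some finite rank $m$ in $t_G$, so $\langle 0,k\rangle\conc 0^k\notin t_G$ for $k\geq m$.
\item For every $n\leq\omega$, the condition $\pi(s,n)$ only carries $\bar\rho(\emptyset)=n$. It is extended by the condition with tree $\{\emptyset,\langle 0\rangle\}\cup\{\langle 0,k\rangle\conc 0^j : j\leq k<\omega\}$ and exact tags $\rho(\emptyset)=\omega+1$, $\rho(\langle 0\rangle)=\omega$, $\rho(\langle 0,k\rangle\conc 0^j)=k-j$. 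That condition forces $a$.
\end{itemize}
So your step \enquote{$\pi(s,n)\forces\neg\psi_i$} is unjustified. Retagging $s$ down to a finite level discards exactly the information $s$ uses to force a negated atom. Note also that the $n$ produced by Lemma~\ref{lem: steel forcing atoms} can be $\omega+1$, not necessarily $<\omega$.

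The repair is what the paper does: treat the base case as your successor step with $\beta=0$. Use Lemma~\ref{lem: steel forcing atoms} in place of the induction hypothesis, and only ever apply it to positive atoms.
\begin{itemize}
\item Retag $s$ only to $\beta'=\sup\{\text{tags of } s \text{ below }\kappa\}+1<\kappa$; this keeps every tag of $s$ below $\kappa$ exact.
\item Suppose some $u\leq\pi(s,\beta')$ forced some $a_j$. Then $\pi(u,n_u)\forces a_j$ with $n_u+\kappa=\kappa$.
\item The refined lemma, applied with $\beta=n_u$ (its $\beta'$ is exactly the one above), gives $\pi(u,n_u)\comp s$. This contradicts $s\forces\neg a_j$.
\end{itemize}
Hence $\pi(s,\beta')\forces\neg\psi_i$ and it has rank below $\kappa$. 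It is compatible with $\pi(p,\kappa)$, as witnessed by $s$. Theorem~\ref{th: steel rank function} then gives $\pi(s,\beta')\comp p$, contradicting $p\forces\psi_i$.
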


\begin{proof}
    We proceed by induction on $\beta$. The base case can be easily derived from the argument in the successor case $\beta \to \beta + 1$ for $\beta = 0$ together with the observation that whether $\kappa = \omega$ or $\kappa > \omega$, the condition yielded by Lemma~\ref{lem: steel forcing atoms} has rank ${<}\kappa$.
    
    Let us thus prove the successor step $\beta \to \beta + 1$ of the induction. Hence we have a $p,\phi$ with $[\phi] \in \Pi_{2 \cdot (\beta + 1)}(\mathcal{L}_\infty(\bar{A})/_{\bS_\alpha})$ and $p \forces \phi$. Now $\bS_\alpha \forces \phi \Leftrightarrow \bigwedge_{i \in I} \bigvee_{j \in J_i} \phi^i_j$, where $[\phi^i_j] \in \Pi_{2 \cdot \beta}(\mathcal{L}_\infty(\bar{A})/_{\bS_\alpha})$. If we can prove $\pi(p, \kappa \cdot (\beta +1)) \forces \bigvee_{j \in J_i} \phi^i_j$ for each $i \in I$, then we are done. Hence we fix $i \in I$ and write $\psi = \bigvee_{j \in J_i} \phi^i_j$ from now on. If $\pi(\beta, \kappa \cdot \beta + \kappa) \not \forces \psi$, then we can find an $s \leq \pi(\beta, \kappa \cdot \beta + \kappa)$ with $s \forces \neg \psi$. Set 
    \[
        s' \defas \pi(s, \sup \setOf{\max(\rho_s(\eta), \bar{\rho}_s(\nu))}{\rho_s(\eta) < \kappa \cdot \beta + \kappa, \bar{\rho}_s(\nu) < \kappa \cdot \beta + \kappa} + 1).
    \]
    We know $\crank(s') < \kappa \cdot \beta + \kappa$ and $s' \comp \pi(p, \kappa \cdot \beta + \kappa)$, thus $s' \comp p$ by Theorem~\ref{th: steel rank function}. Let $t \leq s', p$ be such that there exists a $j \in J_i$ with $t \forces \phi^i_j$. Since $[\phi^i_j] \in \Pi_{2 \cdot \beta}(\mathcal{L}_\infty(\bar{A})/_{\bS_\alpha})$, the inductive hypothesis gives us $\pi(t, \kappa \cdot \beta) \forces \phi^i_j$, But $t$ witnesses $s' \comp \pi(t, \kappa \cdot \beta)$, so Lemma~\ref{lem: improved ranked forcing steel} implies $s \comp \pi(t, \kappa \cdot \beta)$, which is absurd. In the base case $\beta = 0$, use Lemma~\ref{lem: steel forcing atoms} instead of the inductive hypothesis here.

    Now let $\gamma$ be a limit and $[\phi] \in \Pi_{2 \cdot \gamma}(\mathcal{L}_\infty(\bar{A})/_{\bS_\alpha}) = \Pi_{\gamma}(\mathcal{L}_\infty(\bar{A})/_{\bS_\alpha})$. Then $\bS_\alpha \forces \phi \Leftrightarrow \bigwedge_{i \in I} \phi_i$ with $[\phi_i] \in \bigcup_{\beta < \gamma} \Pi_{2 \cdot \beta}(\mathcal{L}_\infty(\bar{A})/_{\bS_\alpha})$. Furthermore assume $p \forces \phi$. But since $\pi(p, \kappa \cdot \gamma) \leq \pi(p, \kappa \cdot \beta)$ for all $\beta < \gamma$, the desired result is immediate.
\end{proof}

\begin{theorem} \label{th: complexity of wf trees}
    The sets $WF_\alpha$ have the following complexity as $\kappa$-Borel subsets of $\powset(\pre{<\omega}{\kappa})$:
    \begin{itemize}
        \item For every $\alpha < \kappa^+$ the set $WF_{\kappa \cdot \alpha}$ is $\ssigma{0}{2\cdot \alpha}{}{\kappa}$ but not $\ppi{0}{2 \cdot \alpha}{}{\kappa}$.
        \item For every $\alpha < \kappa^+$ and $0 < \beta < \kappa$ the set $WF_{\kappa \cdot \alpha + \beta}$ is $\ppi{0}{2\cdot \alpha + 1}{}{\kappa}$ but not $\ssigma{0}{2 \cdot \alpha + 1}{}{\kappa}$.
    \end{itemize}
\end{theorem}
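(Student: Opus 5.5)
The upper bounds come from a direct computation, and the lower bounds from $\kappa$-Steel forcing together with \cref{lem: improved ranked forcing steel}. For an ordinal $\gamma$ and $\eta\in\pre{<\omega}{\kappa}$, let $R_\gamma(\eta)$ be the set of $x\subseteq\pre{<\omega}{\kappa}$ with $\eta\notin x$ or $\trank_x(\eta)<\gamma$. On the closed set $WF_{\kappa^+}$ of well-founded trees, $WF_\gamma$ is just $R_\gamma(\emptyset)$ restricted to trees. The recursion is
\[
R_1(\eta)=\bigcap_{i<\kappa}\{x : \eta\conc i\notin x\},\qquad
R_{\gamma+1}(\eta)=\bigcap_{i<\kappa}R_\gamma(\eta\conc i),\qquad
R_\lambda(\eta)=\bigcup_{\gamma<\lambda}R_\gamma(\eta)\ (\lambda \text{ limit}).
\]
So $R_1$ is closed, and successor steps preserve $\Pi$-classes. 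At $\lambda=\kappa\cdot\alpha$ we take a union of at most $\kappa$ sets from lower levels. By induction this gives $R_{\kappa\cdot\alpha}\in\ssigma{0}{2\cdot\alpha}{}{\kappa}$, and hence $R_{\kappa\cdot\alpha+n}\in\ppi{0}{2\cdot\alpha+1}{}{\kappa}$ for $0<n<\omega$.

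The delicate points are the limits $\lambda=\kappa\cdot\alpha+\lambda'$ with $\lambda'<\kappa$ a limit. There I will first prove, by induction on $\xi$, that $\ppi{0}{\xi}{}{\kappa}$ is closed under unions of fewer than $\kappa$ sets. The base case holds because ${<}\kappa$ intersections of basic open sets are open when $\kappa^{<\kappa}=\kappa$. The inductive step uses distributivity: a ${<}\kappa$ union of $\kappa$-indexed intersections is a $\kappa^{<\kappa}=\kappa$-indexed intersection of ${<}\kappa$ unions. This keeps $R_{\kappa\cdot\alpha+\beta}$ in $\ppi{0}{2\cdot\alpha+1}{}{\kappa}$ for all $0<\beta<\kappa$, and it is exactly why $\beta<\kappa$ is required.

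For the lower bounds I will work with $\bS_\delta$ for some large $\delta$, using $t_G$ and \cref{lem: steel set to form} to turn a hypothetical low-complexity code into a sentence in $\mathcal{L}_\infty(\bar A)/_{\bS_\delta}$.

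Second bullet first. Suppose $WF_{\kappa\cdot\alpha+\beta}$ were $\ssigma{0}{2\cdot\alpha+1}{}{\kappa}$. Its complement relative to trees is then $\ppi{0}{2\cdot\alpha+1}{}{\kappa}$, and I will work with the $\Sigma_{2\cdot\alpha+1}$ sentence $\chi\equiv\bigvee_j\psi_j$ with $[\psi_j]\in\Pi_{2\cdot\alpha}$ and $\bS_\delta\forces \chi\Leftrightarrow t_G\in WF_{\kappa\cdot\alpha+\beta}$. Take $p$ whose tree is the root with $\rho_p(\emptyset)=\kappa\cdot\alpha$, so that $p\forces\chi$. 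Extend $p$ to some $p'$ forcing a single disjunct $\psi_j$. By \cref{lem: improved ranked forcing steel}, $\pi(p',\kappa\cdot\alpha)\forces\psi_j$. In $\pi(p',\kappa\cdot\alpha)$ every tag $\geq\kappa\cdot\alpha$ has become a lower-bound tag $\bar\rho$. So I can extend to a condition in which the root receives the exact tag $\kappa\cdot\alpha+\beta$, building fresh subtrees of the required ranks on unused successors, which exist since the tree has size ${<}\kappa$. That condition forces $t_G\notin WF_{\kappa\cdot\alpha+\beta}$, contradicting $\psi_j\to\chi$.

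First bullet. Suppose $WF_{\kappa\cdot\alpha}$ were $\ppi{0}{2\cdot\alpha}{}{\kappa}$. Then its complement gives a $\Sigma_{2\cdot\alpha}$ sentence forced by the condition $p$ with root tag $\rho_p(\emptyset)=\kappa\cdot\alpha$. Again extend to $p'$ forcing a single disjunct.
\begin{itemize}
\item If $\alpha$ is a limit, the disjunct lies in $\Pi_{2\cdot\beta}$ for some $\beta<\alpha$. Then $\pi(p',\kappa\cdot\beta)$ forces it, and I retag the root with some exact value in $[\kappa\cdot\beta,\kappa\cdot\alpha)$, which forces $t_G\in WF_{\kappa\cdot\alpha}$. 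Contradiction.
\item If $\alpha=\alpha'+1$, the disjuncts lie in $\Pi_{2\cdot\alpha'+1}$. Here I need an odd-level companion of \cref{lem: improved ranked forcing steel}: if $p\forces\phi$ with $[\phi]\in\Pi_{2\cdot\beta+1}$, then already $\pi(p,\kappa\cdot\beta+\omega)\forces\phi$. Given it, I retag the root to exactly $\kappa\cdot\alpha'+\omega<\kappa\cdot\alpha$ and obtain the same contradiction.
\end{itemize}
The case $\alpha=0$ is trivial.

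The main obstacle is this odd-level lemma. I would prove it by rerunning the successor step of \cref{lem: improved ranked forcing steel}. A $\Pi_{2\cdot\beta+1}$ sentence is a conjunction of negations of $\Pi_{2\cdot\beta}$ sentences. If $\pi(p,\kappa\cdot\beta+\omega)$ failed to force one conjunct, some $s$ below it would force the corresponding $\Pi_{2\cdot\beta}$ sentence, and then so would $\pi(s,\kappa\cdot\beta)$, which has rank $<\kappa\cdot\beta+\kappa$. The difficulty is the compatibility step: \cref{th: steel rank function} only yields $h(\beta)=\beta+\kappa$, which is too coarse here. I would instead aim for the sharper special-form statement of the unnamed lemma preceding \cref{lem: steel set to form} (compatibility with conditions of the form $\pi(t,\beta)$), and check that it gives compatibility of $\pi(s,\kappa\cdot\beta)$ with $p$ from its compatibility with $\pi(p,\kappa\cdot\beta+\omega)$. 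If the margin $+\omega$ turns out insufficient, I would replace it by any $\kappa\cdot\beta+\mu$ with $\mu<\kappa$, which still suffices for the contradiction above.
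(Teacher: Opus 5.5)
Your upper bounds and your lower bound for the second bullet follow the paper: root tagged $\kappa\cdot\alpha$, pass to one $\Pi_{2\cdot\alpha}$ disjunct, apply \cref{lem: improved ranked forcing steel}, retag the root to $\kappa\cdot\alpha+\beta$. Your limit case of the first bullet also works.

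The gap is in the successor case of the first bullet. Your odd-level lemma is false as stated, and so is the fallback with any fixed margin $\kappa\cdot\beta+\mu$, $\mu<\kappa$. Here is a counterexample.
\begin{itemize}
\item Let $\phi$ be a $\Pi_{2\cdot\beta+1}$ sentence expressing $t_G\in WF_{\kappa\cdot\beta+\mu+2}$; it exists by your upper bound and \cref{lem: steel set to form}.
\item Let $p$ be the root alone with $\rho_p(\emptyset)=\kappa\cdot\beta+\mu+1$. Then $p\forces\phi$.
\item But $\pi(p,\kappa\cdot\beta+\mu)$ is the root with $\bar\rho(\emptyset)=\kappa\cdot\beta+\mu$. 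It extends to the exact tag $\kappa\cdot\beta+\mu+2$, which forces $\neg\phi$.
\end{itemize}
Correspondingly, the special-form lemma does not transfer compatibility from $\pi(p,\kappa\cdot\beta+\omega)$. It only applies to $\pi(p,\beta'_p)$ with a cutoff above every tag of $p$ below $\kappa\cdot\beta+\kappa$.

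So the margin must depend on the condition. Put
$\beta'_p=\max\bigl(\kappa\cdot\beta,\ \sup\{\text{tags of } p \text{ below } \kappa\cdot\beta+\kappa\}+1\bigr)<\kappa\cdot\beta+\kappa$.
The maximum with $\kappa\cdot\beta$ is needed; without it the special-form lemma as literally stated can fail. With this cutoff, your rerun of the successor step does prove $\pi(p,\beta'_p)\forces\phi$.

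In your application, $p'$ has root tag $\kappa\cdot\alpha=\kappa\cdot\alpha'+\kappa$ and all other tags smaller. Hence $\pi(p',\beta'_{p'})$ only replaces the root's tag by $\bar\rho(\emptyset)=\beta'_{p'}<\kappa\cdot\alpha$. Retag the root exactly to $\beta'_{p'}$, not to $\kappa\cdot\alpha'+\omega$, which may lie below it. Then the contradiction goes through.

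The paper needs none of this, because it derives the first bullet from the second. Suppose $WF_{\kappa\cdot\alpha}\in\bPi^0_{2\cdot\alpha}$. By your own recursion, $WF_{\kappa\cdot\alpha+1}$ is (within the closed set of trees) the intersection over $i<\kappa$ of the sets $\{T:\langle i\rangle\notin T\}\cup\sigma_i^{-1}(WF_{\kappa\cdot\alpha})$, where $\sigma_i(T)=\{\nu:\langle i\rangle\conc\nu\in T\}$ is continuous. Hence $WF_{\kappa\cdot\alpha+1}\in\bPi^0_{2\cdot\alpha}\subseteq\bSigma^0_{2\cdot\alpha+1}$, contradicting the second bullet with $\beta=1$. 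This removes both the limit/successor split and the odd-level retagging lemma. Your repaired route yields that lemma as a by-product, but the theorem does not need it.

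Two smaller points:
\begin{itemize}
\item The step $\bS_\delta\forces\chi\Leftrightarrow t_G\in WF_{\kappa\cdot\alpha+\beta}$ needs the ground-model code to still define $WF_{\kappa\cdot\alpha+\beta}$ in the extension. The paper gets this from $\bPi^1_1$-absoluteness for ${<}\kappa$-closed forcing.
\item Your inductive claim that every $\bPi^0_\xi$ is closed under ${<}\kappa$ unions is false at limits of cofinality ${<}\kappa$. At $\xi=\omega$ it would give $\bPi^0_\omega\subseteq\bSigma^0_\omega$ on $\pre{\kappa}{2}$. You only use it at the successor levels $2\cdot\alpha+1$, where your distributivity argument proves it directly.
\end{itemize}
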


\begin{proof}
    First, we introduce some notation and write $WF_\alpha(\eta)$ for the set of all $T \subseteq \pre{<\omega}{\kappa}$ such that $\eta \in T$ and
    \[
        \setOf{\nu \in \pre{<\omega}{\kappa}}{\eta \conc \nu \in T} \in WF_\alpha.
    \]
    We have $WF_\alpha = WF_\alpha(\emptyset)$.
    
    Then for any ordinal $\alpha < \kappa^+$ and $\eta \in \pre{<\omega}{\kappa}$ the equivalence
    \begin{equation} \label{eq: steel WF complexity}
        T \in WF_\alpha(\eta) \Leftrightarrow \eta \in T \wedge \exists \beta < \alpha\ \forall i < \kappa: \eta \conc i \notin T \vee T \in WF_{\beta}(\eta \conc i)
    \end{equation}
    holds. By noting $\cf(\kappa \cdot \alpha + \beta) < \kappa$ for $\beta < \kappa$ and using the fact that the class $\ppi{0}{2\cdot \alpha + 1}{}{\kappa}$ is closed under unions of size ${<}\kappa$, the positive assertions of the theorem now follow with a straightforward inductive argument from \eqref{eq: steel WF complexity}, proving $WF_{\kappa \cdot \alpha}(\eta) \in \ssigma{0}{2\cdot \alpha}{}{\kappa}$ and $WF_{\kappa \cdot \alpha + \beta} \in \ppi{0}{2\cdot \alpha + 1}{}{\kappa}$ for all $\alpha < \kappa^*, 0 < \beta < \kappa$ and $\eta \in \pre{<\omega}{\kappa}$.

    Now let us prove the negative assertions. First, we shall prove $WF_{\kappa \cdot \alpha + \beta} \notin \ssigma{0}{2 \cdot \alpha + 1}{}{\kappa}$. Assume this is false and let $B$ be a $\ssigma{0}{2 \cdot \alpha + 1}{}{\kappa}$ code for it; hence there exist $\ppi{0}{2 \cdot \alpha}{}{\kappa}$ codes $\seq{B_i}{i < \kappa}$ such that
    \[
        B = \bigcup_{i < \kappa} B_i,
    \]
    holds for the interpretations of these codes as $\kappa$-Borel sets. By $\ppi{1}{1}{}{\kappa}$-absoluteness for ${<}\kappa$-closed forcing notions we have $\bS_{\kappa \cdot \alpha + \beta + 1} \forces WF_{\kappa \cdot \alpha + \beta} = B$. Consider now the condition $p \in \bS_{\kappa \cdot \alpha + \beta + 1}$ with $t_p = {\emptyset}$ and $\rho_p(\emptyset) = \kappa \cdot \alpha$. Since $p \forces t_G \in WF_{\kappa \cdot \alpha + \beta} = B$, there exists a $p' \leq p$ with $p' \forces t_G \in B_{i^*}$ for an $i^* < \kappa$. Let $\phi$ be the $\Pi_{2 \cdot \alpha}(\mathcal{L}_\infty(\bar{A})/_{\bS_{\kappa \cdot \alpha + \beta + 1}})$ formula from Lemma~\ref{lem: steel set to form} for $B_{i^*}$. Lemma~\ref{lem: improved ranked forcing steel} yields $q \defas \pi(p', \kappa \cdot \alpha) \forces \phi$. But we know $\emptyset \in \dom(\bar{\rho}_{q})$ and $\bar{\rho}_q(\emptyset) < \kappa \cdot \alpha$, hence the condition $q' = \langle t_q, \rho_{q'}, \bar{\rho}_{q'} \rangle$ with $\rho_{q'}(\emptyset) = \kappa \cdot \alpha + \beta$ and 
    \[
        \rho_{q'} \restriction (t_q \minus \{\emptyset\}) = \rho_{q} \restriction (t_q \minus \{\emptyset\}), \bar{\rho}_{q'} \restriction (t_q \minus \{\emptyset\}) = \bar{\rho}_{q} \restriction (t_q \minus \{\emptyset\})
    \]
    satisfies $q' \leq q$. But this cannot be, since on the one hand we immediately get $q' \forces t_G \notin WF_{\kappa \cdot \alpha + \beta}$ and on the other hand we have $q' \forces \phi$, hence $q' \forces t_G \in B_{i^*}$ while
    \[
       \bS_{\kappa \cdot \alpha + \beta + 1} \forces WF_{\kappa \cdot \alpha + \beta} = B = \bigcup_{i < \kappa} B_i.
    \]

    We move now to the other part of the negative assertion, namely $WF_{\kappa \cdot \alpha} \notin \ppi{0}{2 \cdot \alpha}{}{\kappa}$. If this were the case for some $\alpha$, then \eqref{eq: steel WF complexity} gives us $WF_{\kappa \cdot \alpha + 1} \in \ppi{0}{2 \cdot \alpha}{}{\kappa} \subseteq \ssigma{0}{2 \cdot \alpha + 1}{}{\kappa}$, which is false.
\end{proof}

\newpage
\printbibliography
\end{document}